\documentclass[11pt]{amsart}
\usepackage{amsmath,amssymb,mathrsfs, xcolor, amsthm}
\usepackage{graphicx} % Required for inserting images
\usepackage{float} % in the preamble
\usepackage[title]{appendix}
\usepackage{geometry}
\usepackage{comment}
\usepackage{hyperref}
\usepackage{enumitem}
\usepackage{mathtools}
\usepackage{tikz}
\usetikzlibrary{arrows.meta}
\usetikzlibrary{calc}
\newcommand{\bB}{\mathbf{B}}
\newcommand{\bC}{\mathbf{C}}
\newcommand{\bF}{\mathbf{F}}
\newcommand{\bL}{\mathbf{L}}
\newcommand{\bM}{\mathbf{M}}
\newcommand{\fb}{\mathfrak{b}}
\newcommand{\fg}{\mathfrak{g}}
\DeclareMathOperator{\arccot}{arccot} 
\newcommand{\RN}[1]{%
  \textup{\uppercase\expandafter{\romannumeral#1}}%
}
 
\newtheorem{thm}{Theorem}[section]
\newtheorem*{thm*}{Theorem}

\newtheorem{lem}[thm]{Lemma}
\newtheorem{prop}[thm]{Proposition}

\newtheorem{clm}[thm]{Claim}
\newtheorem{conj}[thm]{Conjecture}

\theoremstyle{definition}
\newtheorem{defn}[thm]{Definition}

\newtheorem{qe}[thm]{Question}

\theoremstyle{remark}
\newtheorem{rem}[thm]{Remark}
\newtheorem{notation}[thm]{Notation}

\title{Free boundary minimal annuli in convex balls}
\author{Dongyeong Ko and Guanhua Shao}
\address{Department of Mathematics, Massachusetts Institute of Technology, Cambridge, MA 02139}
\email{dyko@mit.edu}
\address{Department of Mathematics, Rutgers University -- New Brunswick, Piscataway, NJ 08854}
\email{gs977@math.rutgers.edu}
\date{July 2026}
\begin{document}

\maketitle
\begin{abstract}
We construct a $4$-parameter family of properly embedded genus-zero surfaces with at most two boundary components in a Riemannian $3$-ball. The construction is a free boundary analog of the canonical $5$-parameter family of surfaces on the $3$-sphere by Marques-Neves in the proof of  Willmore conjecture. In the Euclidean unit ball, this family realizes the critical catenoid as a Simon-Smith min-max limit.  As applications, we give an upper bound of Almgren-Pitts $4$-width $\omega_4(\mathbb{B}^3)$ by the area of the critical catenoid. Moreover, we prove the existence of at least three free boundary minimal annuli in any compact $3$-manifold with nonnegative Ricci curvature and strictly convex boundary. 
\end{abstract}

\tableofcontents

\section{Introduction} 
Free boundary minimal submanifolds on manifolds with boundary have been extensively studied. They are free boundary analogs of closed minimal submanifolds in closed manifolds.

The round $3$-sphere is one of the simplest and most intriguing closed manifolds. Almgren \cite{almgren1966some} proved that any immersed minimal $2$-sphere in $\mathbb{S}^{3}$ is totally geodesic, hence it is congruent to an equatorial sphere. For genus one surfaces, Brendle \cite{Brendle2012} showed that any embedded minimal torus in $\mathbb{S}^{3}$ is congruent to the Clifford torus $1/ \sqrt{2} \,\mathbb{S}^{1} \times 1/ \sqrt{2} \,\mathbb{S}^{1}$, which was conjectured by Lawson \cite{lawson1970unknottedness}. Urbano \cite{urbano1990minimal} showed that Clifford torus is the unique minimal surface of Morse index $5$ in the round $3$-sphere up to congruence. In their  proof of Willmore conjecture \cite{willmore1965note}, Marques-Neves \cite{marques2014} showed that Clifford torus is the least-area closed embedded minimal surface in $\mathbb{S}^3$ other than an equator sphere.

Free boundary minimal surfaces in the unit ball $\mathbb{B}^{3}$ are natural analogs of minimal surfaces on $\mathbb{S}^3$. Nitsche \cite{nitsche1985stationary} proved that any immersed free boundary minimal disk in $\mathbb{B}^{3}$ is an equatorial disk. However, the characterization of free boundary minimal annuli in the unit ball still remains open. The critical catenoid is conjectured to be the unique properly embedded free boundary minimal annulus in $\mathbb{B}^3$ up to congruence \cite[Conjecture 1.1]{FraserLi2014}.

Although the Morse index of the critical catenoid was shown to be $4$ in \cite{tran2016index,devyver2019index, smith2019morse, Medvedev2023} (See also \cite{medvedev2023free, ma2026free} for different settings), whether the critical catenoid is the only free boundary minimal surface of index $4$ is still unknown \cite[Open question 6]{LiMartin2020}. As a natural analog of the Willmore conjecture, whether the critical catenoid is the free boundary minimal surface of smallest area other than the equator disks is also an open question \cite[Open question 10]{LiMartin2020}. 

One of the key steps in the proof of the Willmore conjecture by Marques-Neves \cite{marques2014} is that to each closed surface in $\mathbb{S}^3$, they construct a $5$-parameter canonical family of surfaces. Then they apply the Almgren-Pitts min-max theory to this family and show that the min-max width is achieved by the Clifford torus if the associated surface has positive genus. Nevertheless, to the authors' knowledge, no natural $4$-parameter family of surfaces whose min-max width is realized by the critical catenoid has been known. 

In this paper, we construct a natural $4$-parameter family of properly embedded surfaces with a topological control (Theorem \ref{thm:genuszerotwoboundary}), which is a free boundary analog of the canonical $5$-parameter family of Marques-Neves \cite{marques2014}. More specifically, we construct a  $4$-parameter free boundary Simon-Smith family (See Definition \ref{def:fbssfamily}) of properly embedded annuli (with its degenerations) in the $3$-ball, which can be parameterized by $\mathbb{RP}^4$.
\begin{thm}\label{thm:introgenuszerotwoboundary4parameter}
        There exists a free boundary Simon-Smith $4$-parameter family
        \[
            \Psi \coloneqq \Phi_4:\mathbb{RP}^{4} \rightarrow \mathcal{S}^*(\mathbb{B}^{3}),
        \]
         where $\Psi(x)$ satisfies the topological bound $(0,1)$ for $x \in \mathbb{RP}^{4}$.
    \end{thm}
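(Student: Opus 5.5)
The plan is to realize $\Psi$ as the free boundary analog of the Marques--Neves canonical family, built from the critical catenoid and conformal symmetries of $\mathbb{B}^3$. First I will describe a $4$-parameter family of properly embedded surfaces in $\mathbb{B}^3$ that are either annuli or disks. The model parameter space is $\mathbb{S}^3 \times [-1,1]$, modulo identifications. Here $v \in \mathbb{S}^3$ encodes a slab or "level" direction: a point of the closed $3$-ball, together with a pinching parameter, under the usual cone-off description. The parameter $t \in [-1,1]$ encodes a level set.

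Concretely, for $a \in \mathbb{B}^3$ I will take the boundary-preserving conformal (Möbius) map $F_a$ of $\mathbb{B}^3$ moving $0$ to $a$. For $t\in[-1,1]$ I will take the level surfaces $\Sigma_t=\{x\in \mathbb{B}^3 : f(x)=t\}$ of a fixed function $f$ whose regular levels are annuli meeting $\partial\mathbb{B}^3$ transversally. A natural choice is $f(x)=x_1^2+x_2^2-\lambda x_3^2$ suitably normalized, for which the critical catenoid sits inside the family up to rescaling, or the distance-type function used by Marques--Neves. The candidate family is then
\[
(a,t)\mapsto F_a(\Sigma_t),
\]
where the axis direction of $f$ is also allowed to vary. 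Counting parameters, this would give $3+1$ from $a$ and $t$, plus $2$ for the axis. That is too many, so I will fix the axis to be the radial direction of $a$. This identifies the parameter space with the set of unoriented lines through $a$ together with a level, giving a quotient that I expect to be $\mathbb{RP}^4$, with antipodal symmetry $(a,t)\sim(-a,-t)$ coming from swapping the two components of the complement.

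The key steps are as follows.
\begin{enumerate}
\item Define $\Phi_4$ on the open part and verify the topological bound $(0,1)$. Each surface is a level set of a smooth function, meeting the boundary sphere along at most two circles, and is a planar domain of genus $0$ by Morse theory: the function restricted to the sphere has two critical points.
\item Prove continuity in the flat/varifold sense and the free boundary Simon--Smith regularity of Definition~\ref{def:fbssfamily}. Singularities must be finitely many points, with smooth dependence away from them.
\item Understand the boundary behavior as $|a|\to 1$ or $t\to\pm1$. There the surfaces degenerate into the empty set, a disk, or a point-like collapse. I must check that the limits are independent of the approach direction modulo the identification, so that the map descends continuously to $\mathbb{RP}^4$.
\end{enumerate}

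I expect step 3 to be the main obstacle. It requires controlling the degenerations of $F_a(\Sigma_t)$ as $a\to\partial\mathbb{B}^3$, where the conformal maps concentrate everything near a boundary point. This is the analog of the Marques--Neves analysis at $\partial B^4$. The difficulty is showing that the limiting surfaces (disks or points) depend only on the boundary point and the rescaled level. My plan is to reparametrize $t$ via the conformal factor, exactly as Marques--Neves do, so that the limits become geodesic disks or hemispherical caps determined continuously by the blown-up parameter. I would then verify the antipodal identification on the remaining boundary stratum, so that the quotient is indeed $\mathbb{RP}^4$.
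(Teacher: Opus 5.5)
\textbf{There is a genuine gap.} It sits exactly where the content of the statement lies: producing a family that both closes up continuously over $\mathbb{RP}^4$ and has controlled topology.

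\textbf{Your family does not close up over $\mathbb{RP}^4$.} Your family $(a,t)\mapsto F_a(\{f=t\})$ applies the conformal map \emph{after} taking levels. As $a\to p\in\partial\mathbb{B}^3$, $F_a$ converges to the constant $-p$ uniformly away from $p$. So every level with $f(p)\neq t$ collapses to a point, and the level parameter is lost on $\{|a|=1\}$. At levels with $f(p)=t$, the limit instead depends on the direction of approach.

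Tying the axis of $f$ to the radial direction of $a$ makes the family discontinuous at $a=0$, because your $f$ is not $SO(3)$-invariant. The identification $(a,t)\sim(-a,-t)$ is also not compatible with $F_a(\{f=t\})$. Since $F_{-a}(x)=-F_a(-x)$, $F_{-a}$ of a centrally symmetric level is the antipodal image of $F_a$ of it. Moreover, $t\mapsto -t$ changes the topology of the levels of your quadratic $f$ (one-sheeted versus two-sheeted hyperboloids).

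In the paper, three things are done differently:
\begin{enumerate}
\item The axis is fixed once and for all.
\item Levels are taken \emph{after} the conformal map: $\Sigma_{(v,t)}$ is the boundary of $\{d_v<t\}$, where $d_v$ is the signed spherical distance to $\pi(F_v(\Sigma))$ in $\mathbb{S}^3_+$.
\item The identification is imposed only on $\partial(\overline{\mathbb{B}^3}\times[-1,1])$. After the blow-up along $\partial\Sigma$ and the shift $t\mapsto 2\pi t+\gamma(|v|)(\tfrac{\pi}{2}-\bar r(v))$, every surface there is the orthogonal cap $\partial\pi^{-1}\bigl(B^+_{2\pi t+\pi/2}(\overline Q(v),0)\bigr)$. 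Complementary caps give the same mod $2$ relative cycle, and $\overline Q(-v)=-\overline Q(v)$ because the base annulus is antipodally symmetric with $N(-p)=-N(p)$. This is where $\mathbb{RP}^4$ comes from.
\end{enumerate}

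\textbf{Your topology argument does not transfer.} Once you pass to this Marques--Neves family, as you suggest at the end, your Step 1 no longer applies. The $\Sigma_{(v,t)}$ are parallel surfaces of $\pi(F_v(\Sigma))$, not conformal images of levels of a fixed function. Parallel surfaces of a general orthogonal annulus (e.g.\ the critical catenoid) can change topology in an uncontrolled way past focal points.

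\textbf{The missing idea is the choice of base surface.} The paper uses the Clifford annulus $C=\pi^{-1}(T^2)\cap\mathbb{B}^3$. It is the envelope of the spheres $\mathcal S_\beta$ from the ball side and of the orthogonal caps $\mathcal H_\alpha$ from the solid-torus side. Under $\pi\circ F_v$ these become geodesic spheres, respectively half geodesic spheres orthogonal to $\partial\mathbb{S}^3_+$. Their radii are explicit: $R_v(\beta)=\arccot\bigl((1+|v|^2-2\sqrt2\,v_3\cos\beta)/(1-|v|^2)\bigr)$, and analogously for $R_v(\alpha)$. The first is monotone in $\beta$ and the second has a single maximum and minimum in $\alpha$ (Lemmas~\ref{lem:betaradius} and~\ref{lem:alpharadius}). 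Shrinking these (hemi)spheres by $|t|$ gives the case list of Proposition~\ref{lem:behaviourSigmavt}: annuli, disks with one or two singular points, a disk with two points identified, and degenerate sets. All of these satisfy the bound $(0,1)$.

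By contrast, the boundary blow-up you single out as the main obstacle is a fairly direct adaptation of Marques--Neves once orthogonality of $\Sigma$ to $\partial\mathbb{B}^3$ is used. Separately, your Morse-theoretic claim is off: $x_1^2+x_2^2-\lambda x_3^2$ restricted to $\mathbb{S}^2$ is critical along the whole equator, and its levels are hyperboloids, not catenoids.
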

In Theorem \ref{thm:introgenuszerotwoboundary4parameter}, we define the genus and boundary complexity of a surface to be the sum, over all of its connected components, of the genus and the number of boundary components minus one, respectively, as in Franz-Schulz \cite[Definition 1.6]{franz2023topological}. Then we say that a surface $\Sigma$ satisfies the topological bound $(\fg_{0},\fb_{0})$ if the genus and the sum of the genus and the boundary complexity are less than or equal to $\fg_{0}$ and $\fg_{0}+\fb_{0}$, respectively (see \S \ref{subsec: Simon-Smith min-max theory}). We denote the set of punctate surfaces by $\mathcal{S}^{*}(M)$ which is a generalized class of the set of properly embedded surfaces in $M$ (see Definition \ref{def: Punctate surfaces}). 
    
    Theorem \ref{thm:introgenuszerotwoboundary4parameter} may be viewed as a first step toward constructing higher parameter families of surfaces with prescribed genus and boundary complexity which are nontrivial (See Ketover \cite[\S 1.2]{ketover2022flipping} for the remark on the closed setting). More broadly, the $4$-parameter family gives a framework for a program toward constructing free boundary minimal surfaces with prescribed topology using a variational method. 
    
    After running Simon-Smith min-max construction to the $4$-parameter family in Theorem \ref{thm:introgenuszerotwoboundary4parameter}, we prove that this family achieves the critical catenoid $\mathbb{K}$ as a Simon-Smith min-max limit:

 \begin{thm} \label{intro thm: width is the critical catenoid}
    The Simon-Smith min-max width within the homotopy class $\Lambda(\Psi)$ is
    \[
    \bL _{SS}(\Lambda(\Psi)) = \text{Area}(\mathbb{K}).
    \]
    In particular, the min-max limit surface is obtained by a multiplicity $1$ critical catenoid.
    \end{thm}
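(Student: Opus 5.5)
The equality will follow from two inequalities: an upper bound from the explicit family, and a lower bound from the Simon--Smith min-max theorem combined with a classification of the possible limits.

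\textbf{Upper bound.} I will show $\bL_{SS}(\Lambda(\Psi)) \le \text{Area}(\mathbb{K})$ by proving that every member of the family satisfies
\[
\sup_{x\in\mathbb{RP}^4}\text{Area}(\Psi(x))\le \text{Area}(\mathbb{K}),
\]
with equality only at one point $x_0$ where $\Psi(x_0)=\mathbb{K}$. The natural way to build $\Phi_4$ is as in Marques--Neves. One starts from $\mathbb{K}$ and applies the conformal transformations of $\mathbb{B}^3$ that preserve the ball; these form a $3$-parameter family of M\"obius maps indexed by $v\in$ the open ball. One then adds a fourth parameter $t$ that moves through parallel surfaces (level sets), and this produces the $\mathbb{RP}^4$ compactification. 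For the conformal images, the estimate should come from the fact that the critical catenoid maximizes area among its conformal images. This is the Fraser--Schoen / Steklov eigenvalue fact: the coordinate functions are first Steklov eigenfunctions with $\text{Length}(\partial\mathbb{K})=2\,\text{Area}(\mathbb{K})$. For the parallel surfaces, I will use a free boundary analogue of the Heintze--Karcher / Ros-type inequality. This should show that the area of the parallel deformations is at most the area of the corresponding conformal image, just as Marques--Neves bound $\text{Area}(\Sigma_{(v,t)})\le W(\Sigma)$.

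\textbf{Lower bound.} I will use the nontriviality of $\Lambda(\Psi)$. The boundary of the parameter space, where $|v|\to 1$, degenerates to equatorial disks or to the empty set, and the family detects the generator $\bar\lambda^4$ of $H^4(\mathbb{RP}^4;\mathbb{Z}_2)$. So the family is not homotopic to a family of lower width. In particular $\bL_{SS}>\sup_{\partial}\text{Area}=\pi$, and Lusternik--Schnirelmann arguments strictly separate it from the widths realized by disks. The free boundary Simon--Smith theorem, with the genus and boundary bound of Franz--Schulz, then gives a free boundary minimal surface $\Gamma=\sum m_i\Gamma_i$ with $\text{Area}=\bL_{SS}$ and topological bound $(0,1)$. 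This bound forces each $\Gamma_i$ to be a disk or an annulus.

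\textbf{Identifying the limit.} Disks are equatorial by Nitsche, so each contributes area $\pi$. Since $\bL_{SS}\le\text{Area}(\mathbb{K})<2\pi$, the limit can be only a single disk with multiplicity one or a single annulus. The disk case must be excluded by the strict inequality $\bL_{SS}>\pi$ coming from the topological nontriviality above. For the annulus case, I would invoke the index bound: the Morse index of the limit is at most $4$. I would then need either a rigidity result, namely that annuli of area at most $\text{Area}(\mathbb{K})$ and index at most $4$ are $\mathbb{K}$, or a width argument showing $\bL_{SS}\ge\text{Area}(\mathbb{K})$ directly.

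\textbf{Main obstacle.} I expect this last step to be the hardest. Since uniqueness of $\mathbb{K}$ is open, the realistic route avoids classification. Instead I would argue that the supremum is attained only at $\mathbb{K}$ and that no deformation in $\Lambda(\Psi)$ can push the maximum down. This can be done by pulling the tightened sequence back through a degree argument on the conformal parameter $v$, which forces some slice to be close to a conformal image of $\mathbb{K}$ with area near $\text{Area}(\mathbb{K})$. Multiplicity one then follows because $2\pi>\text{Area}(\mathbb{K})$.
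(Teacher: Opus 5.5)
Your proposal has a genuine gap. The upper-bound route fails, and the idea that replaces it is missing from the step you call the main obstacle.

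\textbf{The upper bound does not work.} First, $\Psi=\Phi_4^{C_{[e_3]}}$ is not generated by $\mathbb{K}$. It is the canonical family of the Clifford annulus $C=\pi^{-1}(T^2)\cap\mathbb{B}^3$: its members are the conformal images $F_v(C)$, the sets at signed spherical distance $t$ from $\pi(F_v(C))$ in $\mathbb{S}^3_+$, and orthogonal spherical caps. So $\mathbb{K}$ does not appear in $\Psi$, and your equality case $\Psi(x_0)=\mathbb{K}$ does not occur.

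Second, even for a family built on $\mathbb{K}$, both of your ingredients fail.
\begin{enumerate}
\item The Steklov property of $\mathbb{K}$ controls the \emph{boundary length} of conformal images, not their area. For an orthogonal surface, $2\,\text{Area}(\Sigma)$ and $\mathcal{H}^1(\partial\Sigma)$ differ by $\int_\Sigma\langle\vec H,x\rangle$, which does not vanish for non-minimal conformal images. Whether a free boundary minimal surface maximizes area in its conformal class is an open question of Fraser--Schoen, and you give no argument for $\mathbb{K}$.
\item The Ros/Heintze--Karcher argument only yields $\text{Area}(\Sigma_{(v,t)})\le\mathcal{W}_{\mathbb{B}^3}(\mathbb{K})=\mathcal{H}^1(\partial\mathbb{K})=2\,\text{Area}(\mathbb{K})$. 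This is off by a factor of two, which is exactly the coarseness the paper points out.
\end{enumerate}

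Third, parallel sets of $\mathbb{K}$ have no topological control, so you could not apply the Franz--Schulz bound to such a family. The paper works with $C$ for this reason. Its two-sided (half-)channel structure, with the radii $R_v(\beta)$ and $R_v(\alpha)$ monotone, is what makes $\Psi$ a Simon--Smith family with topological bound $(0,1)$.

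\textbf{The missing idea is an index rigidity result.} Once you have it, no a priori area bound on the family is needed. You are right that uniqueness of $\mathbb{K}$ among all embedded annuli is open. But uniqueness among index-$4$ annuli is known:
\begin{enumerate}
\item the min-max limit $\Gamma$ of a $4$-parameter family has $\operatorname{Index}(\Gamma)\le 4$ (Franz's index bound);
\item every non-flat free boundary minimal surface in $\mathbb{B}^3$ has index $\ge 4$ (Devyver);
\item a free boundary minimal annulus of index $4$ is the critical catenoid (Devyver, Cor.~7.3; Tran, Cor.~3.11).
\end{enumerate}

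The argument then runs as follows.
\begin{enumerate}
\item $\Psi$ is an Almgren--Pitts $4$-sweepout, so $\bL_{SS}(\Lambda(\Psi))\ge\omega_4(\mathbb{B}^3)>\pi$. The strict inequality does not follow from the boundary values alone; it needs a separate Lusternik--Schnirelmann argument using Ketover's gap theorem and the fact that the $\mathbb{RP}^2$ of equatorial disks carries no cube of $\bar\lambda$.
\item The limit $\Gamma$ has multiplicity one by the Sarnataro--Stryker--Wang--Zhou theorem, since every free boundary minimal surface in $\mathbb{B}^3$ is unstable.
\item $\Gamma$ is a disk or an annulus by the topological bound $(0,1)$.
\item $\Gamma$ is not a disk, by Nitsche, because its area exceeds $\pi$.
\item So $\Gamma$ is an annulus of index $4$, hence $\Gamma=\mathbb{K}$, and $\bL_{SS}(\Lambda(\Psi))=\text{Area}(\Gamma)=\text{Area}(\mathbb{K})$ directly.
\end{enumerate}

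Your fallback, a degree argument on $v$ showing that no deformation in $\Lambda(\Psi)$ lowers the maximum below $\text{Area}(\mathbb{K})$, is not substantiated. Your multiplicity-one argument via $\text{Area}(\mathbb{K})<2\pi$ rests on the upper bound that fails.
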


We briefly explain the intuitions of Theorem \ref{thm:introgenuszerotwoboundary4parameter}. In the sphere case, plugging the Clifford torus into the $5$-parameter canonical family of Marques-Neves \cite{marques2014} and doing boundary identifications, Nurser \cite{nurser2016low} constructs a $5$-sweepout that realizes the Clifford torus as the min-max width. The $5$-parameter family consists of the $4$-parameter conformal deformation of Clifford torus and their equidistant sets. Moreover, it is later proven that the surfaces in this canonical family achieve genus less than or equal to $1$ by Chu-Li \cite[Theorem 3.1]{chu2024existence} based on the channel surface structure of a Clifford torus.

Analogous to minimal surfaces in the $3$-sphere, any constant vector in $\mathbb{R}^3$ is a direction to decrease area for free boundary minimal surfaces in $\mathbb{B}^3$ (see \cite[Theorem 3.1]{Fraser2016}). Thus, it is natural to use a $3$-parameter family of conformal diffeomorphisms in $\mathbb{B}^3$ for our $4$-parameter family. For the last parameter, we use the spherical distance set by the stereographic projection instead of the Euclidean distance, since the stereographic projection has the advantage of making the boundary sphere totally geodesic.

However, while conformal deformations preserve the topology of surfaces, equidistant sets of properly embedded surfaces do not preserve the topology in general. For this reason, we take as an initial surface the annulus obtained by the stereographic projection from a half of the Clifford torus in $\mathbb{S}^{3}$ rather than using a critical catenoid. It has a  (half-)channel surface structure on both sides of the annulus and its conformal deformations. Plugging this annulus into our $4$-parameter family, we obtain the desired topological control in Theorem \ref{thm:introgenuszerotwoboundary4parameter}. This choice of an initial surface is inspired by Chu-Li's proof of the genus bound \cite[Theorem 3.1]{chu2024existence}.

The $p$-width $\{ \omega_{p} \}$ of a compact manifold $(M,g)$ is a nonlinear analog of the spectrum of its Laplace–Beltrami operator, which were introduced by Gromov \cite{gromov2002isoperimetry, gromov2006dimension, gromov2007singularities} and further developed by Guth \cite{guth2009minimax}. The $p$-widths have played a crucial role through the min-max theory to enhance our understanding of minimal hypersurfaces in ambient manifolds. For instance, the existence of infinitely many closed and embedded minimal hypersurfaces on every smooth manifold $(M^{n+1},g)$ ($3 \le n+1 \le 7$), which is one of Yau's conjectures in \cite{yau1982problem}, was proven in a series of papers by Marques-Neves \cite{marques2017existence}, Song \cite{song2023existence}, and Zhou \cite{zhou2020multiplicity} (See also Chodosh-Mantoulidis \cite{chodosh2020minimal} and Chu-Stern \cite{chu2025minimals}). In the free boundary setting, Li \cite{li2015general}, Li-Zhou \cite{li2021min}, and Wang \cite{wang2024existence} developed the Almgren-Pitts min-max theory for free boundary minimal hypersurfaces. The explicit values of the $p$-widths have been known for several manifolds, like spheres and balls; see \cite{chodosh2023p,chu2023free, chu2023strong, donato2020first, donato2024first, marx2025p, marx2026p, nurser2016low}. 

In particular, on the round $3$-sphere, it is now known that $\omega_{5}(\mathbb{S}^{3}) = 2 \pi^{2}$ by the resolution of the Willmore conjecture by Marques-Neves \cite{marques2014} and Nurser's refinement \cite{nurser2016low}, and we have more information on the next widths. However, to the authors' knowledge, the only known result on the widths of the unit ball after disks is that $\omega_{6}(\mathbb{B}^{3})< 2 \pi$ by Chu \cite{chu2023free}, and it is conjectured that the 4-width is detected by the critical catenoid in the same paper (See \S 1.1 in \cite{chu2023free}).  

Since our Simon-Smith family and its deformation by isotopies within a homotopy class is an Almgren-Pitts $4$-sweepout (Proposition \ref{prop:sslargerap}), we have that the $4$-width of the unit ball in Almgren-Pitts min-max is smaller than or equal to the Simon-Smith width of $\Lambda(\Psi)$. Combining this with Proposition \ref{prop:lowerfourthbound} and Theorem \ref{intro thm: width is the critical catenoid}, we obtain the following bounds on the $4$-width of the unit ball.
\begin{thm} \label{intro thm:upperbound4width} The fourth width in Almgren-Pitts volume spectrum of the unit ball satisfies the following
\[
\pi<\omega_{4}(\mathbb{B}^{3}) \le \text{Area}(\mathbb{K}).
\]
\end{thm}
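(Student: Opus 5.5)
The bound has two halves, and I treat them separately.

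\emph{Upper bound.} Here I combine Proposition \ref{prop:sslargerap} with Theorem \ref{intro thm: width is the critical catenoid}. Let $\Lambda(\Psi)$ be the Simon-Smith homotopy class of the family $\Psi=\Phi_4:\mathbb{RP}^4\to\mathcal{S}^*(\mathbb{B}^3)$ from Theorem \ref{thm:introgenuszerotwoboundary4parameter}, and fix $\varepsilon>0$. Theorem \ref{intro thm: width is the critical catenoid} gives a sequence in $\Lambda(\Psi)$ with $\sup_x \mathrm{Area}(\Psi_i(x)) \le \mathrm{Area}(\mathbb{K})+\varepsilon$ for $i$ large. By Proposition \ref{prop:sslargerap}, each such $\Psi_i$, viewed through the associated relative cycles (for instance the boundaries of regions, or the varifold/flat chain induced by the surfaces mod $\partial\mathbb{B}^3$), is an Almgren-Pitts $4$-sweepout. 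The key point is that it detects $\bar\lambda^4\neq 0$, where $\bar\lambda$ generates $H^1(\mathbb{RP}^4;\mathbb{Z}_2)$. The mass of each slice is at most the area of the surface, so $\omega_4(\mathbb{B}^3)\le \mathrm{Area}(\mathbb{K})+\varepsilon$. Letting $\varepsilon\to0$ gives the upper bound. There is one technical point: Almgren-Pitts sweepouts require continuity in the flat topology with no concentration of mass, and Simon-Smith families are continuous in the varifold/smooth sense away from finitely many points. I take this as already contained in Proposition \ref{prop:sslargerap}.

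\emph{Lower bound.} I invoke Proposition \ref{prop:lowerfourthbound}. Its expected content is the following. The $p$-widths are nondecreasing. By the free boundary Almgren-Pitts theory (Li-Zhou \cite{li2021min}, Wang \cite{wang2024existence}), each $\omega_p(\mathbb{B}^3)$ is realized by a free boundary minimal varifold, with integer multiplicities, whose support is smooth and embedded. By Nitsche \cite{nitsche1985stationary}, or by the monotonicity argument for free boundary surfaces in the ball, every nonflat free boundary minimal surface has area strictly greater than $\pi$. The low widths $\omega_1=\omega_2=\omega_3=\pi$ are realized by the family of equatorial disks parametrized by $\mathbb{RP}^3$.

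To get the strict inequality $\omega_4>\pi$, I argue by contradiction. Suppose $\omega_4=\omega_1=\pi$. Then the Lusternik-Schnirelmann argument, as in Marques-Neves, shows that the space of free boundary minimal cycles of area $\pi$ must carry nontrivial $\bar\lambda^4$. Any area-$\pi$ critical point is an equatorial disk with multiplicity one, so this space is the space of equatorial disks, which is $\mathbb{RP}^2$. Since $\bar\lambda^3$ already vanishes on $\mathbb{RP}^2$, $\bar\lambda^4$ cannot be detected there, which is the desired contradiction.

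\emph{Main obstacle.} I expect the hardest part to be the mod-$2$ cohomological detection in the upper bound: showing that the $\mathbb{RP}^4$ family is a genuine $4$-sweepout in the Almgren-Pitts sense. I would handle it by restricting to a projective line of the family and showing it is a $1$-sweepout, using a linear family of parallel slicing surfaces. The cup-length then follows from the fact that the family map pulls back the fundamental class $\bar\lambda$ to the generator of $H^1(\mathbb{RP}^4;\mathbb{Z}_2)$.
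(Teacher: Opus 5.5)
Your proposal is correct and follows the paper's proof. The upper bound comes from $\Phi_4$ being a $4$-sweepout (Proposition \ref{prop: 4sweepout}, a property preserved under the isotopies defining $\Lambda(\Psi)$) combined with Proposition \ref{prop:sslargerap} and $\mathbf{L}_{SS}(\Lambda(\Psi))=\text{Area}(\mathbb{K})$, and the strict lower bound is exactly Proposition \ref{prop:lowerfourthbound}. Your sketch of that proposition needs some corrections, though none affects the result you invoke: the $\mathbb{RP}^3$-family realizing $\omega_3=\pi$ consists of planar slices (the equatorial disks form only an $\mathbb{RP}^2$); the Lusternik--Schnirelmann splitting $\lambda^4=\lambda\cup\lambda^3$ forces only $\lambda^3$, not $\lambda^4$, on the neighborhood of the disks, which still suffices; and the paper needs Ketover's gap theorem to reduce to a single homotopy class whose width is exactly $\pi$.
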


 We are not able to rely on the Willmore energy to bound area within our $4$-parameter family as in the sphere case. The areas of the $5$-parameter canonical family of Marques-Neves are bounded above by its Willmore energy (See Ros \cite{ros1999willmore}). However, the area functional and Willmore energy are not compatible for general properly embedded surfaces in the unit ball (see \S \ref{intro: Willmore energy}). Moreover, Marques-Neves \cite{marques2014} apply Urbano's classification of Morse index $5$ closed and embedded minimal surfaces on the round $3$-sphere \cite{urbano1990minimal}. However, as mentioned above, the classification of index 4 free boundary minimal surfaces remains open.

  Another application of Theorem \ref{thm:introgenuszerotwoboundary4parameter} is on the existence of three free boundary minimal annuli in general strictly convex balls, by expanding our $4$-parameter family to $6$-parameter family and developing repetitive min-max construction in the free boundary setting. Since the simplest form of a minimal submanifold with controlled topology is an embedded geodesic, we start with recalling the classical theorem which resolves Poincaré's 1905 conjecture \cite{poincare1905lignes}: 
\begin{thm} \label{thm:threegeodesics} \cite{birkhoff1917dynamical, lyusternik1947topological, grayson1989shortening}
A closed Riemannian $2$-sphere $(S^{2},g)$ admits at least three simple closed geodesics.
\end{thm}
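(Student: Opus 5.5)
The plan is to follow the classical Lusternik--Schnirelmann strategy, in the form made rigorous by Grayson's curve shortening flow. Consider the space $\mathcal{C}$ of unparametrized simple closed curves on $(S^2,g)$, together with the point curves as a degenerate stratum. The space of (possibly degenerate) great circles on the round sphere is parametrized by $\mathbb{RP}^3$: a unit vector $v$ and a height $t\in[-1,1]$ give the plane section $\{x\cdot v=t\}$, and we identify $(v,t)\sim(-v,-t)$. This gives, after fixing a diffeomorphism $S^2\to(S^2,g)$, a map $\mathbb{RP}^2\times$ interval, i.e. an $\mathbb{RP}^3$, into curves, with the point curves forming the boundary stratum. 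Relative to the point curves, the cohomology class $\alpha\in H^1(\mathbb{RP}^3;\mathbb{Z}_2)$ has $\alpha,\alpha^2,\alpha^3\neq 0$. This yields three nested homotopy classes $\mathcal{F}_1\supset\mathcal{F}_2\supset\mathcal{F}_3$ of sweepouts detected by $\alpha^k$, with associated widths $L_1\le L_2\le L_3$.

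To produce critical points I will run a deformation retract using Grayson's theorem: under curve shortening flow, an embedded curve stays embedded, and it either shrinks to a point in finite time or converges to a simple closed geodesic. The flow is equivariant with respect to reparametrization and continuous on the space of embedded curves. Hence, if no simple closed geodesic had length in a neighbourhood of $L_k$, flowing a near-optimal sweepout would push it below $L_k$. The main technical point is uniform control: curves near the point stratum collapse in short time, which keeps the family continuous, and the flow decreases length. Consequently each $L_k$ is the length of a simple closed geodesic. One also needs $L_1>0$, which follows from an isoperimetric argument showing that a sweepout must pass through a curve bisecting area.

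The main obstacle is the degenerate case $L_k=L_{k+1}$, where the three geodesics might coincide. For this I use the Lusternik--Schnirelmann lemma. If $L_k=L_{k+1}=L$ and there were only finitely many simple closed geodesics of length $L$, then a small neighbourhood $U$ of this finite set would be contractible in the relevant sense, so that $\alpha|_U=0$. Given $\Phi\in\mathcal{F}_{k+1}$ with $\max$ length close to $L$, deforming by the flow lands the high-length part in $U$. The complement is then a family in which $\alpha^{k}$ survives, cut by a set on which $\alpha$ vanishes, so it gives a sweepout in $\mathcal{F}_k$ with maximum strictly less than $L$, a contradiction. Hence either the $L_k$ are distinct, or there are infinitely many simple closed geodesics. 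In either case there are at least three, which proves Theorem \ref{thm:threegeodesics}.
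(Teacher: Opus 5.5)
Your proposal is correct and is essentially the argument of the works the paper cites for this theorem; the paper gives no proof of its own. That argument is the Lusternik--Schnirelmann scheme with three nested min-max classes, made rigorous by using Grayson's embeddedness-preserving curve shortening flow as the deformation.

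Two small precisions. First, the sets $\{x\cdot v=t\}$ are round circles, not great circles. Their parameter space $(S^2\times[-1,1])/\{(v,t)\sim(-v,-t)\}$ is the twisted interval bundle over $\mathbb{RP}^2$, not $\mathbb{RP}^2\times[-1,1]$, and it becomes $\mathbb{RP}^3$ only after the point curves are collapsed. Second, $\mathcal{F}_k$ should be defined cohomologically rather than as homotopy classes: take all families $\Phi$ of embedded curves with $\Phi^*(\bar\alpha^k)\neq 0$, for a class $\bar\alpha$ on the space of curves (e.g.\ pulled back from $\mathcal{Z}_1(S^2;\mathbb{Z}_2)$). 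This is what makes the restriction to the complement of $U$ in the degenerate case an admissible member of $\mathcal{F}_k$.
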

The number three is optimal by Morse's ellipsoid example \cite{morse1934calculus} in Theorem \ref{thm:threegeodesics}. The existence of a closed geodesic is proven by Birkhoff \cite{birkhoff1917dynamical} by initiating the min-max theory. The existence of three closed geodesics is proven by Grayson \cite{grayson1989shortening} through curve shortening flow, based on the Lyusternik-Schnirelmann theory in \cite{lyusternik1947topological}.

In one dimension higher, Yau conjectured the existence of at least four embedded minimal spheres in every Riemannian $3$-sphere in \cite{yau1982problem}. Simon-Smith \cite{smith1983existence} proved that there always exists at least one minimal sphere in any Riemannian $3$-sphere by developing the min-max theory with topological control (See also the survey of Colding-De Lellis \cite{colding2003min}). Wang and Zhou recently confirmed the conjecture in spheres with Ricci positive metrics or bumpy metrics \cite{wang2023existence} (See also Haslhofer-Ketover \cite{Haslhofer2019} and Sarnataro-Stryker \cite{sarnataro2023optimal}). More recently, Wang and Zhou proved the existence of two minimal spheres in $3$-spheres with arbitrary metric in \cite{wang2026existence}. 

For minimal tori on Riemannian $3$-spheres, White \cite{white1989every, white1991space} conjectured that every Riemannian $3$-sphere contains at least $5$ embedded minimal tori. He proved that Riemannian $3$-spheres with positive Ricci curvature contain at least one minimal torus. Chu-Li \cite{chu2024existence} recently confirmed this conjecture for Riemannian $3$-spheres with positive Ricci curvature by developing the repetitive min-max theory. Based on this work, there have been a series of papers on embedded minimal surfaces with prescribed genus. See \cite{chu2025minimal, chu2025min, chu2026enumerative}.

As a free boundary analog of Yau's conjecture on minimal spheres, it has been conjectured that there exist at least three embedded free boundary minimal disks in any strictly convex $3$-ball with nonnegative Ricci curvature. The existence of one free boundary unstable disk was proven by Struwe \cite{struwe1984free} (see \cite{fraser1998free, laurain2019existence, lin2020min} for constructions of immersed solutions). Grüter-Jost \cite{gruter1986embedded} proved the existence of an embedded free boundary disk (See also Jost \cite{jost1986existence} and Li \cite{li2015general}). Haslhofer-Ketover \cite{haslhofer2025free} recently proved the existence of two free boundary minimal disks in compact three-manifolds with strictly convex boundary and nonnegative Ricci curvature, and confirmed the conjecture in generic metric. More recently, Sarnataro-Stryker-Wang-Zhou \cite{sarnataro2026existence} proved the existence of three minimal disks in general metrics.

The problem of the existence of free boundary minimal surfaces of prescribed topology remains open in general. Since surfaces with boundary which have the next first Betti number to disks are annuli, the existence and enumerative problem of free boundary minimal annuli is a natural free boundary counterpart of White's conjecture on minimal tori in $3$-spheres. Maximo-Nunes-Smith \cite{maximo2017free} proved the existence of one free boundary minimal annulus in compact three-manifolds with strictly convex boundary and nonnegative Ricci curvature by developing degree theory techniques (See also Aché-Maximo-Wu \cite{ache2016metrics}). For more works on the construction of minimal submanifolds of controlled topological types, see also \cite{bettiol2023bifurcations, bettiol2024nonplanar, ko2024morse,ko2025regularity,ko2026existence,li2025minimal,li2024existence,wang2026embedded,zhou2016free}.

We now introduce our result on the min-max construction of three free boundary minimal annuli in a compact $3$-manifold with strictly convex boundary and nonnegative Ricci curvature. By changing the rotational symmetry axis of our initial annulus, we obtain the $6$-parameter version of Theorem \ref{thm:introgenuszerotwoboundary4parameter} (Theorem \ref{thm:genuszerotwoboundary}).
\begin{thm}\label{thm:introgenuszerotwoboundary6parameter}
        There exists a $6$-parameter family   
        \[
            \Phi_6:Y \rightarrow \mathcal{S}^*(\mathbb{B}^{3})
        \]
         that is a free boundary Simon-Smith family parametrized by $Y:=\mathbb{RP}^{4} \times \mathbb{RP}^{2}$. Moreover, $\Phi_{6}$ satisfies the topological bound $(0,1)$.
    \end{thm}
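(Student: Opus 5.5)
The plan is to build $\Phi_6$ directly from the $4$-parameter family $\Psi=\Phi_4$ of Theorem \ref{thm:introgenuszerotwoboundary4parameter}, which comes from one initial annulus, and to let the extra $\mathbb{RP}^2$ factor choose the rotational symmetry axis of that annulus. Recall that the initial surface $A_0$ is the stereographic image of half a Clifford torus. It is invariant under rotations about a fixed axis $\ell_0$ through the origin, and it is invariant under the reflection $x\mapsto -x$ up to its two ends. For an unoriented line $\ell\in\mathbb{RP}^2$ I would pick a rotation $R\in SO(3)$ with $R\ell_0=\ell$ and set $A_\ell:=R(A_0)$. Because $A_0$ is rotationally symmetric about $\ell_0$, this set does not depend on which $R$ is chosen. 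It also does not depend on the orientation of $\ell$, since the half-turn about an axis perpendicular to $\ell_0$ maps $A_0$ to itself. This last point has to be checked on the explicit parametrization; it holds because the half Clifford torus is symmetric under swapping its two boundary circles. With this in hand, $\ell\mapsto A_\ell$ is a well-defined, smooth family on $\mathbb{RP}^2$.

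Next, for $(x,\ell)\in\mathbb{RP}^4\times\mathbb{RP}^2$ I would define $\Phi_6(x,\ell)$ to be the canonical $4$-parameter construction applied to $A_\ell$: first the conformal diffeomorphism of $\mathbb{B}^3$ given by the first three parameters, then the spherical equidistant set given by the fourth. The whole construction is equivariant under $O(3)$. Rotations commute with stereographic projection, preserve spherical distance, and conjugate the conformal group of $\mathbb{B}^3$ to itself. So I would simply write $\Phi_6(x,\ell)=R\cdot\Phi_4(R^{-1}\cdot x)$, where $O(3)$ acts on $\mathbb{RP}^4$ through the conformal-center and distance coordinates. To show this is well defined, I would check that the stabilizer of $\ell_0$, which is $O(2)\times\mathbb{Z}_2$, acts on $\mathbb{RP}^4$ so that $\Phi_4$ is equivariant under it. This follows from the symmetry of $A_0$ together with naturality of the construction.

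The topological bound $(0,1)$ then comes for free. Each $\Phi_6(x,\ell)$ is isometric, by a rotation of $\mathbb{B}^3$, to some $\Phi_4(x')$, and genus and boundary complexity are invariant under ambient isometries. Theorem \ref{thm:introgenuszerotwoboundary4parameter} therefore gives the bound.

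What remains is to check that $\Phi_6$ is a free boundary Simon-Smith family in the sense of Definition \ref{def:fbssfamily}: continuity in the varifold/$\mathbf{F}$ sense, smooth dependence away from finitely many punctate singular points, and local isotopy conditions near degenerations. Away from the degenerate parameters of $\Phi_4$ this is clear, because $\Phi_6$ is locally a smooth composition of $\Phi_4$ with the smooth $SO(3)$ action. I expect the main obstacle to be the degenerate locus, where the equidistant sets collapse to curves or points and where the chart of $\mathbb{RP}^4$ wraps around. There one must show that the local structure Definition \ref{def:fbssfamily} requires for $\Phi_4$ survives when the family is twisted by a continuously varying rotation. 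I would handle this by choosing, over small open sets $U\subset\mathbb{RP}^2$, smooth local sections $U\to SO(3)$. Over $\mathbb{RP}^4\times U$ the family is then a product of $\Phi_4$ with a smooth family of isometries, and the Simon-Smith conditions pass to such products because isometries carry punctate surfaces to punctate surfaces and preserve the isotopy conditions. Well-definedness on the overlaps follows from the symmetry argument in the second paragraph.
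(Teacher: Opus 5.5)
Your construction is the paper's: $\Phi_6(z,[a])=\Phi_4^{C_{[a]}}(z)$. The identity $\Phi_4^{R\Sigma}(v,t)=R\,\Phi_4^{\Sigma}(R^{-1}v,t)$, combined with local sections $U\to SO(3)$, is exactly the one-line reduction that opens the paper's proof (``by Remark~\ref{rem: Continuous orientation}, it suffices to consider $C_{[e_3]}$''). When you invoke ``naturality'' for the stabilizer, state explicitly that every symmetry of the annulus preserves the chosen side (normal pointing into the solid-torus region). A side-swapping symmetry would act by $t\mapsto -t$ and break equivariance. This holds because the two complementary regions are topologically distinct, and it is what makes the parameter space an honest product rather than a twisted bundle, as in the Clifford-torus case.

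The genuine gap is that all of the content is delegated to Theorem~\ref{thm:introgenuszerotwoboundary4parameter}, which is not an independent input in this paper. It is the restriction of the present statement to the slice $\mathbb{RP}^4\times\{[e_3]\}$; the proof of Theorem~\ref{multioneannulus} obtains the bound $(0,1)$ for $\Phi_4$ ``by Theorem~\ref{thm:genuszerotwoboundary}''. So as written your argument is circular: it only shows that the $4$-parameter statement implies the $6$-parameter one. The obstacle you anticipate, twisting by rotations near degenerate parameters, is the easy part.

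What is missing is control of the topology of the equidistant sets $\Sigma_{(v,t)}$ themselves. This is not automatic, since parallel sets of a properly embedded surface can change topology in uncontrolled ways past focal points. That is why the initial surface is half a Clifford torus rather than, e.g., the critical catenoid.

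The paper's idea is that $\pi(F_v(C))$ is a two-sided channel surface (Lemma~\ref{lem:halfchannel}). From the solid-ball side it is the envelope of the geodesic spheres $\pi(F_v(\mathcal S_\beta))$, whose centers trace an arc in $\mathbb S^3_+$ ending on $\partial\mathbb S^3_+$. From the solid-torus side it is the envelope of the hemispheres $\pi(F_v(\mathcal H_\alpha))$ centered on $\partial\mathbb S^3_+$. The level set $\{d_v=t\}$ is then the envelope of the spheres on the side toward which one moves, with radii decreased by $|t|$. The explicit radii of Lemmas~\ref{lem:betaradius} and~\ref{lem:alpharadius} are $\cot R_v(\beta)=\frac{1+|v|^2-2\sqrt2\,v_3\cos\beta}{1-|v|^2}$ and $\cot R_v(\alpha)=\frac{1+|v|^2-2\sqrt2\,(v_1\cos\alpha+v_2\sin\alpha)}{1-|v|^2}$. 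They are monotone in $\beta$ and have a single maximum and minimum in $\alpha$, or are constant.

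This forces the only degenerations to be those listed in Proposition~\ref{lem:behaviourSigmavt}:
\begin{itemize}
\item a boundary point singularity when the smallest end sphere collapses;
\item a single pinch of the solid torus at a point of $\partial\mathbb B^3$ when the smallest hemisphere collapses;
\item final collapse to a point, circle or segment.
\end{itemize}
That classification, together with the fact that the blow-up members are orthogonal spherical caps or equatorial disks, yields the required properties. These are genus $0$, at most two boundary components, at most two singular points, and conditions (i)--(iv) of Definition~\ref{def:fbssfamily}; condition (v) is then checked as in Chu--Li. Without this analysis, or some other proof that $\Phi_4$ itself is a Simon--Smith family with bound $(0,1)$, the proposal does not establish the theorem.
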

Then by developing the free boundary version of repetitive min-max construction by Chu-Li \cite{chu2024existence}, we obtain the second main result. 
\begin{thm}\label{thm:threeminimalannuli} Every compact $3$-manifold with nonnegative Ricci curvature and strictly convex boundary contains at least 3 embedded free boundary minimal annuli.
\end{thm}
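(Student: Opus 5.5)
The plan is to follow the repetitive min-max scheme of Chu-Li \cite{chu2024existence}, transplanted to the free boundary setting, and to apply it to the $6$-parameter family $\Phi_6:Y=\mathbb{RP}^4\times\mathbb{RP}^2\to\mathcal{S}^*(\mathbb{B}^3)$ of Theorem \ref{thm:introgenuszerotwoboundary6parameter}.

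\textbf{Step 1: transplant the family.} Let $(M,g)$ be compact with $\mathrm{Ric}\ge 0$ and strictly convex boundary. By the Frankel-type argument (Fraser-Li), $M$ is diffeomorphic to a $3$-ball. Fix a diffeomorphism $F:\mathbb{B}^3\to M$ and push $\Phi_6$ forward. Diffeomorphisms preserve genus and number of boundary components, so the push-forward is still a free boundary Simon-Smith family satisfying the topological bound $(0,1)$.

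\textbf{Step 2: nested widths.} Let $\alpha\in H^1(\mathbb{RP}^4;\mathbb{Z}_2)$ and $\beta\in H^1(\mathbb{RP}^2;\mathbb{Z}_2)$ be the generators. Consider the cohomology classes $\lambda_1=\alpha^4$, $\lambda_2=\alpha^4\beta$, $\lambda_3=\alpha^4\beta^2$ in $H^*(Y;\mathbb{Z}_2)$. For each $k$, define the width $\bL_k$ as the infimum of $\sup\mathrm{Area}$ over the restrictions of isotopic deformations of $\Phi_6$ to subcomplexes $X\subset Y$ that detect $\lambda_k$. This gives $\bL_1\le\bL_2\le\bL_3$.

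\textbf{Step 3: each width is realized by annuli.} For each $k$, apply free boundary Simon-Smith min-max with topological control:
\begin{itemize}
\item Regularity is the free boundary Simon-Smith theory (Li, Li-Zhou).
\item The genus and boundary bounds come from Franz-Schulz \cite{franz2023topological}.
\end{itemize}
Together these give a free boundary minimal surface $\Sigma_k$, possibly with integer multiplicities, with total genus $0$ and total boundary complexity at most $1$. So its components are disks and at most one annulus.

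The main work is to rule out the case where the varifold is supported only on disks. The approach is:
\begin{itemize}
\item Use $\alpha^4\ne0$: the restriction of $\Psi$ to $\mathbb{RP}^4$ is a $4$-sweepout.
\item Compare with a lower bound. Under $\mathrm{Ric}\ge0$ and convex boundary, combinations of free boundary disks have area strictly less than the width detecting $\alpha^4$. The ball case is Proposition \ref{prop:lowerfourthbound}. In general, use the Lusternik-Schnirelmann argument together with a degeneration argument: a family whose min-max limit is a union of disks can be deformed into a family detecting fewer powers of $\alpha$.
\end{itemize}
In practice, I would mirror Chu-Li's treatment of spheres versus tori, using the catenoidal degenerations built into $\Phi_6$.

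\textbf{Step 4: distinctness.} If two of the widths coincide, say $\bL_k=\bL_{k+1}$, the Lusternik-Schnirelmann argument applies: the set of annular critical points at that level must then be infinite. It cannot be covered by a small neighborhood of finitely many annuli, since removing such a neighborhood only kills a class of the form $\beta$. This is the standard cup-length argument. The free boundary adaptation needs:
\begin{itemize}
\item compactness of free boundary minimal annuli with bounded area under $\mathrm{Ric}\ge0$ and convexity (Fraser-Li);
\item a multiplicity-one/pinching control, obtained via a free boundary version of Chu-Li's repetitive min-max.
\end{itemize}
Either way we obtain at least three distinct annuli.

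\textbf{Main obstacle.} I expect the hardest step to be excluding multiplicity and degenerate limits, that is, min-max limits consisting of disks or of a multiply-covered annulus. The first thing I would try is Chu-Li's strategy: restrict the family away from degenerate surfaces (the $\mathbb{RP}^2$ factor records the axis) and use Ketover-type catenoid estimates on the free boundary to push such limits below the width.
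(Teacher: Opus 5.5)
Steps 1--2 match the paper, and your class $\alpha^4\beta^2$ is its $\lambda^4\cup\alpha^2$. There are, however, two genuine gaps.

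\textbf{Step 3: disks cannot be excluded.} You assume that free boundary minimal disks have area strictly below the width detecting $\alpha^4$, so that every width is attained by an annulus. Nothing in the hypotheses gives this. Only in the round ball do you know, by Nitsche plus Proposition~\ref{prop:lowerfourthbound}, that all disks have area $\pi<\omega_4$. In a long thin ellipsoid in $\mathbb{R}^3$ (flat, strictly convex, semi-axes $a\gg b\ge c$), the section containing the two longest axes is a free boundary minimal disk of area $\pi ab$. Slicing perpendicular to the long axis gives $\omega_4\le 4\pi bc$. So no lower bound of $\omega_4$-type can supply your inequality, and in general a width may well be attained by a disk.

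The paper does not exclude disks; it detects and then neutralizes them:
\begin{itemize}
\item It first perturbs the metric (Proposition~\ref{prop:perturbMetric}: nondegenerate disks, area independence, Frankel property, instability). Then each critical set is a single disk or finitely many annuli. Multiplicity one comes from the free boundary multiplicity one theorem for unstable components, not from catenoid estimates.
\item Surfaces $\bF$-close to the detected disk are turned into disks by the relative pinch-off interpolation (Theorem~\ref{thm:mapping_cylinder}), which rests on the Loewner-type inequality for annuli.
\item A Simon--Smith family of disks together with small-area surfaces is never a $4$-sweepout (Proposition~\ref{prop:not4sweepout}). This is metric-independent: pass to the Euclidean metric, where relative min-max with bound $(0,0)$ would produce an equatorial disk of area $\pi<\omega_4(\mathbb{B}^3)$.
\end{itemize}
Your \emph{degeneration argument} would have to be exactly this.

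\textbf{Step 4: the cap does not obviously kill $\beta$.} The claim that \emph{removing a neighborhood of finitely many annuli only kills a class of the form $\beta$} is the real content of the proof, not a standard cup-length fact. The $\mathbb{RP}^2$ class is not pulled back from the cycle space. The paper shows $[\Phi_6]^*\bar\lambda$ is the $\mathbb{RP}^4$ generator precisely because rotating a Clifford annulus preserves its orientation. Hence $\bF$-closeness to $[A]$ by itself says nothing about $\beta$ on that region.

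The paper proves $(i_j)_*=0$ on $H_1$ (Theorem~\ref{thm:trivialInFirsthomo}) using an extra invariant:
\begin{itemize}
\item Surfaces near an annulus bound a ball-like inside and a solid-torus outside, with a canonical generator of $H_1(\mathrm{out})$.
\item This generator is transported back along the pinch-off processes, using that the ranks of $H_1$ of inside and outside are non-increasing, to a loop $c_0$ in the Clifford family $\mathbb{RP}^2$.
\item Over $\mathbb{RP}^2$, the double cover by pairs (solid torus, generator) is $\mathbb{S}^2$. So a nontrivial $c_0$ would flip the generator, which is impossible for a loop whose surfaces all stay near one annulus.
\end{itemize}
This needs closeness as currents (Theorem~\ref{thm:currentsCloseInBoldF}), not just as varifolds.

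Likewise, \emph{$\bL_k=\bL_{k+1}$ forces infinitely many annuli} needs a Simon--Smith deformation lemma that you have not supplied. The paper avoids nested widths: it runs a repetitive min-max with cap removal and applies the Lusternik--Schnirelmann vanishing lemma to the cover $D_0\cup D_1\cup\cdots\cup D_N$ against $\lambda^4\cup\alpha^2\neq 0$. Distinctness of the annuli comes from the strictly decreasing widths.
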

Theorem \ref{thm:threeminimalannuli} is the first construction of multiple free boundary minimal surfaces with prescribed topology after minimal disks by Haslhofer-Ketover \cite{haslhofer2025free} and Sarnataro-Stryker-Wang-Zhou \cite{sarnataro2026existence}. Since an annulus has genus $0$ and two boundary components, this gives an idea of the contribution of the number of boundary components to the topological property of moduli space.

Let us give the heuristic behind the number three in Theorem \ref{thm:threeminimalannuli}. Denote by $\mathcal A$ the space of (unknotted) properly embedded annuli in the standard three-ball. Hatcher's theorem of Smale's conjecture \cite{hatcher1983proof} leads one to expect that $\mathcal A\simeq \mathbb{RP}^{2}$. Thus, Lusternik–Schnirelmann theory predicts at least $\operatorname{cat}(\mathbb{RP}^{2})=3$ critical points of the area functional, corresponding to three free boundary minimal annuli. The main difficulty in transforming this heuristic into the existence result is that the space of annuli is not compact, for instance, annuli may degenerate into disks, and different min--max classes may detect the same geometric annulus, possibly with different multiplicities.

%\subsection{Part \uppercase\expandafter{\romannumeral 1}}
\subsection{Main ideas} We outline the proof of our main results.
\subsubsection{Canonical family}
Inspired by Marques-Neves \cite{marques2014}, to each properly embedded surface $\Sigma$ that intersects $\partial \mathbb{B}^3$ orthogonally, we associate a $4$-parameter canonical family of surfaces $\Phi_4^{\Sigma}$.

For every $v \in \mathbb{B}^{3}$, we consider the conformal map $F_v : \mathbb{B}^3\to \mathbb{B}^3$ by
 \begin{equation}
   F_v(x) = \frac{(1 - |v|^2)x - (1 - 2 \langle v, x \rangle + |x|^2) v}{1 - 2 \langle v, x \rangle + |x|^2 |v|^2},\quad v \in\mathbb{B}^3.
 \end{equation}
$F_v$ arises as the hyperbolic translation in hyperbolic geometry and every conformal map in $\mathbb{B}^3$ differs from some $F_v$ by an orthogonal linear map $O \in O(3)$ (see \cite[\S 4.5]{John2019}).

Using the inverse stereographic projection $\pi: \mathbb{R}^3 \cup \{ \infty \} \to \mathbb{S}^3$ (see \S \ref{subsec: Canonical family}), we define the canonical $4$-dimensional family of surfaces:
\[
\Sigma_{(v, t)} = \partial \left(\pi^{-1}(\{ x \in \mathbb{S}_+^3: d_v(x) < t \}) \right) \cap \mathbb{B}^3, \quad (v, t) \in \mathbb{B}^3 \times [-\pi, \pi].
\]
Here $d_v: \overline{\mathbb{S}_+^3} \to \mathbb{R}$ is the signed distance function to the oriented surface $\pi(\Sigma_v) = \pi(F_v(\Sigma)) \subset \overline{\mathbb{S}_+^3}$, which becomes well-defined after we choose a unit normal vector field $N$ on $\Sigma$ ($\pi$ is conformal and preserves the orientation). The distance is computed with respect to the standard metric of the upper hemisphere $\mathbb{S}_+^3 \subset \mathbb{S}^3$. 

\subsubsection{Boundary blow-up} \label{subsubsec: Boundary blow-up} We would like to apply the min-max method to the $4$-dimensional family $\{\Sigma_{(v, t)}\}_{(v, t) \in \mathbb{B}^3 \times [-\pi, \pi]}$. Unfortunately, this family is not continuous in any reasonable sense if we want to extend it to $\overline{\mathbb{B}^3} \times [- \pi, \pi]$. As $v \in \mathbb{B}^3$ converges to $p \in \partial \Sigma$, we will see that the limit depends on the angle of convergence. Since $\Sigma$ intersects $\partial \mathbb{B}^3$ orthogonally, the tangent plane $T_p\Sigma$ passes through the origin for any $p \in \partial \Sigma$. Thus, as $v \in \mathbb{B}^3$ converges to  $p \in \Sigma$, we are essentially blowing up the equator disk $D_p: = \{ x \in \mathbb{B}^3 :\langle x, N(p) \rangle = 0 \}$ and the limit depends on the angle of convergence. More precisely, if
 \[
 v_n = |v_n|(\cos{(s_n)} p + \sin{(s_n) N(p)})
 \]
is a sequence in $\mathbb{B}^3$ converging to $p \in \partial \Sigma$, i.e., $|v_n|$ tends to one, $|v_n| < 1$, and $s_n$ tends to zero, then the limit of $\Sigma_{(v_n, t)}$ will be a spherical cap that intersects $\partial \mathbb{B}^3$ orthogonally
\[
\begin{split}
\lim_{n \to \infty}\Sigma_{(v_n, t)} & = \lim_{n \to \infty}(D_p)_{(v_n, t)} \\
& = \partial \left(\pi^{-1}(B^+_{\frac{\pi}{2} - \theta + t}(- \sin{\theta} \, p - \cos{\theta} \, N(p), 0)) \right) \cap \mathbb{B}^3 \\
\end{split}
\]
where
\[
\theta = \lim_{n \to \infty} \arctan{\frac{s_n}{1 - |v_n|}} \in \left[- \frac{\pi}{2}, \frac{\pi}{2}\right].
\]
(see \S \ref{subsec: general free  boundary surface notation} for notation)
\begin{rem}
As $v \in \mathbb{B}^3$ converges to $p \in \mathbb{S}^2 \setminus \partial \Sigma$, $\Sigma_{(v, t)}$ converges to spherical caps
\[
\partial \left(\pi^{-1} \left(B^+_{\pi + t}(p, 0) \right) \right) \cap \mathbb{B}^3 \quad \text{or} \quad \partial \left(\pi^{-1} \left(B^+_{t}(-p, 0) \right) \right) \cap \mathbb{B}^3,
\]
depending on which connected component of $\mathbb{S}^2 \setminus \partial \Sigma$ contains $p$.
\end{rem}

In order to fix the failure of continuity, we reparametrize the canonical family to make it continuous on $\overline{\mathbb{B}^3} \times [- \pi, \pi]$. This is done by ``blowing-up" $\overline{\mathbb{B}^3}$ along $\partial \Sigma$ as in \cite[\S 2.5]{marques2014} (see Section \ref{sec:canonicalfamily} for details).

\subsubsection{Construction of $4$-sweepout} 
Consider the annulus $C:=\pi^{-1}(T^{2}) \cap \mathbb{B}^{3}$, where  $T^{2} \subset \mathbb{S}^{3}$ is the Clifford torus. Due to the antipodal symmetry of $C$, $\Phi_4^C$ can be easily reparametrized into a $4$-sweepout using boundary identifications. 

Although any annulus with antipodal symmetry works for the boundary identification, we choose $C$ over other annuli because the topology of the $4$-parameter family $\Phi_4^C$ is well-controlled. $C$ inherits a two-sided channel surface structure from the Clifford torus, and this structure is preserved under conformal deformations. Moreover, by explicitly computing the radii of the channel (hemi-)spheres, we shall see $\Phi_4^C$ is a free boundary Simon-Smith family of surfaces with the topological bound $(0, 1)$. This good property will allow us to find the min-max limit of $\Phi_4^C$ using the topological control for Simon-Smith  families \cite{franz2023topological}.

On the other hand, by a Lyusternik-Schnirelmann type argument (see Proposition \ref{prop:lowerfourthbound}), we obtain a lower bound on the $4$-width of the unit ball:
\begin{equation*}
    \omega_{4}(\mathbb{B}^{3}) > \pi.
\end{equation*} 
Hence, the Simon-Smith min-max width $\mathbf{L}_{SS}(\Lambda(\Phi_4^C))$ satisfies
\begin{equation} \label{main idea eq: lower bound on min-max width}
\pi < \omega_4(\mathbb{B}^3)  \leq \mathbf{L}_{SS}(\Phi_4^C).    
\end{equation}
\subsubsection{Simon-Smith family of surfaces} \label{subsubsec: Simon-Smith family of annuli} 

Applying the Simon-Smith min-max theorem to $\Phi_4^C$, by the topological control \cite[Theorem 1.8]{franz2023topological} and the Multiplicity One Theorem \cite[Theorem 1.3]{sarnataro2026existence}, we obtain that $\mathbf{L}_{SS}(\Phi_4^C)$ is achieved by a properly embedded free boundary minimal annulus $\Gamma$ with
\begin{equation} \label{main idea eq: Gamma}
\mathbf{L}_{SS}(\Phi_4^C) = \operatorname{Area}(\Gamma).    
\end{equation}
On the other hand, by the upper bound on Morse index for min-max surfaces \cite[Theorem 1.10]{franz2023equivariant} and the lower bound of Morse index of non-flat free boundary minimal surfaces from \cite[Theorem 1.1]{devyver2019index}, $\operatorname{Index}(\Gamma) = 4$. Now by the characterization of free boundary minimal annuli by Morse index \cite[Corollary 7.3]{devyver2019index} (also see \cite[Corollary 3.11]{tran2016index}), $\Gamma$ must be the critical catenoid $\mathbb{K}$. Thus, combining (\ref{main idea eq: lower bound on min-max width}) and (\ref{main idea eq: Gamma}), Theorem \ref{intro thm:upperbound4width} holds.

%\subsection{Part \uppercase\expandafter{\romannumeral 2}}
\subsubsection{$6$-parameter family of surfaces} By changing the rotational symmetry axis of $C$, we can obtain a $\mathbb{RP}^2$ family of such annuli congruent to $C$. For each annulus in this $\mathbb{RP}^2$ family, we can associate the $4$-sweepout we constructed above in \S \ref{subsubsec: Simon-Smith family of annuli} and obtain a $6$-parameter family of surfaces $\Phi_6$, whose parameter space is $Y : = \mathbb{RP}^4 \times \mathbb{RP}^2$ (see Section \ref{sec: 6-parameter family}).

\subsubsection{Existence of three free boundary minimal annuli} On a compact $3$-manifold $(M, g)$ with nonnegative Ricci and strictly convex boundary, we adapt the scheme developed by Chu-Li \cite[Section 3]{chu2024existence} to the $6$-parameter family $\Psi \coloneqq \Phi_6$ to show the existence of $3$ minimal annuli in $(M, g)$. We summarize the main steps below.
\begin{enumerate} [label=\textbf{Step \arabic*.}]
 
\item Without loss of generality, we can assume there are only finitely many free boundary minimal annuli in $(M, g)$. By slightly perturbing the metric, we may assume $(M, g)$ still has the same number of minimal annuli but only finitely many minimal disks.

\item \label{step: Repetitive min-max machinery}  Repetitive min-max: We apply the Simon-Smith min-max theorem (Theorem \ref{thm:simon-smith min-max}) to $\Psi$ and detect a free boundary minimal surface, which is an unstable multiplicity one minimal disk or annulus, by the Multiplicity One Theorem \cite[Theorem 1.3]{sarnataro2026existence} and the topological control \cite[Theorem 1.8]{franz2023topological}. We remove a "cap" near the free boundary minimal surface from the parameter space $Y$. As a result, we obtain a new family whose maximal area is strictly smaller than this free boundary minimal surface. Now we repeat the procedures above to this new family.
        The iteration will stop after finitely many iterations as there are only finitely many free boundary minimal disks and annuli in $(M, g)$. 

\item  Assemble pieces we obtained in the previous step to get a $6$-parameter family $\Xi$ which is homotopic to $\Psi$.

\item  Let $N$ be the number of minimal annuli detected in \ref{step: Repetitive min-max machinery}. Then the parameter space $\text{dmn}(\Xi)$ of $\Xi$ can be decomposed into a union of subsets $D_0,D_1,\cdots,D_N$, such that the following hold:
\begin{itemize}
    \item the restriction $\Xi|_{D_0}$ consists entirely of disks (possibly with singularities), and cannot form a $4$-sweepout.
    \item for each $j=1,\cdots,N$, $D_j$ consists of surfaces close to some multiplicity one free boundary minimal annulus. $D_j$ is homologically trivial in the sense that
            the map  $(i_{j})_*: H_1(D_j;\mathbb{Z}_2) \rightarrow H_1(\text{dmn}(\Xi);\mathbb{Z}_2)$ induced by inclusion is trivial. 
\end{itemize}

\item  Let $\lambda \coloneqq [\Psi]^*(\overline{\lambda}) \in H^1(Y; \mathbb{Z}_2)$ induced by the generator $\bar{\lambda} \in H^*(\mathcal{Z}_2(\mathbb{B}^3, \partial \mathbb{B}^3; \mathbb{Z}_2) ; \mathbb{Z}_2)$. We show there exists $\alpha \in H^1(Y ; \mathbb{Z}_2)$ such that
        \begin{equation} \label{eq: topological fact in main ideas}
            \lambda^4 \cup \alpha^2 \neq 0.
        \end{equation}

\item  If $N \leq 2$, by the Lyusternik–Schnirelmann vanishing lemma \cite[Lemma 5.3]{Haslhofer2019}, (\ref{eq: topological fact in main ideas}) cannot hold. Thus, there are at least $3$ free boundary minimal annuli in $(M, g)$.
\end{enumerate}

\subsection{Discussion}
\subsubsection{Willmore energy in the Euclidean ball} \label{intro: Willmore energy} In \cite{marques2014}, Marques-Neves used the fundamental relation given by Ros \cite{ros1999willmore} (also see \cite{Heintze1978}) that the area of their canonical family is always bounded above by the Willmore energy 
$\mathcal{W}_{\mathbb{S}^3}(\Sigma) \coloneqq \int_{\Sigma} 1 + \frac{1}{4}H^2 \, d\mathcal{H}^2$ (see \cite[p. 16]{karpukhin2024embedded} or \cite{Fraser2011}).
  
There is an analog of the Willmore energy given by
$\mathcal{W}_{\mathbb{B}^3}(\Sigma) \coloneqq \int_{\Sigma} \frac{1}{4} H^2 \, d\mathcal{H}^2 + \mathcal{H}^1(\partial \Sigma)$ for properly embedded surfaces that intersect $\partial \mathbb{B}^3$ orthogonally (see \cite[p. 16]{karpukhin2024embedded}).
Although $\mathcal{W}_{\mathbb{B}^3}(\Sigma)$ is conformally invariant, it cannot bound the area of general surfaces and 
\begin{equation} \label{eq: boundary area equals twice the surface area}
\mathcal{W}_{\mathbb{B}^3}(\Sigma) = 2\operatorname{Area}(\Sigma)   
\end{equation} 
if $\Sigma$ is a free boundary minimal surface. In $\mathbb{S}^3$, one can easily show that a closed minimal surface maximizes area among its conformal class using the Willmore energy (see \cite{ros1999willmore}). However, in the free boundary case, it is still an open question asked by Fraser-Schoen \cite[Question 3.2]{Fraser2013} whether a free boundary minimal surface in the unit ball maximizes area among its conformal class. 

Similar to \cite{marques2014}, we have the same relation between the area of our canonical family and the Willmore energy:
\begin{equation} \label{eq: Willmore energy area bound}
\begin{split}
\operatorname{Area}(\Sigma_{(v, t)}) & \leq \int_{\Sigma_{(v, t)}} \frac{4}{(1 + |x|^2)^2} \, d\mathcal{H}^2 \\
& = \operatorname{Area}\left({\pi(\Sigma_{(v, t)})} \right) \\
& \leq \mathcal{W}_{\mathbb{S}^3}(\pi(\Sigma_v)) \\
& =\int_{\pi(\Sigma_v)} 1 + \frac{1}{4}H^2 \, d \mathcal{H}^2 \\
& = \int_{\Sigma_v} \frac{1}{4}H^2 - \Delta_{\Sigma_v} \log (\frac{2}{1 + |x|^2}) \, d\mathcal{H}^2 \\
& = \int_{\Sigma_v} \frac{1}{4}H^2 \, d\mathcal{H}^2 + \mathcal{H}^1(\partial \Sigma_v) \\
& = \mathcal{W}_{\mathbb{B}^3}(\Sigma),
\end{split}   
\end{equation}
where the second inequality follows from \cite{Ros2001} (also see \cite[\S 3.5]{marques2014}). Unfortunately, (\ref{eq: Willmore energy area bound}) is a coarse bound due to (\ref{eq: boundary area equals twice the surface area}).

\subsubsection{Lower bound on the $4$-width of the unit ball}
As in Chu \cite{chu2023free}, it is conjectured that $\omega_{4}(\mathbb{B}^{3}) = \text{Area}(\mathbb{K})$. While we could obtain that $\pi< \omega_{4}(\mathbb{B}^{3}) \le \text{Area}(\mathbb{K})$ in Theorem \ref{intro thm:upperbound4width}, we cannot exclude the possibility that $\omega_{4}(\mathbb{B}^{3})< \text{Area}(\mathbb{K})$.

The conjectured equality would follow immediately from either a uniqueness theorem, up to congruence, for free boundary minimal surfaces of Morse index $4$, or a lower bound asserting that every free boundary minimal surface in the unit ball other than an equatorial disk has area at least $\operatorname{Area}(\mathbb{K})$. However,  neither statement is currently known.

Nevertheless, the index estimates available for min-max surfaces reduce the problem to a seemingly weaker question. Indeed, a min-max surface realizing $\omega_{4}(\mathbb{B}^{3})$ has Morse index at most $4$ by the index upper bound of Marques--Neves \cite{marques2016morse}. Since $\omega_{4}(\mathbb{B}^{3})>\pi$, such a surface cannot be an equatorial disk. On the other hand, the results of Devyver \cite[Corollary 7.3]{devyver2019index} and Tran \cite[Corollary 3.11]{tran2016index} imply that a non-flat embedded free boundary minimal surface in the unit ball has Morse index at least $4$. Thus, any min-max surface realizing the fourth width must have Morse index exactly $4$. This motivates the following question.
\begin{qe}\label{qe:fbindex4}
Does there exist an embedded free boundary minimal surface $\Sigma\subset \mathbb{B}^{3}$ such that
\[
\operatorname{Area}(\Sigma)<\operatorname{Area}(\mathbb{K})
\qquad\text{and}\qquad
\operatorname{Index}(\Sigma)=4?
\]
\end{qe}
A negative answer to Question \ref{qe:fbindex4} would imply
\[
\omega_{4}(\mathbb{B}^{3})=\operatorname{Area}(\mathbb{K}).
\]
Moreover, Devyver \cite[Corollary 7.2]{devyver2019index} and Tran \cite[Corollary 3.10]{tran2016index} proved that every free boundary minimal surface of Morse index $4$ in the unit ball has genus zero. Also, we know that $\Sigma$ has at least $3$ boundary components if it satisfies conditions in Question \ref{qe:fbindex4}. Consequently, the conjectured area-minimizing properties of genus-zero free boundary minimal surfaces formulated in Question 3 and Conjecture 1.21 of Karpukhin--Kusner--McGrath--Stern \cite{karpukhin2024embedded} would yield a negative answer to Question \ref{qe:fbindex4}, and hence the desired equality for the fourth width.
\subsubsection{Free boundary minimal surfaces with arbitrary topology and their Morse indices} \label{section:arbitrarytopology}
In this section, we use the notation $g$ and $\beta$ for genus and the number of boundary components of connected minimal surfaces. Motivated by our repetitive min-max theory in the free boundary setting in Theorem \ref{thm:threeminimalannuli}, we propose the following conjecture:
\begin{conj} \label{conj:riemannianballindex}
    In a $3$-manifold with boundary $M$ with $\text{Ric} \ge 0$ and a strictly convex boundary, for every $g \ge 0$ and $\beta \ge 1$ and $(g,\beta)\neq(0,1)$, there exists a free boundary minimal surface $\Sigma$ with genus $g$ and $b$ boundary components satisfying
    \[
\operatorname{Index}(\Sigma) \le \beta_{1}(\Sigma)+3 = 2 g+\beta+2,
\]
where $\beta_{1}(\Sigma)$ is the first Betti number of $\Sigma$.
\end{conj}

 Conjecture \ref{conj:riemannianballindex} is further motivated by the construction of minimal surfaces with arbitrary genus $g$ on positively curved spheres with $2g+3$ parameter family of surfaces by Chu \cite{chu2025min}. The Betti number of compact and connected surfaces computations follow from, for instance, Franz-Schulz \cite[Proposition A.1]{franz2023topological},

Resolution of Conjecture \ref{conj:riemannianballindex} may provide a variational characterization of free boundary minimal surfaces in the unit ball and a new construction of those. On the round $3$-sphere, Lawson \cite{lawson1972equivariant} gave the first construction of closed and embedded minimal surfaces with arbitrary genus by introducing the classes of Lawson surfaces $\xi_{m,k}$ with genus $mk$. More recently, Kapouleas and Wigyul proved that the Lawson surface $\xi_{g,1}$ has Morse index $2g+3$ and nullity $6$ in \cite{kapouleas2020index}. 

Analogously, Fraser and Li \cite{FraserLi2014} asked whether every compact orientable surface with boundary can be realized as an embedded free boundary minimal surface in the unit ball $\mathbb{B}^{3}$. As an affirmative answer of this question, only recently, Karpukhin-Kusner-McGrath-Stern \cite{karpukhin2024embedded} proved the existence of free boundary minimal surfaces in the unit ball with arbitrary prescribed genus and number of boundary components via eigenvalue optimization methods.

 Since the free boundary minimal surfaces in Karpukhin--Kusner--McGrath--Stern \cite[Theorem 1.2]{karpukhin2024embedded} are constructed by an eigenvalue optimization, the variational characterization of such surfaces such as Morse index is not known. For instance, Morse index of a genus one catenoid constructed by Franz-Ketover-Schulz \cite{franz2024genus} with the min-max method and that of one constructed by Karpukhin--Kusner--McGrath--Stern \cite{karpukhin2024embedded} conjecturally have different Morse index; see \cite[Table 1]{franz2024genus}.

Moreover, motivated by Morse index of Lawson surfaces $\xi_{g,1}$ by Kapouleas--Wiygul \cite{kapouleas2020index} and considering Conjecture \ref{conj:riemannianballindex}, we can conjecture the following in the unit ball case with an upper bound of the area:
\begin{conj} \label{conj:roundballindex}
    For every $g \ge 0$ and $\beta \ge 1$ and $(g,\beta)\neq(0,1)$, there exists a free boundary minimal surface $\Sigma$ in $\mathbb{B}^{3}$ with genus $g$ and $b$ boundary components satisfying
    \[
\operatorname{Area}(\Sigma)<2 \pi
\qquad\text{and}\qquad
\operatorname{Index}(\Sigma)= \beta_{1}(\Sigma)+3= 2g+\beta+2,
\]
where $\beta_{1}(\Sigma)$ is the first Betti number of $\Sigma$.
\end{conj}

Note that $\text{Index} (\xi_{g,1}) = 2g+3 = \beta_{1}(\xi_{g,1})+3$, hence we can regard surfaces in Conjecture \ref{conj:roundballindex} as a natural free boundary counterpart of Lawson surfaces $\xi_{g,1}$. Doublings of two equator disks using the gluing technique may achieve such surfaces \cite[\S 3.3]{LiMartin2020}.

The relative pinch-off process introduced in our paper may also provide a machinery for detecting free boundary minimal surfaces with positive genus. In particular, it would be interesting to determine whether the $5$-sweepout obtained from Chu \cite{chu2023free} is nontrivial relative to genus $0$ families. Such a result may provide a genus $1$ building block for the prescribed topology program discussed above.
\subsection{Organization}
We divide the paper into two parts. 

In \textbf{Part \uppercase\expandafter{\romannumeral 1}}, we give a lower bound on $\omega_4(\mathbb{B}^3)$ (Section \ref{sec: lower bound on w_4(B^3)}), and construct a $4$-parameter family of  annuli (Section \ref{sec:canonicalfamily} - \ref{sec : topologysixparameter}) to give an upper bound on $\omega_4(\mathbb{B}^3)$ (Section \ref{sec: upper bounda of the 4th-width}). Section \ref{sec :boundary blow-up} and Appendix \ref{appendix} are devoted to the technical details of boundary blow-up in the canonical family.

In \textbf{Part \uppercase\expandafter{\romannumeral 2}}, we show the existence of $3$ properly embedded free boundary minimal annuli. In Section \ref{sec: Existence of 3 free-boundary minimal annuli}, we give the framework of the proof. We leave the technical ingredients to Section \ref{Sec:proofmin-max} - \ref{sec: topology of boundary two cap}.
\subsection{Acknowledgements} We would like to thank Daniel Ketover for his encouragement, support, and valuable discussions. We are grateful to Fernando Codá Marques, André Neves, and Tristan Rivière for valuable discussions and comments. D.K. is grateful to Tobias Holck Colding and William Minicozzi for their continuing encouragement and support. We would like to thank Adrian Chun-Pong Chu, Giada Franz, Yangyang Li, Lorenzo Sarnataro, Mario Schulz, Doug Stryker, Zhihan Wang, Xin Zhou, Jonathan Zhu and Jiahua Zou for valuable discussions. G.S. has been partially supported by NSF DMS-2405114.
\section{Preliminaries}
Let $(M, g)$ be an $n$-dimensional compact Riemannian manifold with nonnegative Ricci curvature and strictly convex boundary $\partial M$. we fix the following notations from the Geometric Measure Theory. Readers may refer to \cite{sun2024multiplicity, chu2024existence, marques2017existence}.

\subsection{Notation.}
\begin{itemize}
    \item $\mathbf{I}_k(M;\mathbb{Z}_2)$: the set of integral $k$-dimensional currents in $M$ with $\mathbb{Z}_{2}$-coefficients.
    
    \item $Z_{k}(M,\partial M; \mathbb{Z}_{2})$: the space of relative cycles consisting of elements $T \in \mathbf{I}_k(M;\mathbb{Z}_2)$ such that $\operatorname{spt}(\partial T) \subset \partial M$.
    
    \item $\mathcal{Z}_{k}(M,\partial M; \mathbb{Z}_{2})$: define $T, S \in Z_{k}(M,\partial M; \mathbb{Z}_{2})$ to be equivalent if $T - S \in \mathbf{I}_k(\partial M;\mathbb{Z}_2)$. $\mathcal{Z}_{k}(M,\partial M; \mathbb{Z}_{2})$ is the quotient space of $Z_{k}(M,\partial M; \mathbb{Z}_{2})$  under this equivalent relation.
    
    \item $\mathcal{Z}_{k}(M,\partial M; \nu; \mathbb{Z}_{2})$ with $\nu = \mathcal{F}, \bF, \bM$: The set $\mathcal{Z}_{k}(M,\partial M; \mathbb{Z}_{2})$ equipped with the three topologies given by the flat norm $\mathcal{F}$, the $\mathbb{F}$-metric, and the mass norm $\bM$ (See the detailed definition in \cite{pitts2014existence} and \cite{marques2020applications}). For the flat norm, there are two definitions that induce the same topology:
    \[
    \mathcal{F}(T) := \operatorname{inf}\{ \bM(P) + \bM(Q) : T = P + \partial Q \},
    \]
    and
    \[
    \mathcal{F}(T) := \operatorname{inf}\{ \bM(Q) : T = \partial Q \}.
    \]
    In this paper, we will use the second definition.
    \item $\mathcal{V}_k(M)$: the closure in the varifold weak topology of the space of $k$-dimensional rectifiable varifolds in $M$. The $\mathbf{F}$-metric on $\mathcal{V}_k(M)$ is defined in Pitts' book \cite{pitts2014existence} and induces the varifold weak topology on $\mathcal{V}_k(M)$. 
    \item $|| V||$: the Radon measure induced by $V \in \mathcal{V}_k(M)$.
    \item $|T|$: the varifold in $\mathcal{V}_k(M)$ induced by a current $T \in \mathcal{Z}_{k}(M,\partial M; \mathbb{Z}_{2})$, or a countable $k$-rectifiable set $T \subset M$. In the same spirit, given a map $\Phi$ into $\mathcal{Z}_{k}(M,\partial M; \nu; \mathbb{Z}_{2})$, the associated map into $\mathcal{V}_k(M)$ is denoted by $|\Phi|$.
    \item $[W]$: the $\mathbb{Z}_2$-relative cycle induced by a countably $k$-rectifiable set with $\mathcal{H}^k(W) < \infty$. In the same spirit, given a map $f$ whose images are countably $k$-rectifiable sets, the associated map into $\mathcal{Z}_{k}(M,\partial M; \nu; \mathbb{Z}_{2})$ is denoted by $[f]$.
    \item $[|U|]$ is the associated integral current of an open set of finite perimeter $U \subset M$.
    \item $[\mathcal{W}]:= \{ [\Sigma_i]\} \subset \mathcal{Z}_{k}(M,\partial M; \nu; \mathbb{Z}_{2})$ for a set $\mathcal{W}$ of varifolds $\{ V_i \} \subset \mathcal{V}_k(M)$, each associated with a countably $k$-rectifiable set $\Sigma_i$.
    \item $\mathbf{B}_{\epsilon}^{\nu}(\cdot)$: the open $\epsilon$-neighborhood of an element or subset of $\mathcal{Z}_{k}(M,\partial M; \nu; \mathbb{Z}_{2})$.

    \item $\mathbf{B}_{\epsilon}^{\mathbf{F}}(\cdot)$: the open $\epsilon$-neighborhood of an element or subset of $\mathcal{V}_k(M)$.
    \item $\mathcal{C}(M)$: the space of Caccioppoli sets in $M$, equipped with the metric induced by the Lebesgue measure of the symmetric difference.
    \item $\partial ^* \Omega$: the reduced boundary of $\Omega \in \mathcal{C}(M)$.
    \item $\Gamma^{\infty}(M)$: the set of smooth Riemannian metrics on $M$.
    \item $B_r(p)$: a geodesic ball of radius $r$ centered at $p \in M^3$.
    \item $\mathfrak{g}(\Sigma)$, $\mathfrak{\beta}(\Sigma)$, $\mathfrak{b}(\Sigma)$: If $\Sigma \subset M$ is a properly embedded, connected surface, $\mathfrak{g}(\Sigma)$ is the genus of $\Sigma$, $\mathfrak{\beta}(\Sigma)$ is the number of boundary components of $\Sigma$, and $\mathfrak{b}(\Sigma) \coloneqq \mathfrak{\beta}(\Sigma) - 1$ is the boundary complexity of $\Sigma$. 
    \item $\operatorname{in}(S) (\operatorname{out}(S))$: the inside (outside) open region of an oriented hypersurface $S$, provided that $S$ separates $M$.
    \item $\operatorname{injrad}(M)$: the injective radius of $(M, g)$.
    \item $\operatorname{int}(M)$: the interior of $M$.

\end{itemize}

 We introduce a cubical complex structure for Almgren-Pitts and Simon-Smith min-max theory. For an $m$-dimensional cube $I^m = \mathbb{R}^m \cap \{x : 0 \leq x_i \leq 1, i = 1,2, \cdots, m\}$, we define {\em cubical complex structures} as follows.
    \begin{itemize}
        \item  $I(1,j)$: the cubical complex on $I := [0,1]$ whose $0$-cells and $1$-cells are respectively 
        \[
           [0],[1/3^j],[2/3^j],\dots,[1]\, \quad \textrm{and}\quad [0,1/3^j],[1/3^j,2/3^j],\dots,[1-1/3^j,1].
        \]
        \item $I(m,j)$: the cubical complex structure
        \[
            I(m,j)=I(1,j)\otimes\dots\otimes I(1,j)\quad(m\textrm{ times})
        \]
        on $I^m$. We call $\alpha = \alpha_1 \otimes \cdots \otimes \alpha_m$ by a {\em $q$-cell} of $I(m, j)$ if and only if each $\alpha_i$ is a cell in $I(1, j)$ and there are exactly $q$ $1$-cells for $q \le m$. We define a cell $\beta$ by {\em the face} of a cell $\alpha$ if and only if $\beta \subset \alpha$ as a set.
    \end{itemize}

    We call $X \subset I(m, j)$ by a {\em cubical subcomplex of $I(m, j)$} if every face of a cell in $X$ is also a cell in $X$. We call $X$ by a {\em cubical complex} without referring to the ambient cube for convenience. We denote by $|X|$ the {\em underlying space} of $X$. We will also identify a complex and its underlying space for simplicity.

    Given a cubical subcomplex $X$ of some $I(m, j)$, for $j' > j$, one can refine $X$ to a cubical subcomplex
    \[
        X(j') := \{\sigma \in I(m, j'): \sigma \cap |X| \neq \emptyset\}
    \]
    of $I(m, j')$. We will denote the refined cubical subcomplex by $X$ for simplicity.

We denote the diffeomorphism group of $M$ equipped with the $C^{\infty}$-topology by $\text{Diff}^{\infty}(M, \partial M)$.
\subsection{Almgren-Pitts min-max theory} As in the case for closed manifolds, Almgren's isomorphism theorem \cite{almgren1962homotopy} applies to the modulo $2$ relative cycle space $\mathcal{Z}_{n}(M;\partial M, \mathbb{Z}_{2})$ in the free boundary setting, namely the modulo $2$ relative cycle space $\mathcal{Z}_{n}(M;\partial M, \mathbb{Z}_{2})$ is weakly homotopically equivalent to $\mathbb {R}P^\infty$ as in Guang-Li-Wang-Zhou \cite{guang2021min}. We denote the generator of the cohomology ring of the cycle space by $\bar \lambda \in H^1(\mathcal{Z}_{n}(M;\partial M;\mathbb{Z}_2))$ and we denote the cohomology ring of $\mathcal{Z}_{n}(M;\partial M;\mathbb{Z}_2)$ by $\mathbb{Z}_2[\bar\lambda]$. Let $\mathcal{P}_p$ be the set of all $\bF$-continuous maps $\Phi:X\to \mathcal{Z}_2(M;\partial M; \bF; \mathbb{Z}_2)$, where $X$ is a finite simplicial complex, such that $\Phi^*(\bar\lambda^p)\ne 0$ and we call elements of $\mathcal{P}_{p}$ by $p$-\emph{sweepouts}. Since every finite cubical complex is homeomorphic to a finite simplicial complex and vice versa by Buchstaber-Panov \cite{buchstaber2002torus}, let us regard $X$ to be a finite cubical complex.

We define the Almgren-Pitts min-max $p$-width as follows with the notation $\text{dmn}(\Phi)$ of the domain of $\Phi$:
\begin{equation} \label{sswidthdef}
    \omega_{p}(M) = \inf_{\Phi \in \mathcal{P}_{p}} \sup_{x \in \text{dmn}(\Phi)} \bM(\Phi(x)).
\end{equation}
We denote the sequence $\{\Phi_{i} \}$ to be a \emph{minimizing sequence} among $p$-sweepouts if $\lim_{i \rightarrow \infty} \sup_{x \in X} \mathcal{H}^{2} (\Phi_{i}(x)) = \omega_{p}(M)$. We define a \emph{min-max sequence} to be a sequence of properly embedded punctate surfaces $\{\Phi_{i}(x_{i})\}$ to be a min-max sequence if $\mathcal{H}^{2} (\Phi_{i}(x_{i}))$ converges to $\omega_{p}(M)$, where $x_{i} \in X$ and $\{\Phi_{i} \}$ is a minimizing sequence. We define the \emph{critical set} $\bC(\{ \Phi_{i} \})$ to be a set of stationary varifolds achieved by the limit of min-max sequence induced by  $\{ \Phi_{i}(x) \}$. We call $\{ \Phi_{i} \}$ to be \emph{pulled-tight} if every varifold in $\bC(\{ \Phi_{i} \})$ is stationary. 

\subsection{Simon-Smith min-max theory} \label{subsec: Simon-Smith min-max theory}
As in Chu-Li \cite[Section 2]{chu2024existence}, we consider the class of punctuate surfaces with boundary as a generalized notion of surfaces (with boundary).
\begin{defn}[Relative punctate surface] \label{def: Punctate surfaces}
Let $(M^3,\partial M, g)$ be a compact Riemannian $3$-manifold with boundary. 
A closed set $\Sigma \subset M$ is called a \emph{punctate surface with boundary} if:

\begin{enumerate}[label=(\roman*)]
\item The Hausdorff measure of $\Sigma$ satisfies $\mathcal{H}^2(\Sigma) \in (0,\infty)$;

\item There exists a finite set $P \subset \Sigma$ such that $\Sigma \setminus P$ is a smooth, embedded, orientable surface with boundary satisfying
\[
\partial (\Sigma \setminus P) \subset \partial M;
\]

\item $\Sigma \setminus P$ meets $\partial M$ transversely;

\item We denote by $\Sigma_{\text{iso}}$ the set of isolated points of $\Sigma$, 
the complement $M  \setminus (\Sigma \setminus \Sigma_{\text{iso}})$ consists of two open sets whose common relative boundary is $\Sigma \setminus \Sigma_{\text{iso}}$.
\end{enumerate}
\end{defn}
\begin{rem}
    In the free boundary setting, we allow the singularities to occur on the boundary, i.e. $P\cap \partial M$ can be nonempty.
\end{rem}

We denote the space of punctate surfaces with boundary in $M$ by $\mathcal{S}(M)$. Let us denote the punctate set of $\Sigma$ by $P$. Then we say $\Sigma \in \mathcal{S}(M)$ is connected if $\Sigma \setminus \Sigma_{iso}$ is connected. We define the genus of a connected punctate surface $\Sigma$ by
\[
\mathfrak{g}(\Sigma) := \lim_{r \rightarrow 0} \mathfrak{g}(\Sigma \setminus B_{r}(P)),
\]
and the number of boundary components of $\Sigma$ by
\[
\mathfrak{\beta}(\Sigma) := \#\pi_0\left(\overline{\partial (\Sigma \setminus P)}\right).
\]
It is not hard to see that the definitions above are independent of the choice $r \to 0$ and the punctate set $P$.

We consider a more generalized class than punctate surfaces, $\mathcal{S}^{*}(M)$ as
\[
\mathcal{S}^{*}(M) = \mathcal{S}(M) \cup \{ \Sigma \subset M \, | \, \Sigma \text{ is a closed set satisfying } \mathcal{H}^{2}(\Sigma)=0 \}. 
\]
For a punctate surface $\Sigma \in \mathcal{S}(M)$ such that $\Sigma \setminus \Sigma_{iso}$ is a disjoint union of connected punctate surfaces $\{\Gamma_1, \cdots, \Gamma_N \}$, we define the \emph{genus} $\mathfrak{g}$ and the \emph{boundary complexity} $\mathfrak{b}$ of $\Sigma$ as follows.
\begin{align*}
    \mathfrak{g}(\Sigma)&:=\sum_{j=1}^N\fg(\Gamma_j) \\
    \mathfrak{b}(\Sigma)&:=\sum_{j \in B} (\beta(\Gamma_{j})-1),
\end{align*}
where $\Gamma_{j}$ is a connected component with nonempty boundary if $j \in B$. 

For nonnegative integers $\fg_0, \fb_{0}\geq 0$, we say that $\Sigma \in \mathcal{S}(M)$ satisfies the \emph{topological bound} $(\fg_0, \fb_{0})$ if
\begin{align}
    \label{genusbounddef} \mathfrak{g}(\Sigma) &\leq \fg_0, \\
    \label{boundarybounddef} \mathfrak{g}(\Sigma) + \mathfrak{b}(\Sigma) &\leq \fg_{0}+\fb_{0},
\end{align}
Note that our definition of the topological bounds (\ref{genusbounddef}) and (\ref{boundarybounddef}) are inspired by the topological control results of min-max surfaces in Franz-Schulz \cite[Definition 1.6]{franz2023topological}.
\begin{defn}[Free boundary Simon-Smith family] \label{def:fbssfamily}
Let $(M,\partial M)$ be a compact Riemannian $3$-manifold with boundary, 
and let $X$ be a cubical subcomplex of some $I(m,j)$. 
A map $\Phi : X \to \mathcal{S}^*(M)$ is called a 
\emph{free boundary Simon--Smith family} provided that:

\begin{enumerate}[label=(\roman*)]
\item The map $x \mapsto \mathcal{H}^2(\Phi(x))$ is continuous.

\item For any $x_0 \in X$ and any open set $U \supset \Phi(x_0)$,
there exists a neighborhood $O \subset X$ of $x_0$ such that
$\Phi(x) \in U$ for all $x \in O$.

\item For each $x_0 \in X$ with $\Phi(x_0) \in \mathcal{S}(M)$,
there exists a punctate set $P_{\Phi(x_0)} \subset \Phi(x_0)$ such that
\[
N_P(\Phi) :=\sup_{x : \Phi(x) \in \mathcal{S}(M)}|P_{\Phi(x)}|< \infty.
\]

\item For each $x \in X$ with $\Phi(x) \in \mathcal{S}(M)$,
the regular part $\Phi(x)\setminus P_{\Phi(x)}$
is a smooth embedded orientable surface with boundary satisfying
\[
\partial (\Phi(x)\setminus P_{\Phi(x)}) \subset \partial M.
\]

\item For the punctate set $P_{\Phi(x_0)}$ chosen in (iii),
for any $x_0 \in X$ with $\Phi(x_0) \in \mathcal{S}(M)$ and any open set $U \subset \subset M \setminus P_{\Phi(x_0)}$,
the convergence $\Phi(x) \to \Phi(x_0)$ is smooth on $U$
whenever $x \to x_0$.
\end{enumerate}
In this case, we call $X$ the parameter space of $\Phi$.
Moreover, for non-negative integers $\mathfrak{g}_0$, $\mathfrak{b}_{0}$, we say that $\Phi$ is a free boundary Simon--Smith family of topological bounds $(\mathfrak{g}_0,\mathfrak{b}_{0})$ if for each $x \in X$ with $\Phi(x) \in \mathcal{S}(M)$, the surface $\Phi(x)$ satisfies the \emph{topological bound} $(\fg_0, \fb_{0})$.
\end{defn}
We define the homotopy class of Simon-Smith family on $(M, \partial M, g)$.
\begin{defn}[Homotopy class] \label{def:homotopyclasssimonsmith}
Let $M$ be a compact Riemannian $3$-manifold with boundary, and let $\Phi_0,\Phi_1$ be two (relative) Simon-Smith families parametrized by the same parameter space $X$. Then $\Phi_0$ and $\Phi_1$ are called \emph{homotopic} if there exists a continuous map
\[
\varphi:[0,1]\times X \to \text{Diff}^\infty(M,\partial M).
\]
such that:
\begin{enumerate}[label=(\roman*)]
\item $\varphi(0,x)=\text{Id}$ for all $x\in X$;
\item $\varphi(1,x)\big(\Phi_0(x)\big)=\Phi_1(x)$ for all $x\in X$.
\end{enumerate}
\end{defn}
We call the set of all families homotopic to a Simon-Smith family $\Phi$ by \emph{the homotopy class associated to} $\Phi$ and denote this by $\Lambda(\Phi)$. Note that we can assume that $N_{P}(\Phi) < \infty$, the properly embeddedness is preserved and the upper bounds of topological bounds are bounded within $\Lambda(\Phi)$.

Now we define the min-max width within the homotopy class $\Lambda(\Phi)$ as follows:
\begin{equation} \label{sswidthdef}
    \bL(\Lambda) = \inf_{\Phi \in \Lambda} \sup_{x \in X} \mathcal{H}^{2}(\Phi(x)).
\end{equation}

As in \cite[Section 3]{marques2016morse} (also see \cite[Section 2]{chu2024existence}), we give the following definitions for the Almgren-Pitts min-max theory

\begin{defn} \label{def: min-max sequence}(Minimizing sequence and min-max sequence).
A sequence $\{\Phi_i\} \subset \Lambda$ is said to be minimizing if 
\[
\lim_{i \to \infty} \sup_{x \in X} \mathcal{H}^2(\Phi_i(x)) = \mathbf{L}(\Lambda).
\]
If $\{\Phi_i\}$ is a minimizing sequence in $\Lambda$ and $\{ x_i \} \subset X$ satisfies
\[
\lim_{i \to \infty} \mathcal{H}^2(\Phi_i(x_i)) = \mathbf{L}(\Lambda),
\]
then $\{ \Phi_i(x_i) \}$ is called a \textit{min-max sequence}. For a minimizing sequence $\{\Phi_i\}$, we define its critical set $\mathbf{C}(\{ \Phi_i \})$ to be the set of all subsequential varifold-limits of its \textit{min-max sequences}:
\[
\mathbf{C}(\{ \Phi_i \}) := \{ V = \lim_{j \to \infty} | \Phi_{i_j}(x_{i_j})| : x_{i_j} \in X, ||V|| (M) = \mathbf{L}(\Lambda)\}.
\]
\end{defn}
Moreover, $\{ \Phi_{i} \}$ is called \emph{pulled-tight} if every varifold in $\bC(\{ \Phi_{i} \})$ is stationary.

\begin{defn}
In a compact $3$-manifold $M$ with boundary, for $L > 0$, we define $W_L(M)$ to be the set of all varifolds $W \in \mathcal{V}_2(M)$, with $||W||(M) = L$, of the form
\[
W = m_1 |\Gamma_1| + \cdots + m_l |\Gamma_l|,
\]
where $\{m_j\}$ is a set of positive integers and $\{ \Gamma_j \}$ is a set of disjoint properly embedded, connected, free boundary minimal surfaces in $(M, g)$.

For nonnegative integers $\fg_0, \fb_{0}\geq 0$, let $\mathcal{W}_{L, \le \fg_{0}, \le \fg_{0} + \fb_{0}} \subseteq \mathcal{W}_{L}$ be the subset consisting of $W \in W_L(M)$ whose support, $\operatorname{supp}||W||$, satisfies the \emph{topological bound} $(\fg_0, \fb_{0})$.  Moreover, we denote $\mathcal{W}^{(1)}_{L, \le \fg_{0}, \le \fg_{0} + \fb_{0}} \subseteq \mathcal{W}_{L, \le \fg_{0}, \le \fg_{0} + \fb_{0}}$ as the subset consisting of $W \in \mathcal{W}_{L, \le \fg_{0}, \le \fg_{0} + \fb_{0}}$ with multiplicity one, i.e., $m_i = 1$ for all $i \in \{1, \cdots, l \}$.
\end{defn}

Now we state the free boundary Simon-Smith min-max theorem and will prove it in Section \ref{Sec:proofmin-max}.
\begin{rem}
As in Franz-Schulz \cite[Remark 1.5]{franz2023topological}, we focus on the topological bounds without multiplicities since this is sufficient for our applications. In the free boundary setting, Sarnataro-Stryker-Wang-Zhou \cite{sarnataro2026existence} recently proved that the multiplicity does not occur when the limit is an unstable free boundary minimal surface (See Wang-Zhou \cite{wang2023existence} for the Multiplicity One Theorem in the closed setting and Ketover \cite{ketover2019genus} for effective genus bound in the closed setting).
\end{rem}
\begin{rem}
Every properly embedded free boundary minimal surface is unknotted by Chu-Franz \cite{chu2025unknottedness}. Hence, any free boundary minimal surface we construct here is unknotted.
\end{rem}

\begin{thm}[Simon-Smith min-max theorem]\label{thm:simon-smith min-max}
        Given a compact orientable Riemannian $3$-manifold $(M, g)$ with mean convex boundary $\partial M$, a cubical subcomplex $X$ of some $I(m, k)$, a Simon-Smith family $\Phi: X \to \mathcal{S}^*(M)$ satisfying a topological bound $(\fg_{0}, \fb_{0})$, and a positive real number $r > 0$, if $L := \mathbf{L}(\Lambda(\Phi)) > 0$, 
        then there exists a minimizing sequence $\{\Phi_{i}\}$ in $\Lambda(\Phi)$ such that:
	\begin{enumerate}[label=\normalfont(\arabic*)]
            \item\label{item:minMaxPulltight} The sequence $\{\Phi_i\}$ is pulled-tight and moreover,
            \[
                \bC(\{\Phi_{i}\})\cap \mathcal{W}_{L, \leq \mathfrak{g}_0,\le \fg_{0}+\fb_{0}} \neq \emptyset\,.
            \]
            \item\label{item:minMaxMultiplicityone} There exists a $W \in \mathcal{W}_{L, \leq \mathfrak{g}_0, \le \fg_{0}+\fb_{0}} \cap \mathbf{C}(\{ \Phi_i \})$ such that every connected component $\Gamma_j$ in $W$ satisfies:
		\begin{enumerate}[label=\normalfont(\alph*)]
			\item If $\Gamma_j$ is unstable and two-sided, then $m_j = 1$.
			\item If $\Gamma_j$ is one-sided, then its connected double cover is stable.
		\end{enumerate}
            \item \label{item:minMaxW} Furthermore, there exists $\eta > 0$ such that for all sufficiently large $i$,
		\[
			\mathcal{H}^2(\Phi_{i}(x)) \geq L - \eta \implies |\Phi_i(x)| \in \bB^{\bF}_{r}(\mathcal{W}_{L, \leq \mathfrak{g}_0,\le \fg_{0} + \fb_{0}} \cap \bC(\{\Phi_{i}\}))\,.
		\]
	\end{enumerate}
    \end{thm}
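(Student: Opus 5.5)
The plan is to run the usual Simon--Smith scheme (tightening, almost minimizing property, regularity, topological control) in the free boundary setting, and to borrow each ingredient from the closed case or from the free boundary Almgren--Pitts theory. The key structural point is that mean convexity of $\partial M$ lets all isotopies be taken in $\text{Diff}^\infty(M,\partial M)$, so the homotopy class $\Lambda(\Phi)$ of Definition \ref{def:homotopyclasssimonsmith} stays inside the class of free boundary Simon--Smith families, with the same punctate bound $N_P$ and the same topological bound $(\fg_0,\fb_0)$.

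\textbf{Step 1 (pull-tight).} Start from any minimizing sequence in $\Lambda(\Phi)$. Apply the Colding--De Lellis tightening map, built as in Pitts and Li--Zhou from vector fields tangent to $\partial M$ on $\mathcal{V}_2(M)$, continuously in the varifold $\bF$-metric. This produces a minimizing sequence whose critical set consists of varifolds that are stationary for the free boundary problem, i.e.\ stationary with respect to tangential variations. Since the tightening is given by a continuous family of isotopies preserving $\partial M$, the new families remain in $\Lambda(\Phi)$.

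\textbf{Step 2 (almost minimizing).} Adapt Pitts' combinatorial argument, in the form used by Colding--De Lellis and by Chu--Li \cite{chu2024existence} for cubical parameter spaces. The goal is to show that some $V\in\bC(\{\Phi_i\})$ is almost minimizing under isotopies in small annuli around every point, including points of $\partial M$, where we use relative half-annuli. The combinatorial deformation only needs isotopies supported in small open sets together with the bound $N_P<\infty$, so it transfers verbatim. Punctate points are handled by choosing annuli that avoid them, as in Chu--Li.

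\textbf{Step 3 (regularity, topology, multiplicity, localization).}
\begin{itemize}
\item Regularity. Replacements in interior annuli give smooth minimal surfaces by Meeks--Simon--Yau. At boundary points, a free boundary version of the $\gamma$-reduction, as in Li \cite{li2015general} and Li--Zhou \cite{li2021min}, combined with free boundary regularity, shows that $V\in \mathcal{W}_L$.
\item Topological bound. The genus and boundary complexity control is exactly \cite[Theorem 1.8]{franz2023topological}, which gives $V\in \mathcal{W}_{L,\le\fg_0,\le\fg_0+\fb_0}$ and hence item (1).
\item Multiplicity, item (2). Unstable two-sided components have multiplicity one by \cite[Theorem 1.3]{sarnataro2026existence}. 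The one-sided statement follows as in Wang--Zhou \cite{wang2023existence}, by running the argument with a sequence of metrics.
\item Localization, item (3). Argue by contradiction. If there are $x_i$ with $\mathcal{H}^2(\Phi_i(x_i))\ge L-\eta_i$, $\eta_i\to0$, but $|\Phi_i(x_i)|$ staying $r$-far from the good set, then a subsequence converges to some element of $\bC$ that lies outside $\bB^{\bF}_r(\mathcal{W}\cap\bC)$. Compactness of $\bC$ then lets us rerun Steps 2--3 with the combinatorial argument restricted to min-max sequences that avoid the $r$-neighborhood. This is the standard refinement in Marques--Neves \cite{marques2016morse} and Chu--Li, and it yields the uniform $\eta$.
\end{itemize}

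\textbf{Main obstacle.} The hardest step is the boundary part of Step 3: proving smooth free boundary regularity and passing the topological bounds to the limit at points of $\partial M$, since singular points of the families are allowed there. My plan is to rely on Franz--Schulz's analysis together with reflection or Fermi-coordinate arguments of Li--Zhou, rather than redo the estimates. I would check carefully that our definition of $\mathcal{S}^*(M)$, which allows $P\cap\partial M\neq\emptyset$, matches their hypotheses.
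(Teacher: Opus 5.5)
Your overall architecture matches the paper's: tightening by isotopies tangent to $\partial M$, Pitts' combinatorial argument in admissible annuli and half-annuli, regularity, topological control, multiplicity one from Sarnataro--Stryker--Wang--Zhou, and the Marques--Neves/Chu--Li refinement for item (3). The genuine gap is the topological bound, which you take ``exactly'' from \cite[Theorem 1.8]{franz2023topological}. In this Simon--Smith framework every member of $\Phi$ may be a punctate surface, with punctate points possibly on $\partial M$. Hence so may every surface $\Sigma_j=\Phi_{i_j}(x_j)$ of a min-max sequence, and moving the parameter cannot avoid this. The lifting tools behind both bounds are carried out for sequences of smooth properly embedded surfaces. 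For the genus, Ketover's improved lifting lemma \cite{ketover2019genus} uses the Colding--Minicozzi genus collapse lemma for smooth surfaces. For $\fg+\fb$, the Franz--Schulz tree lifting lemma and counting argument (their Theorem 4.11) are also run on smooth surfaces. The remedy in your last paragraph (reflection or Fermi coordinates as in Li--Zhou) concerns regularity of the limit, not the topology of the singular sequence, so it does not close this step.

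What is missing is the localization-and-smoothing argument the paper uses, following \cite[Section 4]{chu2024existence}.
\begin{enumerate}
\item Since $N_P(\Phi)<\infty$ is preserved in $\Lambda(\Phi)$, choose $P_j$ with $\sup_j \#P_j<\infty$ and $P_j\to P$ in the Hausdorff sense, with $P$ finite.
\item Perturb the loops and the properly embedded trees on each $\Gamma_i$ off $P$, and off the finite set $Q$ where the replacements fail to converge graphically.
\item Work in $M\setminus B_s(P)$, where the lifting lemmas apply; genus collapse is purely topological, so minimality of $\Sigma_j$ is not needed.
\item Perform neck and half-neck surgeries near the punctate points, and discard components there, to get smooth surfaces $\tilde\Sigma_j'\to V$ with $\fg(\tilde\Sigma_j')+\fb(\tilde\Sigma_j')\le\fg(\Sigma_j)+\fb(\Sigma_j)\le\fg_0+\fb_0$. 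Surgery does not increase $\fg+\fb$ (Franz--Schulz, Lemma 3.6), and the Franz--Schulz counting then applies verbatim.
\end{enumerate}

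Two smaller points. First, item (3) cannot be proved by contradiction for a fixed pulled-tight sequence: $\bC$ may contain stationary, non-almost-minimizing varifolds of mass $L$. As in \cite[Theorem 2.16]{chu2024existence}, you must deform the sequence on the parameters whose surfaces are far from $\mathcal{W}_{L,\le\fg_0,\le\fg_0+\fb_0}\cap\bC(\{\Phi_i\})$, and then check that (1) and (2) persist. Second, mean convexity is not what keeps isotopies in $\operatorname{Diff}^\infty(M,\partial M)$; that holds by definition. Its role is in the regularity step, via the maximum principle, to make the limit properly embedded rather than touching $\partial M$ from the interior.
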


\subsection{Relative free boundary Simon-Smith min-max theory} We introduce the relative Simon-Smith min-max theory in the  free boundary setting. Let us fix parameter spaces, cubical complex $X$ and its subcomplex $Z \subset X$. We develop the relative $(X,Z)$-homotopy class in the free boundary setting (See Zhou \cite{zhou2020multiplicity} where the class is first introduced).
 \begin{defn}[Relative homotopy class]
        Two Simon-Smith families $\Phi_0$ and $\Phi_1$ parametrized by the same parameter space $X$ with $\Phi_0\vert_Z = \Phi_1\vert_Z$ are said to be homotopic relative to $\Phi_0\vert_Z$ to each other if there exists a continuous map
        \[
           \varphi: [0, 1] \times X \to \operatorname{Diff}^\infty(M, \partial M)
        \] 
        such that: 
        \begin{enumerate}[label=(\roman*)]
            \item $\varphi(0, x) = \operatorname{Id}$ for all $x \in X$.
            \item $\varphi(t, z) = \operatorname{Id}$ for all $t \in [0, 1]$ and $z \in Z$.
            \item $\varphi(1, x)(\Phi_0(x)) = \Phi_1(x)$ for all $x \in X$.
        \end{enumerate} 
        
        The set of all families homotopic relative to $\Phi\vert_Z$ to a Simon-Smith family $\Phi$ is called the {\em relative $(X, Z)$-homotopy class of $\Phi$}, and is denoted by $\Lambda_Z(\Phi)$.
    \end{defn}
Now we define the min-max width within the homotopy class $\Lambda_{Z}(\Phi)$ as follows:
\begin{equation} \label{sswidthdef}
    \bL(\Lambda_{Z}) = \inf_{\Phi \in \Lambda_{Z}} \sup_{x \in X} \mathcal{H}^{2}(\Phi(x)).
\end{equation}
We define the minimizing sequence, \textit{min-max sequence}, and the critical set for $\Lambda_Z$ in the same way as Definition \ref{def: min-max sequence}. The following straightforward inequality then holds.
\begin{lem} [\cite{chu2024existence}, Lemma 2.20]
    For a Simon-Smith family $\Phi: X \rightarrow \mathcal{S}^{*}(M)$ and $Z \subset X$, we have
    \[
    \bL(\Lambda(\Phi)) \le \bL(\Lambda_{Z}(\Phi)).
    \]
\end{lem}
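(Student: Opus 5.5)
The claim follows directly from the inclusion $\Lambda_Z(\Phi) \subseteq \Lambda(\Phi)$. Once this inclusion is in hand, the infimum defining $\bL(\Lambda(\Phi))$ runs over a larger set than the infimum defining $\bL(\Lambda_Z(\Phi))$. Both infima are of the same quantity, $\sup_{x\in X}\mathcal{H}^2(\Psi(x))$ over families $\Psi$. The inequality $\bL(\Lambda(\Phi)) \le \bL(\Lambda_Z(\Phi))$ is then immediate.

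To verify the inclusion, let $\Phi_1 \in \Lambda_Z(\Phi)$. By definition of the relative homotopy class there is a continuous map $\varphi:[0,1]\times X\to \operatorname{Diff}^\infty(M,\partial M)$ satisfying three conditions:
\begin{itemize}
\item[(i)] $\varphi(0,\cdot)=\operatorname{Id}$;
\item[(ii)] $\varphi(t,z)=\operatorname{Id}$ for all $t\in[0,1]$ and $z\in Z$;
\item[(iii)] $\varphi(1,x)(\Phi(x))=\Phi_1(x)$ for all $x\in X$.
\end{itemize}
Forgetting condition (ii), the same $\varphi$ satisfies exactly conditions (i) and (ii) of Definition \ref{def:homotopyclasssimonsmith}. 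So $\Phi_1$ is homotopic to $\Phi$ in the non-relative sense, and $\Phi_1\in\Lambda(\Phi)$.

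It remains to note that both classes are defined over the same parameter space $X$ and that the widths are infima of the same functional. Hence, for any $\epsilon>0$, choose $\Phi_1\in\Lambda_Z(\Phi)$ with $\sup_X \mathcal{H}^2(\Phi_1(x))\le \bL(\Lambda_Z(\Phi))+\epsilon$. Since $\Phi_1\in\Lambda(\Phi)$, this gives $\bL(\Lambda(\Phi))\le \bL(\Lambda_Z(\Phi))+\epsilon$, and letting $\epsilon\to0$ concludes the proof. There is no real obstacle here; the only point requiring care is checking that the two definitions of homotopy use the same target space $\operatorname{Diff}^\infty(M,\partial M)$ and the same continuity requirement, which they do as stated.
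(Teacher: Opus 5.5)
Your proof is correct and is the intended argument: every relative homotopy (identity on $Z$) is in particular an unrestricted homotopy, so $\Lambda_Z(\Phi)\subseteq\Lambda(\Phi)$, and the infimum of $\sup_X\mathcal{H}^2$ over the larger class is no bigger. The paper gives no separate proof; it calls the inequality straightforward and cites Chu--Li, and your inclusion argument is exactly what that remark relies on.
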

Now we state the relative Simon-Smith min-max theorem in the free boundary setting.
\begin{thm} [Relative Simon-Smith min-max theorem] \label{thm:relative simon-smith min-max}
     Given a compact orientable Riemannian $3$-manifold $(M, g)$ with a strictly mean convex boundary $\partial M$, a pair of cubical subcomplex $Z \subset X$ of some $I(m, k)$, a Simon-Smith family $\Phi: X \to \mathcal{S}^*(M)$ satisfying the topological bound $(\fg_{0},\fb_{0})$, and a positive real number $r > 0$, if 
     \[L := \mathbf{L}(\Lambda_{Z}(\Phi)) > \sup_{z \in Z} \mathcal{H}^{2}(\Phi(z)),\]
     then there exists a minimizing sequence $\{\Phi_{i}\}$ in $\Lambda_{Z}(\Phi)$ such that conclusions \normalfont (1),  \normalfont (2) and \normalfont (3) in Theorem~\ref{thm:simon-smith min-max} still hold.
\end{thm}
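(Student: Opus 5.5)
We would prove Theorem~\ref{thm:relative simon-smith min-max} by reducing it to the absolute version, Theorem~\ref{thm:simon-smith min-max}, following the structure of Chu--Li \cite[Theorem 2.21]{chu2024existence} and Zhou \cite{zhou2020multiplicity}. Throughout, the only structural change from the absolute case is that every deformation must be the identity on $Z$. Since $L > \sup_{z\in Z}\mathcal{H}^2(\Phi(z))$, set $\delta := L - \sup_Z \mathcal{H}^2(\Phi(z)) > 0$. Every family in $\Lambda_Z(\Phi)$ agrees with $\Phi$ on $Z$, so points of $Z$ have area at most $L-\delta$. By continuity of $x\mapsto \mathcal{H}^2(\Phi(x))$ (Definition~\ref{def:fbssfamily}(i)), we may choose a cubical neighborhood $N(Z)$ on which areas stay below $L-\delta/2$, after refining the cubical structure.

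\textbf{Step 1 (pull-tight).} We first construct the pull-tight map relative to $Z$. In the absolute case, one builds a continuous map from the set of varifolds of mass at most $2L$ to isotopies, which are the identity at stationary varifolds and decrease mass by a definite amount away from them. In the free boundary setting, the relevant vector fields must be tangent to $\partial M$ so that they generate $\operatorname{Diff}^\infty(M,\partial M)$. Here we multiply the isotopy parameter by a cutoff function $\rho: X\to[0,1]$ with $\rho\equiv 0$ on $Z$ and $\rho\equiv 1$ on the set where $\mathcal{H}^2 \ge L-\delta/4$. This keeps the deformed family in $\Lambda_Z(\Phi)$ and only moves it where the area is close to $L$. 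Consequently, any min-max sequence has $x_i$ outside $N(Z)$, and its varifold limits are stationary. This gives the pulled-tight part of (1).

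\textbf{Step 2 (almost minimizing and regularity).} Next, as in Colding--De Lellis and Franz--Schulz, we show by a combinatorial argument that some min-max sequence is almost minimizing in small annuli, including annuli centered at boundary points. The standard proof deforms $\Phi_i$ locally near the parameters of high area; since those parameters lie in $X\setminus N(Z)$, we may again use cutoffs vanishing on $N(Z)$. The deformation therefore stays in the relative class, and the contradiction with minimality of $L$ goes through verbatim. The rest of the argument takes place entirely on the limit varifold and does not see $Z$. In that part, we invoke the free boundary regularity of almost minimizing varifolds (Li--Zhou, and Sarnataro--Stryker--Wang--Zhou near $\partial M$, which uses the strict mean convexity), the topological bound $(\fg_0,\fb_0)$ of Franz--Schulz \cite[Theorem 1.8]{franz2023topological}, and the multiplicity statement \cite[Theorem 1.3]{sarnataro2026existence}. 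Together these give (1) and (2).

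\textbf{Step 3 (conclusion (3)).} Conclusion (3) follows by contradiction. Otherwise there would be $x_i$ with $\mathcal{H}^2(\Phi_i(x_i))\to L$ whose varifolds stay $r$-far from $\mathcal{W}\cap\mathbf{C}$. Rerunning Step 2 on the subfamily near such parameters would produce a critical varifold in $\mathcal{W}$ within distance $r$ of them, contradicting the choice of $x_i$.

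\textbf{Main obstacle.} I expect the main obstacle to be Step 1 in the free boundary setting. The difficulty is making the pull-tight isotopies continuous in $x$ and, at the same time, boundary-preserving, despite the fact that the Simon--Smith families are only smooth away from punctate sets, including punctate points on $\partial M$. I would first try to construct the tightening map on the varifold space, with tangential vector fields near $\partial M$, and then compose it with $\Phi$, checking continuity via Definition~\ref{def:fbssfamily}(ii) and (v).
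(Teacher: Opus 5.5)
\textbf{The gap is Step 3.} Conclusion (3) does not hold for an arbitrary pulled-tight minimizing sequence, so you cannot get it by contradiction for the sequence produced in Steps 1--2.

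Suppose $\mathcal{H}^2(\Phi_i(x_i))\to L$ while $|\Phi_i(x_i)|$ stays $r$-far from $\mathcal{W}\cap\mathbf{C}(\{\Phi_i\})$. Then every subsequential limit lies in $\mathbf{C}(\{\Phi_i\})\setminus\mathcal{W}$. By the regularity and topological-bound results of Step 2, such a limit is not almost minimizing in admissible annuli.

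The trouble is that the combinatorial argument of Step 2 reaches a contradiction only when it lowers the maximum of the \emph{whole} family below $L$. Deforming the subfamily near the $x_i$ lowers area there, but the maximum may still be attained near $\mathcal{W}$. So nothing contradicts $L=\mathbf{L}(\Lambda_Z(\Phi))$, and rerunning Step 2 on that subfamily does not produce an almost minimizing varifold near the $x_i$.

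The missing idea is that (3) must be arranged by \emph{changing the sequence}. Apply Pitts' combinatorial deformation to the parameters whose varifolds are far from $\mathcal{W}$, pushing their area below $L-\eta$. This yields a new minimizing sequence for which (3) holds. That is the refinement of \cite[Theorem 4.7]{marques2021morse} carried out in \cite[Theorem 2.16, \S 4.5]{chu2024existence}, which the paper invokes for the absolute theorem. In the relative setting these deformations act only at parameters of area close to $L$, hence away from the low-area set containing $Z$, so the new sequence stays in $\Lambda_Z(\Phi)$.

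Apart from this, your Steps 1--2 are exactly the paper's proof. The paper restricts every deformation of the absolute argument to be supported away from $X'=\{x\in X:\mathcal{H}^2(\Phi'(x))\le \frac{1}{2}(L+\sup_{z\in Z}\mathcal{H}^2(\Phi(z)))\}$. Two smaller corrections:
\begin{itemize}
\item This set, i.e.\ your $N(Z)$ and cutoff $\rho$, must be defined separately for each $\Phi'\in\Lambda_Z(\Phi)$, in particular for each $\Phi_i$. Such families agree with $\Phi$ only on $Z$, not on a fixed neighborhood of it.
\item Conclusion (2) is not a statement about the limit varifold alone. The multiplicity-one theorem of \cite{sarnataro2026existence} is proved by a PMC min-max over the family, so it too must be run relative to $Z$. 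This works because the gap $L-\sup_Z\mathcal{H}^2(\Phi)$ survives the small perturbation of the area functional.
\end{itemize}
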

We will provide the proof of Theorem \ref{thm:relative simon-smith min-max} in Section \ref{Sec:proofmin-max} with the proof of Theorem \ref{thm:simon-smith min-max}. As in Chu-Li \cite{chu2024existence}, we have that Simon-Smith families are sweepouts in Almgren-Pitts min-max theory.
\begin{prop} [Simon-Smith families are Almgren-Pitts sweepouts]\label{prop:sslargerap}
Every element $\Sigma$ in $\mathcal{S}^{*}(M)$ is associated with a unique relative 2-cycle $[\Sigma]$ in $\mathcal{Z}_{2}(M;\partial M; \mathbb{Z}_{2})$. In particular, if $\mathcal{H}^{2}(\Sigma) =0$, then $[\Sigma]=0$. Moreover, for a Simon-Smith family $\Phi$, the induced map
\[
[\Phi]: X \rightarrow \mathcal{Z}_{2}(M;\partial M; \bF; \mathbb{Z}_{2})
\]
is continuous with respect to $\bF$-topology.
\begin{proof}
The proof is identical to the closed case, replacing cycle spaces and $\mathcal F$-metric by relative cycle spaces and relative flat metric, respectively.
\end{proof}
\end{prop}

\part{The topological family of annuli and an upper bound on $\omega_4(\mathbb{B}^3)$}

\section{Lower bound on $\omega_4(\mathbb{B}^3)$} \label{sec: lower bound on w_4(B^3)}

The first three widths of the unit ball are given by
\[
\omega_{1}(\mathbb{B}^{3})=\omega_{2}(\mathbb{B}^{3})=\omega_{3}(\mathbb{B}^{3})= \pi,
\]
which follows from straightforward constructions of first three sweepouts and the regularity theory of min-max minimal surfaces. Now we prove the fourth width of a Euclidean unit ball $\mathbb{B}^{3}$ is strictly larger than $\pi$, the area of a unit disk. The proof uses the gap theorem for free boundary minimal surfaces by Ketover \cite[Proposition 2.1]{ketover2016free} and is a straightforward modification of Nurser's argument to show $\omega_{5} (\mathbb{S}^{3})>4 \pi$ \cite[p. 44-51]{nurser2016low}.
\begin{prop} \label{prop:lowerfourthbound} $\omega_{4}(\mathbb{B}^{3}) > \pi$.
\begin{proof}
Suppose not. By definition of $w_4(\mathbb{B}^3)$ \cite[Lemma 4.7]{marques2017existence}, there exists a $4$-dimensional cubical subcomplex $X$ and a sequence of $4$-sweepouts $\Pi^{j}\in[X, \mathcal{Z}_2(\mathbb{B}^3;\partial \mathbb{B}^3; \mathbb{Z}_2)]$ such that
\[
\lim_{j\to\infty}\ L(\Pi^j) \;=\; w_4(\mathbb{B}^3)\; \le \;\pi.
\]
By \cite[Proposition 2.1]{ketover2016free}, there exists $\delta>0$ such that if $\Sigma$ is an embedded free boundary minimal surface in $\mathbb{B}^{3}$ and $|\Sigma| < \pi + \delta$, then $\Sigma$ is a flat free boundary disk. Fix $j \in \mathbb{N}$ large, we can assume
\begin{equation}
     L(\Pi^j)  < \pi + \delta.
\end{equation}
By the Almgren-Pitts min-max theory \cite[Theorem 2.9]{marques2017existence}, there exists a multiplicity one embedded free boundary minimal surface $\Sigma \subset \mathbb{B}^3$ such that $|\Sigma| = L(\Pi^j) < \pi + \delta$. Thus, $\Sigma$ is the free boundary disk and hence $L(\Pi^j) = |\Sigma| = \pi$. We will show that this is impossible. For notational convenience, we will omit the superscript $j$ for the rest of the proof.

Now by the pull-tight argument \cite[Proposition 2.4]{marques2017existence}, we can find a critical sequence $S = \{ \phi_i \}_{i \in \mathbb{N}} \in \Pi$ so that every $V \in C(S)$ is a stationary varifold with mass equal to $L(S) = L(\Pi) = \pi$. If $\Phi_i : X \to \mathcal{Z}_2(\mathbb{B}^3;\partial \mathbb{B}^3; \mathbb{Z}_2)$ denotes the Almgren extension of $\phi_i$, the fact that $\Pi$ is a class of $4$-sweepouts means that $\Phi_i \in \mathcal{P}_4$ for all $i$ sufficiently large ($\mathcal{P}_p$ denotes the space of all $p$-sweepouts).

Let $\bar\lambda\in H^1(\mathcal{Z}_{2}(\mathbb{B}^{3};\partial \mathbb{B}^{3}; \mathbb{Z}_{2});\mathbb Z_2)$ be a generator, and set $\lambda:=\Phi_i^*(\bar\lambda)\in H^1(X;\mathbb Z_2)$.
Since $\Phi_i$ is a $4$-sweepout,
\begin{equation}
\label{eq:lambda4-nonzero}
\lambda^4\neq 0\quad\text{in }H^4(X;\mathbb Z_2).
\end{equation}

Let $\mathcal{T}_{D}\subset \mathcal{Z}_{2}(\mathbb{B}^{3};\partial \mathbb{B}^{3}; \mathbb{Z}_{2})$ be the subset of cycles given by (unoriented) equatorial disks, and let $\mathcal{T}_{V}$ be the corresponding set of associated varifolds. Then
\[
\mathcal{T}_{D} \simeq \mathcal{T}_{V} \simeq \mathbb{RP}^2.
\]
Let $\epsilon > 0$ be a small constant to be determined later, and set $\mathcal{U}_{\epsilon} (\mathcal{T}_D) := \{ T \in \mathcal{Z}_2(\mathbb{B}^3;\partial \mathbb{B}^3; \mathbb{Z}_2) : \mathcal{F}(T, \mathcal{T}_D) < \epsilon \}$ be the $\epsilon$-tubular neighborhood of $\mathcal{T}_D$. By \cite[Lemma 52]{nurser2016low}, we can choose $\eta = \eta(\epsilon)>0$ so that
\begin{equation}
\label{eq:eta-eps}
\bF(|T|,\mathcal{T}_{V})\le 2\eta \ \Longrightarrow\ \mathcal{F}(T,\mathcal{T}_{D}\cup\{0\})<\varepsilon \qquad\text{for all }T\in\mathcal{Z}_{2}(\mathbb{B}^{3};\partial \mathbb{B}^{3}; \mathbb{Z}_{2}).
\end{equation}
With $k_i \in \mathbb{N}$ so that $\text{dmn}(\phi_i) = X(k_i)_0$. Define $Y_i \subset X$ to be the cubical subcomplex of $X(k_i)$ consisting of all cells $\alpha$ such that
\[
\bF(|\phi_i(x)|,\mathcal{T}_{V})\ge \eta\qquad\text{for every vertex }x\in\alpha_0. 
\]
It also follows that
\[
\mathbf{F}(|\Phi_i(x)|, \mathcal{T}_V) < 2 \eta \qquad \text{for every }x \in X \setminus Y_i
\]
Set $Z_i:=\overline{X\setminus Y_i}$. We make $Y_{i}$ and $Z_{i}$ to be a cubical complex after refinement. Then $X=Z_i\cup Y_i$. We denote the inclusions by $i_1\colon Z_i\hookrightarrow X$ and $i_2\colon Y_i\hookrightarrow X$. By the Lyusternik–Schnirelmann vanishing lemma \cite[Lemma 5.3]{Haslhofer2019} and (\ref{eq:lambda4-nonzero}), we must have $(i_1^*\lambda)^3\neq 0 \text{ in }H^3(Z_i;\mathbb Z_2)$ or $i_2^*\lambda\neq 0 \text{ in }H^1(Y_i;\mathbb Z_2)$ for all $i \in \mathbb{N}$. 

Suppose $i_2^*\lambda\neq 0$. Then $(\Phi_i)|_{Y_i}$ is a $1$-sweepout. Consider the sequence $\tilde{S} := \{\psi_i\}_{i \in \mathbb{N}}$, where
\[
\psi_i = (\phi_i)|_{Y_i} : (Y_i)_0 \to \mathcal{Z}_2(\mathbb{B}^3;\partial \mathbb{B}^3; \mathbb{Z}_2)
\]
and let 
\[
L = L(\tilde{S}) = \limsup_{i \to \infty} \max\{\mathbf{M}(\psi_i(y)): y \in (Y_i)_0\} \leq L(S) = \pi.
\]
If $L < \pi$, then by \cite[Theorem 3.7 Property (iii)]{marques2017existence}, the Almgren extension $\Phi_i$ satisfies
\[
\sup_{y \in Y_i} \mathbf{M}(\Phi_i(y)) < \pi
\]
for sufficiently large $i$. On the other hand, we know that $(\Phi_i)|_{Y_i} \in \mathcal{P}_1$ and thus
\[
\sup_{y \in Y_i} \mathbf{M}(\Phi_i(y)) \geq w_1(\mathbb{B}^3) = \pi,
\]
which is a contradiction. Thus, $L = \pi = w_1(\mathbb{B}^3)$. We also have that $C(\tilde{S}) \subset \{V : \mathbf{F}(V, \mathcal{T}_V) \geq \eta\}$. We claim that although every element of $C(\tilde{S})$ is stationary, none of them can have smooth support. Suppose not, there exists $\Sigma \in C(\tilde{S})$ with smooth support. Then $\mathcal{H}^2(\Sigma) = L = \pi$ and $\Sigma \in \mathcal{T}_V$ as the equator disks are the unique area minimizers among free boundary minimal surfaces in $\mathbb{B}^3$, contradiction. Thus, the claim holds. Now the pull-tight argument of Almgren and Pitts allows one to construct a homotopic family $\Phi_i'|_{Y_i}$ with all areas strictly lower than $L = \pi$, contradiction to $\omega_1(\mathbb{B}^3) = \pi$ (see \cite[Theorem 2.8]{marques2017existence} or \cite[Theorem 5.2]{Haslhofer2019}).

Now we prove that $(i_1^*\lambda)^3\neq 0$ cannot hold by showing that the restriction $\Phi_i|_{Z_i}$ cannot be a $3$-sweepout. Suppose not. By (\ref{eq:eta-eps}), we have $\Phi_i(Z_i)\subset \mathcal U_\varepsilon(\mathcal{T}_{D} \cup \{0\})$. Since $\epsilon > 0$ can be taken sufficiently small that $\mathcal{F}(\mathcal{T}_D, \{0\}) > \epsilon$, we can split $Z_i$ disjointly into the parts for which $\Phi_i(x)$ is close to $\mathcal{T}_D$ and $\{0 \}$ as $Z_i^{(D)} \sqcup Z_i^{(0)}$. Note that $(i_1^*\lambda)|_{Z_i^{(0)}} = 0$ as there are no $1$-sweepouts close to $0$. Thus, $(i_1^*\lambda)^3|_{Z_i^{(D)}}\neq 0$ and $\Phi_i|_{Z_i^{(D)}}$ is a $3$-sweepout. We abuse notation by using $Z_i$ to denote $Z_i^{(D)}$. Thus, we have a $3$-sweepout $\Phi_i: Z_i \to \mathcal{Z}_2(\mathbb{B}^3, \partial \mathbb{B}^3 ; \mathbb{Z}_2)$ with $\mathcal{F}(\Phi_i(x), \mathcal{T}_D) < \epsilon$. 

We now give our choice of $\epsilon$. By the projection lemma \cite[Lemma 9.10]{marques2014} or \cite[3.3.6, p.49-51]{nurser2016low}, we can choose $\epsilon >0$ small enough such that for any cubical subcomplex $Z \subset X$ and any continuous map $\Phi: Z \to \mathcal{U}_{\epsilon} (\mathcal{T}_D)$, which is the Almgren extension of some $\phi: Z_0 \to \mathcal{U}_{\epsilon} (\mathcal{T}_D)$ with fineness $f(\phi) < \epsilon$, we can find a continuous map $\tilde{\Phi}: Z \to \mathcal{T}_D$ such that $\tilde{\Phi}$ is homotopic to $\Phi$. By our choice of $\epsilon$ and taking $i$ sufficiently large such that $f(\phi_i) < \epsilon$, we can find a continuous map $\tilde{\Phi}_i: Z_i \to \mathcal{T}_D$ such that $\tilde{\Phi}_i$ is homotopic to $\Phi_i|_{Z_i}$. Let $l: \mathcal{T}_D \to \mathcal U_\varepsilon(\mathcal{T}_{D})$ be the inclusion map, then $l \circ \tilde{\Phi}_i$ is also a $3$-sweepout. This means that $\tilde{\Phi}_i^*((l^*\tilde{\lambda})^3) \neq 0$, but this cannot hold as there is no $\alpha \in H^1(\mathcal{T}_D; \mathbb{Z}_2) \cong H^1(\mathbb{RP}^2; \mathbb{Z}_2)$ with $\alpha^3 \neq 0$. We get the contradiction. Hence $\omega_{4}(\mathbb{B}^{3}) > \pi$.
\end{proof}
\end{prop}

\section{$4$-parameter canonical family of surfaces} \label{sec:canonicalfamily}
In this section, we introduce a $4$-parameter canonical family of annuli (with its degenerations) in the unit ball $\mathbb{B}^{3}$. This is a free boundary analog of the $5$-parameter canonical family of surfaces on $\mathbb{S}^{3}$ by Marques-Neves \cite{marques2014}. 

To each properly embedded free boundary surface in $\mathbb{B}^3$, we associate a continuous $4$-parameter family of surfaces. Then for any properly embedded annulus with antipodal symmetry that intersects $\partial \mathbb{B}^3$ orthogonally, this associated family of surfaces can be reparametrized into an Almgren-Pitts $4$-sweepout using boundary identifications. By Proposition \ref{prop:lowerfourthbound} above, this $4$-sweepout will give a free boundary minimal surface other than the disk.

\subsection{Notation and definitions} \label{subsec: general free  boundary surface notation} We use the following notation:
\begin{itemize}
    \item $\mathbb{B}^3 \subset \mathbb{R}^3$ is the unit ball. $\mathbb{S}^n$ is the unit $n$-sphere for $n \in \mathbb{N}$.
    \item $\mathbb{S}^3_{+} \coloneqq \mathbb{S}^3 \cap \{ x_4 > 0 \}$ is the upper hemisphere of $\mathbb{S}^3$.
    \item  For $n \in \mathbb{N}$, $B^n_R (Q) = \{ x \in \mathbb{R}^n: |x - Q| < R \}$ and $B_r(p) = \{ x \in \mathbb{S}^3 : d(x, p) < r \}$, where $Q \in \mathbb{R}^n, \, p \in \mathbb{S}^3, \, R, r> 0$, and $d$ is the spherical distance.
    \item $B^+_r(p) \coloneqq B_r(p) \cap \mathbb{S}^3_+$ is the half geodesic ball in $\mathbb{S}^3$.
\end{itemize}
Throughout the paper, we may abbreviate notation by using $B_r(v, 0)$, $B^+_r(v, 0)$, $B_r^4(v, 0)$ for the geodesic ball $B_r\left((v, 0)\right)$, half geodesic ball $B^+_r\left((v, 0)\right)$, Euclidean ball $B^4_r((v, 0))$ respectively, where $(v, 0) \in \mathbb{R}^4$, $v \in \mathbb{R}^3$.

\subsection{Canonical family} \label{subsec: Canonical family}
For every $v \in \mathbb{B}^{3}$, we consider the conformal map $F_v : \mathbb{B}^3\to \mathbb{B}^3$ by
 \begin{equation}
   F_v(x) = \frac{(1 - |v|^2)x - (1 - 2 \langle v, x \rangle + |x|^2) v}{1 - 2 \langle v, x \rangle + |x|^2 |v|^2},\quad v \in\mathbb{B}^3.
 \end{equation}
It is easy to observe that $F_v$ is a conformal diffeomorphism of $\overline{\mathbb{B}^3}$ with inverse $F_{-v}$ and $F_v({\partial \mathbb{B}^3}) = \partial \mathbb{B}^3$. Define the inverse stereographic projection $\pi: \mathbb{R}^3 \cup \{ \infty \} \to \mathbb{S}^3$ by
\begin{equation*}
  \pi(x_1, x_2, x_3) = \left(\frac{2x_1}{|x|^2 + 1}, \frac{2x_2}{|x|^2 + 1}, \frac{2x_3}{|x|^2 + 1}, \frac{1 - |x|^2}{1 + |x|^2}\right).
\end{equation*}
Note that $\pi|_{\mathbb{B}^3}: \mathbb{B}^3 \to \mathbb{S}_+^3$ is a diffeomorphism and $\pi(\partial \mathbb{B}^3) = \partial \mathbb{S}_+^3$.

Let $\Sigma \subset \mathbb{B}^3$ be a properly embedded surface that intersects $\partial \mathbb{B}^3$ orthogonally. Throughout the paper, for notational convenience, for any properly embedded surface $\Sigma \subset \mathbb{B}^3$, we say $\Sigma$ is orthogonal if $\Sigma$ intersects $\partial \mathbb{B}^3$ orthogonally. We make several definitions regarding the geometry of a tubular neighborhood of $\partial \Sigma$ in $\partial \mathbb{B}^3 = \mathbb{S}^2$.
\begin{itemize}
    \item $A$ and $A^*$ denote the disjoint connected components of $\overline{\mathbb{B}^3} \setminus \Sigma = A \cup A^*$; $U = \partial A \, \setminus \Sigma$, $U^* = \partial A^* \, \setminus \Sigma$ are the boundary pieces of $A$ and $A^*$ on $\mathbb{S}^2$. 
    
    \item $N$ denote the unit normal to $\Sigma$ that points into $A^*$.
    \item Denote
    \[
    D^2_+(r) = \{ s = (s_1, s_2) \in \mathbb{R}^2 : |s| < r, s_1 \geq 0 \}.
    \]
    \item If $\epsilon > 0$ is sufficiently small, the map $\Lambda: \partial \Sigma \times D^2_+(3 \epsilon) \to \overline{\mathbb{B}^3}$ given by
    \begin{equation} \label{eq: def of Lambda}
        \Lambda(p, s) = (1 - s_1)(\cos{s_2} \,p + \sin{s_2} \, N(p))
    \end{equation}
    is a diffeomorphism onto a neighborhood of $\partial \Sigma$ in $\overline{\mathbb{B}^3}$.
    \item let $\Omega_r = \Lambda(\partial \Sigma \times D^2_+(r))$ for all $r \leq 3 \epsilon$.

    Consider the continuous map $T: \overline{\mathbb{B}^3} \to \overline{\mathbb{B}^3}$ such that
    \begin{itemize}
        \item $T$ is the identity on $\overline{\mathbb{B}^3} \setminus \Omega_{3 \epsilon}$;
        \item on $\Omega_{3 \epsilon}$, we have
        \[
        T(\Lambda(p, s)) = \Lambda(p, \phi(|s|)s),
        \]
        where $\phi$ is smooth, zero on $[0, \epsilon]$, strictly increasing on $[\epsilon, 2 \epsilon]$, and one on $[2 \epsilon, 3 \epsilon]$.
    \end{itemize}
\end{itemize}
The map $T$ collapses a tubular neighborhood of $\partial \Sigma$ onto $\partial \Sigma$.

Define
\[
A_v = F_v(A), \quad A_v^* = F_v(A^*), \quad \Sigma_v = F_v(\Sigma) = \partial A_v.
\]
We choose the unit normal to $\pi (\Sigma_v)$ to be \begin{equation} \label{eq: normal N_v}
 N_v =D(\pi \circ F_v)(N) / |D(\pi \circ F_v)(N)|,   
\end{equation} and let $d_v: \overline{\mathbb{S}_+^3} \to \mathbb{R}$ be the signed distance to $\pi(\Sigma_v) \subset \overline{\mathbb{S}_+^3}$:
\[
d_v(x) = \begin{cases}
    & d(x, \pi(\Sigma_v)) \quad \text{if }x \notin \pi(A_v), \\
    & -d(x, \pi(\Sigma_v)) \quad \text{if }x \in \pi(A_v)
\end{cases}
\]

The canonical family of $\Sigma$ is the $4$-parameter family of $2$-rectifiable subsets of $\mathbb{B}^3$ given by
\[
\Sigma_{(v, t)} = \partial A_{(v, t)} \cap \mathbb{B}^3, \quad \text{where } A_{(v, t)} = \pi^{-1}\bigl( \{x \in \mathbb{S}^3_+ : d_v(x) < t \} \bigr)  
\]

Following the discuss in \S \ref{subsubsec: Boundary blow-up}, we want to reparametrize and extend the canonical family to all of $\overline{\mathbb{B}}^3 \times [- \pi, \pi]$ as Marques-Neves did in \cite{marques2014}. Our goal is to produce, out of the canonical family, a $4$-dimensional family of integral currents that is continuous in the flat topology (Theorem \ref{thm: flattop conti blow-up}). Before we implement the construction, we first define the extended Gauss map and the angle function as in \cite[Section 3]{marques2014}.

\subsection{Extended Gauss map.} For every $p \in \partial \Sigma$ and $k \in [-\infty, \infty]$, consider
\begin{equation} \label{eq: Q^bar_p, k}
    \overline{Q}_{p, k} = - \frac{k}{\sqrt{1 + k^2}}p - \frac{1}{\sqrt{1 + k^2}} N(p) \in \mathbb{S}^2.
\end{equation}
This induces a function $\overline{Q}: \overline{\Omega}_{\epsilon} \to \mathbb{S}^2$ such that
\[
\overline{Q}(\Lambda(p, s)) = \overline{Q}_{p, k}, \quad \text{where }k = \frac{s_2}{\sqrt{\epsilon^2 - s_2^2}}.
\]
We extend this map in the following way:
\begin{equation} \label{def:Q^bar}
    \overline{Q}: \mathbb{S}^2 \cup \overline{\Omega}_{\epsilon} \to \mathbb{S}^2, \quad \overline{Q}(v) = \begin{cases}
      &  - T(v) \quad \text{if }v \in U^* \setminus \overline{\Omega}_{\epsilon}, \\
      &  T(v) \quad \text{if }v \in U \setminus \overline{\Omega}_{\epsilon}, \\
      & \overline{Q}(v) \quad \text{if }v \in \overline{\Omega}_{\epsilon}.
    \end{cases}
\end{equation}

\subsection{Angle function.} For every $k \in [ -\infty, + \infty]$, consider 
\begin{equation} \label{eq: r^bar_k}
\overline{r}_k = \frac{\pi}{2} - \arctan{k} \in [0, \pi] 
\end{equation}

Also consider $\overline{r}: \overline{\Omega}_{\epsilon} \to [0, \pi]$ given by
\[
\overline{r}(\Lambda(p, s)) = \overline{r}_k, \quad \text{where }k = \frac{s_2}{\sqrt{\epsilon^2 - s_2^2}}.
\]
We extend this function in the following way:
\begin{equation} \label{def:r^bar}
    \overline{r}: \mathbb{S}^2 \cup  \overline{\Omega}_{\epsilon} \to [0, \pi], \quad \overline{r}(v) = \begin{cases}
        0 & \text{if }v \in U^* \setminus \overline{\Omega}_{\epsilon}, \\
        \pi & \text{if }v \in U \setminus \overline{\Omega}_{\epsilon}, \\
        \overline{r}(v) & \text{if }v \in \overline{\Omega}_{\epsilon}.
    \end{cases}
\end{equation}
Now we can reparametrize the canonical family to give the continuous $4$-parameter family of currents we need.

\begin{thm} \label{thm: flattop conti blow-up}
The map below is well-defined and continuous in the flat topology:
\[
C: \overline{\mathbb{B}^{3}} \times [- \pi, \pi] \to \mathcal{Z}_2(\mathbb{B}^3,\partial \mathbb{B}^{3}; \mathbb{Z}_2),
\]
\[
C(v,t)=
\begin{cases}
\partial [| A_{(T(v),t)} |],
& \text{if } v\in \mathbb{B}^{3}\setminus \overline{\Omega}_{\epsilon}, \\[4pt]
\partial[|\pi^{-1}\left(
 B^+_{\overline r(v)+t}(\overline Q(v),0)
\right)|],
& \text{if } v\in \partial\mathbb{B}^{3}
\cup
\overline{\Omega}_{\epsilon}.
\end{cases}
\]
 Furthermore, we have
 $C(v, \pi) = C(v, - \pi) = 0$ for every $v \in \overline{\mathbb{B}^{3}}$.
\end{thm}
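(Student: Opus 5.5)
We follow the scheme of Marques--Neves \cite[\S 3--5]{marques2014} and move everything to the upper hemisphere. Set $S_{(v,t)} := \{x \in \overline{\mathbb{S}^3_+} : d_v(x) < t\}$. Because $\pi|_{\overline{\mathbb{B}^3}}$ is a diffeomorphism onto $\overline{\mathbb{S}^3_+}$, flat continuity of $C$ is equivalent to continuity of the region $\pi^{-1}(S)$ in the $L^1$ (Lebesgue measure of symmetric difference) topology. Indeed, $\mathcal{F}(\partial[|A|]-\partial[|B|]) \le \bM([|A \triangle B|]) = |A\triangle B|$ with the flat norm chosen in the preliminaries, and modding out by currents supported in $\partial\mathbb{B}^3$ only decreases the norm. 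So we prove that $(v,t)\mapsto \pi^{-1}$ of the relevant region is $L^1$-continuous on $\overline{\mathbb{B}^3}\times[-\pi,\pi]$.

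\textbf{Well-definedness and endpoints.} We check the three steps in order.
\begin{enumerate}
\item The region $S_{(v,t)}$ is a set of finite perimeter, since $d_v$ is Lipschitz and a.e.\ level set is rectifiable. Its reduced boundary in $\mathbb{B}^3$ gives a relative cycle.
\item On the overlap $\mathbb{B}^3\cap\partial\overline{\Omega}_\epsilon$ the two formulas agree. There $T(v)=\Lambda(p,\phi(\epsilon)s)=p\in\partial\Sigma$ and $|s|=\epsilon$, so $k=s_2/\sqrt{\epsilon^2-s_2^2}$ realizes the boundary blow-up value of \S\ref{subsubsec: Boundary blow-up}. We interpret $d_{T(v)}$ at $T(v)=p\in\partial\mathbb{B}^3$ through this blow-up limit; this is exactly how $\overline Q,\overline r$ were designed, with $\arctan k$ matching $\theta$.
\item The endpoint values follow directly. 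Since the diameter of $\overline{\mathbb{S}^3_+}$ is $\pi$, we have $|d_v|\le\pi$. Hence $S_{(v,-\pi)}$ is null and $S_{(v,\pi)}$ is all of $\overline{\mathbb{S}^3_+}$ up to measure zero, so both boundaries are $0$ in $\mathcal{Z}_2(\mathbb{B}^3,\partial\mathbb{B}^3;\mathbb{Z}_2)$. The same holds for half balls, because $\overline r+t\le 0$ or $\ge\pi$ in these cases.
\end{enumerate}

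\textbf{Continuity on the pieces.} We treat the interior and the cap region separately.
\begin{itemize}
\item On $\mathbb{B}^3\setminus\overline\Omega_\epsilon$, the map $T(v)$ stays in the open ball. There $F_v$ depends smoothly on $v$, so $\pi(\Sigma_v)$ moves continuously in Hausdorff distance and $d_v\to d_{v_0}$ uniformly. Since $\mathcal{H}^3(\{d_{v_0}=t\})=0$ for every $t$ (the level sets of a distance function to a smooth compact surface are null), the sets $S_{(v,t)}$ converge in $L^1$.
\item On $\partial\mathbb{B}^3\cup\overline\Omega_\epsilon$ we check that $\overline Q$ and $\overline r$ are continuous, and that half balls vary $L^1$-continuously in center and radius. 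Continuity of $\overline Q$ and $\overline r$ at $\partial\Omega_\epsilon\cap\mathbb{S}^2$ holds because $k\to\pm\infty$ there. This gives $\overline Q_{p,\pm\infty}=\mp p$ and $\overline r=0,\pi$, matching $\mp T(v)$ and the constants on $U^*,U$. Here we use $T(v)=p$ and the sign convention that $N$ points into $A^*$.
\end{itemize}

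\textbf{Main obstacle: the joint limit.} The key step is continuity at $v_0\in\partial\mathbb{B}^3\cup\partial\Omega_\epsilon$ approached from $v\in\mathbb{B}^3\setminus\overline\Omega_\epsilon$. Then $T(v)\to$ either $p\in\partial\Sigma$ at a definite blow-up angle, or a point of $U\cup U^*$. We plan to prove two convergence statements.
\begin{enumerate}
\item If $w_n\to q\in U$ (resp.\ $U^*$), then $\pi(\Sigma_{w_n})$ Hausdorff converges to the point $(q,0)$ (resp.\ $(-q,0)$). Meanwhile $\pi(A_{w_n})$ exhausts the hemisphere (resp.\ shrinks to that point). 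This holds because $F_{w_n}$ pushes everything outside a shrinking neighborhood of $-q$ toward $q$, and because $\Sigma$ avoids a neighborhood of $\pm q$ on $\mathbb{S}^2$. Consequently $d_{w_n}\to d(\cdot,(\pm q,0))$ with the appropriate sign shift, which yields the half-ball formula with radius $\pi+t$ or $t$.
\item If $w_n\to p\in\partial\Sigma$ with angle $\theta$, then $\pi(\Sigma_{w_n})$ converges locally to the totally geodesic hemisphere obtained from the disk $D_p$. This is the blow-up computation announced in \S\ref{subsubsec: Boundary blow-up} and deferred to Section \ref{sec :boundary blow-up}. We would first prove Hausdorff convergence of $\pi(\Sigma_{w_n})$ to the totally umbilic sphere $\partial B^+_{\pi/2-\theta}(\overline Q_{p,\tan\theta},0)$. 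We do this by writing $F_{w_n}$ as a composition of a rotation, a dilation at $p$, and its inverse, using that $\Sigma$ is $C^1$-close to $D_p$ near $p$. Uniform convergence of signed distances, hence $L^1$ convergence, then follows as in the interior case.
\end{enumerate}
Finally we handle the regions where $\pi(\Sigma_{w_n})$ is not close to the limit sphere. By the above these carry vanishing measure. This is precisely where we expect the delicate part of the argument to lie.
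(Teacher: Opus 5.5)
Your architecture is the paper's: reduce to $L^1$-continuity of the regions via $\bM([|V_1|]-[|V_2|])=\operatorname{vol}(V_1\Delta V_2)$, then handle interior points, continuity of $\overline Q,\overline r$, and the limits at $U$, $U^*$ and $\partial\Sigma$. Your interior argument (uniform convergence of $d_v$ plus null level sets) is fine. However, case (1) of the joint limit has the dynamics of $F_w$ reversed. We have $F_w(w)=0$, and for fixed $x\neq q$ the numerator and denominator of $F_w(x)$ tend to $-|x-q|^2q$ and $|x-q|^2$ as $w\to q\in\partial\mathbb{B}^3$. So it is a shrinking neighborhood of $q$ that is blown up, and everything else collapses to $-q$. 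Hence in \emph{both} cases $\pi(\Sigma_{w_n})\to(-q,0)$. The distinction $q\in U$ versus $q\in U^*$ only decides whether $\pi(A_{w_n})$ exhausts $\mathbb{S}^3_+$ or shrinks to $(-q,0)$. With your claim that $\pi(\Sigma_{w_n})\to(q,0)$ while $\pi(A_{w_n})$ exhausts for $q\in U$, the sublevel sets would converge to $\{d(\cdot,(q,0))>-t\}=B^+_{\pi+t}(-q,0)$. That is not the prescribed value $B^+_{\pi+t}(\overline Q(v),0)$ with $\overline Q(v)=T(v)=q$ on $U\setminus\overline\Omega_\epsilon$, so as written you would be exhibiting a discontinuity. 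Your justification also uses that $\Sigma$ avoids a neighborhood of $-q$, which fails for a general orthogonal $\Sigma$ (e.g.\ $-q$ may lie on $\partial\Sigma$). Only avoidance of a neighborhood of $q\notin\partial\Sigma$ is needed. With this corrected you recover the paper's Proposition (ii)--(iii).

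The second, more serious gap is that case (2), which is the real content of the theorem, is left open; your last sentence even suggests you are unsure the convergence is Hausdorff. $C^1$-closeness of $\Sigma$ to $D_p$ near $p$ does not by itself give three things you need.
\begin{enumerate}
\item[(a)] The rest of $\Sigma$, which $F_{w_n}$ collapses to $(-p,0)$, must land on the limit sphere. It does, since $\langle -p,\overline Q_{p,k}\rangle=k/\sqrt{1+k^2}=\cos\overline r_k$, but this must be checked.
\item[(b)] You need control at all intermediate scales, uniformly in the moving base points $p_n$.
\item[(c)] You need the side: $\pi\circ F_{w_n}(A)$ must converge to the inside of the limit sphere before signed distances can converge.
\end{enumerate}
The paper gets all three from one global containment. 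Orthogonality gives $T_q\Sigma=\{\langle x,N(q)\rangle=0\}$, hence a uniform $r_0$ such that the orthogonal caps $\pi^{-1}(B^+_{r_0}(\cos r_0\,q\mp\sin r_0\,N(q),0))$, tangent to $\Sigma$ at any $q\in\partial\Sigma$, lie in $A$ and $A^*$ respectively. Thus $A\Delta B_{p_n}\subset\Delta(p_n,N(p_n),r_0)$, where $B_{p_n}=\{\langle x,N(p_n)\rangle<0\}\cap\mathbb{B}^3$. The explicit computations of Appendix A show that $\pi\circ F_{w_n}(B_{p_n})$ is exactly a $4$-ball intersected with $\mathbb{S}^3_+$, i.e.\ a geodesic ball centered on $\partial\mathbb{S}^3_+$. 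They also show that $\pi\circ F_{w_n}(\Delta)$ lies in a shell of width $O(|s_n|^{1/2})$ about its boundary. This sandwiches both $\pi(A_{w_n})$ and $\pi(\Sigma_{w_n})$. The Marques--Neves triangle-inequality argument then applies, using that on $\overline{\mathbb{S}^3_+}$ the signed distance to a half sphere centered at $(\overline Q,0)\in\partial\mathbb{S}^3_+$ is $d(\cdot,(\overline Q,0))-\overline r$.

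Two smaller points. First, the two cases defining $C$ partition $\overline{\mathbb{B}^3}$, so there is no overlap to check. Your item (2) is really continuity across $\partial\Omega_\epsilon$ from outside, which reduces to case (2) because $T(\Lambda(p_n,s_n))=\Lambda(p_n,\phi(|s_n|)s_n)$ keeps $s_{n2}/s_{n1}\to k$. Second, the coarea formula gives finite perimeter only for a.e.\ $t$, whereas you need every $t$. That follows from $\pi(\partial A_{(v,t)}\cap\mathbb{B}^3)\subset\exp_v(\pi(\Sigma_v)\times\{t\})$.
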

 We extend $C(v,t) =0$ for $t \in (-\infty, -\pi] \cup [\pi, \infty)$. We postpone the proof of Theorem \ref{thm: flattop conti blow-up} to Section \ref{sec :boundary blow-up}. The proof includes technical details of the free boundary analog of boundary blow-up in $\mathbb{S}^{3}$ as in \cite[Section 5]{marques2014}.

\begin{rem} \label{rem: Area bound}
For $v \in \partial \mathbb{B}^3$, let $\alpha = \alpha(v, t) = \overline r(v)+t$. For $\alpha \in (0, \pi) \setminus \{ \frac{\pi}{2}\}$, we observe that
\[
C(v, t) = \partial B^3_{|\tan{\alpha}|}(\sec{(\alpha)} \,\overline Q(v) ) \cap \mathbb{B}^3
\]
is an orthogonal spherical cap in $\mathbb{B}^{3}$. For $\alpha = \frac{\pi}{2}$,
\[
C(v, t) = \{ \langle x, \overline Q(v) \rangle = 0 \} \cap \mathbb{B}^3
\]
is an equator disk.
By direct computations, for any $v \in \partial \mathbb{B}^3$ and $\alpha \neq \frac{\pi}{2} $,
we have the area bound
\[
\begin{split}
\mathbf{M}(C(v, t)) & = \mathcal{H}^2(\partial B^3_{|\tan{\alpha}|}(\sec{(\alpha)} \,\overline Q(v) ) \cap \mathbb{B}^3) \\
& = 2 \pi \frac{\sin^2{\alpha}}{1 + \sin{\alpha}} < \pi
\end{split}
\]
and $\mathcal{H}^2(C(v, t)) = \pi$ iff  $ \alpha(v, t) =  \pi/2$.
\end{rem}

\subsection{$4$-parameter family of surfaces} Proceeding the same as \cite[Section 6]{marques2014}, we reparametrize $C(v, t)$ to close up the parameter space. Choose a continuous extension $\bar {r}:\overline{\mathbb{B}^{3}}\to[0,\pi]$ of $r$. We define $\Phi : \overline{\mathbb{B}^{3}} \times [- 1, 1] \to \mathcal{Z}_2(\mathbb{B}^3 ;\partial \mathbb{B}^{3}; \mathbb{Z}_2)$ by:
\[
\Phi(v,t)
=
C\left(v,2\pi t+\gamma(|v|)\left(\frac{\pi}{2}-\bar r(v)\right)\right),
\qquad
(v,t)\in \overline{\mathbb B^3}\times[-1,1].
\]
where $\gamma:[0, 1] \to [0, 1]$ is given by:
\[
\gamma(s) = \begin{cases}
    0 & \text{if }s \leq \frac{1}{2}, \\
    2s - 1 & \text{if }s \geq \frac{1}{2}.
\end{cases}
\]
$\Phi$ is clearly continuous in the flat topology with no concentration of mass since $C(v, t)$ is. Now observe that on $\partial( \overline{\mathbb{B}^{3}} \times [-1, 1]) = \partial \mathbb{B}^{3} \times [-1, 1] \cup\overline{\mathbb{B}^{3}} \times \{-1\} \cup \overline{\mathbb{B}^{3}} \times \{1\}$, we have:
\begin{equation} \label{eq: Phi}
    \Phi(v, t) = \begin{cases} \partial [|\pi^{-1}\left(B^+_{2 \pi t + \frac{\pi}{2}}(\overline{Q}(v), 0)\right)|] & \text{if }t \in (-1, 1), \\
    0 & \text{if }t = -1, 1.  
    \end{cases}
\end{equation}
Again, $\Phi(v, 1) = \Phi(v, -1) = 0$ for all $v \in \overline{\mathbb{B}}^3$. so we can think of $\Phi$ as a map from $\overline{\mathbb{B}^4}$. Also note that by Remark \ref{rem: Area bound}, we have
\[
\max\{\mathbf{M}(\Phi(x)) : {x \in \partial\mathbb{B}^3} \} = \pi.
\]

\subsection{Construction of a $4$-sweepout} Now we choose $\Sigma \subset \mathbb{B}^3$ to be any orthogonal annulus with antipodal symmetry. Similar to \cite[Section 3.4.2]{nurser2016low}, for any $v \in \partial \mathbb{B}^3$, we have:
\begin{equation} \label{eq: boundary identificarions}
\begin{split}
    \Phi(-v, -t) & = \partial[|\pi^{-1}\left( B^+_{-2 \pi t + \frac{\pi}{2}}(\overline Q(-v),0)\right)|] \\
& = \partial[|\pi^{-1}\left(
 B^+_{2 \pi t + \frac{\pi}{2}}(-\overline Q(-v),0)\right)|].
\end{split}
\end{equation}
On the other hand, due to the antipodal symmetry of $\Sigma$ and the fact that $N(p) = - N(-p)$ for any $p \in \partial \Sigma$, we have
\begin{equation} \label{eq:Q}
    \overline{Q}(v) = - \overline{Q}(-v)
\end{equation}
for any $v \in \partial \mathbb{B}^3$. By (\ref{eq: boundary identificarions}), (\ref{eq:Q}), we can take the boundary identification
\[
\Phi(v, t) = \Phi(-v, -t) \quad \text{for }(v, t) \in \partial \mathbb{B}^{3} \times [-1, 1] \cup\overline{\mathbb{B}^{3}} \times \{-1\} \cup \overline{\mathbb{B}^{3}} \times \{1\}.
\] 
Thus, we can define a $\mathbb{RP}^{4}$-family of surfaces as
\[
\Phi^{\Sigma}_{4}([(v,t)]) = C\left(v, 2\pi t + \gamma(|v|) \left(\frac{\pi}{2}-\bar{r}(v)\right)\right).
\]
The $4$-parameter family $\Phi_4^{\Sigma}$ then becomes an Almgren-Pitts $4$-sweepout as shown below.
\begin{prop} \label{prop: 4sweepout}
Suppose $\Sigma \subset \mathbb{B}^3$ is an annulus with antipodal symmetry that intersects $\partial \mathbb{B}^3$ orthogonally. Then  $\Phi_4 = \Phi^{\Sigma}_{4}$ is an Almgren-Pitts $4$-sweepout $\Phi_{4}: \mathbb{RP}^4 \to \mathcal{Z}_2(\mathbb{B}^3, \partial \mathbb{B}^{3};\mathbb{Z}_2)$.
\end{prop}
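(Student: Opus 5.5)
We must show $\Phi_4^*(\bar\lambda^4)\neq 0$ in $H^4(\mathbb{RP}^4;\mathbb{Z}_2)$. Since $H^*(\mathbb{RP}^4;\mathbb{Z}_2)=\mathbb{Z}_2[a]/(a^5)$ with $a$ the generator, it is enough to prove $\lambda:=\Phi_4^*(\bar\lambda)=a$. Equivalently, $\lambda$ should evaluate nontrivially on the generator of $H_1(\mathbb{RP}^4;\mathbb{Z}_2)$. That generator is represented by any closed loop whose lift to $S^4$ joins antipodal points, or, in the model $\overline{\mathbb{B}^4}/\!\sim$, by a path in $\overline{\mathbb{B}^3}\times[-1,1]$ joining two identified boundary points. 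By the Almgren isomorphism (as in Guang--Li--Wang--Zhou, quoted above), $\bar\lambda$ evaluates nontrivially on a loop of relative cycles exactly when the loop is a $1$-sweepout. Concretely, lifting the loop to a path of Caccioppoli sets $\Omega_s$ with $\partial[|\Omega_s|]=\Phi(\gamma(s))$, the endpoints must be complementary: $\Omega_1=\overline{\mathbb{B}^3}\setminus\Omega_0$ up to measure zero.

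The natural loop is a vertical segment. Fix $v\in\partial\mathbb{B}^3$ and let $t$ run over $[-1,1]$. In $\mathbb{RP}^4$, the points $(v,-1)$ and $(v,1)$ are both identified with the single collapsed point, the boundary $\overline{\mathbb{B}^3}\times\{\pm1\}$ where $\Phi=0$. Lifting to $S^4$, these two points are antipodal, so the segment $\{v\}\times[-1,1]$ closes up to a nontrivial loop. An alternative is the loop from $(v,t)$ to $(-v,-t)$ along the boundary identification; either choice represents the generator.

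On this segment, \eqref{eq: Phi} gives $\Phi(v,t)=\partial[|\pi^{-1}(B^+_{2\pi t+\pi/2}(\overline Q(v),0))|]$, which is a boundary of an explicit set at every parameter. We therefore take
\[
\Omega_t:=\pi^{-1}\bigl(B^+_{2\pi t+\pi/2}(\overline Q(v),0)\bigr),
\]
where a negative radius means the empty set and a radius at least $\pi$ means the whole hemisphere. This family is continuous in volume. Its endpoints are complementary: at $t=-1$ the radius is negative, so $\Omega_{-1}=\emptyset$; at $t=1$ the radius exceeds $\pi$, so $\Omega_1=\mathbb{B}^3$. Hence the loop is a $1$-sweepout and $\lambda(\gamma)\neq0$, which gives $\lambda=a$ and $\lambda^4=a^4\neq0$.

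The main obstacle is the careful identification of the loop as the nontrivial class in $\pi_1(\mathbb{RP}^4)$. One must confirm that the quotient of $\overline{\mathbb{B}^3}\times[-1,1]$ is $\mathbb{RP}^4$ once the top and bottom faces are collapsed together with the identification $(v,t)\sim(-v,-t)$ on the sides. The cleanest way is to view $\overline{\mathbb{B}^3}\times[-1,1]$ as a closed $4$-ball $B$. The sides together with the top and bottom faces form $\partial B\cong S^3$, and on this $S^3$ the identification is the antipodal map after a radial homeomorphism. Under this picture the vertical segment is a diameter of $B$ joining antipodal boundary points, so it represents the generator. It also has to be checked that collapsing the top and bottom faces, where $\Phi=0$, is compatible with the antipodal identification; it is, since $(v,1)\sim(-v,-1)$. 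Continuity of $\Phi_4$ in the flat topology, needed for the pullback to make sense, is supplied by Theorem~\ref{thm: flattop conti blow-up} and descends to the quotient.
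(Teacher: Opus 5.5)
Your argument is the paper's: you restrict to the vertical segment over a fixed $v\in\partial\mathbb{B}^3$, where $\Phi$ is the standard sweepout by orthogonal caps, conclude that $\Phi_4^*\bar\lambda$ is the generator of $H^1(\mathbb{RP}^4;\mathbb{Z}_2)$, and take the fourth power; your Caccioppoli lift from $\emptyset$ to $\mathbb{B}^3$ simply makes the paper's ``standard sweepout'' step explicit.

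One correction to your last paragraph. Take the uncollapsed cylinder $B=\overline{\mathbb{B}^3}\times[-1,1]$ with $(v,t)\sim(-v,-t)$ on all of $\partial B$. There the segment $\{v\}\times[-1,1]$ lies on $\partial B$, and its endpoints are not antipodal: the antipode of $(v,1)$ is $(-v,-1)$. So in that picture the segment is neither a diameter nor a closed loop. It does close up in either of two ways. First, as in your second paragraph, collapse each face to a point; the two poles are then antipodal. Second, append the path $s\mapsto(sv,1)$ in the top face, running from $(v,1)$ to $(-v,1)\sim(v,-1)$, along which $\Phi\equiv0$. In either case the loop represents the generator.
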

\begin{proof}
Let \(\bar\lambda\in H^1(\mathcal Z_2(\mathbb B^3,\partial\mathbb B^3;\mathbb Z_2);\mathbb Z_2)\)
be the generator and \(\lambda\in H^1(\mathbb RP^4;\mathbb Z_2)\) be the generator.
It suffices to show that \(\Phi_4^*\bar\lambda=\lambda\), since then
\[
\Phi_4^*(\bar\lambda^4)=(\Phi_4^*\bar\lambda)^4=\lambda^4\neq0.
\]
Fix \(v_0\in\partial\mathbb B^3\) and consider the loop $\gamma(t)=[(v_0,t)]$ for $t\in[-1,1]$ where the endpoints are identified by the boundary relation. $\gamma$ represents the nontrivial element of \(H_1(\mathbb RP^4;\mathbb Z_2)\).
By the definition of $\Phi_4$, the family $\Phi_4\circ\gamma (t) = \partial [|\pi^{-1}\left(B^+_{2 \pi t + \frac{\pi}{2}}(\overline{Q}(v_0), 0)\right)|]$ is a standard sweepout of \(\mathbb B^3\) by
orthogonal spherical caps. Therefore we have
\[
\langle \Phi_4^*\bar\lambda,[\gamma]\rangle=1.
\]
Hence \(\Phi_4^*\bar\lambda\neq0\). Since
\(H^1(\mathbb RP^4;\mathbb Z_2)\cong\mathbb Z_2\), we have
\(\Phi_4^*\bar\lambda=\lambda\). Consequently
\[
\Phi_4^*(\bar\lambda^4)=\lambda^4\neq0,
\]
and \(\Phi_4\) is a \(4\)-sweepout.
    \end{proof}

Together with Proposition~\ref{prop:lowerfourthbound}, Proposition \ref{prop: 4sweepout} yields that
\begin{equation} \label{eq: lower bound of the 4-sweepout}
\pi<\omega_4(\mathbb B^3)\le \mathbf{L}_{AP}(\Lambda(\Phi_4^{\Sigma}))   
\end{equation}
where $\mathbf{L}_{AP}(\Lambda(\Phi_4^{\Sigma}))$ is the Almgren-Pitts min-max width of the $4$-sweepout $\Phi_4^{\Sigma}$. 

In the next section, we are going to take an appropriate choice of the annulus $\Sigma$ such that $\Phi_4^{\Sigma}$ becomes a Simon-Smith family so that we can use the topological control for Simon-Smith families to characterize the Simon-Smith min-max width of $\Phi_4^{\Sigma}$.

\section{$6$-parameter family of annuli} \label{sec: 6-parameter family}
In this section, We define a $6$-parameter Simon-Smith family of surfaces based on our previous constructions.

\subsection{Definitions} First, we take a specific family of annuli in $\mathbb{B}^3$. Consider $C:=\pi^{-1}(T^{2}) \cap \mathbb{B}^{3}$, where  $T^{2} \subset \mathbb{S}^{3}$ is the Clifford torus given by
 \[
        T^2 = \left\{x \in \mathbb{R}^4 : x^2_1 + x^2_2 = x^2_3 + x^2_4 = \frac{1}{2}\right\}\,.
\]
Observe that $C$ is a rotationally symmetric annulus with antipodal symmetry that intersects $\partial \mathbb{B}^3$ orthogonally. Also $C$ is uniquely determined by its rotational symmetry axis, the $e_{3}$-axis, and can be parametrized as
\[
C :=C_{e_3}
=
\left\{
\frac{1}{\sqrt{2}+\sin\beta}(\cos\alpha,\sin\alpha,\cos\beta):\alpha\in[0,2\pi),\ \beta\in[0,\pi]\right\}.
\]
Given any $a\in \mathbb{S}^2$, we can choose a rotation $R_a\in SO(3)$ such that $R_a(e_3)=a$, and define
\begin{equation}\label{eq:Ka-def}
C_a:=R_a(C_{e_3})\subset \mathbb{B}^3.
\end{equation} 
$C_a$ satisfies all the properties as $C_{e_3}$, except that the rotational symmetry axis is changed to the line $\{ ta: t \in \mathbb{R} \}$. Note that $C_a$ is independent of the choice of $R_a$. Moreover, $C_{-a}=C_a$ as unoriented surfaces of $\mathbb{B}^3$. For notational convenience, we will name any such surface $C_a$ the Clifford annulus. We denote the family of oriented Clifford annuli by
\begin{equation}
\tilde{\mathcal{C}} := \{C_a\}_{a\in \mathbb{S}^2} \cong \mathbb{S}^2.  
\end{equation}
For each $a \in \mathbb{S}^{2}$, identifying $\{ C_a, C_{-a} \}$ as the unoriented Clifford annulus $C_{[a]}$, $\tilde{\mathcal{C}}$ descends to the family of unoriented Clifford annuli,
\begin{equation}
\mathcal{C} = \{C_{[a]} \}_{[a] \in \mathbb{RP}^2} \cong \mathbb{RP}^2.
\end{equation}
\begin{notation} \label{subsec: orientation of K}
 For each $a\in \mathbb{S}^2$, the oriented Clifford annulus $C_a$ separates $\mathbb{B}^3$ into two components, $A_a$ and $A_a^*$. Throughout the paper, we use the convention that $A_a$ is the topologically solid ball region, and $A_a^{*}$ is the topologically torus region, and we choose the orientation on $C_a$ such that the outer unit normal $N_a$ points towards $A_a^*$. The antipodal map $\zeta: a\mapsto -a$ leaves the underlying surface unchanged. More importantly, $\zeta$ preserves the normal field $N_a$, and $A_{a} = A_{-a}$, $A_a^*= A_{-a}^{*}$.
\end{notation} 
\begin{rem} \label{rem: Continuous orientation}
Since the antipodal map preserves the orientation of a Clifford annulus, we can assign an orientation to each unoriented Clifford annulus in $\mathcal{C}$ continuously. This is different from \cite{chu2024existence}, as there is no continuous way to assign an orientation to the space of unoriented Clifford tori. Thus, our parameter space is $\mathbb{RP}^4 \times \mathbb{RP}^2$, the product of $4$-parameter family $\mathbb{RP}^4$ (recall Section \ref{sec:canonicalfamily}) and the family of unoriented Clifford annuli, $\mathcal{C} \cong \mathbb{RP}^2$. We also note that due to the reasoning above, the $\mathbb{RP}^2$ family of Clifford annuli is not a $2$-sweepout of $\mathbb{B}^3$.
\end{rem}
By Remark \ref{rem: Continuous orientation}, we can assign an orientation to each $C_{[a]} \in \mathcal{C}$ such that $\mathcal{C}$ becomes a $\mathbb{RP}^2$ family of oriented smooth surfaces, that is continuous in the smooth topology. Without loss of generality, we also assume that each $C_{[a]}$ has the same orientation as in Section \ref{subsec: orientation of K} above.

Now we extend the $4$-parameter family from Section \ref{sec:canonicalfamily} to a $6$-parameter family. For $[a]\in\mathbb{RP}^2$, applying the construction of the previous section to $C_{[a]}$, we obtain a canonical family
\[
\Phi^{C_{[a]}}_4:\mathbb{RP}^4\to \mathcal Z_2(\mathbb B^3,\partial\mathbb B^3;\mathbb Z_2).
\]
We then define
\begin{equation} \label{def: 6 parameter family}
\Phi_6:\mathbb{RP}^4\times\mathbb{RP}^2\to \mathcal Z_2(\mathbb B^3,\partial\mathbb B^3;\mathbb Z_2), \qquad \Phi_6(z,[a])=\Phi^{C_{[a]}}_4(z).    
\end{equation}

\begin{lem}\label{lem:Ca-continuous}
The map
\[
(z,[a])\mapsto \Phi_6(z,[a])
\]
from $\mathbb{RP}^4\times\mathbb{RP}^2$ to $\mathcal Z_2(\mathbb B^3,\partial\mathbb B^3;\mathbb Z_2)$ is continuous in the flat topology.
\end{lem}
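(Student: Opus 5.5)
The plan is to reduce continuity of $\Phi_6$ to a uniform version of Theorem \ref{thm: flattop conti blow-up}, using the rotational equivariance of the whole construction.

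\textbf{Step 1: Equivariance under rotations.} For $R \in SO(3)$, the maps $F_v$ and $\pi$ satisfy
\[
F_{Rv}\circ R = R\circ F_v, \qquad \pi\circ R = \tilde R\circ \pi,
\]
where $\tilde R = R\oplus 1$ is an isometry of $\mathbb{S}^3$ preserving $\mathbb{S}^3_+$. Hence signed distance sets transform equivariantly. The auxiliary data for $R(\Sigma)$ (the map $\Lambda$, the neighborhoods $\Omega_r$, the collapse $T$, and the functions $\overline Q$, $\overline r$) can also be chosen as the $R$-conjugates of those for $\Sigma$, with the same $\epsilon$ and $\phi$. This is consistent because $N_{R\Sigma}=R N_\Sigma\circ R^{-1}$, the regions satisfy $A_{R\Sigma}=R(A_\Sigma)$, and the Clifford annuli are all congruent, so one fixed $\epsilon$ works for every $C_a$. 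Choosing the extension $\bar r$ equivariantly as well, we get
\[
C^{R\Sigma}(Rv,t)=R_\#\,C^{\Sigma}(v,t),
\]
and therefore
\[
\Phi_4^{C_a}([(v,t)])=(R_a)_\#\,\Phi_4^{C_{e_3}}\bigl([(R_a^{-1}v,t)]\bigr).
\]

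\textbf{Step 2: Well-definedness on $\mathbb{RP}^2$.} Two points need checking. First, the right-hand side must not depend on the choice of $R_a$. Any two choices differ by a rotation about the $e_3$-axis, which preserves $C_{e_3}$ together with its orientation and normal, so by equivariance the result is unchanged. Second, the value must be the same for $a$ and $-a$. By Notation \ref{subsec: orientation of K}, $C_{-a}=C_a$ with the same normal $N_a$ and regions $A_a$, $A_a^*$. Since every piece of the construction depends only on the oriented surface, the two families coincide. Consequently $\Phi_6$ descends to $\mathbb{RP}^4\times\mathbb{RP}^2$, and it suffices to prove continuity on $\mathbb{RP}^4\times \mathbb{S}^2$.

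\textbf{Step 3: Continuity.} Locally on $\mathbb{S}^2$ choose a continuous section $a\mapsto R_a\in SO(3)$. The map
\[
(z,a)\mapsto \bigl(R_a,\;R_a^{-1}\cdot z\bigr)
\]
is continuous, where $SO(3)$ acts on $\mathbb{RP}^4$ through $v$; this action commutes with the boundary identification $(v,t)\sim(-v,-t)$. So we only need joint continuity of
\[
SO(3)\times \mathcal Z_2(\mathbb B^3,\partial\mathbb B^3;\mathcal F;\mathbb Z_2)\to \mathcal Z_2(\mathbb B^3,\partial\mathbb B^3;\mathcal F;\mathbb Z_2),\qquad (R,T)\mapsto R_\#T.
\]
We prove this by writing
\[
R_\#T-R'_\#T' = R_\#(T-T') + (R_\#T'-R'_\#T').
\]
The first term has flat norm at most $\mathcal F(T-T')$, because $R$ is an isometry of $\mathbb{B}^3$ preserving $\partial\mathbb B^3$. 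For the second term we use the homotopy formula along a short path of rotations, which gives a bound of the form
\[
\mathcal F(R_\#T'-R'_\#T')\lesssim |R-R'|\,\mathbf M(T'),
\]
modulo boundary pieces, which vanish in the relative quotient. Here $\mathbf M(T')$ is uniformly bounded on the image of $\Phi_4^{C_{e_3}}$: the image is compact, there is no mass concentration, and the masses are bounded since they come from spherical distance sets of a fixed surface.

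\textbf{Main obstacle.} The hard step is Step 1, specifically making the blow-up data ($T$, $\overline Q$, $\overline r$, $\bar r$) equivariant so that the identity $C^{R\Sigma}(Rv,t)=R_\#C^\Sigma(v,t)$ holds exactly on $\partial\mathbb B^3\cup\overline\Omega_\epsilon$. The approach is to simply define these data for $C_a$ by conjugating the data for $C_{e_3}$. Since Theorem \ref{thm: flattop conti blow-up} holds for each such choice, the formula of Step 1 can then be taken as the definition, and we check that it agrees with the intrinsic construction.
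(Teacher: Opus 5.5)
Your proposal is correct and follows the same route as the paper, whose proof just cites the flat continuity of $\Phi_4^{C_{[e_3]}}$ from Theorem~\ref{thm: flattop conti blow-up} together with the smooth (rotational) dependence of $C_{[a]}$ on $[a]$. Your equivariance identity $\Phi_4^{C_a}([(v,t)])=(R_a)_\#\Phi_4^{C_{e_3}}([(R_a^{-1}v,t)])$ and the joint continuity of $(R,T)\mapsto R_\#T$ are exactly the details behind that sentence. Two small points are worth making explicit. First, the extension $\bar r$ must be chosen invariant under the full stabilizer of the oriented $C_{e_3}$ in $SO(3)$, including the rotations sending $e_3$ to $-e_3$ that are needed for $a\sim -a$; averaging over that compact group gives such a choice. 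Second, continuity at a point only requires $\mathbf M(T')<\infty$ at the base point, so you do not need a uniform mass bound.
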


\begin{proof}
It follows from the continuity of $\Phi^{C_{[e_3]}}_4$ by Theorem \ref{thm: flattop conti blow-up} and the smooth dependence of $C_{[a]}$ on $[a] \in \mathbb{RP}^2$.
\end{proof} 
\subsection{Topological characterization of $6$-parameter family} 

\begin{thm}\label{thm:genuszerotwoboundary}
        The $6$-parameter family 
        \[
            \Psi \coloneqq \Phi_6:Y \rightarrow \mathcal{S}^*(\mathbb{B}^{3})
        \]
         is a free boundary Simon-Smith family parametrized by $Y:=\mathbb{RP}^{4} \times \mathbb{RP}^{2}$. Moreover, $\Psi$ satisfies the topological bound $(0,1)$.
    \end{thm}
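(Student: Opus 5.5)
Because the conformal maps $F_v$ and the stereographic projection $\pi$ preserve the relevant geometry, the argument can be carried out in the upper hemisphere $\overline{\mathbb{S}^3_+}$, with $\pi(\partial\mathbb{B}^3)=\partial\mathbb{S}^3_+$ totally geodesic.

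\emph{Reduction to the doubled torus.} Reflecting across the equator $\{x_4=0\}$ doubles $\pi(C_{[a],v})$ to a conformal image $\widehat T_v$ of the Clifford torus in $\mathbb{S}^3$. This holds because $F_v$ extends to a Möbius transformation of $\mathbb{S}^3$ commuting with the reflection, and the half-torus meets the equator orthogonally. Since $\partial\mathbb{S}^3_+$ is totally geodesic, for $x\in\overline{\mathbb{S}^3_+}$ the distance to the half-torus equals the distance to the doubled torus. Hence the level set $\{d_v=t\}\cap\overline{\mathbb{S}^3_+}$ is exactly half of the level set of the signed distance to $\widehat T_v$, and it is invariant under the reflection. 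The level sets of the Marques--Neves family of $\widehat T_v$ are known to be Simon--Smith surfaces of genus $\le1$ (Chu--Li \cite[Theorem 3.1]{chu2024existence}). We therefore reuse their channel-surface analysis. $\widehat T_v$ is a two-sided channel surface, i.e. the envelope of a one-parameter family of round spheres on each side, since this property is conformally invariant. Each level set of the distance function on one side is then the envelope of the concentric spheres with radii shifted by $t$. It is smooth away from finitely many points or circles where the radius reaches $0$ or $\pi$, and the singular set consists of finitely many points, or of whole circles that collapse.

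\emph{Topology of the half level sets.} Using the explicit radii of the channel spheres (as in Chu--Li), each level set $\{d_v=t\}$ is one of the following: an embedded torus swept by circles; a torus pinched along finitely many points; a sphere; or a union of points/a circle (the degenerate case, of zero $\mathcal H^2$). We then intersect with $\mathbb{S}^3_+$. Because everything is reflection-symmetric and the generating circles of the channel structure meet the equator orthogonally in pairs, the half of a torus is either an annulus or a pair of disks, and the half of a sphere is a disk. In every case the genus is $0$ and $\fg+\fb\le1$, so the number of boundary components of each connected piece exceeds one by at most one in total. Pinch points are allowed to lie on $\partial\mathbb{B}^3$, and they give punctate sets of uniformly bounded cardinality, so condition (iii) of Definition~\ref{def:fbssfamily} holds. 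Condition (iv) of Definition \ref{def: Punctate surfaces} (separation) holds because each surface is the boundary of $A_{(v,t)}$. Transversality to $\partial\mathbb{B}^3$ holds by reflection symmetry.

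\emph{Continuity conditions.} Continuity of $\mathcal H^2$, the Hausdorff-type condition (ii), and smooth convergence away from the punctate set must be checked in several regimes.
\begin{enumerate}[label=(\alph*)]
\item In the interior of the parameter space they follow from smooth dependence of the channel spheres on $(v,t)$.
\item Near the blown-up boundary $\partial\mathbb{B}^3\cup\overline\Omega_\epsilon$ they follow from the proof of Theorem~\ref{thm: flattop conti blow-up}: the limits are the orthogonal spherical caps of Remark~\ref{rem: Area bound}, and we upgrade that proof to smooth local convergence.
\item Near $t=\pm\pi$ the surfaces shrink to points, and $\mathcal H^2\to0$.
\end{enumerate}
Dependence on $[a]$ is smooth by Lemma~\ref{lem:Ca-continuous}, and the boundary identifications defining $\mathbb{RP}^4$ are compatible with these limits.

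\emph{Main obstacle.} The hardest step is controlling smooth convergence and the number of punctate points uniformly near the degenerate parameters. These include the focal times, where channel spheres collapse to circles (a whole circle of singularities must be handled as a punctate limit, or shown to occur only when $\mathcal H^2=0$), and the blow-up region $\overline\Omega_\epsilon$. For this I would first compute the focal radii explicitly for $\widehat T_v$ as functions of $v$, and check that circle collapses occur only at isolated $t$ where the level set is a great-circle arc or is null.
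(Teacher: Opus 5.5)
\textbf{Where the proposal stands.} Your doubling reduction is correct and is a different route from the paper's. The formula for $\pi\circ F_{-v}\circ\pi^{-1}$ in the proof of Lemma~\ref{lem:betaradius} is exactly the Marques--Neves conformal map of $\mathbb{S}^3$ with parameter $-(v,0)\in B^4$. Hence $\pi(\Sigma_{(v,t)})$ is the Marques--Neves level set of the Clifford torus at $((v,0),t)$, intersected with $\mathbb{S}^3_+$.

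The paper never doubles. It computes the radii of the interior channel spheres and of the exterior channel \emph{hemispheres} directly in $\overline{\mathbb{S}^3_+}$ (Lemmas~\ref{lem:halfchannel}--\ref{lem:alpharadius}). It then reruns Chu--Li's Claim~5.3 on this half structure to obtain the explicit list in Proposition~\ref{lem:behaviourSigmavt}, and refers condition (v) to \cite[Section 5.3.2]{chu2024existence}. Your route gets Chu--Li's classification and smooth convergence for free for $v$ in the open ball, including up to $\partial\mathbb{B}^3$ by restriction. The cost is that all the free-boundary content moves into the halving step.

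\textbf{The gap: the halving step.} As written, that step is wrong and incomplete.

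\emph{Smooth level sets.} A smooth reflection-symmetric torus meeting the equator transversally never halves into a pair of disks. The half $H$ is connected, since $\widehat T_v$ is connected and meets the equator. Also $\chi(\widehat T_v)=2\chi(H)$, i.e.\ $2\fg(H)+\beta(H)=1+\fg(\widehat T_v)$. This forces $H$ to be an annulus, and a disk when the level set is a sphere.

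\emph{Singular level sets.} Reflection symmetry alone does not tell you where the pinch points lie, or how a collapsing circle meets $\partial\mathbb{S}^3_+$. The topology of the half depends on exactly this. The missing input is the following fact. Each channel-radius function on a center curve of $\widehat T_v$ is reflection-invariant and, when nonconstant, has a unique maximum and a unique minimum. These extrema must therefore sit at reflection-fixed parameters, i.e.\ on $\partial\mathbb{S}^3_+$:
\begin{itemize}
\item all exterior centers $C_v(\alpha)$ lie on $\partial\mathbb{S}^3_+$;
\item the interior centers $C_v(\beta)$ meet $\partial\mathbb{S}^3_+$ exactly at $\beta=0,\pi$, which is where $\cot R_v(\beta)$ is extremal.
\end{itemize}
This is precisely what Lemmas~\ref{lem:halfchannel}--\ref{lem:alpharadius} establish.

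\emph{Resulting list.} With this fact, the halves are:
\begin{itemize}
\item annuli;
\item a disk with two boundary points identified (an essential arc pinched to a point of $\partial\mathbb{B}^3$);
\item a disk with a boundary cone point (a boundary circle of the annulus collapsed);
\item disks with one or two singular points;
\item null sets.
\end{itemize}
All of these satisfy the bound $(0,1)$. This also settles your ``main obstacle'': whole-circle collapses occur only when a radius function is constant, i.e.\ $v_3=0$ or $(v_1,v_2)=(0,0)$. In that case the level set is the center curve and has $\mathcal H^2=0$.
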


We will prove Theorem \ref{thm:genuszerotwoboundary} in Section \ref{sec : topologysixparameter} by using the channel surface structure on both sides of the Clifford annulus. The Clifford annulus inherits a two-sided channel surface structure from the Clifford torus, and this structure is preserved under conformal deformations. This will help us characterize the topology of the $6$-parameter family $\Phi
_6$ and show Theorem \ref{thm:genuszerotwoboundary}.

\section{Topology of surfaces in the $6$-parameter family} \label{sec : topologysixparameter}
In this section, we prove Theorem \ref{thm:genuszerotwoboundary}. By Remark \ref{rem: Continuous orientation}, it suffices to consider the Clifford annulus $\Sigma = C = C_{[e_3]}$ and study $\Sigma_{(v, t)}$. Recall that $\Sigma$ can be parametrized by
\[
\Sigma=\left\{\frac{1}{\sqrt{2}+\sin\beta}(\cos\alpha,\sin\alpha,\cos\beta):\alpha\in[0,2\pi),\ \beta\in[0,\pi]\right\}.
\]
By Section \ref{subsec: orientation of K}, we take the outer unit normal $N$ to point into $A^{*} = A^{*}_{e_3}$, the solid torus region, and $A = A_{e_3}$ is the solid ball region. Before proving Theorem \ref{thm:genuszerotwoboundary}, we describe interior and exterior channel surface structures of $F_{v}(\Sigma)$. From the interior region $A$, $\Sigma$ is a channel surface with channel spheres $\mathcal S_\beta\subset \mathbb R^3$ with the center
\[
C_{\mathbb{R}^{3}}(\beta)=\left(0,0,\frac{\sqrt2\cos\beta}{1+\sqrt2\sin\beta}\right)
\]
and radius
\[
R_{\mathbb{R}^{3}}(\beta)=\frac{1}{1+\sqrt2\sin\beta},\qquad \beta\in[0,\pi].
\]
Then $\Sigma$ is the envelope of the one-parameter family
$\{\mathcal S_\beta\}_{\beta\in[0,\pi]}$.
Equivalently, $\mathcal S_\beta$ is given by
\[
\left|x-\left(0,0,\frac{\sqrt2\cos\beta}{1+\sqrt2\sin\beta}\right)\right|^2=\frac{1}{(1+\sqrt2\sin\beta)^2}.
\]

On the other hand, from the exterior region $A^{*}$, $\Sigma$ is an envelope of spherical caps that are orthogonal to $\partial \mathbb{B}^{3}$. More precisely, it is an envelope of the following $1$-parameter family of orthogonal spherical caps
\begin{equation} \label{channelcapdef}
    \mathcal{H}_\alpha=\overline{\mathbb B}^3\cap\left\{x\in\mathbb R^3:\left|x-\sqrt2(\cos\alpha,\sin\alpha,0)\right|^2=1\right\}.
\end{equation}
Conformal maps preserve the structure of channel surfaces. Viewing from the interior region $\pi(F_v(A))$, $\pi(F_{v}(\Sigma))$ is an envelope of geodesic spheres $\pi(F_{v}(\mathcal{S}_{\beta}))$; from the exterior region $\pi(F_v(A^{*}))$, it is an envelope of half geodesic spheres $\pi(F_{v}(\mathcal{H}_{\alpha}))$. 
In the following lemma, we characterize the centers of these geodesic spheres and half geodesic spheres. This good property of the center curves is the main motivation we work with the Clifford annulus. For notational convenience, Let us call a surface in $\mathbb{S}^3$ which is an envelope of channel hemispheres \emph{half-channel surface}.
 
\begin{lem} \label{lem:halfchannel} Fix any $v \in \mathbb{B}^3$. For $\alpha \in [0, 2 \pi)$, the center $C_{\mathbb{S}^{3}}(\alpha)$ of $\pi(F_{v}(\mathcal{H}_{\alpha}))$ is on $\partial \mathbb{S}^{3}_{+}$. In particular, for each $\alpha$, $\pi(F_{v}(\mathcal{H}_{\alpha}))$ is a half geodesic sphere. 

For $\beta \in (0, \pi)$, the center $C_{\mathbb{S}^3}(\beta)$ of $\pi(F_v(\beta))$ is on $\mathbb{S}_+^3$ and  $C_{\mathbb{S}^3}(\mathcal{S}_{\beta}) \in \partial \mathbb{S}_+^3$ for $\beta =0, \pi$.
\end{lem}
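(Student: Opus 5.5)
The plan is to use two facts. First, a geodesic sphere (or geodesic ball boundary) in $\mathbb{S}^3$ has center on the equator $\partial\mathbb{S}^3_+=\{x_4=0\}$ exactly when it is invariant under the reflection $\rho:(x_1,x_2,x_3,x_4)\mapsto(x_1,x_2,x_3,-x_4)$. Second, $\pi^{-1}\circ\rho\circ\pi$ is the inversion $\iota(x)=x/|x|^2$ in the unit sphere $\partial\mathbb{B}^3$. This can be read off directly from the formula for $\pi$, since replacing $x$ by $x/|x|^2$ sends $(1-|x|^2)/(1+|x|^2)$ to its negative and fixes the other coordinates. So a round sphere $S\subset\mathbb{R}^3$ is mapped by $\pi$ to a geodesic sphere centered on $\partial\mathbb{S}^3_+$ if and only if $\iota(S)=S$, i.e. if and only if $S$ meets $\partial\mathbb{B}^3$ orthogonally, or $S=\partial\mathbb{B}^3$. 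Here we use the classical fact that a sphere is preserved by inversion in $\partial\mathbb{B}^3$ iff it is orthogonal to it, which can also be checked via the power of the origin: $|c|^2-R^2=1$.

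For the caps $\mathcal{H}_\alpha$, I will first note that the full sphere $\{|x-\sqrt2(\cos\alpha,\sin\alpha,0)|=1\}$ satisfies $|c|^2-R^2=2-1=1$, so it is orthogonal to $\partial\mathbb{B}^3$. Next, $F_v$ extends to a Möbius transformation of $\mathbb{R}^3\cup\{\infty\}$ preserving $\partial\mathbb{B}^3$ (as stated in the paper). It therefore maps spheres to spheres and preserves angles, so the image sphere is again orthogonal to $\partial\mathbb{B}^3$. One degenerate case needs a remark: the image could be a plane through the origin, which is also $\iota$-invariant, and its image under $\pi$ is a great sphere through the poles, with center on the equator. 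By the criterion above, the sphere $\pi(F_v(\text{full sphere}))$ is a geodesic sphere with center on $\partial\mathbb{S}^3_+$. The reflection $\rho$ then cuts it into two congruent halves, so its intersection with $\overline{\mathbb{S}^3_+}$, namely $\pi(F_v(\mathcal{H}_\alpha))$, is a half geodesic sphere.

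For the interior channel spheres $\mathcal{S}_\beta$, I will compute $|c|^2-R^2$. This gives
\[
\frac{2\cos^2\beta-1}{(1+\sqrt2\sin\beta)^2}=\frac{(1-\sqrt2\sin\beta)(1+\sqrt2\sin\beta)}{(1+\sqrt2\sin\beta)^2}=\frac{1-\sqrt2\sin\beta}{1+\sqrt2\sin\beta},
\]
after using $2\cos^2\beta-1=1-2\sin^2\beta$. This equals $1$ iff $\sin\beta=0$, i.e. $\beta\in\{0,\pi\}$. In that case $\mathcal{S}_\beta$ is orthogonal to $\partial\mathbb{B}^3$, and the previous argument puts the center on $\partial\mathbb{S}^3_+$.

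For $\beta\in(0,\pi)$ the quantity is $<1$, so $\mathcal{S}_\beta$ is not $\iota$-invariant. Indeed $\mathcal{S}_\beta$ is contained in the closed ball $\overline{\mathbb{B}^3}$: $|c|+R=(\sqrt2|\cos\beta|+1)/(1+\sqrt2\sin\beta)\le1$, which I will check via $|\cos\beta|\le\sin\beta+\tfrac{1}{\sqrt2}(\ldots)$, or simply from the fact that $\mathcal{S}_\beta$ is tangent to $\Sigma\subset\overline{\mathbb{B}^3}$ from $A$. Since $F_v$ preserves $\mathbb{B}^3$ and is Möbius, $F_v(\mathcal{S}_\beta)$ is a round sphere in $\overline{\mathbb{B}^3}$ not orthogonal to the boundary. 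Hence $\pi(F_v(\mathcal{S}_\beta))$ is a geodesic sphere lying in $\overline{\mathbb{S}^3_+}$ and not $\rho$-invariant. Its center, the point of $\mathbb{S}^3$ on the side of the enclosed small ball, lies in the open hemisphere, because the enclosed ball is contained in $\mathbb{S}^3_+$ and is not symmetric.

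The main obstacle is this last step. A touching case ($|c|+R=1$ at some interior $\beta$) would need care, so I would instead argue with the complementary cap: the small ball bounded by the sphere lies in $\overline{\mathbb{S}^3_+}$ with radius $<\pi/2$, which forces its center into $\mathbb{S}^3_+$ unless the sphere is $\rho$-symmetric, and that is excluded by the computation above.
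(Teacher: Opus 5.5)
Your treatment of the caps $\mathcal{H}_\alpha$ and of $\beta\in\{0,\pi\}$ is fine. It makes the paper's conformality/orthogonality argument precise via $\pi^{-1}\circ\rho\circ\pi=\iota$. Your power-of-a-point computation also correctly shows that for $\beta\in(0,\pi)$ the center is \emph{off} the equator. The gap is in deciding \emph{which} hemisphere it lies in.

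The claim $\mathcal{S}_\beta\subset\overline{\mathbb{B}^3}$ is false. The inequality $|c|+R\le 1$ is equivalent to $|\cos\beta|\le\sin\beta$, i.e.\ $\beta\in[\pi/4,3\pi/4]$. As $\beta\to 0$ the sphere $\mathcal{S}_\beta$ converges to $\mathcal{S}_0$, the unit sphere centered at $(0,0,\sqrt2)$. Equivalently, $\pi(\mathcal{S}_\beta)=\partial B_{\pi/4}(w(\beta))$ crosses $\partial\mathbb{S}^3_+$ whenever $\sin\beta<1/\sqrt2$. Tangency to $\Sigma$ from $A$ gives no containment either. So for $\beta\in(0,\pi/4)\cup(3\pi/4,\pi)$ the image sphere crosses the equator, and your last step (``the enclosed small ball lies in $\overline{\mathbb{S}^3_+}$'') has no basis in either of its versions.

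This is more than bookkeeping: with the convention ``center of the smaller enclosed ball'' the statement is actually false. Take $v=(0,0,1/\sqrt2)$. The formula in Lemma~\ref{lem:betaradius} gives $\cot R_v(\beta)=3-4\cos\beta<0$ for $\cos\beta>3/4$. So the ball $B_{R_v(\beta)}(C_v(\beta))$, which is the image under $\pi\circ F_v$ of the ball bounded by $\mathcal{S}_\beta$ on the side of $A$, has radius $>\pi/2$. For small $\beta$ you can see this directly: that Euclidean ball contains $v$ and the pole $v/|v|^2$ of $F_v$, so its image contains $\pi(0)$ and $\pi(\infty)$. The smaller ball bounded by the same sphere is then centered at $-C_v(\beta)$, which lies in the open lower hemisphere.

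So the lemma holds only for the center of the ball on the $\pi(F_v(A))$ side. The proof must track that specific ball through $\pi\circ F_v$; $\rho$-invariance plus ``which side is small'' cannot do this. The paper settles it with the explicit formula from the proof of Lemma~\ref{lem:betaradius}. There $C_v(\beta)$ is the normalization of $(1-|v|^2)w(\beta)+2(v_3\cos\beta-\tfrac{1}{\sqrt2})(v,0)$, whose fourth coordinate $(1-|v|^2)\sin\beta$ is positive on $(0,\pi)$ and vanishes at $\beta=0,\pi$.

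If you want to stay conceptual, use the following:
\begin{itemize}
\item $\Phi_v:=\pi\circ F_v\circ\pi^{-1}$ commutes with $\rho$ and preserves $\mathbb{S}^3_+$.
\item For a ball $B=\{y\cdot c>\cos R\}$, the identity $y\cdot c-y\cdot\rho(c)=2y_4c_4$ shows that $c_4>0$ if and only if $B\cap\mathbb{S}^3_+\supsetneq\rho(B)\cap\mathbb{S}^3_+$.
\item This strict inclusion is transported by $\Phi_v$. Starting from $B=B_{\pi/4}(w(\beta))$ with $w_4=\sin\beta>0$, the center of $\Phi_v(B)$ therefore has positive fourth coordinate, whatever its radius.
\end{itemize}
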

\begin{proof}
By (\ref{channelcapdef}), for each $\alpha \in [0, 2 \pi)$, $\mathcal{H}_{\alpha}$ is a spherical cap which is orthogonal to $\partial \mathbb{B}^{3}$. By the conformality of $F_v$ and $\pi$, $\pi(F_{v}(\mathcal{H}_{\alpha})) \subset \overline{\mathbb{S}_+^3}$ is orthogonal to $\partial \mathbb{S}^{3}_{+}$. Since $\partial \mathbb{S}^{3}_{+}$ is an equator of $\mathbb{S}^{3}$, $\pi(F_{v}(\mathcal{H}_{\alpha}))$ is a half geodesic sphere with center on $\partial \mathbb{S}^{3}_{+}$.

The center curve $\{C_{\mathbb{R}^3(\beta)} \}_{\beta \in [0, \pi]} \subset \overline{\mathbb{B}^3}$ is orthogonal to $\partial \overline{\mathbb{B}^3}$. By the conformality of $F_v$ and $\pi$ again, the center curve $\{\pi(C_{\mathbb{R}^3(\beta))} \}_{\beta \in [0, \pi]} \subset \mathbb{S}^3_+$ has its interior in $\mathbb{S}^3_+$ and orthogonal to $\partial \mathbb{S}^3_+$ at the two endpoints.

\end{proof} 
\subsection{Channel surface structure in the interior region $A$} We first analyze the spherical radius of channel spheres of $\Sigma_{(v,0)}=F_{v}(\Sigma)$. This will give us the topology of spherical distance set $\Sigma_{(v,t)}$ when $t<0$, i.e. when $\Sigma_{(v,0)}$ is shrinking to the center curve.  We denote the center and radius of $F_{v}(\mathcal{S}_{\beta})$ by $C_{\mathbb{R}^{3},v}(\beta)$ and $R_{\mathbb{R}^{3},v}(\beta)$, respectively. Also, denote the center and radius of $\pi(F_{v}(\mathcal{S}_{\beta}))$ by $C_{\mathbb{S}^{3},v}(\beta)$ and $R_{\mathbb{S}^{3},v}(\beta)$, respectively. We abbreviate the notation by omitting $\mathbb{S}^{3}$ as $C_{v}(\beta)$ and $R_{v}(\beta)$. Now we compute $C_{v}(\beta)$ and $R_{v}(\beta)$.
\begin{lem} \label{lem:betaradius}
    The radius of $R_{v}(\beta)$ of $\pi(F_{v}(\mathcal{S}_{\beta}))$ is
    \[
     R_{v}(\beta) = \arccot \left( \frac{1+|v|^2-2\sqrt2\,v_3\cos\beta}{1-|v|^2} \right).
    \]
In particular, if $v_3\neq0$, then $R_v$ is strictly monotone on $[0,\pi]$, hence its maximum and minimum are attained at the endpoint channel spheres $\beta=0,\pi$. If $v_3=0$, then $R_v$ is a constant.
\end{lem}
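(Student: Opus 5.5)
The plan is to express the spherical radius of $\pi(S)$, for any round sphere $S\subset\mathbb{R}^3$, by a Möbius-invariant formula, and then transport it through $F_v$ using the Lorentzian (pentaspherical) model of Möbius geometry. First I would record the elementary fact that if $S=\partial B^3_R(c)$, then $\pi(S)$ is a geodesic sphere in $\mathbb{S}^3$ whose radius $r\in(0,\pi)$, measured from the center lying on the side of $\pi(B^3_R(c))$, satisfies
\[
\cot r=\frac{1+|c|^2-R^2}{2R}.
\]
This is checked directly for $c=0$: the image is $\{x_4=\frac{1-R^2}{1+R^2}\}$, so $\cos r=\frac{1-R^2}{1+R^2}$ and $r=2\arctan R$. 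The general case follows by writing $\pi(S)=\mathbb{S}^3\cap\{\langle y,n\rangle=d\}$ and computing $d/|n|=\cos r$ from the equation $|x|^2-2\langle x,c\rangle+|c|^2-R^2=0$ pulled back by $\pi$. As a sanity check at $v=0$, take $c=C_{\mathbb{R}^3}(\beta)$ and $R=R_{\mathbb{R}^3}(\beta)$. A short computation gives $1+|c|^2-R^2=\frac{2+2\sqrt2\sin\beta}{(1+\sqrt2\sin\beta)^2}$, so $\cot r=1$ and $r=\pi/4$ for every $\beta$, as it must for the Clifford torus. This matches the claimed formula at $v=0$.

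Next I would treat $F_v$. Rather than computing the image sphere $F_v(\mathcal S_\beta)$ directly, which is messy, I would encode a sphere $|x-c|^2=R^2$ by the spacelike vector
\[
\sigma=\frac1R\Bigl(c,\ \tfrac{|c|^2-R^2-1}{2},\ \tfrac{|c|^2-R^2+1}{2}\Bigr)\in\mathbb{R}^{4,1},
\]
which has Lorentz norm $1$. The round metric of $\mathbb{S}^3$ corresponds to the fixed timelike unit vector $e_0=(0,0,0,0,1)$, and the formula above reads $\cot r=-\langle\sigma,e_0\rangle_{4,1}$ up to a fixed sign convention. Since $F_v$ is a Möbius transformation, it acts on $\mathbb{R}^{4,1}$ by a Lorentz transformation $L_v$. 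Because $F_v$ fixes the unit sphere and is the hyperbolic translation taking $0$ to $-v$, $L_v$ is the boost in the plane spanned by $(v,0,0)$ and the vector representing the unit sphere's "center", with rapidity determined by $\cosh=\frac{1+|v|^2}{1-|v|^2}$ and $\sinh=\frac{2|v|}{1-|v|^2}$. Then
\[
\cot R_v(\beta)=-\langle L_v\sigma_\beta,e_0\rangle=-\langle\sigma_\beta,L_v^{-1}e_0\rangle,
\]
where $L_v^{-1}e_0=\frac{1}{1-|v|^2}\bigl(\pm2v,\ \ast,\ 1+|v|^2\bigr)$. Pairing this with $\sigma_\beta$ uses the $v=0$ value $-\langle\sigma_\beta,e_0\rangle=1$, the fact that $c/R=(0,0,\sqrt2\cos\beta)$, and the fact that the fourth coordinate of $\sigma_\beta$ pairs to zero with the boost direction. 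The result is
\[
\cot R_v(\beta)=\frac{1+|v|^2-2\sqrt2\,v_3\cos\beta}{1-|v|^2}.
\]
The signs would be fixed by checking a single case, such as $v=(0,0,v_3)$ with $\beta=0$, directly from the explicit formula for $F_v$.

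The main obstacle is bookkeeping: getting the correct normalization and sign of $L_v$, and making sure the radius is measured on the side of the image of the solid channel ball, that is, toward $\pi(A_v)$. I would control this by verifying the final formula in the axial case, where $F_v(\mathcal S_\beta)$ stays centered on the $e_3$-axis and can be computed by hand. I would also note that the argument of $\arccot$ is always positive, since $1+|v|^2-2\sqrt2|v|>-(1-|v|)^2\cdot$ is not needed; all that is used is that $\arccot$ is well defined. Monotonicity is then immediate. Since $1-|v|^2>0$, the argument is an affine function of $\cos\beta$ with slope $-2\sqrt2 v_3/(1-|v|^2)$, and $\cos\beta$ is strictly decreasing on $[0,\pi]$. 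Because $\arccot$ is strictly decreasing, $R_v$ is strictly monotone when $v_3\neq0$, so its extrema occur at $\beta=0$ and $\beta=\pi$, and $R_v$ is constant when $v_3=0$.
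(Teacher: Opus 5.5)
Your argument is correct and gives exactly the paper's formula, but by a different route. The paper stays on $\mathbb{S}^3$ throughout. It writes $\pi(\mathcal S_\beta)=\{y\cdot w(\beta)=1/\sqrt2\}$, substitutes the explicit conformal map $\pi\circ F_{-v}\circ\pi^{-1}$, and clears the denominator to get a new affine equation $z\cdot n_v(\beta)=d_v(\beta)$. From this it reads off the center $C_v(\beta)=n_v/|n_v|$ and $\cos R_v(\beta)=d_v/|n_v|$, hence $\cot R_v(\beta)$. You instead linearize the M\"obius action on $\mathbb{R}^{4,1}$, where $\cot r=-\langle\sigma,e_0\rangle$. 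The computation then collapses to one pairing, $\cot R_v(\beta)=-\langle\sigma_\beta,L_v^{-1}e_0\rangle$, and the image sphere is never computed. As a bonus, the same pairing with $\sigma=(\sqrt2\cos\alpha,\sqrt2\sin\alpha,0,0,1)$ reproduces Lemma~\ref{lem:alpharadius} at no extra cost. What the paper's computation gives in exchange is the explicit centers $C_v(\beta)$ and their position in $\overline{\mathbb{S}^3_+}$, which the later topological analysis uses. In your setup these are the normalized first four coordinates of $L_v\sigma_\beta$, with the fourth one negated, because your normalization represents $y\in\mathbb{S}^3$ by $(y_1,y_2,y_3,-y_4,1)$.

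Two small corrections.

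First, the sign bookkeeping you flag as the main obstacle goes away: you only need $L_v^{-1}e_0$, not the boost itself. Work on $e_4^{\perp}$, where $-e_4$ is the unit sphere's vector and is fixed by $L_v$. There the origin corresponds to $e_0$, and $x\in\mathbb{B}^3$ corresponds to $\xi(x)=\frac{1}{1-|x|^2}(2x,0,1+|x|^2)$. Equivariance gives $L_v^{-1}e_0=\xi(F_{-v}(0))=\xi(v)$. So the sign is $+$, the starred entry is $0$, and no axial check is needed. Orthochronicity of $L_v$ also tracks the ball side automatically.

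Second, drop the garbled sentence about positivity. The argument of $\arccot$ can be negative, for example at $v=(0,0,0.9)$, $\beta=0$, where $R_v(0)>\pi/2$. What the formula requires is the branch $\arccot:\mathbb{R}\to(0,\pi)$, which matches your convention $r\in(0,\pi)$. That branch is strictly decreasing, so your monotonicity argument stands as written.
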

\begin{proof}
Observe that
\begin{equation} \label{eq: channel sphere of the Clifford Torus}
\begin{split}
    \pi(\mathcal {S}_\beta) & =\{y\in \mathbb{S}^3:y\cdot w(\beta)=\cos(\pi/4)\} \\
    & = \partial B_{\frac{\pi}{4}}(w(\beta)),
\end{split}
\end{equation}
where
\[
w(\beta)=(0,0,\cos\beta,\sin\beta)\in \mathbb{S}^3, \quad \beta \in [0, \pi].
\]
Let us take $\tilde{v}=(v,0) \in \mathbb{R}^4$. By (\ref{eq: channel sphere of the Clifford Torus}) and the fact that $(F_v)^{-1} = F_{-v}$, $z\in (\pi\circ F_v)(\mathcal{S}_\beta)$ iff
\[
(\pi \circ F_{-v} \circ \pi^{-1})(z)\cdot w(\beta)=\frac1{\sqrt2}.
\]
By direct computations, we have
\[
(\pi \circ F_{-v} \circ \pi^{-1})(z)=\frac{(1-|v|^{2})z+2(1+\tilde{v}\cdot z)\tilde{v}}{1+2\tilde{v}\cdot z+|v|^{2}},
\]
and hence
\begin{equation} \label{eq: channel geodeisc sphere}
z\cdot\left((1-|v|^{2})w(\beta)+2\left(v_3\cos\beta-\frac1{\sqrt2}\right)\tilde{v}\right)=\frac{1+|v|^{2}}{\sqrt2}-2v_3\cos\beta.
\end{equation}
Observe that 
\begin{equation*}
\left|(1-|v|^{2})w(\beta)+2\left(v_3\cos\beta-\frac1{\sqrt2}\right)\tilde{v}\right|^2 = 4\left(v_3 \cos{\beta} - \frac{1 + |v|^2}{2 \sqrt{2}}\right)^2 + \frac{1}{2}(1 - |v|^2)^2 > 0, \quad \text{for }(v, \beta) \in \mathbb{B}^3 \times [0, \pi].
\end{equation*}
Thus, \((\pi\circ F_v)(\mathcal{S}_{\beta})\) is the geodesic sphere given by
\[
(\pi\circ F_v)(\mathcal{S}_{\beta})=\{y\in \mathbb{S}^3:y\cdot C_v(\beta)=\cos R_v(\beta)\},
\]
where
\begin{equation} \label{eq: center of the interior channel surface}
    C_v(\beta) = \frac{(1-|v|^{2})w(\beta)+2\left(v_3\cos\beta-\frac1{\sqrt2}\right)\tilde{v}}{\left|(1-|v|^{2})w(\beta)+2\left(v_3\cos\beta-\frac1{\sqrt2}\right)\tilde{v}\right|}
\end{equation}
and
\begin{equation}
\cos R_v(\beta)=\frac{\frac{1+|v|^2}{\sqrt2}-2v_3\cos\beta}{\left|(1-|v|^{2})w(\beta)+2\left(v_3\cos\beta-\frac1{\sqrt2}\right)\tilde{v}\right|}.    
\end{equation}
In particular, from (\ref{eq: center of the interior channel surface}), it is straightforward to see that $C_v(\beta) \in \mathbb{S}_+^3$ for $\beta \in (0, \pi)$, and $C_v(0), C_v(\pi) \in \partial \mathbb{S}_+^3$.
Then we can compute
\[
\cot R_v(\beta)=\frac{\cos R_v(\beta)}{\sin R_v(\beta)}=\frac{1+|v|^2-2\sqrt2\,v_3\cos\beta}{1-|v|^2}.
\]
Now the proposition follows from the monotonicity of $\cos \beta$.
\end{proof}

\subsection{Half-channel surface structure in the exterior region $A^{*}$} 
Now we analyze the half-channel surface structure and see the behavior of the spherical radius of the channel half geodesic spheres of $F_{v}(\Sigma)$ from $A^{*}$. This gives the topology of spherical distance set $\Sigma_{(v,t)}$ when $t>0$, i.e., when the annulus $\Sigma_{(v,0)}$ is expanding. Note that  $\pi(F_{v}(\mathcal{H}_{\alpha}))$ is a hemisphere by Lemma \ref{lem:halfchannel}. Let us denote the center and radius of $\pi(F_{v}(\mathcal{H}_{\alpha}))$ by $C_{\mathbb{S}^{3},v}(\alpha)$ and $R_{\mathbb{S}^{3},v}(\alpha)$, respectively. We abbreviate the notation as before to $C_{v}(\alpha)$ and $R_{v}(\alpha)$ and we compute $C_{v}(\alpha)$ and $R_{v}(\alpha)$.
\begin{lem} \label{lem:alpharadius}
    The radius of $R_{v}(\alpha)$ of $\pi(F_{v}(\mathcal{H}_{\alpha}))$ is
    \[
     R_{v}(\alpha) = \arccot \left( \frac{1+|v|^2-2\sqrt2\,(v_1\cos\alpha+v_2\sin\alpha)}{1-|v|^2} \right).
    \]
In particular, if $(v_{1},v_{2}) \neq (0,0)$, then $R_{v}(\alpha)$ has exactly one maximum and one minimum point in $[0, 2 \pi)$, and is monotonic between these two extreme points. Otherwise, \(R_v\) is a constant.
\end{lem}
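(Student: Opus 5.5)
We will mirror the proof of Lemma \ref{lem:betaradius}, replacing the interior channel spheres $\mathcal S_\beta$ by the exterior orthogonal caps $\mathcal H_\alpha$. The first step is to identify $\pi(\mathcal H_\alpha)$ as a geodesic (half-)sphere in $\mathbb S^3$. Since $T^2$ is the envelope from the solid torus side of the spheres centered on the other core circle, we expect
\[
\pi(\mathcal H_\alpha)=\{y\in\overline{\mathbb S^3_+}: y\cdot u(\alpha)=\cos(\pi/4)\},\qquad u(\alpha)=(\cos\alpha,\sin\alpha,0,0).
\]
We will check this directly from (\ref{channelcapdef}). Writing $x=\pi^{-1}(y)$ and using $|x|^2=(1-y_4)/(1+y_4)$ and $x_i=y_i/(1+y_4)$, the equation $|x-\sqrt2(\cos\alpha,\sin\alpha,0)|^2=1$ becomes $|x|^2+1=2\sqrt2(x_1\cos\alpha+x_2\sin\alpha)$. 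Multiplying by $1+y_4$ gives $2=2\sqrt2\, y\cdot u(\alpha)$, which is the claimed formula.

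Next we pull back by $\pi\circ F_{-v}\circ\pi^{-1}$, using the explicit formula from the proof of Lemma \ref{lem:betaradius}:
\[
(\pi\circ F_{-v}\circ\pi^{-1})(z)=\frac{(1-|v|^2)z+2(1+\tilde v\cdot z)\tilde v}{1+2\tilde v\cdot z+|v|^2}.
\]
Substituting into $(\cdot)\cdot u(\alpha)=1/\sqrt2$ and writing $\tilde v\cdot u(\alpha)=v_1\cos\alpha+v_2\sin\alpha=:\ell(\alpha)$, we obtain the linear equation
\[
z\cdot\Big((1-|v|^2)u(\alpha)+2\big(\ell(\alpha)-\tfrac1{\sqrt2}\big)\tilde v\Big)=\frac{1+|v|^2}{\sqrt2}-2\ell(\alpha).
\]
Call the vector on the left $W$. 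The same algebra as before gives $|W|^2=4\big(\ell-\frac{1+|v|^2}{2\sqrt2}\big)^2+\frac12(1-|v|^2)^2>0$. Hence the image is the geodesic sphere with center $C_v(\alpha)=W/|W|$ and $\cos R_v(\alpha)=\big(\frac{1+|v|^2}{\sqrt2}-2\ell\big)/|W|$.

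From this we get $\sin R_v(\alpha)=\frac{(1-|v|^2)}{\sqrt2\,|W|}$. The quantity $\frac{1-|v|^2}{\sqrt2}$ is positive, so $\sin R_v>0$, consistent with $R_v\in(0,\pi)$. Therefore
\[
\cot R_v(\alpha)=\frac{1+|v|^2-2\sqrt2\,\ell(\alpha)}{1-|v|^2}.
\]
Because $\arccot$ is strictly decreasing, the stated formula for $R_v(\alpha)$ follows. The fourth coordinate of $W$ vanishes, which is consistent with Lemma \ref{lem:halfchannel}.

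For the monotonicity statement, write $\ell(\alpha)=\rho\cos(\alpha-\alpha_0)$ with $\rho=|(v_1,v_2)|$. If $\rho>0$, then $\ell$ has exactly one maximum, at $\alpha_0$, and one minimum, at $\alpha_0+\pi$, on the circle, and it is strictly monotone on each arc between them. Since $R_v=\arccot(a-b\,\ell)$ with $b>0$ is a strictly increasing function of $\ell$, $R_v$ inherits this structure. If $\rho=0$, then $R_v$ is constant.

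The only real obstacle is the identification of $\pi(\mathcal H_\alpha)$ with the sphere $y\cdot u(\alpha)=1/\sqrt2$; everything after that is routine algebra paralleling Lemma \ref{lem:betaradius}.
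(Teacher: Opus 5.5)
Your proposal is correct and follows essentially the same route as the paper. You identify $\pi(\mathcal H_\alpha)$ with $\{y\cdot u(\alpha)=1/\sqrt2\}$, pull back by $\pi\circ F_{-v}\circ\pi^{-1}$ to get the linear equation (hence the center and $\cos R_v(\alpha)$), compute $\cot R_v(\alpha)$, and get monotonicity from $v_1\cos\alpha+v_2\sin\alpha=\rho\cos(\alpha-\alpha_0)$; you additionally verify the identification of $\pi(\mathcal H_\alpha)$ and the formula $\sin R_v(\alpha)=(1-|v|^2)/(\sqrt2\,|W|)$, both of which the paper simply asserts.
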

\begin{proof}
Observe that
\[
\pi(\mathcal{H}_\alpha)=\{y\in \mathbb{S}^3_{+}:y\cdot u(\alpha)=\cos(\pi/4)\}.
\]
where
\[
u(\alpha)=(\cos \alpha,\sin \alpha,0,0)\in \mathbb{S}^3, \quad u(\alpha) \in [0, 2 \pi].
\]
As before, we take $\tilde{v}=(v,0)$, then $z\in (\pi\circ F_v)(\mathcal{H}_\alpha)$ iff
\[
(\pi \circ F_{-v} \circ \pi^{-1})(z)\cdot u(\alpha)=\frac1{\sqrt2}.
\]
Using
\[
(\pi\circ F_{-v}\circ\pi^{-1})(z)=\frac{(1-|v|^2)z+2(1+\tilde{v}\cdot z)\tilde{v}}{1+2\tilde{v}\cdot z+|v|^2},
\]
we obtain
\begin{equation}
z\cdot\left((1-|v|^2)u(\alpha)+2\left(v(\alpha)-\frac1{\sqrt2}\right)\tilde{v}\right)=\frac{1+|v|^2}{\sqrt2}-2v(\alpha).    
\end{equation}
where $v(\alpha):= v_1\cos\alpha+v_2\sin\alpha$.
Observe that
\[
|(1-|v|^2)u(\alpha)+2\left(v(\alpha)-\frac1{\sqrt2}\right)\tilde{v}|^2 = 4(v(\alpha) - \frac{1 + |v|^2}{2 \sqrt{2}})^2 + \frac{1}{2}(1 - |v|^2)^2 > 0, \quad \text{for }(v, \alpha) \in \mathbb{B}^3 \times [0, 2 \pi], 
\]
and
\[
(1-|v|^2)u(\alpha)+2\left(v(\alpha)-\frac1{\sqrt2}\right)\tilde{v} \in \{ x_4 = 0 \}.
\]
Thus, \((\pi\circ F_v)(\mathcal{H}_{\alpha})\) is a half geodesic sphere given by
\[
(\pi\circ F_v)(\mathcal{H}_{\alpha})=\{y\in \mathbb{S}^3_{+}:y\cdot C_v(\alpha)=\cos R_v(\alpha)\},
\]
where
\begin{equation*}
    C_v(\alpha) = \frac{(1-|v|^2)u(\alpha)+2\left(v(\alpha)-\frac1{\sqrt2}\right)\tilde{v}}{|(1-|v|^2)u(\alpha)+2\left(v(\alpha)-\frac1{\sqrt2}\right)\tilde{v}|} \in \partial \mathbb{S}_+^3,
\end{equation*}
and
\begin{equation}
    \cos{R_v(\alpha)} = \frac{\frac{1+|v|^2}{\sqrt2}-2v(\alpha)}{|(1-|v|^2)u(\alpha)+2\left(v(\alpha)-\frac1{\sqrt2}\right)\tilde{v}|}.
\end{equation}

We then compute
\[
\cot R_v(\alpha)=\frac{1+|v|^2-2\sqrt2\,(v_1\cos\alpha+v_2\sin\alpha)}{1-|v|^2}.
\]
Hence, we obtain that if $(v_{1},v_{2}) \neq (0,0)$, then $R_{v}(\alpha)$ has exactly one maximum and one minimum point in $[0, 2 \pi)$ and monotonic between these two points.
\end{proof}

\subsection{$6$-parameter family of genus zero surfaces with at most two boundary components} We prove the following topological control of the $4$-parameter Simon-Smith family (recall Section \ref{sec:canonicalfamily}) based on the channel surface structures we discussed above, and the monotonicity of the radii $R_{v}(\alpha)$ and $R_{v}(\beta)$ in Lemma \ref{lem:alpharadius} and \ref{lem:betaradius}, respectively. 
   \begin{prop}\label{lem:behaviourSigmavt}
        Let $(v,t)\in {\mathbb B^3}\times [-\pi,\pi]$. We have the topological characterizations of the set
        \[
            \Sigma_{(v,t)}:=\partial \left(\pi^{-1}\left( \{x \in \mathbb{S}^3_+ : d_v(x) < t \} \right) \right) \cap \mathbb{B}^3\,.
        \]
        
    If $v=0$, then $\Sigma_{(v,t)}$ is an annulus for $t\in (-\pi/4,\pi/4)$, a great circle for $t=\pi/4$, a straight line for $t= - \pi/4$, and empty for other $t$.
             
    If $v\ne 0$ and $v_{3} =0$, then there exist
        $-\pi<t_1<0<t_2<t_{3}<\pi,$ depending on $v$,
        such that:
        \begin{enumerate}[label=\normalfont(\arabic*)]
            \item For $t_1<t<t_2$, $\Sigma(v,t)$ is an embedded annulus.
            \item For $t=t_1$, $\Sigma_{(v,t)}$ is a straight line.
            \item For $t=t_2$, $\Sigma_{(v,t)}$ is a topological disk with two points identified such  that it is smooth except at a point.
            \item For $t_2<t<t_3$, $\Sigma_{(v,t)}$ is a topological disk, smooth except at two points.
            \item For $t= t_3$, $\Sigma_{(v,t)}$ is a point on $\partial \mathbb{B}^{3}$.
            \item For $t< t_1$ and $t> t_3$, $\Sigma_{(v,t)}$ is empty.
        \end{enumerate}
        
        If $v\ne 0$ and $(v_{1},v_{2})=(0,0)$, then there exist
        $-\pi<t_1<t_2<0<t_3<\pi,$ depending on $v$,
        such that:
        \begin{enumerate}[label=\normalfont(\arabic*)]
            \item For $t_2<t<t_3$, $\Sigma_{(v,t)}$ is an embedded annulus.
            \item For $t=t_2$, $\Sigma_{(v,t)}$ is a topological disk which is smooth except at a point on $\partial\mathbb{B}^3$.
            \item For $t=t_3$, $\Sigma_{(v,t)}$ is an embedded circle.
            \item For $t_1<t<t_2$, $\Sigma_{(v,t)}$ is a topological disk which is smooth except at a point.
            \item For $t= t_1$, $\Sigma_{(v,t)}$ is a point on $\partial \mathbb{B}^{3}$.
            \item For $t< t_1$ and $t> t_3$, $\Sigma_{(v,t)}$ is empty.
        \end{enumerate}
        
        Otherwise, there exist
        \[
            -\pi<t_1<t_2<0<t_3<t_4<\pi\,,
        \] depending on $v$, such that:
        \begin{enumerate}[label=\normalfont(\arabic*)]
            \item For $t_2<t<t_3$, $\Sigma_{(v,t)}$ is an embedded annulus.
            \item For $t=t_2$, $\Sigma_{(v,t)}$ is a topological disk which is smooth except at a point on $\partial\mathbb{B}^3$.
            \item For $t=t_3$, $\Sigma_{(v,t)}$ is a topological disk with two points identified such  that it is smooth except at a point. 
            \item For $t_1<t<t_2$, $\Sigma_{(v,t)}$ is a topological disk which is smooth except at a point.
            \item For $t_3<t<t_4$, $\Sigma_{(v,t)}$is a topological disk, smooth except at two points.
            \item For $t= t_1, t_{4}$, $\Sigma_{(v,t)}$ is a point on $\partial \mathbb{B}^{3}$.
            \item For $t< t_1$ and $t> t_4$, $\Sigma_{(v,t)}$ is empty.
        \end{enumerate}
    \end{prop}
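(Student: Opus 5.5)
We work entirely in $\overline{\mathbb{S}^3_+}$, where $\Sigma_{(v,t)}$ corresponds to the level set $\{d_v = t\}$ of the signed distance to $S_v := \pi(F_v(\Sigma))$. By Lemma \ref{lem:halfchannel}, $S_v$ is the envelope of two channel families. From the ball side $\pi(A_v)$ it is swept by geodesic spheres $\partial B_{R_v(\beta)}(C_v(\beta))$, with centers along an arc $\gamma_v=\{C_v(\beta)\}_{\beta\in[0,\pi]}$ that is interior to $\mathbb{S}^3_+$ except at its endpoints on $\partial\mathbb{S}^3_+$. From the torus side $\pi(A^*_v)$ it is swept by half geodesic spheres $\partial B^+_{R_v(\alpha)}(C_v(\alpha))$ whose centers form a closed curve $\sigma_v\subset\partial\mathbb{S}^3_+$. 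The key observation, taken from Chu--Li's proof of the genus bound, is that for a channel surface the nearest-point projection to the surface factors through the channel spheres.

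\textbf{Step 1: the ball side ($t<0$).} For $t<0$ we show
\[
\{d_v\le t\}=\bigcup_{\beta}\overline{B}^+_{R_v(\beta)+t}(C_v(\beta))
\]
(empty spheres are dropped when the radius becomes negative). Consequently $\Sigma_{(v,t)}$ is the envelope of the shrunken channel spheres, intersected with $\mathbb{S}^3_+$. Its topology is then read off from the monotonicity of $R_v$ in Lemma \ref{lem:betaradius}.

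Suppose $v_3\neq 0$. Then $R_v$ is strictly monotone, so as $t$ decreases the spheres with smallest radius disappear first. The first to vanish is the endpoint sphere centered at $C_v(0)$ or $C_v(\pi)$, which lies on $\partial\mathbb{S}^3_+$. At $t_2=-\min R_v$ one boundary circle of the annulus collapses to a point on $\partial\mathbb{B}^3$. For $t_1<t<t_2$ the surface is a disk: a tube around a sub-arc of $\gamma_v$, capped at the interior end, with a conical point where the vanishing sphere's center sits. At $t_1=-\max R_v$ it becomes a point on $\partial\mathbb{B}^3$.

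Suppose instead $v_3=0$. Then $R_v$ is constant, and the whole tube collapses at once to the arc $\gamma_v$ at $t_1=-R_v$. This arc is orthogonal to $\partial\mathbb{S}^3_+$, and since it is the image of the axis it is a straight line in $\mathbb{B}^3$.

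\textbf{Step 2: the torus side ($t>0$).} Symmetrically, for $t>0$ the set $\{d_v\ge t\}$ is the union of the half balls $\overline{B}^+_{R_v(\alpha)-t}(C_v(\alpha))$ taken over $\alpha$. Using Lemma \ref{lem:alpharadius}:

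\begin{itemize}
\item If $(v_1,v_2)\neq(0,0)$, $R_v(\alpha)$ has a unique minimum and a unique maximum and is monotone between them. At $t_3=\min R_v$ the half-tube pinches at a single point on the center curve, which lies on $\partial\mathbb{S}^3_+$; this yields a disk with two points identified. For $t_3<t<t_4$ the tube is a single half-tube over an arc of $\sigma_v$, a disk with two singular endpoint caps. At $t_4=\max R_v$ it shrinks to a point on $\partial\mathbb{B}^3$.
\item If $(v_1,v_2)=(0,0)$, $R_v$ is constant and the half-tube collapses at once onto $\sigma_v$, which is a circle.
\item When $v=0$, the explicit values $R=\pi/4$ give the stated list of cases.
\end{itemize}

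\textbf{Main obstacle.} The hard step is justifying the union-of-balls description and the resulting smoothness and topology claims. We must rule out additional focal or cut points of $d_v$ other than the channel centers, and show that away from the singular moments the envelope is a smooth annulus or disk. For this we would adapt the argument of Chu--Li's Theorem 3.1. Every point of $\mathbb{S}^3_+$ near $S_v$ lies on a geodesic normal to $S_v$, and for a channel surface that normal geodesic passes through the center of the channel sphere tangent at its foot point. Hence $d_v(x)=\min_\beta\bigl(d(x,C_v(\beta))-R_v(\beta)\bigr)$ on the ball side, and the analogous formula holds on the torus side. We also need the reflection symmetry across $\partial\mathbb{S}^3_+$: the centers on that boundary let the half-spheres double to full spheres, so these min-formulas remain valid up to the boundary.

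Smoothness of the level sets then follows wherever the minimizing $\beta$ (or $\alpha$) is unique and nondegenerate. Monotonicity of $R_v$, together with the triangle inequality along the center curve, gives uniqueness except at the endpoint or extremal parameters, and those account exactly for the listed singular points.
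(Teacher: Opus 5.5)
Your proposal follows the same route as the paper: for $t<0$ you shrink the interior channel spheres $\partial B_{R_v(\beta)+t}(C_v(\beta))$, for $t>0$ you shrink the boundary-centred half spheres $\partial B^+_{R_v(\alpha)-t}(C_v(\alpha))$, you read off the critical times from the monotonicity in Lemmas \ref{lem:betaradius} and \ref{lem:alpharadius}, and you defer the union-of-balls and envelope justification to Chu--Li's channel-surface argument, which is exactly what the paper does (your reflection across $\partial\mathbb{S}^3_+$ and your min-formula for $d_v$ usefully spell out what the paper leaves implicit). One slip: when $v_3=0$, the collapse set $\gamma_v$ is not the image of the axis under $\pi\circ F_v$, since spherical centres are not preserved by non-isometric conformal maps; moreover, for $v\neq 0$ its preimage in $\mathbb{B}^3$ is a circular arc orthogonal to $\partial\mathbb{B}^3$ rather than a Euclidean segment, so your argument justifies only ``an arc'', which is all that is used later.
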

    
\begin{proof}
Since the inverse stereographic projection $\pi$ preserves the topology, it suffices to show the proposition for $\pi(\Sigma(v, t))$.

\textbf{Case $1$:} If $t > 0$, by our choice of the orientation for $\Sigma$, this corresponds to expanding the annulus $\pi(F_v(\Sigma))$. As $\pi(F_v(\Sigma))$ can be viewed as a half-channel surface from the exterior region $\pi(F_v(A))$, it corresponds to shrinking the half geodesic spheres:
\begin{equation} \label{eqn:expandingchannel}
    \partial B^+_{R_{v}(\alpha)-t}(C_{v}(\alpha))=\{y\in \mathbb{S}^3_{+}:y\cdot C_v(\alpha)=\cos (R_v(\alpha)-t)\}.
\end{equation}
By the monotonicity of the radius $R_{v}(\alpha)$ from Lemma \ref{lem:alpharadius}, and since the monotonicity is preserved over the extension by the constant radius, arguing the same as \cite[Proposition 5.2 Claim 5.3]{chu2024existence}, we obtain the desired topological characterization.

\textbf{Case $2$:} If $t < 0$, by our choice of the orientation for $\Sigma$, this corresponds to shrinking the annulus $\pi (F_v(\Sigma))$ towards the interior region $\pi(F_v(A^*))$. Thus, we track the topology of $\pi(\Sigma_{(v, t)})$ by shrinking the geodesic spheres:
\begin{equation} \label{eqn:shrinkingchannel}
\partial B_{R_{v}(\beta)+t}(C_{v}(\beta))=\{y\in \mathbb{S}^3:y\cdot C_v(\beta)=\cos (R_v(\beta)+t)\}.
\end{equation}
By Lemma \ref{lem:betaradius}, arguing similarly as Case $1$, we obtain the desired topological characterization when $t<0$.
\end{proof}
Let us now prove Theorem \ref{thm:genuszerotwoboundary}.
\begin{proof}[Proof of Theorem \ref{thm:genuszerotwoboundary}]
    By Proposition \ref{lem:behaviourSigmavt} and the definition of $\Psi$, any surface in $\Psi$ is one of the following:
    \begin{itemize}
        \item properly embedded annuli,
        \item topological disk with two points identified, which is smooth except at a point,
        \item topological disk smooth except at a point,
        \item embedded circle,
        \item straight line segment,
        \item empty set,
    \end{itemize}
which satisfies the topological bound $(0, 1)$ and conditions \normalfont{(i)} - \normalfont{(iv)} of Definition \ref{def:fbssfamily}. Arguing the same as \cite[Section 5.3.2]{chu2024existence}, we can show $\Psi$ also satisfies Definition \ref{def:fbssfamily} \normalfont{(v)}. Thus, $\Psi$ is a free boundary Simon-Smith family with the topological bound $(0, 1)$ and we have finished the proof.
\end{proof}

\section{The upper bound of $\omega_4(\mathbb{B}^3)$} \label{sec: upper bounda of the 4th-width}
Now we can finally show Theorem \ref{intro thm:upperbound4width}. By extracting our $4$-parameter family from the $6$-parameter family in Theorem \ref{thm:genuszerotwoboundary}, and applying Simon-Smith min-max theorem (Theorem \ref{thm:simon-smith min-max}), we obtain the following existence theorem of free boundary minimal annulus in the unit ball $\mathbb{B}^{3}$.
\begin{thm} \label{multioneannulus} In the Euclidean unit ball $\mathbb{B}^{3}$, $\Phi_4 \coloneqq \Phi_4^{C_{[e_3]}}:\mathbb{RP}^4\to \mathcal{S}^{*}(\mathbb{B}^{3})$ is a free boundary Simon-Smith family with the topological bound $(0,1)$ such that the Simon-Smith min-max width $\mathbf{L}_{SS}(\Lambda(\Phi_4))$ is attained by a multiplicity one properly embedded free boundary minimal annulus.
\begin{proof}
By \cite[Lemma 2.4]{FraserLi2014} and \cite[Lemma C.1]{chodosh2017minimal}, any properly embedded free boundary minimal surface in $\mathbb{B}^{3}$ is connected and orientable. Since every Simon-Smith family is an Almgren-Pitts sweepout and $\Phi_4$ is a $4$-sweepout, we have
\begin{equation} \label{eq: inequality for the min-max width}
\mathbf{L}_{SS}(\Lambda(\Phi_4)) \geq \mathbf{L}_{AP}(\Lambda(\Phi_4)) \geq \omega_4(\mathbb{B}^3) > \pi   
\end{equation}
by (\ref{eq: lower bound of the 4-sweepout}).
Together with the Multiplicity One Theorem \cite[Theorem 1.3]{sarnataro2026existence} and the fact that every free boundary minimal surface in $\mathbb{B}^3$ is unstable, $\mathbf{L}_{SS}(\Lambda(\Phi_4))$ is attained by a multiplicity one free boundary minimal surface $\Gamma$.
Since $\Phi_4$ has a topological bound $(0,1)$ by Theorem \ref{thm:genuszerotwoboundary}, by the topological control \cite[Theorem 1.8]{franz2023topological}, $\Gamma$ is either a minimal disk or annulus.

Since $\mathcal{H}^2(\Gamma) = \mathbf{L}_{SS}(\Lambda(\Phi_4)) > \pi$,
by Nitsche's uniqueness result of free boundary minimal disk in \cite{nitsche1985stationary}, we can rule out the case that $\Gamma$ is a free boundary minimal disk. Hence $\Gamma$ is a multiplicity one properly embedded free boundary minimal annulus.
\end{proof}
\end{thm}
By combining Theorem \ref{multioneannulus} with an upper bound of Morse index for min-max free boundary minimal surfaces \cite{franz2023equivariant}, and the characterization of free boundary minimal annuli by Morse index \cite[Corollary 7.3]{devyver2019index} (also see \cite[Corollary 3.11]{tran2016index}), we prove that $\Gamma$ must be the critical catenoid below. Notice that instead of trying to control the areas of surfaces in the $4$-parameter family $\Phi_4$ to derive an upper bound for $\omega_4(\mathbb{B}^3)$, our arguments here only rely on the topology and Morse index bounds.
\begin{thm} \label{thm: width is the critical catenoid}
    The min-max surface $\Gamma$ in Theorem \ref{multioneannulus} above is the critical catenoid $\mathbb{K}$ in $\mathbb{B}^{3}$.
    \begin{proof}
    By \cite[Theorem 1.10]{franz2023equivariant},  $\operatorname{Index}(\Gamma) \le 4$. On the other hand, by \cite[Theorem 1.1]{devyver2019index}, every free boundary surface other than the flat disk must have index $\geq 4$. Thus, $\operatorname{index}(\Gamma) = 4$. Now by the characterization of minimal annuli by Morse index \cite[Corollary 7.3]{devyver2019index} (also see \cite[Corollary 3.11]{tran2016index}), we obtain that $\Gamma$ is the critical catenoid.
    \end{proof}
\end{thm}
\begin{proof}[Proof of Theorem \ref{intro thm:upperbound4width}] By (\ref{eq: inequality for the min-max width}) and Theorem \ref{thm: width is the critical catenoid} , we have
\begin{equation} \label{eqn:upper4width}
    \pi < \omega_{4}(\mathbb{B}^{3}) \le \mathbf{L}_{SS}(\Lambda(\Phi)) = Area(\mathbb{K}), 
\end{equation}
which are the desired bounds we want.
\end{proof}

\section{Boundary blow-up} \label{sec :boundary blow-up}
In this section, we prove Theorem \ref{thm: flattop conti blow-up} to extend the canonical family to $\overline{\mathbb{B}^3} \times [-\pi,\pi]$ by the boundary blow-up argument. In our setting, the blow-up limits of (relative) cycles are spherical caps orthogonal to $\partial \mathbb{B}^3$, instead of geodesic spheres in \cite{marques2014}. So we will modify the boundary blow-up argument of Marques-Neves \cite[Section 5]{marques2014} to  adapt to our proof.

Throughout this section, we consider an orthogonal properly embedded surface $\Sigma \subset \mathbb{B}^3$, and the reader may refer to \S \ref{subsec: general free  boundary surface notation} for notation and conventions we put on $\Sigma$. We now restate and prove Theorem \ref{thm: flattop conti blow-up}.
\begin{thm} \label{thm: general surfaces blow up continuous in flat topology}
The map below is well-defined and continuous in the flat topology:
\[
C: \overline{\mathbb{B}^{3}} \times [- \pi, \pi] \to \mathcal{Z}_2(\mathbb{B}^3,\partial \mathbb{B}^{3}; \mathbb{Z}_2),
\]
\[
C(v,t)=
\begin{cases}
\partial [| A_{(T(v),t)} |],
& \text{if } v\in \mathbb{B}^{3}\setminus \overline{\Omega}_{\epsilon}, \\[4pt]
\partial[|\pi^{-1}\bigl(
 B^+_{\overline r(v)+t}(\overline Q(v),0)
\bigr)|],
& \text{if } v\in \partial\mathbb{B}^{3}
\cup
\overline{\Omega}_{\epsilon}.
\end{cases}
\]
Furthermore, we have
 $C(v, \pi) = C(v, - \pi) = 0$ for every $v \in \overline{\mathbb{B}^{3}}$.
\end{thm}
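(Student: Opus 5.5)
We will follow the structure of Marques--Neves \cite[Section 5]{marques2014}, adapting to the half-sphere picture via $\pi$. First we check well-definedness and the endpoint values. For $v\in\mathbb{B}^3\setminus\overline{\Omega}_\epsilon$, $T(v)\in\mathbb{B}^3$, so $A_{(T(v),t)}$ is an open subset of $\mathbb{B}^3$ of finite perimeter. Its boundary is contained in a level set of the distance function to the smooth compact surface $\pi(\Sigma_{T(v)})$, which has finite $\mathcal{H}^2$-measure by the area bound (\ref{eq: Willmore energy area bound}). Hence $\partial[|A_{(T(v),t)}|]$ is a well-defined relative cycle. For $t=\pi$, every point of $\overline{\mathbb{S}^3_+}$ lies within distance $<\pi$ of $\pi(\Sigma_v)$, so $A=\mathbb{B}^3$; for $t=-\pi$, $A=\emptyset$. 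In both cases the relative boundary vanishes in $\mathcal{Z}_2(\mathbb{B}^3,\partial\mathbb{B}^3;\mathbb{Z}_2)$, since $\partial[|\mathbb{B}^3|]$ is supported on $\partial\mathbb{B}^3$. The same reasoning applies to the caps $B^+_{\overline r(v)\pm\pi}$, because $\overline r\in[0,\pi]$. On the overlap $\partial\mathbb{B}^3\cap\overline{\Omega}_\epsilon$ and on $\partial\Omega_\epsilon\cap\mathbb{B}^3$ the two formulas must agree. Where $|s|=\epsilon$ we have $T(v)=\Lambda(p,0)=p\in\partial\Sigma$, so the first formula is not even defined there, and the second formula is what we use. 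Continuity across this interface is the content of the blow-up step below.

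Since flat continuity of $\partial[|U_n|]$ follows from $L^1$ convergence of $\chi_{U_n}$, via $\mathcal{F}(\partial[|U|]-\partial[|U'|])\le \mathrm{vol}(U\triangle U')$ in the second flat-norm definition, it suffices throughout to prove convergence of the regions in measure. The proof splits into three cases.

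\textbf{Case 1: interior points.} For $v_n\to v\in\mathbb{B}^3$ and $t_n\to t$, $\pi(\Sigma_{v_n})\to\pi(\Sigma_v)$ smoothly, so $d_{v_n}\to d_v$ uniformly. The level sets $\{d_v=t\}$ have measure zero, so the sublevel sets converge in measure.

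\textbf{Case 2: boundary points away from $\partial\Sigma$.} Take $v_n\to v\in\partial\mathbb{B}^3\setminus\overline{\Omega}_\epsilon$, say $v\in U$. Then $F_{v_n}$ collapses $\Sigma$ toward $-v$, and $\pi(\Sigma_{v_n})$ shrinks to a point, namely the boundary point $(-v,0)$ (up to sign conventions in $F_v$). Therefore $d_{v_n}$ converges uniformly to $\pm d(\cdot,(\overline Q(v),0))$ plus the appropriate constant, which gives the caps $B^+_{\pi+t}$ or $B^+_t$ with $\overline Q=\pm v$. This matches $\overline r=\pi$ or $0$ in (\ref{def:r^bar}).

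\textbf{Case 3: the blow-up region.} This is the main obstacle. Take $v_n=T(w_n)$ with $w_n\in\mathbb{B}^3\setminus\overline\Omega_\epsilon$ approaching $\overline\Omega_\epsilon$ or $\partial\mathbb{B}^3$ near $\partial\Sigma$. Write $T(w_n)=\Lambda(p_n,s_n')$ with $|s_n'|\to0$, and note that $s'_{n,2}/s'_{n,1}$ tracks $s_2/\sqrt{\epsilon^2-s_2^2}=k$ by the construction of $\phi$. We rescale by $F_{v_n}$ around $p_n$. Since $\Sigma$ is orthogonal, the blow-up of $\Sigma$ at $p_n$ in the direction $v_n$ is the orthogonal image of the tangent disk $D_{p}$ under the limiting Möbius map. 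We would prove a lemma, the free boundary analogue of \cite[Prop.~5.?]{marques2014} and to be placed in Appendix \ref{appendix}: $\pi(\Sigma_{v_n})$ converges in Hausdorff distance, and smoothly away from a point, to the half great sphere $\partial B^+_{\overline r_k}(\overline Q_{p,k},0)$. We would verify this by explicitly computing $F_{v_n}(D_{p_n})$ as the orthogonal cap in Remark \ref{rem: Area bound} with $\alpha=\overline r_k$, and then controlling $F_{v_n}(\Sigma)-F_{v_n}(D_{p_n})$ using the second-order Taylor expansion of $\Sigma$ at $p_n$; the error is $O(|s_n'|)$ after rescaling. Uniform convergence of the signed distance functions $d_{v_n}$ to the signed distance to this sphere then gives convergence in measure of the sublevel sets, that is, $A_{(v_n,t_n)}\to\pi^{-1}(B^+_{\overline r+t}(\overline Q,0))$. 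The orientation is fixed by $N$ pointing into $A^*$, which produces the minus signs in (\ref{eq: Q^bar_p, k}).

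Finally, continuity within $\partial\mathbb{B}^3\cup\overline\Omega_\epsilon$ is immediate, because $\overline Q$ and $\overline r$ are continuous there by (\ref{def:Q^bar}) and (\ref{def:r^bar}) (at $|s|=\epsilon$, $k=\pm\infty$ gives $\overline Q=\mp p$ and $\overline r\in\{0,\pi\}$, matching $\pm T(v)$). Caps depend continuously in measure on center and radius. Combining the three cases gives flat continuity on all of $\overline{\mathbb{B}^3}\times[-\pi,\pi]$.
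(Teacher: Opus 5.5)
Your proposal is correct, and its skeleton is the paper's. You reduce flat continuity to $L^1$-convergence of the enclosed regions; the paper phrases this as mass-continuity of $\mathbf{U}(v,t)$ and then takes boundaries. You split into interior points, points of $U\cup U^*$, and the blow-up at $\partial\Sigma$, and you use continuity of $\overline Q,\overline r$ on $\mathbb{S}^2\cup\overline\Omega_\epsilon$.

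The genuine difference is the blow-up step, Proposition \ref{prop:convergence.sets.fb}(iv). The paper uses a barrier argument:
\begin{itemize}
\item Uniform orthogonal tangent balls on both sides of $\Sigma$ along $\partial\Sigma$ give $A\,\Delta\,B_{p_n}\subset\Delta(p_n,N(p_n),r_0)$.
\item Proposition \ref{prop:A.1} computes exactly that $\pi\circ F_v(H_N)=B^4_R(-Q/|Q|,0)\cap\mathbb{S}^3_+$, and that $\pi\circ F_v(\Delta(p,N,r))$ lies in a shell of width $C_1\sqrt{|(s,t)|}$ about $\partial B^4_{\overline R}(\overline Q,0)$.
\item So $\pi(A_{v_n})$ is sandwiched between two concentric caps, and $t\neq0$ follows from the triangle-inequality argument of Marques--Neves.
\end{itemize}
You instead compare $\Sigma$ with $D_{p_n}$ by Taylor expansion to get Hausdorff convergence of $\pi(\Sigma_{v_n})$. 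You then use $|d(\cdot,K)-d(\cdot,K')|\le d_H(K,K')$ to get uniform convergence of $d_{v_n}$, which handles every $t$, and your Cases 1 and 2, in one stroke. That is conceptually cleaner.

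However, the sandwich gives three things for free that you must supply yourself.
\begin{itemize}
\item[(a)] You need a bound on the conformal factor,
\[
\frac{1-|v|^2}{1-2\langle v,x\rangle+|x|^2|v|^2}\lesssim \frac{s'_{n,1}}{(|x-p_n|+|s'_n|)^2},
\]
on the region between $\Sigma$ and $D_{p_n}$. This is what turns a deviation $O(|x-p_n|^2)$ into $O(s'_{n,1})$.
\item[(b)] You need a far-field step, since the Taylor expansion says nothing about $\Sigma\setminus B_\rho(p_n)$. As in your Case 2, that part is squeezed into a small neighbourhood of $(-p,0)$. This is harmless only because $(-p,0)$ lies on the limit sphere: $\langle\overline Q_{p,k},-p\rangle=k/\sqrt{1+k^2}=\cos\overline r_k$. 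The same observation covers $k=\pm\infty$, where the limit degenerates to that point.
\item[(c)] You need sidedness, since Hausdorff convergence of $\pi(\Sigma_{v_n})$ alone does not say which side $\pi(A_{v_n})$ lies on. Use that each component of the complement of the $\eta$-shell is connected and misses $\pi(\Sigma_{v_n})$. Then exhibit one point of $A$ on the $-N(p_n)$ side whose image lands inside.
\end{itemize}

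Two small slips. The limit is a half geodesic sphere of radius $\overline r_k$, which is great only for $k=0$. In Case 2 the centre is $\pm T(v)$, not $\pm v$.
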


\subsection{Preliminary results} Given sets $A,B \subset \mathbb{R}^n$, the symmetric difference is denoted by
\[
A\,\Delta \,B=(A\setminus B)\cup (B\setminus A).
\]

Recall the definition of the map $\Lambda$ in (\ref{eq: def of Lambda}). If $v_n \in \mathbb{B}^3$ is a sequence converging to $p \in \partial \Sigma$, then for all $n$ sufficiently large, there are unique $p_n \in \partial \Sigma$ and $s_n \in D^2_+(3 \epsilon)$ so that $v_n = \Lambda(p_n, s_n)$. Necessarily, $p_n$ tends to $p$ and $s_n$ tends to zero. By passing to a subsequence, we can also assume that
\[
 \lim_{n \to \infty} \frac{s_{n_2}}{s_{n_1}} = k \in [ - \infty, + \infty].
\]

Recall the definition of $\overline{Q}_{p, k}$ in (\ref{eq: Q^bar_p, k}) and $\overline{r}_k$ in (\ref{eq: r^bar_k}), we note that
\begin{equation} \label{eq: ball centered at Q^bar_p, k }
B_{\overline{R}_k}^4(\overline{Q}_{p, k}, 0) \cap \mathbb{S}^3 = B_{\overline{r}_k}(\overline{Q}_{p, k}, 0), \quad \text{ where} \quad \overline{R}_k = \sqrt{2\bigl(1 - \frac{k}{\sqrt{k^2 + 1}}\bigr)}
\end{equation}

\begin{prop}[Convergence of the free boundary canonical family]
\label{prop:convergence.sets.fb}
Consider a sequence
\[
        (v_n,t_n)\in \mathbb{B}^3 \times[- \pi,\pi]
\]
converging to \((v,t)\in \overline{\mathbb{B}^3}\times[- \pi, \pi]\). Then the following hold.

\begin{enumerate}[label=\normalfont(\roman*)]
\item If \(v\in \mathbb{B}^3\), then
\[
        \lim_{n\to\infty}
        \operatorname{vol}\left(
        A_{(v_n,t_n)}\Delta A_{(v,t)}
        \right)=0.
\]

\item If $v \in U$, then
\[
        \lim_{n\to\infty}
        \operatorname{vol}\left(
        A_{(v_n,t_n)}
        \Delta \,
        \pi^{-1}\left(B^+_{\pi+t}(v, 0)\right)
        \right)=0.
\]
and, given any \(\delta>0\),
\[
        \Sigma_{(v_n,t_n)}
        \subset
        \pi^{-1} \left(\overline{B^+}_{\pi + t + \delta}(v, 0) \setminus B^+_{\pi +t - \delta}(v, 0) \right)  \quad 
        \text{for all } \text{ sufficiently large } n.
\]
\item If $v \in U^*$, then
\[
\lim_{n \to \infty} \operatorname{vol}(A_{(v_n, t_n)} \Delta \pi^{-1}(B^+_t(-v, 0))) = 0
\]
and, given any \(\delta>0\),
\[
        \Sigma_{(v_n,t_n)}
        \subset
        \pi^{-1}\left(\overline{B^+}_{t + \delta}(-v, 0) \setminus B^+_{t - \delta}(-v, 0) \right)  \quad 
        \text{for all } \text{ sufficiently large }n.
\]
\item Suppose \(v=p\in \partial\Sigma\) and
\[
        v_n=\Lambda\bigl(p_n,(s_{n_1},s_{n_2})\bigr),
        \qquad
        \lim_{n\to\infty}\frac{s_{n_2}}{s_{n_1}}
        =k\in[-\infty,+\infty].
\]
Then
\[
        \lim_{n\to\infty}
        \operatorname{vol}\left(
        A_{(v_n,t_n)}
        \Delta
        \pi^{-1}(B^+_{\overline r_{k}+t}
        (\overline Q_{p,k}, 0))
        \right)=0.
\]
Moreover, given any \(\delta>0\),
\[
        \Sigma_{(v_n,t_n)}
        \subset
        \pi^{-1}\left( \overline{B^+}_{\overline r_{k}+t+\delta}
        (\overline Q_{p,k}, 0)
        \setminus
        B^+_{\overline r_{k}+t-\delta}
        (\overline Q_{p,k}, 0) \right)
        \quad
        \text{for all } \text{ sufficiently large }n.
\]
\end{enumerate}
\end{prop}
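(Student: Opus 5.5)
Everything transfers to the upper hemisphere: since $\pi|_{\overline{\mathbb B^3}}$ is a diffeomorphism onto $\overline{\mathbb S^3_+}$ with bounded Jacobian, it suffices to prove each statement for $\pi(A_{(v_n,t_n)}) = \{d_{v_n}<t_n\}$ with volume measured in $\mathbb S^3_+$. Write $S_n := \pi(\Sigma_{v_n}) = \pi(F_{v_n}(\Sigma))$ and $\Omega_n := \pi(A_{v_n})$. All four statements then reduce to describing the Hausdorff limits of $S_n$ and $\Omega_n$ in $\overline{\mathbb S^3_+}$. After that, we use two facts, following \cite[Section 5]{marques2014}.

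\begin{enumerate}[label=(\alph*)]
\item If $\Omega_n$ converge in volume to a region $\Omega$ and $\partial\Omega_n$ converge in Hausdorff distance to $\partial\Omega$, then $d_{v_n}\to d_\Omega$ uniformly. Here $d_\Omega$ is the signed distance to $\partial\Omega$ computed in the intrinsic metric of $\overline{\mathbb S^3_+}$. Since $\mathbb S^3_+$ is geodesically convex in $\mathbb S^3$ (a closed hemisphere), this coincides with the spherical distance.
\item Level sets of a distance function have measure zero for all but countably many $t$. More robustly, $\{d_\Omega<t\}$ is continuous in volume in $t$, since $\mathcal H^3(\{d_\Omega=t\})=0$ by the coarea formula, because $|\nabla d_\Omega|=1$ a.e. Uniform convergence of $d_{v_n}$ together with $t_n\to t$ then gives $\operatorname{vol}(\{d_{v_n}<t_n\}\Delta\{d_\Omega<t\})\to 0$.
\end{enumerate}

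The inclusion of $\Sigma_{(v_n,t_n)}=\partial\{d_{v_n}<t_n\}\subset\{d_{v_n}=t_n\}$ into the annular shell follows directly from uniform convergence. When $\Omega$ is a half ball $B^+_r(Q,0)$ with $Q\in\partial\mathbb S^3_+$, we have $d_\Omega = d(\cdot,(Q,0)) - r$ on $\overline{\mathbb S^3_+}$. So for large $n$, $\{d_{v_n}=t_n\}$ lies where $|d(\cdot,Q)-r-t|<\delta$.

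\textbf{Case (i).} $F_{v_n}\to F_v$ smoothly on $\overline{\mathbb B^3}$, so $S_n\to S$ smoothly and $d_{v_n}\to d_v$ uniformly. Then (b) applies, using that $\{d_v=t\}$ has measure zero.

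\textbf{Cases (ii)/(iii).} When $v_n\to v\in\partial\mathbb B^3\setminus\partial\Sigma$, $F_{v_n}$ is the hyperbolic translation pushing everything toward $-v$. Indeed, $F_{v_n}(x)\to -v$ locally uniformly on $\overline{\mathbb B^3}\setminus\{v\}$, and $F_{v_n}$ expands a shrinking neighborhood of $v$ onto everything. Since $v\notin\overline\Sigma$, $S_n$ collapses in Hausdorff sense to the point $(-v,0)$.

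\begin{itemize}
\item If $v\in U^*$, the neighborhood of $v$ lies in $A^*$, so $\Omega_n$ shrinks to that point. The signed distance then tends uniformly to $d(\cdot,(-v,0))$. This gives $\{d<t\}=B^+_t(-v,0)$ (empty for $t\le0$).
\item If $v\in U$, $\Omega_n$ fills everything, and $d_{v_n}\to -d(\cdot,(-v,0))$. Hence $\{d<t\}=\{d(\cdot,-v)>-t\}=B^+_{\pi+t}(v,0)$, using $d(x,v)=\pi-d(x,-v)$.
\end{itemize}

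The shell statements follow as above. A little care is needed at $t$ where the limit set is empty or everything; there the statement is vacuous or trivially in volume.

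\textbf{Case (iv)} is the main obstacle. We need to show $S_n\to\pi(\partial D)$, where $D$ is the limit orthogonal spherical cap $\partial\pi^{-1}(B^+_{\overline r_k}(\overline Q_{p,k},0))$, with $\Omega_n$ converging to its ball side. The plan is a blow-up. Write $v_n=(1-s_{n_1})(\cos s_{n_2}\,p_n+\sin s_{n_2}\,N(p_n))$. Near $p_n$, $\Sigma$ is $C^2$-close, at scale $|s_n|$, to the equatorial disk $D_{p_n}=\{\langle x,N(p_n)\rangle=0\}$: it is orthogonal and its tangent plane passes through the origin. Precompose with a rotation taking $p_n$ to $p$. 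The maps $F_{v_n}$ act like the dilation by $1/|s_n|$ centered at $p_n$, followed by a fixed Möbius map.

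Concretely, I would show that $F_{v_n}(\Sigma)$ and $F_{v_n}(D_{p_n})$ have the same Hausdorff limit. For the disk, $F_{v_n}(D_{p_n})$ is explicitly an orthogonal cap. Its center and radius are computed from $v_n$ as in Lemma \ref{lem:alpharadius}: a formula $\cot R=(1+|v|^2-2\langle v, \cdot\rangle)/(1-|v|^2)$-type expression in $s_{n_1},s_{n_2}$. This tends to $\overline r_k=\pi/2-\arctan k$ and center $\overline Q_{p,k}$.

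For the comparison, split $\Sigma$ into $\Sigma\cap B_{\rho}(p_n)$ and the rest. The rest is mapped near $-v$-type points, since $F_{v_n}(x)\to$ the point $F(\text{far})$, which lies on the limit cap boundary circle. The local part differs from $D_{p_n}$ by $O(\rho^2)$ in $C^0$, which after rescaling by $1/|s_n|$ is $O(\rho^2/|s_n|)$. This is not small. So instead I would take $\rho_n$ with $|s_n|\ll\rho_n\ll\sqrt{|s_n|}$: the rescaled error $\rho_n^2/|s_n|\to0$, and points outside $B_{\rho_n}$ map within $o(1)$ of the limit circle $\partial D\cap\partial\mathbb B^3$.

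The sides match because $N$ points into $A^*$ consistently with the disk orientation. Then (a) and (b) finish as before, including the case $k=\pm\infty$ where $v_n$ approaches tangentially along $\partial\mathbb B^3$.
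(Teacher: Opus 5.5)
Your proof is correct in outline. For (i)--(iii) you are doing what the paper does, phrased through uniform convergence of $d_{v_n}$ and null level sets $\{d=t\}$. The paper instead uses an explicit sandwich $\pi(A_{(v,t-\eta)})\subset\pi(A_{(v_n,t_n)})\subset\pi(A_{(v,t+\eta)})$ (resp.\ $\pi(\Sigma_{v_n})\subset B^+_{\delta/2}(-v,0)$ plus the triangle inequality); the two are interchangeable.

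For (iv) your route is genuinely different. The paper never rescales. It fixes $r_0$ so that at every $q\in\partial\Sigma$ the two orthogonal caps of angular radius $r_0$ tangent to $D_q$ at $q$ lie in $A$ and in $\mathbb{B}^3\setminus\overline A$ respectively. This gives $A\,\Delta\,B_{p_n}\subset\Delta(p_n,N(p_n),r_0)$ for all $n$. Since $\pi\circ F_v$ carries half-balls and orthogonal caps to explicit half-balls, Proposition~\ref{prop:A.1} then traps $\pi\circ F_{v_n}(B_{p_n})$ and $\pi\circ F_{v_n}(\Delta)$ within $C\sqrt{|s_n|}$ of the corresponding half-sphere. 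The passage to general $t$ is quoted from Marques--Neves.

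Your version combines a two-scale comparison with (a), (b). Near $p_n$ you use second-order tangency of $\Sigma$ to $D_{p_n}$. Elsewhere everything collapses to $-v_n$, which does lie on the limit circle since $\langle -p,\overline Q_{p,k}\rangle=\cos\overline r_k$. This is softer and self-contained: it needs only $C^{1,1}$ tangency and the conformal factor of $F_v$, and it treats the $t$-shift uniformly in all four cases. The paper's version is a pure computation with spheres and comes with an explicit rate.

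One step is wrong as stated and must be replaced. $F_{v_n}$ does not act like dilation by $1/|s_n|$ when $k=\pm\infty$. Its conformal factor is $\lambda_v(x)=(1-|v|^2)/\bigl(|x-v|^2+(1-|x|^2)(1-|v|^2)\bigr)$. At $x=v_n$ this equals $1/(1-|v_n|^2)\approx 1/(2s_{n_1})\gg 1/|s_n|$. With a Lipschitz constant of order $1/s_{n_1}$, your window $|s_n|\ll\rho_n$, $\rho_n^2\ll s_{n_1}$ is empty as soon as $s_{n_2}^2\gtrsim s_{n_1}$.

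What you need is $\lambda_{v_n}\lesssim 1/|s_n|$ only on the thin region between $\Sigma$ and $D_{p_n}$, used via $|F_v(x)-F_v(y)|^2=\lambda_v(x)\lambda_v(y)|x-y|^2$. If $s_{n_1}\gtrsim|s_n|$, the global bound $\lambda_{v_n}\lesssim 1/s_{n_1}$ suffices. Otherwise $|s_{n_2}|\gtrsim|s_n|$. That region lies within $O(|x-p_n|^2)$ of $D_{p_n}$, while $\langle v_n,N(p_n)\rangle=(1-s_{n_1})\sin s_{n_2}$, so it stays at distance $\gtrsim|s_n|$ from $v_n$, and $\lambda_{v_n}\le(1-|v_n|^2)/|x-v_n|^2$ does the job.

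This makes the local part $O(|s_n|)$-close at any fixed small scale, so the intermediate $\rho_n$ is unnecessary. For $k=\pm\infty$ the same estimate shows directly that $\pi(F_{v_n}(A))$ or $\pi(F_{v_n}(A^*))$ collapses to $(-p,0)$, reducing to (ii)/(iii).

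Finally, your fact (a) takes volume convergence of $\pi(F_{v_n}(A))$ as input, so ``the sides match'' must be made explicit. Inside $B_\rho(p_n)$ the set $A\,\Delta\,B_{p_n}$ lies in that thin region, whose image is thin. Outside $B_\rho(p_n)$ everything maps near $-v_n$. This is the qualitative content of the paper's tangent-cap lemma.
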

\begin{proof}
Recall (\ref{eq: normal N_v}) that we denote by $N_v$ the unit normal to $\pi(\Sigma_v)$ with the same direction as $D(\pi \circ F_v)(N)$. Consider the normal exponential map of $\pi(\Sigma_v)$ given by 
\[
\exp_v : \pi(\Sigma_v) \times \mathbb{R} \to \mathbb{S}^3,\, \exp_v(y, t) = \cos{t} \, y + \sin{t} \, N_v(y).
\]
For every $x \in \overline{\mathbb{S}_+^3}$, there exists $y \in \pi (\Sigma_v)$ such that $x = \exp_v(y, d_v(x))$. In particular, 
\begin{equation} \label{eq: blow up Case 1 (i)}
    \pi \bigl(A_{(v, t)} \setminus A_{(v, s)} \bigr) \subset \exp_v{(\pi(\Sigma_v) \times [s, t))} \quad \text{for }s \leq t.
\end{equation}

We now prove Proposition \ref{prop:convergence.sets.fb} (i). Let $\delta > 0$, and choose $\eta > 0$ such that
\[
\operatorname{vol}\left(\exp_v(\pi (\Sigma_v)) \times [t - \eta, t+ \eta] )\right) \leq \delta.
\]
The sequence of surfaces $\pi(\Sigma_{v_n})$ converges to $\pi(\Sigma_v)$ smoothly since $v_n $ tends to $v \in \mathbb{B}^3$. This, together with the triangle inequality and the fact that $t_n$ tends to $t$, implies that we can choose $n_0 \in \mathbb{N}$ such that
\[
\pi(A_{(v, t - \eta)}) \subset \pi(A_{(v_n, t_n)}) \subset \pi(A_{(v, t + \eta)}) \quad  \text{for all }n \geq n_0.
\]
Hence, for all $n \geq n_0$, we have
\[
\pi (A_{(v_n, t_n)}) \Delta \, \pi (A_{(v, t)}) \subset \pi\bigl( A_{(v, t + \eta)} \setminus A_{(v, t - \eta)} \bigr).
\]
From (\ref{eq: blow up Case 1 (i)}), we have
\[
\pi \left( A_{(v, t + \eta)} \setminus A_{(v, t - \eta)} \right) \subset \exp_v(\pi (\Sigma_v)) \times [t - \eta, t+ \eta]),
\]
which implies
\[
\left( A_{(v, t + \eta)} \setminus A_{(v, t - \eta)} \right) \subset \pi^{-1}\left(\exp_v(\pi (\Sigma_v)) \times [t - \eta, t+ \eta])\right)
\]
and thus 
\[
\operatorname{vol} \bigl( \pi (A_{(v_n, t_n)}) \Delta \, \pi (A_{(v, t)}) \bigr) \leq \delta
\]
for each $n \geq n_0$. Taking the stereographic projection $\pi^{-1}$, we have
\[
\operatorname{vol} \bigl( A_{(v_n, t_n)} \Delta A_{(v, t)} \bigr) \leq C\delta
 \]
where $C>0$ is a constant only depending on $\pi$. Proposition \ref{prop:convergence.sets.fb} (i) follows since we can take $\delta > 0$ arbitrarily small.

We now prove Proposition \ref{prop:convergence.sets.fb} (ii). Take a small ball $B^3_r(v) \cap \mathbb{B}^3 \subset A$. Observe that
\[
\begin{split}
    F_{v_n} (x) & = \frac{(1 - |v_n|^2)x - (1 - 2 \langle v_n, x \rangle + |x|^2) v_n}{1 - 2 \langle v_n, x \rangle + |x|^2 |v_n|^2} \\
    & = (1 - |v_n|^2)\frac{\frac{x}{|x|^2} - v_n}{|\frac{x}{|x|^2} - v_n|^2} - v_n, \quad \text{for }x \neq 0, \\
F_{v_n}(0) & = - v_n.
\end{split}
\]
Thus, $F_{v_n}$ will push every point in $\mathbb{B}^3$ that is outside a small neighborhood of $v \in \partial \mathbb{B}^3$ towards $-v$. As a result, given $\delta > 0$, there exists $n_0 \in \mathbb{N}$ such that for all $n \geq n_0$,
\begin{equation*}
    \mathbb{B}^3 \setminus B^3_{\tan{(\frac{\delta}{2})}} \left(- \sec{\left(\frac{\delta}{2}\right)} v\right) \subset F_{v_n}(B^3_r(v) \cap \mathbb{B}^3) \subset F_{v_n}(A) = A_{(v_n, 0)} \quad \text{and} \quad |t_n - t| \leq \frac{\delta}{2}.
\end{equation*}
where $B^3_{\tan{(\frac{\delta}{2})}} (- \sec{(\frac{\delta}{2})} v)$ intersects $\partial \mathbb{B}^3$ orthogonally and contains $-v$.
In particular, 
\[
\Sigma_{v_n} \subset B^3_{\tan{(\frac{\delta}{2}})} \left(- \sec{\left(\frac{\delta}{2}\right)} v\right) \cap \mathbb{B}^3
\]
for all $n \geq n_0$. Hence we have for all $n \geq n_0$,
\begin{equation} \label{eq: blow up Case 2 (i)}
     \mathbb{S}_+^3 \setminus B_{\frac{\delta}{2}}(-v, 0) = \pi\left(\mathbb{B}^3 \setminus B^3_{\tan{(\frac{\delta}{2})}} \left(- \sec{\left(\frac{\delta}{2}\right)} v\right)\right)\subset \pi \circ F_{v_n}(B^3_r(v)) \subset \pi (A_{(v_n, 0)}),
\end{equation}
and 
\begin{equation} \label{eq: blow up Case 2 (ii)}
    \pi(\Sigma_{v_n}) \subset B^+_{\frac{\delta}{2}}(-v, 0).
\end{equation}

If $t \geq 0$, then from (\ref{eq: blow up Case 2 (i)}) and the triangle inequality we have, for all $n \geq n_0$,
\[
\mathbb{S}_+^3 \setminus B_{\delta}(-v, 0) \subset \pi \left( A_{(v_n, -\frac{\delta}{2})} \right) \subset \pi \left(A_{(v_n, t_n)} \right).
\]
Hence, because $B_{\pi + t}(v, 0) = \mathbb{S}^3$, 
\begin{equation*}
   \pi \left(A_{(v_n, t_n)}\right) \Delta  B^+_{\pi + t}(v, 0) \subset B^+_{\delta}(-v, 0)
\end{equation*}
for all $n \geq n_0$. Hence we have
\begin{equation} \label{eq: blow up Case 2 (iii)}
A_{(v_n, t_n)} \Delta \pi^{-1}(B^+_{\pi + t}(v, 0)) \subset \pi^{-1}(B^+_{\delta}(-v, 0)) = B^3_{\tan{\delta}}(- \sec{\delta} \, v) \cap \mathbb{B}^3 
\end{equation}
for all $n \geq n_0$. Notice that if $t > 0$, then (\ref{eq: blow up Case 2 (i)}) implies that $A_{(v_n, t_n)} = \mathbb{B}^3$ and hence $\Sigma_{(v_n, t_n)} = \emptyset$ for any sufficiently large $n \in \mathbb{N}$.

If $t < 0$, choose $n_1 \geq n_0$ such that $t_n < 0$ for each $n \geq n_1$. We have
\[
\pi (A_{(v_n, t_n)}) \subset B^+_{\pi + t + \delta} (v, 0) \quad \text{for all }n \geq n_1
\]
because, picking $x \in \pi(A_{(v_n, t_n)})$ and $y \in \pi (\Sigma_{v_n})$ with $d_{v_n}(x) = -d(x, y)$, we obtain from (\ref{eq: blow up Case 2 (ii)}) and the triangle inequality
\[
d(x, (-v, 0)) \geq d(x, y) - d(y, (-v, 0)) = -d_{v_n}(x) - d(y, (-v, 0)) \geq -t_n - \frac{\delta}{2} \geq -t - \delta.
\]
Also
\[
B^+_{\pi + t - \delta}(v, 0) \subset \pi(A_{(v_n, t_n)}) \quad \text{for all } n \geq n_1
\]
because if $x \in B^+_{\pi + t - \delta}(v, 0)$, $x \notin B^+_{\delta - t}(-v, 0)$ and we obtain from (\ref{eq: blow up Case 2 (ii)}) 
\[
d(x, \pi(\Sigma_{v_n})) > d(x, \partial B^+_{\frac{\delta}{2}}(-v, 0)) > -t + \frac{\delta}{2} \geq - t_n.
\]
Hence, for all $n \geq n_1$, 
\begin{equation*}
    \left( \pi(A_{(v_n, t_n)}) \Delta B^+_{\pi + t} (v, 0) \right) \cup \pi(\Sigma_{(v_n, t_n)}) \subset \overline{B^+}_{\pi + t + \delta}(v, 0) \setminus B^+_{\pi + t - \delta}(v, 0).
\end{equation*}
Hence, taking the stereographic projection $\pi^{-1}$, we have
\begin{equation} \label{eq: blow up Case 2 (iv)}
\left( A_{(v_n, t_n)} \Delta \, \pi^{-1}(B^+_{\pi + t}(v, 0)) \bigr) \cup \Sigma_{(v_n, t_n)} \right) \subset \pi^{-1} \left(\overline{B^+}_{\pi + t + \delta}(v, 0) \setminus B^+_{\pi + t - \delta}(v, 0) \right)
\end{equation}
where
\[
\begin{split}
& \pi^{-1} \left(\overline{B^+}_{\pi + t + \delta}(v, 0) \setminus B^+_{\pi + t - \delta}(v, 0) \right) \\
& = \left( \overline{B^3}_{\tan{(\pi + t+ \delta)}} (\sec{(\pi + t + \delta)}v) \setminus B^3_{\tan{(\pi + t - \delta)}} (\sec{(\pi + t - \delta)}v) \right) \cap \mathbb{B}^3
\end{split}
\]
is the region between two orthogonal spherical caps whose centers are on the radial line starting from the origin and passes $v$. Thus, in any case, Proposition \ref{prop:convergence.sets.fb} (ii) follows from 
(\ref{eq: blow up Case 2 (iii)}) and (\ref{eq: blow up Case 2 (iv)}) since we can choose $\delta > 0$ arbitrarily small. Proposition \ref{prop:convergence.sets.fb} (iii) is proven exactly in the same way as Proposition \ref{prop:convergence.sets.fb} (ii). We now prove Proposition \ref{prop:convergence.sets.fb} (iv).

\begin{lem} \label{lem: A and A^*}
    There exists $r_0 > 0$ such that for every $p \in \partial \Sigma$, we have
    \[
    B^3_{\tan{r_0}}(\sec{r_0}(\cos{r_0} \, p - \sin{r_0} \, N(p))) \cap \mathbb{B}^3 \subset A
    \]
    and
    \[
    \overline{A} \subset \mathbb{B}^3 \setminus B^3_{\tan{r_0}}(\sec{r_0}(\cos{r_0} \, p + \sin{r_0} \,  N(p))).
    \]
\end{lem}

\begin{proof}
Since $\Sigma$ is smooth and properly embedded in $\mathbb{B}^3$, we can choose  sufficiently small $r_0 > 0$ such that for every $p \in \partial \Sigma$, \[
\bigl(B^3_{\tan{r_0}}\left(p \pm \tan{r_0} \,  N(p)\right) \cap \mathbb{B}^3 \bigr) \cap \Sigma =\{p \}
\]
and
\begin{equation} \label{eq: A and A^*}
\begin{split}
& B^3_{\tan{r_0}}(p - \tan{r_0} \,  N(p)) \cap \mathbb{B}^3  \subset A, \\
& B^3_{\tan{r_0}}(p + \tan{r_0} \,  N(p)) \cap \mathbb{B}^3  \subset A^*.
\end{split}
\end{equation}
The result follows from (\ref{eq: A and A^*}).
\end{proof}

Now we prove Proposition \ref{prop:convergence.sets.fb} (iv). Write $v_n = \Lambda (p_n, (s_{n_1}, s_{n_2}))$, where $k_n = \frac{s_{n_2}}{s_{n_1}}$ tends to $k$ and $p_n$ tends to $p$. Set $B_q = \{x \in \mathbb{B}^3 :\langle x, N(q) \rangle < 0 \}$ be the half ball, and $D_q = \partial B_q \cap \mathbb{B}^3$ the equator disk boundary of $B_q$, for $q \in \partial \Sigma$. We can choose $r_0 > 0$ small so that 
\begin{equation} \label{eq: tangent half ball}
\begin{split}
& B^3_{\tan{r_0}}(\sec{r_0}(\cos{r_0} \, q - \sin{r_0} \,  N(q))) \cap \mathbb{B}^3 \subset B_{q}, \\
& B^3_{\tan{r_0}}(\sec{r_0}(\cos{r_0} \, q + \sin{r_0} \,  N(q))) \cap \mathbb{B}^3 \subset \mathbb{B}^3 \setminus \overline{B_{q}}.
\end{split}
\end{equation}
for every $q \in \partial \Sigma$. Here we use the fact that $\Sigma$ intersects $\partial \mathbb{B}^3$ orthogonally so that the tangent plane of $\Sigma$ at $q$, $T_{q} \Sigma: = \{ \langle x, N(q) \rangle = 0 \}$, is the same plane where $D_{q}$ lies. Making sure (\ref{eq: tangent half ball}) holds is the most important reason we have to work with properly embedded surfaces that intersect $\partial \mathbb{B}^3$ orthogonally.

Now it follows from Lemma \ref{lem: A and A^*} and (\ref{eq: tangent half ball}) that we can choose $r_0 >0 $ sufficiently small such that
 \begin{equation} \label{eq: blow up Case 3 (i)}
A \Delta B_{p_n} \subset \mathbb{B}^3 \setminus \left(B^3_{\tan{r_0}}(\sec{r_0}(\cos{r_0} \, p_n + \sin{r_0} \,  N(p_n))) \cup B^3_{\tan{r_0}}(\sec{r_0}(\cos{r_0} \, p_n - \sin{r_0} \,  N(p_n))\right).
\end{equation}
(\ref{eq: blow up Case 3 (i)}) is the crucial step to show Proposition \ref{prop:convergence.sets.fb} (iv). From Proposition \ref{prop:A.1} of Appendix \ref{appendix}, we obtain the existence of $C_0 > 0$ and $n_0 \in \mathbb{N}$ such that
\[
B^4_{\overline{R}_{k_n} - C_0 \sqrt{a_n}} (\overline{Q}_{p_n,k_n}, 0) \cap \mathbb{S}_+^3 \subset \pi \circ F_v(A) \subset B^4_{\overline{R}_{k_n} + C_0 \sqrt{a_n}} (\overline{Q}_{p_n,k_n}, 0) \cap \mathbb{S}_+^3,
\]
for all $n \geq n_0$, where $a_n = \sqrt{1 + k_n^2}s_{n1}$. Notice that $a_n \to 0$. 
Therefore, from (\ref{eq: ball centered at Q^bar_p, k }), we see that for each $\delta > 0$, there exists $n_1 \geq n_0$ such that for every $n \geq n_1$, we have
\begin{equation}
B^+_{\overline{r}_{k_n} - \frac{\delta}{2}}(\overline{Q}_{p_n,k_n}, 0) \subset \pi \circ F_v(A) \subset B^+_{\overline{r}_{k_n} + \frac{\delta}{2}}(\overline{Q}_{p_n,k_n}, 0)
\end{equation}
and
\begin{equation}
    \pi (\Sigma_{v_n}) \subset \left(\overline{B^+}_{\overline{r}_{k_n} + \frac{\delta}{2}}(\overline{Q}_{p_n,k_n}, 0) \setminus B^+_{\overline{r}_{k_n} - \frac{\delta}{2}}(\overline{Q}_{p_n,k_n}, 0) \right)
\end{equation}
Now we can verbatim follow the proof of \cite[Proposition 5.3 (iv)]{marques2014} to obtain that
\[
\left(\pi (A_{(v_n, t_n)}) \Delta B^+_{\overline{r}_k + t}(\overline{Q}_{p, k}, 0) \right)\cup \pi (\Sigma_{(v_n, t_n)}) \subset \overline{B^+}_{\overline{r}_k +t + \delta}(\overline{Q}_{p, k}, 0) \setminus B^+_{\overline{r}_k + t - \delta}(\overline{Q}_{p, k}, 0).
\]
Applying the stereographic projection $\pi^{-1}$, we obtain
\[
\left(A_{(v_n, t_n)} \Delta \, \pi^{-1}(B^+_{\overline{r}_k + t}(\overline{Q}_{p, k}, 0)) \right)  \cup \Sigma_{(v_n, t_n)} \subset \pi^{-1} \left( \overline{B^+}_{\overline{r}_k +t + \delta}(\overline{Q}_{p, k}, 0) \setminus B^+_{\overline{r}_k + t - \delta}(\overline{Q}_{p, k}, 0)\right)
\]
where 
\[
\begin{split}
& \pi^{-1} \left( \overline{B^+}_{\overline{r}_k +t + \delta}(\overline{Q}_{p, k}, 0) \setminus B^+_{\overline{r}_k + t - \delta}(\overline{Q}_{p, k}, 0)\right) \\
& = \mathbb{B}^3 \cap \left(\overline{B^3}_{\tan{(\overline{r}_k +t + \delta)}} (\sec{(\overline{r}_k +t + \delta)} \, \overline{Q}_{p, k}) \setminus B^3_{\tan{(\overline{r}_k +t - \delta)}} (\sec{(\overline{r}_k +t - \delta)} \, \overline{Q}_{p, k})\right)
\end{split}
\]
is the region between two orthogonal spherical caps whose centers are on the radial line from the origin that passes $\overline{Q}_{p, k}$. The result follows since $\delta > 0$ can be chosen arbitrarily small.
\end{proof}

\begin{proof}[Proof of Theorem \ref{thm: general surfaces blow up continuous in flat topology}]  We start by arguing that the functions $\overline{r}$  defined on $\mathbb{S}^2 \cup \overline{\Omega}_{\epsilon}$, $\overline{Q}$ on $\mathbb{S}^2$, are both continuous. Clearly $\overline{r}$ is continuous on $\mathbb{S}^2 \cap \overline{\Omega}_{\epsilon}$, $U^* \setminus \overline{\Omega}_{\epsilon}$, and $U \setminus \overline{\Omega}_{\epsilon}$. Assume 
\[
v = \Lambda (p, (0, t)) = \cos{t} \, p + \sin{t} \, N(p) \in \Omega_{2 \epsilon}.
\]
By the definition of $\overline{r}$ (see (\ref{def:r^bar})), the continuity follows at once from
\[
\lim_{t \to \epsilon_-} \overline{r}(v) = \lim_{k \to \infty} \left( \frac{\pi}{2} - \arctan{(k)} \right) = 0
\]
and
\[
\lim_{t \to -\epsilon_+} \overline{r}(v) = \lim_{k \to -\infty} \bigl( \frac{\pi}{2} - \arctan{(k)} \bigr) = \pi.
\]

Now we show $\overline{Q}: \mathbb{S}^2 \to \mathbb{S}^2$ is continuous. Clearly $\overline{Q}$ is continuous on $\mathbb{S}^2 \cap \overline{\Omega}_{\epsilon}, \, U^* \setminus \overline{\Omega}_{\epsilon}, \, U \setminus \overline{\Omega}_{\epsilon}$. Assume again
\[
v = \Lambda (p, (0, t)) = \cos{t}  \, p + \sin{t} \, N(p) \in \Omega_{2 \epsilon}.
\]
If $v \in \mathbb{S}^2 \cap \overline{\Omega}_{\epsilon}$, we see from (\ref{def:Q^bar}) that
\[
\lim_{t \to \epsilon_-} \overline{Q}(v) = \overline{Q}_{p, + \infty} = -p \quad \text{and} \quad \lim_{t \to - \epsilon_+} \overline{Q}(v) = \overline{Q}_{p, - \infty} = p.
\]
If $v \in  U \setminus \overline{\Omega}_{\epsilon}$, we see from the definition of $T$ and (\ref{def:Q^bar}) that
\[
\lim_{t \to -\epsilon_-} \overline{Q}(v) = \lim_{t \to - \epsilon_-} T(v)= \lim_{t \to 0-} \Lambda(p, (0, t)) = p.
\]
If $v \in U^* \setminus \overline{\Omega}_{\epsilon}$, we see from the definition of $T$ and (\ref{def:Q^bar}) that
\[
\lim_{t \to \epsilon_+} \overline{Q}(v) = \lim_{t \to \epsilon_+} -T(v)= \lim_{t \to 0+} -\Lambda(p, (0, t)) = -p.
\]
Hence $\overline{Q}: \mathbb{S}^2 \to \mathbb{S}^2$ is continuous.

Now we Consider the map
\[
\mathbf{U}: \overline{\mathbb{B}^3} \times [- \pi, \pi] \to \mathbf{I}_3(\mathbb{B}^3),
\]
\begin{equation} \label{def: U}
    \mathbf{U}(v, t) = \begin{cases}
       & [|A_{(T(v), t)}|] \quad \text{if }v \in \mathbb{B}^3 \setminus \overline{\Omega}_{\epsilon}, \\
      & [|\pi^{-1}\left(B^+_{\overline{r}(v) + t}(\overline{Q}(v),0)\right)|] \quad \text{if }v \in \mathbb{S}^2 \cup \overline{\Omega}_{\epsilon}.
    \end{cases}
\end{equation}
Note that $A_{(T(v), t)}$ has finite perimeter and so indeed $\mathbf{U}(v, t) \in \mathbf{I}_3(\mathbb{B}^3)$ for all $(v, t) \in \overline{\mathbb{B}^3} \times [- \pi, \pi]$.

\begin{lem} \label{lem: Mass continuity of U}
The map $\mathbf{U}$ is continuous with respect to the mass topology of currents.
\end{lem}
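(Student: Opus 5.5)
Since $\mathbf{U}(v,t)$ is the current of integration over a Lebesgue-measurable subset of $\mathbb{B}^3$ with multiplicity one (mod $2$), the mass of $\mathbf{U}(v_n,t_n)-\mathbf{U}(v,t)$ is the volume of the symmetric difference of the underlying sets. So I will show that for every sequence $(v_n,t_n)\to(v,t)$ in $\overline{\mathbb{B}^3}\times[-\pi,\pi]$ this volume tends to zero, following \cite[Lemma 5.4]{marques2014}. By passing to subsequences, I may assume all $v_n$ lie in one of the two regions $\mathbb{B}^3\setminus\overline{\Omega}_\epsilon$ or $\mathbb{S}^2\cup\overline{\Omega}_\epsilon$. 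This leaves four cases, according to the region of $v_n$ and the region of $v$.

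\textbf{Case A: $v_n,v\in \mathbb{S}^2\cup\overline{\Omega}_\epsilon$.} Here both currents are of the form $[|\pi^{-1}(B^+_{\rho}(Q,0))|]$, with $(Q,\rho)=(\overline Q(v_n),\overline r(v_n)+t_n)$ and $(\overline Q(v),\overline r(v)+t)$ respectively. The map $\overline Q$ is continuous on $\mathbb{S}^2$ by the argument just given, and on $\overline{\Omega}_\epsilon$ it is given by the explicit formula (\ref{eq: Q^bar_p, k}) in $(p,k)$. Likewise $\overline r$ is continuous on $\mathbb{S}^2\cup\overline{\Omega}_\epsilon$. The volume of $B^+_\rho(Q,0)\,\Delta\, B^+_{\rho'}(Q',0)$ is continuous in $(Q,\rho)$, including at the degenerate values $\rho\le 0$ and $\rho\ge\pi$. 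The pullback $\pi^{-1}$ has bounded Jacobian on $\overline{\mathbb{S}^3_+}$. Together these give the claim.

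\textbf{Case B: $v_n,v\in\mathbb{B}^3\setminus\overline{\Omega}_\epsilon$, with $v$ interior.} In this case $T(v_n)\to T(v)$, and $T(v)$ either lies in $\mathbb{B}^3$ or is a point of $\partial\Sigma$. If $T(v)\in\mathbb{B}^3$, Proposition \ref{prop:convergence.sets.fb}(i) applies directly.

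\textbf{Case C: boundary-type limits.} This covers two situations: $v\in\mathbb{S}^2\cup\overline{\Omega}_\epsilon$ while $v_n\in\mathbb{B}^3\setminus\overline\Omega_\epsilon$, and $T(v_n)$ converging to a point of $\mathbb{S}^2$. I will use parts (ii)--(iv) of Proposition \ref{prop:convergence.sets.fb} with $T(v_n)$ in place of $v_n$.
\begin{enumerate}[label=(\alph*)]
\item Suppose $T(v_n)\to w\in U\setminus\partial\Sigma$. Part (ii) identifies the limit as $\pi^{-1}(B^+_{\pi+t}(w,0))$. I then need $\overline r(v)=\pi$ and $\overline Q(v)=T(v)=w$. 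This holds for $v\in U\setminus\overline{\Omega}_\epsilon$. It also holds when $v\in\overline{\Omega}_\epsilon$ lies where $T$ collapses onto $\partial\Sigma$ from the $U$ side, since there $k=-\infty$ and hence $\overline{Q}_{p,-\infty}=p$ and $\overline r_{-\infty}=\pi$.
\item Suppose $T(v_n)\to w\in U^*\setminus\partial\Sigma$. This is symmetric to (a), using part (iii).
\item Suppose $T(v_n)\to p\in\partial\Sigma$. Write $T(v_n)=\Lambda(p_n,\phi(|s^n|)s^n)$, where $v_n=\Lambda(p_n,s^n)$. The ratio $s_2/s_1$ is unchanged by $T$. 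Since $v=\Lambda(p,s)$ with $|s|\le\epsilon$ on the boundary of $\Omega_\epsilon$, the limit ratio is $k=s_2/s_1$. I must then check that this matches the parametrisation $k=s_2/\sqrt{\epsilon^2-s_2^2}$ used in defining $\overline Q$ and $\overline r$ on $\overline\Omega_\epsilon$. For $v$ with $|s|=\epsilon$ we have $s_1=\sqrt{\epsilon^2-s_2^2}$, so the two agree. Part (iv) then gives convergence to $\pi^{-1}(B^+_{\overline r_k+t}(\overline Q_{p,k},0))=\mathbf U(v,t)$.
\end{enumerate}

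\textbf{Case D: $v_n\in\mathbb{S}^2\cup\overline{\Omega}_\epsilon$ while $v\in\mathbb{B}^3\setminus\overline{\Omega}_\epsilon$.} This is impossible, because the region $\mathbb{S}^2\cup\overline{\Omega}_\epsilon$ is closed.

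The main obstacle is the bookkeeping at the interface $\partial\Omega_\epsilon$ and at $\partial\Omega_\epsilon\cap\mathbb{S}^2$, where the two definitions of $\mathbf U$ must agree. Concretely, I must verify that for $v$ on the boundary of $\overline\Omega_\epsilon$ inside $\mathbb{B}^3$, the value $[|A_{(T(v),t)}|]=[|A_{(p,t)}|]$ is interpreted through the blow-up limit. Equivalently, the only genuinely delicate input is the uniform-in-$n$ volume estimate of Proposition \ref{prop:convergence.sets.fb}(iv), with $k$ read off correctly. Once that identification is made, the estimate reduces the mass continuity to the uniform continuity of the volumes of the sets $\pi^{-1}(B^+_\rho(Q,0))$.
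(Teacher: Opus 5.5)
Your proposal is correct and is essentially the paper's argument, which itself adapts \cite[Lemma 5.4]{marques2014}: mass equals the volume of the symmetric difference, continuity of $\overline{Q}$ and $\overline{r}$ on $\mathbb{S}^2\cup\overline{\Omega}_\epsilon$ handles the cap region, and Proposition~\ref{prop:convergence.sets.fb}(i)--(iv) applied to $T(v_n)$ handles the remaining cases, with the identification $s_2/s_1=s_2/\sqrt{\epsilon^2-s_2^2}$ at $|s|=\epsilon$ exactly as you make it. The only blemishes are cosmetic: the alternative $T(v)\in\partial\Sigma$ in Case B and the clause about $v\in\overline{\Omega}_\epsilon$ in Case C(a) cannot occur, because $T$ maps $\overline{\Omega}_\epsilon$ onto $\partial\Sigma$ and $\mathbb{B}^3\setminus\overline{\Omega}_\epsilon$ into $\mathbb{B}^3$, so those limits fall under C(c); also, on $\partial\Omega_\epsilon$ the value $\mathbf{U}(v,t)$ is by definition the cap rather than $[|A_{(T(v),t)}|]$, so nothing there needs to be ``interpreted'' through the blow-up limit.
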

\begin{proof}
We will use the  fact that if $V_1, V_2 \subset \mathbb{B}^3$ are open sets, then
\[
\mathbf{M}([|V_1|] - [|V_2|]) = \operatorname{vol}(V_1 \Delta V_2).
\]
Let $(v_n, t_n)$ tend to $(v, t)$ with $v_n, v \in \mathbb{B}^3 \setminus \overline{\Omega}_{\epsilon}$. Hence $(T(v_n), t)$ tends to $(T(v), t)$ and we obtain from Proposition \ref{prop:convergence.sets.fb} (i) that
\[
\lim_{n \to \infty} \mathbf{M}(\mathbf{U}(v_n, t_n) - \mathbf{U}(v, t)) = 0.
\]
Suppose now that $(v_n ,t_n)$ tends to $(v, t)$ with $v \in U \setminus \overline{\Omega}_{\epsilon}$. We have $T(v_n)$ converging to $T(v) \in U$ and $\overline{r}(v) = \pi$. Thus $\mathbf{U}(v, t) = [|\pi^{-1}\left(B^+_{\pi + t}(\overline{Q}(v), 0)\right)|]$. For every $n$ sufficiently large,
\[
\mathbf{U}(v_n, t_n) = [|\pi^{-1}\left(B^+_{\pi + t_n}(\overline{Q}(v), 0)\right)|] \quad \text{if }v_n \in \mathbb{S}^2,
\]
or
\[
\mathbf{U}(v_n, t_n) = [|A_{(T(v_n), t_n)}|] \quad \text{if }v_n \in \mathbb{B}^3.
\]
In any case, using Proposition \ref{prop:convergence.sets.fb} (ii), we get that
\[
\lim_{n \to \infty} \mathbf{M}(\mathbf{U}(v_n, t_n) - \mathbf{U}(v, t)) = 0.
\]

The case $(v_n, t_n)$ tending to $(v, t)$ with $v \in U^* \setminus \overline{\Omega}_{\epsilon}$ follows similarly, using Proposition \ref{prop:convergence.sets.fb} (iii).

The restriction of $\mathbf{U}$ to $\overline{\Omega}_{\epsilon}$ is clearly continuous in the mass topology because $\overline{Q}$ and $\overline{r}$ are continuous functions.

It remains to consider the case $(v_n, t_n)$ converging to $(v, t)$ with $v_n \in \mathbb{B}^3 \setminus \overline{\Omega}_{\epsilon}$ and $v \in \partial \Omega_{\epsilon}$. We write
\[
v_n = \Lambda (p_n, s_n) \quad \text{and} \quad v = \Lambda(p, s),
\]
where $\epsilon = |s| < |s_n|, s, s_n \in D^2_+(2 \epsilon)$, and we set
\[
\lim_{n \to \infty} \frac{s_{n_2}}{s_{n_1}} = \frac{s_2}{s_1} = k \in [- \infty, \infty].
\]
Recalling the definition of $T$ at the beginning of Section \ref{subsec: general free  boundary surface notation}, we have
\[
T(v_n) = \Lambda (p_n, u_n), \quad \text{where }u_n = \phi(|s_n|)s_n,
\]
and so
\[
\lim_{n \to \infty} \frac{u_{n_2}}{u_{n_1}} = k.
\]
Therefore, Proposition \ref{prop:convergence.sets.fb} (iv) implies that
\[
\begin{split}
&\lim_{n \to \infty} \mathbf{M}(\mathbf{U}(v_n, t_n) - [|\pi^{-1}\bigl(B^+_{\overline{r}_k + t}(\overline{Q}_{p, k}, 0)\bigr)|]) \\
 = & \lim_{n \to \infty} \mathbf{M}([|A_{(T(v_n), t_n)}|] - [|\pi^{-1}\bigl(B^+_{\overline{r}_k + t}(\overline{Q}_{p, k}, 0)\bigr)|]) = 0. \\
\end{split}
\]
We claim that $\mathbf{U}(v, t) = [|\pi^{-1}\bigl(B^+_{\overline{r}_k + t}(\overline{Q}_{p, k}, 0)\bigr)|]$, and this implies the desired continuity at once.

Indeed, since
\[
|s| = \epsilon \Rightarrow k = \frac{s_2}{\sqrt{\epsilon^2 - s_2^2}},
\]
we see from the definition of $\overline{Q}$ in (\ref{def:Q^bar}) and $\overline{r}$ in (\ref{def:r^bar}) that
\[
\overline{Q}(v) = \overline{Q}_{p, k} \quad \text{and} \quad \overline{r}(v) = \overline{r}_k.
\]
This implies  $\mathbf{U}(v, t) = [|\pi^{-1}\bigl(B^+_{\overline{r}_k + t}(\overline{Q}_{p, k}, 0)\bigr)|]$.
\end{proof}

From the boundary rectifiability theorem (Theorem $30.3$ of \cite{Simon1983Lectures}) we know that $C(v, t) = \partial\mathbf{U}(v, t) \in \mathcal{Z}_2(\mathbb{B}^3, \partial \mathbb{B}^3; \mathbb{Z}_2)$, and Lemma \ref{lem: Mass continuity of U} implies at once $C$ is continuous in the flat topology.

We are left to prove the final statement of Theorem \ref{thm: general surfaces blow up continuous in flat topology}. If $v \in \mathbb{S}^2 \cup \overline{\Omega}_{\epsilon}$, it is clear from (\ref{def: U}) that $\mathbf{U}(v, \pi) = [|\mathbb{B}^3|]$ and $\mathbf{U}(v, - \pi) = 0$, and that $C(v, \pm \pi) = 0$.

If $v \in \mathbb{B}^3 \setminus \overline{\Omega}_{\epsilon}$, set $\omega = T(v) \in \mathbb{B}^3$. Since $\pi(\Sigma_{\omega})$ is a smooth surface in $\mathbb{S}^3_+$, there can be no point $p \in \mathbb{S}^3$ with $d(p, \pi(\Sigma_{\omega})) \geq \pi$ (otherwise $\pi(\Sigma_{\omega}) \subset \{ -p \}$). Therefore $A_{(\omega, \pi)} = \mathbb{B}^3$ and $A_{(\omega, -\pi)} = \emptyset$. Since in this case $\mathbf{U}(v, t) = [|A_{(\omega, t)}|]$, we again have
$C(v, \pm \pi) = \partial \mathbf{U}(v, \pm \pi) = 0$.
\end{proof}

\part{Existence of three free boundary minimal annuli}
\section{Framework} \label{sec: Existence of 3 free-boundary minimal annuli}
In this section, we give the framework to prove Theorem \ref{thm:threeminimalannuli} which states that every compact $3$-manifold $M$ with nonnegative Ricci curvature and strictly convex boundary contains at least 3 embedded free boundary minimal annuli. We will adapt the scheme developed by Chu-Li \cite[Section 3]{chu2024existence} to our $6$-parameter family $\Psi$ (see Section \ref{sec: 6-parameter family}).

\subsection{Repetitive min-max in free boundary setting} Since the infinite existence of free boundary minimal annuli gives our desired conclusion directly, let us assume that there exist finitely many free boundary annuli in $M$. In Section \ref{sec: 6-parameter family}, we obtain a $6$-parameter Simon-Smith family $\Psi$ satisfying the topological bound $(0, 1)$ in the unit ball $\mathbb{B}^3$. But this family can also be viewed as a Simon-Smith family in $\mathbb{B}^3$ with any metric. By Meeks-Simon-Yau \cite{meeks1982embedded} (also see \cite[Theorem 2.11]{FraserLi2014}), any compact $3$-manifold with strictly convex boundary is diffeomorphic to the unit ball $\mathbb{B}^3$. Thus, we can view $\Psi$ as a Simon-Smith family in $M$. Throughout the rest of the paper, We may mix the use of $M$ and $\mathbb{B}^3$.

We will use the following perturbation lemma to quantize the areas of free boundary minimal surfaces to be linearly independent in each min-max procedure. By compactness of compact properly embedded free boundary minimal surfaces of fixed topological type in \cite{FraserLi2014}, let us take $L>0$ to be a larger number than the area of compact free boundary minimal disks and annuli below. 
\begin{prop}\label{prop:perturbMetric}
        Let $(M, g)$ be a compact $3$-dimensional Riemannian manifold with nonnegative Ricci curvature and strictly convex boundary. Suppose there are only finitely many properly embedded free boundary minimal annuli in $M$. Then, there exist a smooth metric $g'$ on $M$ such that $(M, g')$ has strictly convex boundary $\partial M$ and the following properties hold among all properly embedded $g'$-free boundary minimal surfaces.
        \begin{enumerate}[label=\normalfont(\arabic*)]
            \item\label{item:sameNumber}  $(M, \partial M, g)$ and $(M, \partial M, g')$ have the same number of embedded minimal annuli.
            \item $(M, \partial M, g')$ has finitely many embedded minimal disks, each of which is non-degenerate.
            \item \label{item:area_disk_neq_annuli} For any embedded free boundary minimal disk $D$ in $(M, \partial M, g')$ and any positive integer $m$, the value $m \cdot \text{Area}_{g'}(D)$ can never equal the $g'$-area of an embedded free boundary minimal annulus in $(M, \partial M, g')$.
            \item \label{item:area_disk_linear_indep} For any pair of distinct embedded free boundary minimal disks $D_1, D_2$ in $(M, \partial M, g')$ and any positive integers $m_1, m_2$, we have
            \[
                  m_1 \cdot \text{Area}_{g'}(D_1) \neq m_2 \cdot  \text{Area}_{g'}(D_2)\,.
            \]
            \item Any properly embedded $g'$-free boundary minimal surface whose area is smaller than $L$ is unstable.
            \item The Frankel property holds for disks and annuli: For any pair of connected properly embedded $g'$-free boundary minimal surfaces $\Sigma_1,\Sigma_2$ satisfying the topological bound $(0,1)$, we have
            \[
            \Sigma_1\cap\Sigma_2\neq\emptyset.
            \]
            \item There is no closed embedded $g'$-minimal sphere in $M$.
        \end{enumerate}
    \end{prop}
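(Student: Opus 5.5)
We will obtain $g'$ as a small $C^\infty$ perturbation of $g$ supported away from the finitely many minimal annuli, modelled on Chu--Li's metric perturbation lemma and White's bumpy metric theorem in the free boundary setting (Ambrozio--Carlotto--Sharp). Let $\mathcal{A}=\{\Gamma_1,\dots,\Gamma_N\}$ be the $g$-free boundary minimal annuli. First we reduce to the case where each $\Gamma_j$ is non-degenerate, or at least isolated. Since there are finitely many, each is isolated in the space of annuli. By the compactness theory of Fraser--Li for bounded area and fixed topology, together with the Ricci lower bound, any sequence of annuli for metrics $g_k\to g$ subconverges smoothly, with multiplicity one, to a $g$-minimal annulus or to a degenerate limit. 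The degenerate limits to be excluded are a multiplicity-two disk or a neck pinching. We will choose the perturbation to be conformal, $g'=e^{2f}g$, with $f$ supported in a small open set $W$. We take $W$ disjoint from $\bigcup_j\Gamma_j$, touching $\partial M$ only where needed, and with the property that no nearby annulus lies entirely outside $W$. Then each $\Gamma_j$ remains $g'$-minimal with the free boundary condition, and by the implicit function argument and the compactness above no new annuli appear for small $f$. This gives (1). Condition (1) is the delicate point, and we expect the main obstacle to be ruling out new annuli bifurcating from degenerate old ones. We would handle it by first applying a generic perturbation that fixes every $\Gamma_j$ (metrics agreeing with $g$ to second order along $\Gamma_j$ but otherwise generic). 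Such a perturbation makes every other free boundary minimal surface non-degenerate while keeping the $\Gamma_j$ as the only annuli, by a localized version of White's structure theorem.

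Next, for (2), we use the bumpy metric theorem within the class of metrics fixed near $\bigcup\Gamma_j$. Generic metrics there make all minimal disks non-degenerate, and Fraser--Li compactness with area below $L$ then gives finiteness. For (3) and (4) we further perturb $f$ in small balls meeting each disk $D_i$ but no other disk and no annulus; this is possible because distinct non-degenerate minimal surfaces are finitely many and are not contained in one another. Scaling the conformal factor near $D_i$ changes $\operatorname{Area}(D_i)$ at first order while keeping it minimal up to a small normal graph, by non-degeneracy. A one-parameter family per disk then lets us avoid the countably many relations $m\operatorname{Area}(D)=\operatorname{Area}(\Gamma_j)$ and $m_1\operatorname{Area}(D_1)=m_2\operatorname{Area}(D_2)$.

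For (5)--(7) we use that strict convexity and small $C^2$ perturbation preserve the curvature conditions only approximately, since $\operatorname{Ric}\ge0$ may fail slightly. So we argue via openness directly. By Fraser--Li, $\operatorname{Ric}\ge 0$ and strictly convex boundary give that every $g$-free boundary minimal surface is unstable, that Frankel holds, and that there are no closed minimal surfaces. The last point follows from the maximum principle against $\partial M$ and the Ricci condition: a closed minimal sphere would be stable-free, with the second fundamental form control from $\partial M$. For small perturbations these persist among surfaces of area $<L$ with topological bound $(0,1)$ by compactness. If $g_k\to g$ and $\Sigma_k$ were stable, disjoint, or closed, a smooth subsequential limit would give a $g$-counterexample. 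Multiplicity and degeneration are excluded using the curvature estimates for bounded-area minimal surfaces of bounded topology and the monotonicity at the boundary. Choosing the perturbation small enough completes the proof.
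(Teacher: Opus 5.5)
Your overall architecture matches the paper's: a conformal perturbation fixed near the annuli, a bumpy-metric theorem in that restricted class for (2), a Chu--Li area perturbation for (3)--(4), and openness for (5)--(7). However, the step you flag as the main obstacle, (1), is not handled correctly.

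You cannot reduce to non-degenerate $\Gamma_j$, since perturbing them could change the number of annuli. So the implicit function argument is unavailable. Your proposed fix also fails. A metric agreeing with $g$ only to second order along $\Gamma_j$ keeps $\Gamma_j$ minimal with the same Jacobi operator. But its higher normal jets change the reduced (Lyapunov--Schmidt) functional on $\ker L_{\Gamma_j}$: a cubic term can appear, turning e.g.\ $t^4$ into $t^4+\varepsilon t^3$. So new annuli can bifurcate from a degenerate $\Gamma_j$, and no ``localized structure theorem'' rules this out.

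The missing idea is to keep $g'=g$ on a whole relatively open neighborhood $U$ of $\bigcup_j\Gamma_j$, and use compactness instead of non-degeneracy. Your first setup already gives this, since $\operatorname{supp} f$ is closed and disjoint from the annuli. Suppose $g_i\to g$ with $g_i|_U=g|_U$, and $A_i$ are $g_i$-annuli of area $<L$. Then $A_i$ converge smoothly with multiplicity one to some $\Gamma_j$. Hence $A_i\subset U$ for large $i$, so $A_i$ is $g$-minimal and is one of the $\Gamma_k$. Multiplicity one here comes from the absence of stable $g$-free boundary minimal surfaces (a higher-multiplicity limit carries a positive Jacobi field). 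It does not come from curvature estimates and boundary monotonicity, as you suggest.

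The restricted bumpy-metric theorem also needs the complementary condition of Lemma~\ref{lem:eo}(1): $U$ must be so thin that no minimal disk of area $<L$ (for metrics near $g$) lies inside it. Then, by unique continuation, a Jacobi field on a disk is nonzero off $\overline U$. A conformal factor supported there gives the transversality that makes $\Pi$ Fredholm of index $0$, with regular values exactly the metrics for which all disks are non-degenerate. Your condition ``no nearby annulus lies entirely outside $W$'' is not this condition; for annuli you actually need the opposite.

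For (6), ``a smooth subsequential limit would give a $g$-counterexample'' is too quick, because limits of disjoint pairs need not be disjoint. If the two limits are distinct, they intersect by the Frankel property for $g$. A transversal intersection would persist to the approximating disjoint pairs, and a tangential contact is one-sided, contradicting the (free boundary) maximum principle. If the limits coincide, the normalized separation of the two sheets is a positive Jacobi field, so the limit is stable, contradicting instability for $g$.

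Also, (5) concerns all free boundary minimal surfaces of area $<L$, not only those with topological bound $(0,1)$. Its openness follows from curvature estimates for stable free boundary minimal surfaces, which need no topological bound.
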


We will prove Proposition \ref{prop:perturbMetric} in Section \ref{sec:perturbmetric} and give some idea of the proof here. We follow the same idea as \cite[Proposition 6.1]{chu2024existence} to perturb $g$ to satisfy conditions \normalfont{(1)}-\normalfont{(4)} above. It is not hard to see that conditions \normalfont{(5)}-\normalfont{(7)} are all open conditions. Thus, the perturbed metric $g'$ can inherit \normalfont{(5)}-\normalfont{(7)} from $g$ and the proof is complete.

Conditions \normalfont{(5)}-\normalfont{(7)} serve as an alternative of the nonnegative Ricci condition, as we cannot guarantee the perturbed metric $g'$ always has nonnegative Ricci curvature in $M$. As we shall see below, conditions \normalfont{(5)}-\normalfont{(7)} are enough for our use. Thus, in the rest of this section, we perturb the metric $(M, g)$ to satisfy conditions \normalfont{(1)}-\normalfont{(7)} of Proposition \ref{prop:perturbMetric} and still denote it by $(M, g)$.

\begin{rem} \label{rem:multiplicityone}
We note that in $(M, g)$, by Theorem \ref{thm:simon-smith min-max} and the Strong multiplicity one argument in \cite[Theorem 1.3]{chu2023strong}, for any Simon-Smith family $\Phi: X \to \mathcal{S}^*(M)$, by choosing the minimizing sequence appropriately, we have the min-max limit is always of multiplicity one and associated with a connected properly embedded free boundary minimal surface, i.e., $ \bC(\{\Phi_{i}\})\cap \mathcal{W}_{L, \leq 0,\le 1} \subseteq \mathcal{W}^{(1)}_{L, \le 0, \le 1}$.
\end{rem}

\begin{lem}\label{lem:W_one_elmt}
        In $(M, g)$, for any $L > 0$, suppose that $ \mathcal{W}^{(1)}_{L, \leq 0, \leq 1}$ is nonempty. Then there exists a varifold in $ \mathcal{W}^{(1)}_{L, \leq 0, \leq 1}$ associated with a multiplicity-one free boundary minimal surface $\Sigma$ and the following hold.
        \begin{enumerate}[label=\normalfont(\arabic*)]
            \item If $\Sigma$ is a free boundary minimal annulus, then every varifold in $\mathcal{W}^{(1)}_{L, \leq 0, \leq 1}$ is associated with a multiplicity-one minimal annulus. 
            \item If $\Sigma$ is a free boundary minimal disk, then 
            \[\mathcal{W}^{(1)}_{L, \leq 0, \leq 1} = \{|\Sigma|\}\,.
            \]
        \end{enumerate}
        In particular, $\mathcal{W}^{(1)}_{L, \leq 0, \leq 1}$ consists of finitely many varifolds.
        \end{lem}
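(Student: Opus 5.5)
Since $\mathcal{W}^{(1)}_{L,\le 0,\le 1}$ is nonempty, pick any element $W$. By the definition of $\mathcal{W}^{(1)}_{L,\le 0,\le 1}$ we have $W=|\Gamma_1|+\cdots+|\Gamma_l|$, where the $\Gamma_j$ are disjoint connected properly embedded free boundary minimal surfaces and the support satisfies the topological bound $(0,1)$. The Frankel property, Proposition~\ref{prop:perturbMetric}(6), says any two connected free boundary minimal surfaces with topological bound $(0,1)$ intersect. Each $\Gamma_j$ is such a surface, so disjointness forces $l=1$. Hence every element of $\mathcal{W}^{(1)}_{L,\le 0,\le 1}$ is $|\Sigma|$ for a single connected free boundary minimal surface $\Sigma$ of area $L$ with genus $0$ and $\beta(\Sigma)\le 2$. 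Property (7) rules out closed components, so $\Sigma$ is a properly embedded disk or annulus. Fix one such element and call its surface $\Sigma$.

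\textbf{Case (1): $\Sigma$ is an annulus.} Suppose another element $|\Sigma'|\in\mathcal{W}^{(1)}_{L,\le 0,\le 1}$ had $\Sigma'$ a disk. Then $\operatorname{Area}_g(\Sigma')=L=\operatorname{Area}_g(\Sigma)$, which is the case $m=1$ excluded by Proposition~\ref{prop:perturbMetric}(3). So every element is a multiplicity-one annulus.

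\textbf{Case (2): $\Sigma$ is a disk.} By the same argument with the roles reversed, every other element must also be a disk $D$ of area $L$. Proposition~\ref{prop:perturbMetric}(4) with $m_1=m_2=1$ says distinct minimal disks have distinct areas, so $D=\Sigma$. Hence $\mathcal{W}^{(1)}_{L,\le 0,\le 1}=\{|\Sigma|\}$.

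\textbf{Finiteness.} In case (2) the set has one element. In case (1) its elements correspond injectively to minimal annuli in $(M,g)$. There are only finitely many of these: the standing assumption gives finitely many for the original metric, and Proposition~\ref{prop:perturbMetric}(1) says the perturbed metric has the same number.

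The main point needing care is the reduction to $l=1$, which relies on Frankel. I expect no real obstacle beyond checking that Proposition~\ref{prop:perturbMetric}(6) applies to each component $\Gamma_j$ individually. It does: each component has genus zero and at most two boundary components, since the genus and boundary complexity of the support are sums over components.
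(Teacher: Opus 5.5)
Your proof is correct and follows the paper's argument: the Frankel property, Proposition~\ref{prop:perturbMetric}(6), forces a single component, and items (3) and (4) of that proposition then give the dichotomy. You also spell out two points the paper's short proof leaves implicit: using (7) to exclude closed sphere components, and using (1) together with the standing finiteness assumption to get finiteness.
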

    \begin{proof}
    By the Frankel property in Proposition \ref{prop:perturbMetric} (6),  every pair of connected embedded free boundary minimal disks or annuli must intersect each other. Thus, every varifold in $\mathcal{W}^{(1)}_{L, \leq 0, \leq 1}$ consists of a single multiplicity one free boundary minimal disk or annuli by assumption. The dichotomy then follows from \ref{item:area_disk_neq_annuli} and \ref{item:area_disk_linear_indep} in Proposition \ref{prop:perturbMetric}.
    \end{proof}
By our assumption and Proposition \ref{prop:perturbMetric}, we can assume that there are finitely many free boundary minimal disks and annuli. We take $d_{0}$ to be smaller than the shortest distance among free boundary minimal disks and annuli both in the varifold $\bF$-distance sense and in the relative cycle space distance.

\subsubsection{First step} \label{sec:boundary component cap} 

Applying Theorem \ref{thm:simon-smith min-max} with $r= d_{0}$ to $\Psi$, we obtain a pulled-tight minimizing sequence $\{ \Phi_{i}\}$ and a min-max limit varifold $V^{1} \in \mathcal{W}^{1}:=\bC(\{\Phi_{i}\}) \cap \mathcal{W}_{\bL(\Lambda(\Psi)), \le 0, \le 1}$. Moreover, we have $ \mathcal{W}^{1} \subseteq  \mathcal{W}^{(1)}_{\bL(\Lambda(\Psi)), \le 0, \le 1}$ by Remark \ref{rem:multiplicityone} and the Multiplicity One Theorem \cite[Theorem 1.3]{sarnataro2026existence}. In particular, $V^{1}$ is induced by a multiplicity one free boundary minimal disk or annulus. By Lemma \ref{lem:W_one_elmt}, we obtain the following dichotomy:
\begin{enumerate}[label= \textbf{Case} \normalfont\arabic* :]
    \item $\mathcal{W}^{1}$ consists of varifolds induced by a multiplicity-one minimal annulus;
    \item $\mathcal{W}^{1} = \{ |\Sigma| \}$ where $\Sigma$ is a multiplicity-one minimal disk.
\end{enumerate}

\textbf{Case $1$}. If Case $1$ occurs, by Theorem \ref{thm:simon-smith min-max} \normalfont{(3)}, there exists $\delta > 0$ and a pulled-tight minimizing sequence $\{ \Phi_i \} \subset \Lambda(\Phi)$ such that
\[
\mathbf{M}(\Phi_i(x)) \geq \mathbf{L}(\Lambda(\Phi)) - \delta \quad \Rightarrow \quad |\Phi_i(x)| \in \mathbf{B}_{d_0}^{\mathbf{F}}(\mathcal{W}^1).
\]
However, this is not sufficient to proceed with the topological argument in the cycle space $\mathcal{Z}_2(M,\partial M, \mathbb{Z}_{2})$, and we will apply the following theorem.
\begin{thm}\label{thm:currentsCloseInBoldF}
        Suppose $(M,g)$ satisfies conditions \normalfont{(1)}-\normalfont{(7)} of Proposition \ref{prop:perturbMetric}. Let $\Lambda$ be the homotopy class of a Simon-Smith family satisfying the topological bound $(\fg_{0}, \fb_{0})$ for some nonnegative integers $\fg_{0}, \fb_{0}$, with $L = \mathbf{L}(\Lambda) > 0$. Suppose that $\bC(\{\Phi_{i}\}) \cap \mathcal{W}_{\bL(\Lambda), \le \fg_{0}, \le \fg_{0}+\fb_{0}}$ consists of finitely many multiplicity one varifolds for some pulled-tight minimizing sequence $\{\Phi_i\}$ in $\Lambda$. Then for any $r > 0$, there exists $\eta>0$ and $\Phi \in \Lambda$ such that 
        \[
          \bM(\Phi(x))\geq L -\eta \implies [\Phi(x)] \in \bB^\bF_r([\bC(\{\Phi_{i}\}) \cap \mathcal{W}_{\bL(\Lambda), \le \fg_{0}, \le \fg_{0}+\fb_{0}}]) \,.
        \]
        \begin{proof} 
        It follows from the same proof as  \cite[Theorem 3.5]{chu2024existence}, as \cite[Corollary 7.2]{chu2024existence} also holds for relative cycles and the proof of \cite[Proposition 7.3]{chu2024existence} is a local argument.
        \end{proof}
    \end{thm}

We apply Theorem \ref{thm:currentsCloseInBoldF} to $(M, g)$, $\Lambda = \Lambda(\Psi)$, $r=d_0$, and $(\fg_{0}, \fb_{0}) = (0, 1)$. Then we obtain $\eta > 0$ and $\Phi \in \Lambda(\Psi)$ satisfying the properties of Theorem \ref{thm:currentsCloseInBoldF}. For late reference, we denote $\delta_1 := \eta$ and $\tilde\Psi^1 := \Phi$.

By refining the cubical complex structure of the parameter space $Y$, we obtain a $6$-dimensional subcomplex $C^1$ by refining the cubical complex such that 
    \begin{equation}\label{eq:Phi_iMassBound1}
        \bM(\tilde \Psi^1(x))\geq \bL(\Lambda(\Psi))-\delta_1/2 \implies x\in C^1
    \end{equation}
    and  
    \begin{equation}\label{eq:C1liesIn} 
        x\in C^1 \implies [\tilde \Psi^1(x)]\in\bB^\bF_{d_0}([\mathcal{W}^1]).
    \end{equation}
Roughly speaking, $C^1$ is the "cap" where $\tilde\Psi^1$ has large area and is close to some free boundary minimal annuli.

Since $(M, g)$ has only finitely many properly embedded free boundary minimal annuli, by the definition of $d_{0}$, and by further refining $Y$,  we can decompose $C^{1}$ into a disjoint union of $6$-dimensional subcomplexes $\{C^{1}_{j} \}_{1 \le j \le n_{i}}$ such that for each $j = 1, \cdots, n_1$, there is a distinct free boundary minimal annulus $A^1_j \in \mathcal{S}^*(M)$ with $|A^1_j| \in \mathcal{W}^1$, such that
\[
[\tilde \Psi^1](C^1_j) \subset \mathbf{B}_{d_0}^{\mathbf{F}}([A^1_j]).
\]
finally, we define \[
\Psi^1 := \tilde \Psi^1|_{\overline{Y\setminus C^1}}
\]
By (\ref{eq:Phi_iMassBound1}), $\Psi^1$ satisfies
\begin{equation}\label{eq:Psi1MassBound}
        \sup_ {x \in \text{dmn}(\Psi^{1})}\mathcal{H}^2(\Psi^1(x))<\bL(\Lambda(\Psi)).
\end{equation}
Later when we apply the min-max process for the second time, we will start with $\Psi_1$, and the area upper bound guarantees new min-max minimal surfaces will be detected. For notational convenience later, we denote
\[\tilde Y^1:= \text{dmn}(\tilde\Psi^1)=Y, \quad Y^1=\overline{Y\setminus C^1}=\text{dmn}(\Psi^1).
\]

\textbf{Case $2$}. In this case, the goal is once again to construct a map $\Psi_1$ that satisfies the area bound (\ref{eq:Psi1MassBound}). Additionally, we want $\Psi_1$, when restricted to the boundary of the removed cap, to form a Simon-Smith family of disks. To achieve this, we adapt the interpolation theorem in Chu-Li \cite{chu2024existence} to our setting and collect necessary notions first.

Given two topological spaces $X$ and $X'$, and a continuous map $f:X'\to X$, we define {\it mapping cylinder} of $f$ by $M_f := (([0, 1] \times X') \sqcup X) / \sim\,,$, where $(0, x) \sim f(x)$ for all $x \in X'$.
    
In the following, we will primarily be interested in the case where $X'$ and $X$ are both finite cubical complexes, and $f:X'\to X$ is a surjective homotopy equivalence that is a {\it cubical  map}, i.e., every cell of $X'$ is mapped to a cell of $X$.  In this case, the mapping cylinder $W:=M_f$ can be viewed as a simplicial complex and we define the following subsets $ \partial_0 W$ and $ \partial_1 W$ of $W$:
\[\partial_0 W:=(\{0\}\times X')/\sim\,,\quad \partial_1 W:=(\{1\}\times X')/\sim\ \]
We can consider $W$ as a cobordism between cubical complexes $\partial_0 W$ and $\partial_1 W$. We will abuse notation by using the terms $\partial_0W$ and $X$, and $\partial_1W$ and $X'$ interchangeably. 

Now we consider the map from $[0,1]\times [0,1] \times X'$ to $[0,1] \times X'$ defined by 
\[
(t,s,x') \mapsto (ts,x').
\]
This induces a map
\begin{equation} \label{eq: F_W}
    F_W: [0,1] \times W \to W.
\end{equation}
The key property of $F_W$ is that the map $(t,w)\mapsto F_W(1 - t, w)$ for $(t, w) \in [0, 1] \times W$ is a {\it strong deformation retraction} of $W$ onto $\partial_0W $ i.e. we have 
\[F_W(1, w) = w\,,\quad F_W(0, w)\in \partial_0 W\,, F_W(t, x)= x \]
for all $t\in [0,1]$, $w\in W$, $x\in \partial_0 W$. $W$ can also be regarded as a cubical complex by ~\cite[\S 4]{buchstaber2002torus}. Let us state the free boundary analog of the interpolation theorem in \cite[Theorem 3.6]{chu2024existence}.

\begin{thm}\label{thm:mapping_cylinder}
         Suppose $(M, g)$ satisfies conditions \normalfont{(1)}-\normalfont{(7)} of Proposition \ref{prop:perturbMetric}. Let $\Phi:X \rightarrow \mathcal{S}^{*}(M)$ be a Simon-Smith family satisfying the topological bound $(0,1)$ with $L = \mathbf{L}(\Lambda(\Phi)) > 0$, where $X$ is a cubical subcomplex in $I^m$. Let $\{\Phi_i\}$ be a pulled-tight minimizing sequence in $\Lambda(\Phi)$, and suppose that the set $\bC(\{\Phi_{i}\}) \cap \mathcal{W}_{L, \le 0, \le 1}$ consists of exactly one varifold associated with a multiplicity-one nondegenerate free boundary minimal disk. Then there exists a cubical subcomplex $X'$ in $I^{m+1}$, and homotopy equivalence
        $f: X' \to X\,,$ which is a surjective cubical map after refining $X$, and a Simon-Smith family $\Phi': X' \to \mathcal{S}^*(M)$ satisfying topological bound $(0,1)$ with the following properties.
        \begin{enumerate}[label=\normalfont(\arabic*)]
            \item There exists an $\eta>0$ such that for $x \in X'$,  
                \[
                    \mathcal{H}^2(\Phi'(x))\geq L - \eta\quad\Longrightarrow\quad \fg(\Phi'(x))=0\,\text{and } \fb(\Phi'(x))=0.
                \]
            \item\label{item:mapping_cylinder} Let $W$ be the mapping cylinder $M_f = (([0, 1] \times X') \sqcup X) / \sim$ of $f$, where $(0, x) \sim f(x)$ for all $x \in X'$. There exists a Simon-Smith family $H: W \to \mathcal{S}^*(M)$ satisfying topological bound $(0,1)$ such that:
            \begin{enumerate}[label=\normalfont(\alph*)]
                \item $H|_{\partial_0 W}=\Phi$ and $H|_{\partial_1 W} = \Phi'$.
                \item\label{item_item:boundary-pinch-off} For all $x \in X'$, $t \mapsto H(t, x)$ is a {\em relative pinch-off process} (see Definition~\ref{defn:boundary_pinch_off}). 
            \end{enumerate}
        \end{enumerate}
    \end{thm}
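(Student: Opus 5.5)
The plan is to adapt the proof of \cite[Theorem 3.6]{chu2024existence} to the free boundary setting, replacing the neck-pinch surgery near a degenerate min-max torus by a \emph{relative} surgery near the nondegenerate free boundary minimal disk $D$ with $\bC(\{\Phi_i\})\cap\mathcal{W}_{L,\le 0,\le 1}=\{|D|\}$.

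\textbf{Step 1: localization.} First, by Theorem \ref{thm:simon-smith min-max} (3) and Theorem \ref{thm:currentsCloseInBoldF}, choose $\Phi\in\Lambda(\Phi)$ and $\eta_0>0$ such that $\mathcal{H}^2(\Phi(x))\ge L-\eta_0$ forces $[\Phi(x)]$ and $|\Phi(x)|$ into a small $\bF$-neighborhood of $D$. Refining $X$, let $K\subset X$ be the subcomplex where this closeness holds. Since $D$ is nondegenerate and unstable (Proposition \ref{prop:perturbMetric} (2),(5)), we have a free boundary tubular neighborhood $N\cong D\times(-1,1)$, meeting $\partial M$ orthogonally along $\partial D\times(-1,1)$. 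We then run the free boundary analog of the Colding--De Lellis/Ketover--Chu--Li improvement argument in $N$. Surfaces $\Phi(x)$ with $x\in K$ are, outside a small set, graphical over $D$ of multiplicity one, with the exceptional regions being small necks or handles. The topological bound $(0,1)$ then means each such surface is $D$ with at most one extra handle, or one extra boundary component, produced by a thin tube in $N$ (a tube possibly touching $\partial M$).

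\textbf{Step 2: relative pinch-off.} Next, for $x\in K$ we deform $\Phi(x)$ continuously in $x$ by a \emph{relative pinch-off process} (Definition \ref{defn:boundary_pinch_off}). This process is an isotopy, followed by neck-pinches that cut thin interior tubes, and by boundary pinches that collapse a thin half-tube against $\partial M$ to a point of $\partial M$ and then remove it. Along the way the area does not increase by more than a fixed small amount, and at the end the surface is a genus zero surface with one boundary component, i.e.\ $\fg=\fb=0$. The delicate point is that the choice of which neck to pinch is not continuous in $x$. Following Chu--Li, I would handle this by enlarging the parameter space: at points where two pinching choices compete we insert an extra interval parameter, which yields $X'\subset I^{m+1}$ together with a cubical surjection $f:X'\to X$ whose fibers are contractible (points or intervals). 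This makes $f$ a homotopy equivalence. Away from $K$ we let $\Phi'=\Phi\circ f$ and $H(t,x')=\Phi(f(x'))$.

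\textbf{Step 3: assembly.} Finally, we define $H$ on $W=M_f$ by $H(t,x')=$ the pinch-off process at time $t$ applied to $\Phi(f(x'))$, with $H|_{\partial_0W}=\Phi$. To prove (1), note that the mass drop in the pinch-off of the tube region is controlled from below, because the necks are small away from the nondegenerate disk. So any surface of $\Phi'$ with area at least $L-\eta$ must have completed the pinch-off and be a disk. We then verify the Simon-Smith axioms of Definition \ref{def:fbssfamily} for $H$: finitely many punctate points, allowed on $\partial M$ by the remark after Definition \ref{def: Punctate surfaces}, and smooth convergence away from them. The topological bound $(0,1)$ is preserved because surgery never increases genus or boundary complexity.

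The main obstacle is Step 2 at the boundary. We need a continuous-in-parameters free boundary neck-pinch in which a half-neck degenerates to a point on $\partial M$, and we must show the resulting family still satisfies condition (v) of Definition \ref{def:fbssfamily} near boundary punctate points. I would first try to reduce this to the interior case by reflecting across $\partial M$ in Fermi coordinates, where the orthogonality of $N$ to $\partial M$ makes the reflected picture symmetric. One would then apply Chu--Li's interior construction equivariantly and restrict back to $M$.
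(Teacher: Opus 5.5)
Your outline has the right shape (localize, pinch, glue through a mapping cylinder). However, the two steps that carry the proof are missing, and the mechanisms you propose for them would not work.

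\textbf{First gap: finding the neck and controlling area.} Step 1 asserts that every $\Phi(x)$ with $x\in K$ is, off a small set, a multiplicity-one graph over $D$ with a few thin necks or half-tubes. Nothing gives this:
\begin{itemize}
\item $\bF$-closeness to $D$ is a weak measure-theoretic condition.
\item The surfaces at near-maximal parameters are arbitrary members of the family, not almost-minimizing ones.
\item The Colding--De Lellis/Ketover replacement argument concerns a min-max sequence, not every parameter in $K$.
\end{itemize}
So you never actually produce the neck to cut, nor the small-area-increase claim of Step 2.

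The paper obtains both without any regularity. It applies the Loewner-type inequality $\min\{\operatorname{sys},\operatorname{sys}_{\operatorname{rel}}\}\le\sqrt{\mathcal H^2(\Sigma)}$ (Theorem \ref{thm:relativesystolic}) to the pieces obtained by cutting along short curves chosen by coarea over a prism/half-prism decomposition of $D$. This shows (Lemmas \ref{lem:boundarysmallcurve}, \ref{lem:small_essential_piece_punctate}) that a punctate annulus $\bF$-close to a disk contains a nontrivial loop in a small interior ball or an essential arc in a small half-ball, whose pinching leaves only disks.

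The area control comes from the annular bound $\mathcal H^2(\Phi(x)\cap\mathcal A(p;r,2r))\le 100r^2$ plus a relative isoperimetric filling. Together they make pinching \emph{all} level-set curves in an admissible (half-)annulus cost at most $C\Lambda^2r^2$ (Lemma \ref{lem:relative-local-pinching}). Continuity in $x$ comes from Chu--Li's combinatorial choice of disjoint or nested annuli \cite[Lemmata 8.9--8.11]{chu2024existence}. Hence Proposition \ref{prop:boundary_interpolation} lives on $[0,1]\times X$ itself. Your device of inserting intervals where pinching choices ``compete'' is not how continuity is obtained, and it would need its own justification.

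Boundary points are handled directly with half-balls, half-spheres and Fermi coordinates (Lemmas \ref{lem:relative-local-pinching}, \ref{lem:relative-shrinking}). Your reflection idea is awkward here, since members of the family meet $\partial M$ only transversally and their doubles are creased.

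\textbf{Second gap: the argument for (1) and the gluing.} Neck-pinching does not lower area by a definite amount; in Proposition \ref{prop:boundary_interpolation} it may raise it by $\epsilon$. The real difficulty is the gluing.
\begin{itemize}
\item If $\Phi'$ is the pinched surface on $K$ and $\Phi\circ f$ off $K$, it is discontinuous along $\partial K$.
\item Any continuous $\Phi'$ must therefore contain, in $X'$, a transition region of partially pinched surfaces. These may be annuli of area up to $\mathcal H^2(\Phi(x))+\epsilon$ for $x$ near $\partial K$, and (1) requires them to stay below $L-\eta$.
\item The constants cannot simply be chosen. The closeness $\delta$ allowed by the interpolation depends on $\epsilon$, and the area gap in Theorem \ref{thm:currentsCloseInBoldF} depends on $\delta$, so a naive choice is circular.
\end{itemize}
The paper builds $X'$, $f$, $\Phi'$ as in Steps~2--3 of \cite[Section 8.5]{chu2024existence}, invoking the local min-max theorem \cite[Theorem 8.12]{chu2024existence} (after \cite{white1994strong} and \cite[Theorem 6.1]{marques2021morse}). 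This is where nondegeneracy of the disk is used. In your proposal nondegeneracy plays no real role, which is a sign that this step is missing.
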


\begin{rem} \label{rem: relative pinch-off process}
We explain a heuristic of the relative pinch-off process we define below. If an annulus is sufficiently close to a disk, then there exists either an essential arc connecting the two boundary components of the annulus within in a small half-ball $B \cap \partial M$, or a homologically nontrivial simple closed curve  within a small interior ball $B$, respectively. The reader may refer to Theorem \ref{thm:relativesystolic} for the precise statement and proof. Let us denote this curve by $\gamma$. The relative pinch-off process is defined to be a free boundary Simon-Smith family that continuously pinchs off this small ball or half-ball. This process consists of a finite number of (half) neck-pinch surgeries and the surface becomes disjoint from this small (half-)ball. We shrink the components inside the ball into a point. 
\end{rem}

\begin{defn}\label{defn:boundary_pinch_off}
        In a compact Riemannian manifold with boundary $(M, g)$, given a $1$-parameter free boundary Simon-Smith family $\Phi: [a, b] \to \mathcal{S}^*(M)$, where we view the parameter space $[a, b]$ as a time interval, we call $\Phi$ by a {\em relative pinch-off process} if the following holds. 
        
        There exist finitely many spacetime points $(t_1,p_1),\cdots,(t_n,p_n)$, where $t_i\in[a,b]$ and $p_i\in\Phi(t_i)$, such that: 
        \begin{enumerate}[label=\normalfont(\arabic*)]
            \item For each $(t,p)$, where $t\in[a,b]$ and $p\in\Phi(t)$, different from any $(t_i,p_i)$, there exists an open time interval $J\subset [a,b]$ around $t$, a one-parameter group of diffeomorphisms $\{\varphi_{t'}\}_{t' \in J} \subset \operatorname{Diff}(M, \partial M)$ and an (relative) open ball $U\subset M$ around $p$ such that for all $t' \in J$,
            \[
                \varphi_{t'}(\Phi(t)) \cap U = \Phi(t') \cap U\,.
            \]
            \item For each $i=1,\cdots,n$, there exists an open time interval $J_{i}\subset [a,b]$ around $t_i$ and a (relative) open ball $U\subset M$ around $p_i$ such that the family 
            \[
                \{\Phi(t')\cap U\}_{t'\in J_{i}}
            \] is of one of the following types:
            \begin{enumerate}[label=\normalfont(\alph*)]
            \item A {\it shrinking process}: For each $t'\in J_{i}$, $\Phi(t')$ does not intersect $\partial U \cap \operatorname{int}(M)$, and $\{ \Phi(t')\cap U\}_{t' \in J_{i}\cap[a,t_i)}$ is induced by a one-parameter group of diffeomorphisms in $U$, and $\Phi(t')\cap U = p_i$ for all $t' \in J_{i} \cap [t_i,b]$ (Here it is possible that $p_i \in \partial M$ and $U$ is diffeomorphic to a half geodesic ball $B_R(p_i) \cap M$).
            \item A {\it surgery process}: we have two types of surgeries as follows.
                \begin{enumerate}
                    \item Interior neck-pinch: for $p_i \in \operatorname{int}(M)$, pinching a (topological) cylinder at the point $p_i$ at time $t_i$, to obtain a (topological) double cone, and then either splitting it into two smooth disks or leaving it intact.
                    \item Boundary neck-pinch: for $p_i \in \partial M$ and $U$ is diffeomorphic to a half geodesic ball $B_R(p_i) \cap M$,  pinching a topological half-cylinder at $p_i$, at time $t_i$, to obtain a topological half-double cone, and then either splitting it into two smooth proper discs with boundaries on $\partial M$ or leaving it intact (see Figure \ref{fig:Boundary neck-pinch}).
                \end{enumerate}
           \end{enumerate}
        \end{enumerate}
    \end{defn}

\begin{rem}
    We apply the relative pinch-off process instead of the free boundary mean curvature flow due to the singularity. We also remark that even the singularity models for the free boundary mean curvature flow have been less explored than that for the mean curvature flow. We will see how this relative pinch-off process simplifies the topology of surfaces in Section \ref{sec:relativepinchoff}.
\end{rem}
\begin{figure} [H]
    \centering
\begin{tikzpicture}[
    x={(0.9cm,0.25cm)},
    y={(-0.55cm,0.35cm)},
    z={(0cm,1cm)},
    line join=round,
    line cap=round,
    >=Latex
]

% Common parameters
\def\zmax{1.35}
\def\Nz{24}
\def\Nt{24}
\def\thetac{-31.4} % outer contour meridian for this projection

% Useful radius for the catenoid top/bottom
\pgfmathsetmacro{\rcat}{0.5*(exp(\zmax)+exp(-\zmax))}

% --------------------------------------------------
% Left figure: half catenoid
% --------------------------------------------------
\begin{scope}[shift={(-5.8cm,0cm)}]

% Plane x = 0
\fill[blue!20,opacity=0.45]
  (0,-2.4,-1.6) -- (0,2.4,-1.6) -- (0,2.4,1.6) -- (0,-2.4,1.6) -- cycle;

\draw[blue!70!black,thick]
  (0,-2.4,-1.6) -- (0,2.4,-1.6) -- (0,2.4,1.6) -- (0,-2.4,1.6) -- cycle;

% Half-catenoid surface: x >= 0
% (x,y,z) = (cosh(z) cos(theta), cosh(z) sin(theta), z), theta in [-90,90]

\foreach \i in {0,...,23} {
  \pgfmathsetmacro{\za}{-\zmax + 2*\zmax*\i/\Nz}
  \pgfmathsetmacro{\zb}{-\zmax + 2*\zmax*(\i+1)/\Nz}

  \pgfmathsetmacro{\ra}{0.5*(exp(\za)+exp(-\za))}
  \pgfmathsetmacro{\rb}{0.5*(exp(\zb)+exp(-\zb))}

  \foreach \j in {0,...,23} {
    \pgfmathsetmacro{\ta}{-90 + 180*\j/\Nt}
    \pgfmathsetmacro{\tb}{-90 + 180*(\j+1)/\Nt}

    \fill[gray!35,opacity=0.75]
      ({\ra*cos(\ta)},{\ra*sin(\ta)},\za)
      -- ({\ra*cos(\tb)},{\ra*sin(\tb)},\za)
      -- ({\rb*cos(\tb)},{\rb*sin(\tb)},\zb)
      -- ({\rb*cos(\ta)},{\rb*sin(\ta)},\zb)
      -- cycle;
  }
}

% Boundary / intersection with plane x = 0
\draw[black,thick]
  plot[domain=-\zmax:\zmax,samples=100]
  (0,{0.5*(exp(\x)+exp(-\x))},\x);

\draw[black,thick]
  plot[domain=-\zmax:\zmax,samples=100]
  (0,{-0.5*(exp(\x)+exp(-\x))},\x);

% Highest and lowest altitude lines
\draw[black,thick]
  plot[domain=-90:90,samples=100]
  ({\rcat*cos(\x)},{\rcat*sin(\x)},\zmax);

\draw[black,thick]
  plot[domain=-90:90,samples=100]
  ({\rcat*cos(\x)},{\rcat*sin(\x)},-\zmax);

% Outermost contour meridian
\draw[black,thick]
  plot[domain=-\zmax:\zmax,samples=100]
  ({0.5*(exp(\x)+exp(-\x))*cos(\thetac)},
   {0.5*(exp(\x)+exp(-\x))*sin(\thetac)},
   \x);

% Dashed lines in the plane x=0
\draw[dashed,thick] (0,-\rcat,\zmax) -- (0,\rcat,\zmax);
\draw[dashed,thick] (0,-\rcat,-\zmax) -- (0,\rcat,-\zmax);

\end{scope}

% --------------------------------------------------
% Right figure: half double cone
% --------------------------------------------------
\begin{scope}[shift={(5.8cm,0cm)}]

% Plane x = 0
\fill[blue!20,opacity=0.45]
  (0,-2.4,-1.6) -- (0,2.4,-1.6) -- (0,2.4,1.6) -- (0,-2.4,1.6) -- cycle;

\draw[blue!70!black,thick]
  (0,-2.4,-1.6) -- (0,2.4,-1.6) -- (0,2.4,1.6) -- (0,-2.4,1.6) -- cycle;

% Half double cone surface: x >= 0
% (x,y,z) = (|z| cos(theta), |z| sin(theta), z), theta in [-90,90]

\foreach \i in {0,...,23} {
  \pgfmathsetmacro{\za}{-\zmax + 2*\zmax*\i/\Nz}
  \pgfmathsetmacro{\zb}{-\zmax + 2*\zmax*(\i+1)/\Nz}

  \pgfmathsetmacro{\ra}{abs(\za)}
  \pgfmathsetmacro{\rb}{abs(\zb)}

  \foreach \j in {0,...,23} {
    \pgfmathsetmacro{\ta}{-90 + 180*\j/\Nt}
    \pgfmathsetmacro{\tb}{-90 + 180*(\j+1)/\Nt}

    \fill[gray!35,opacity=0.75]
      ({\ra*cos(\ta)},{\ra*sin(\ta)},\za)
      -- ({\ra*cos(\tb)},{\ra*sin(\tb)},\za)
      -- ({\rb*cos(\tb)},{\rb*sin(\tb)},\zb)
      -- ({\rb*cos(\ta)},{\rb*sin(\ta)},\zb)
      -- cycle;
  }
}

% Boundary / intersection with plane x = 0
\draw[black,thick]
  plot[domain=-\zmax:\zmax,samples=100]
  (0,{abs(\x)},\x);

\draw[black,thick]
  plot[domain=-\zmax:\zmax,samples=100]
  (0,{-abs(\x)},\x);

% Highest and lowest altitude lines
\draw[black,thick]
  plot[domain=-90:90,samples=100]
  ({\zmax*cos(\x)},{\zmax*sin(\x)},\zmax);

\draw[black,thick]
  plot[domain=-90:90,samples=100]
  ({\zmax*cos(\x)},{\zmax*sin(\x)},-\zmax);

% Outermost contour meridian
\draw[black,thick]
  plot[domain=-\zmax:\zmax,samples=100]
  ({abs(\x)*cos(\thetac)},
   {abs(\x)*sin(\thetac)},
   \x);

% Dashed lines in the plane x=0
\draw[dashed,thick] (0,-\zmax,\zmax) -- (0,\zmax,\zmax);
\draw[dashed,thick] (0,-\zmax,-\zmax) -- (0,\zmax,-\zmax);

% Mark the origin
\fill[black] (0,0,0) circle (0.9pt);
\node[black] at ($(0,0,0)+(0.52,-0.22,0)$) {$p_i$};

\end{scope}

% --------------------------------------------------
% Big right arrow in the middle
% --------------------------------------------------
\node at (0cm,0cm) {\Huge$\longrightarrow$};

\end{tikzpicture}
 \caption{Boundary neck-pinch}
    \label{fig:Boundary neck-pinch}
\end{figure}
We will prove Theorem \ref{thm:mapping_cylinder} in Section \ref{boundaryinterpolation} later. Let us briefly describe the outline, which is a modification of the proof of Theorem 3.6 in \cite{chu2024existence}. As in Remark \ref{rem: relative pinch-off process}, if an annulus is sufficiently close to a disk in $\mathcal{Z}_2(M,\partial M;\bF;\mathbb{Z}_2)$, then it must contain a nontrivial essential arc on a small half-ball or a simple closed curve in a small interior ball. Then we pinch-off a half-neck (neck) containing this essential arc (simple closed curve) to obtain a punctate surface which is a disjoint union of topological disks. Then we can extend Pitts' combinatorial arguments \cite[Theorem 4.10]{pitts2014existence} and can construct a desired deformation map $H$ as in \cite{chu2024existence}.

Note that $H$ is not a homotopy between two Simon-Smith families of surfaces $\Phi$ and $\Phi'$ since it involves with surgery and shrinking processes. However, in the cycle space, the induced maps $[\Phi]$ and $[\Phi']$  are in the same homology class in $\mathcal{Z}_2(M,\partial M;\bF;\mathbb{Z}_2)$ via the continuous map $[H]$. 

Applying Theorem \ref{thm:mapping_cylinder} to $\Psi$, we obtain
\begin{itemize}
    \item a cubical complex $\tilde Y^1$,
    \item a surjective, cubical, and homotopy equivalence $f^1:\tilde Y^1\to Y$
    \item a mapping cylinder $W^1:=M_{f^1}$ and the associated map $F^1:=F_{W^1}:[0,1]\times W^1\to W^1$ (see (\ref{eq: F_W})), 
    \item a Simon-Smith family \[
    \tilde\Psi^1:\tilde Y^1\to\mathcal{S}^*(M)
    \]
    satisfying the topological bound $(0,1)$, where $H^1|_{\partial_0W^1}=\Psi$,
    \item a constant $\delta_1 > 0$ (in place of $\eta$),
\end{itemize}
satisfying the properties listed in Theorem \ref{thm:mapping_cylinder}.

By refining $\tilde Y^1$, we obtain a cubical subcomplex $C^1$ of $\tilde Y^1$ the cubical complex such that 
\begin{equation}\label{eq:Phi_iMassBound2}
        \mathcal{H}^{2}(\tilde \Psi^1(x))\geq \bL(\Lambda(\Psi))-\delta_1/2 \implies x\in C^1,
\end{equation}
and $\tilde\Psi^1|_{C^1}$ is a Simon-Smith of disks. Finally, we let \[\
\Psi^1:=\tilde\Psi^1|_{\overline{\tilde Y^1\setminus C^1}}\,
\]
As in Case $1$, a key property of $\Psi^1$ is that
\begin{equation}\label{eq:Psi1MassBound2}
        \sup_ {x \in \text{dmn}(\Psi^{1})}\mathcal{H}^2(\Psi^1(x))<\bL(\Lambda(\Psi)).
\end{equation}

In summary, in both cases, we obtain a Simon-Smith family $\Psi^1$ satisfying the topological bound $(0, 1)$ with parameter space $\overline{\tilde{Y}^{1}\setminus C^1}$ with
$\sup \mathcal{H}^2 \circ \Psi^1$ strictly less than the width $\mathbf{L}(\Lambda(\Psi))$. Thus, if we apply the Simon-Smith min-max theorem again to $\Psi^{1}$, all min-max minimal surfaces will have strictly smaller area than those obtained previously. We will continue repeating the min-max process.
\subsubsection{$k$-th stage} Now we explain in detail what is obtained in the $k$-th stage of min-max for $k \geq 1$. As a result from previous $k-1$ steps, we have a Simon-Smith family with the topological bound $(0,1)$ as
    \[
        \Psi^{k-1}:Y^{k-1}\to \mathcal{S}^*(M)\,,
    \] 
    where $Y^{k-1}$ is some cubical complex. If $k = 1$, we take $\Psi^0:=\Psi$ and $Y^0:=Y$. we apply Theorem \ref{thm:simon-smith min-max} to $\Psi^{k - 1}$ and choose $r = d_0$, then  there exists  a pulled-tight minimizing sequence $\{ \Phi^{k-1}_{i}\} \subset \Lambda(\Psi^{k - 1})$ 
    and a min-max limit varifold $V^{k} \in \mathcal{W}^{k}:=\bC(\{\Phi_{i}^{k-1}\}) \cap \mathcal{W}_{\bL(\Lambda(\Psi^{k-1})), \le 0, \le 1}$. Note that $ \mathcal{W}^{k} \subseteq  \mathcal{W}^{(1)}_{\bL(\Lambda(\Psi^{k-1})), \le 0, \le 1}$ by Remark \ref{rem:multiplicityone} and the Multiplicity One Theorem \cite[Theorem 1.3]{sarnataro2026existence}. Again, we have the following dichotomy: 
    \begin{enumerate} [label= \textbf{Case} \normalfont\arabic* :]
    \item Every element in $\mathcal{W}^{k}$ is a varifold induced by a multiplicity-one minimal annuli;
    \item $\mathcal{W}^{k} = \{ |\Sigma^{k}| \}$ where $\Sigma^{k}$ is a multiplicity-one free boundary minimal disk.
\end{enumerate}

In the following, we adopt the convention that the superscript $^k$ labels the object from $k$-th step of the min-max process. 

\textbf{Case $1$.} As in the first step, we have:
\begin{itemize}
    \item A Simon-Smith family satisfying the topological bound $(0, 1)$:
    \[
    \tilde{\Phi}^k: Y^{k -1} \to \mathcal{S}^*(M)
    \]
    in $\Lambda(\Psi^{k - 1})$.
    \item a "cap" $C^k$, which is a subcomplex of $Y^{k - 1}$ and can be decomposed into a disjoint union of cubical subcomplexes $C_1^k, \cdots, C_{n_k}^k$,
    \item the restriction
    \[
    \Psi^k:= \tilde{\Psi}^k|_{\overline{Y^{k - 1} \setminus C^k}},
    \]
    \item a new domain $Y^k:= \operatorname{dmn}(\Psi^k) = \overline{Y^{k - 1} \setminus C^k}$,
\end{itemize}
satisfying the following properties:
\begin{itemize}
    \item For each $j = 1, \cdots, n_k$, there is a distinct properly embedded free boundary minimal annulus $A_j^k \in \mathcal{S}^*(M)$, with $|A_j^k| \in \mathcal{W}^k$ such that
    \[
    [\tilde{\Psi}^k](C_j^{k}) \subset \mathbf{B}_{d_0}^{\mathbf{F}}([A_j^k]).
    \]
    \item $\sup_{Y^k} \mathcal{H}^2 \circ \Psi^k < \mathbf{L}(\Lambda(\Psi^{k - 1}))$.
\end{itemize}
Since $\tilde\Psi^k$ and $\Psi^{k-1}$ are homotopic in the sense of Simon-Smith family of surfaces, there exists a homotopy
\[
H^k:[0, 1] \times Y^{k - 1} \to \mathcal{S}^*(M),
\]
such that
\[
H^{k}(0, \cdot) = \Psi^{k - 1}, \quad H^k(1, \cdot) = \tilde \Psi^k,
\]
and for each $(t, x)$, $H^{k}(t, x)$ is obtained from $\Psi^{k - 1}(x)$ via an isotopy of $M$, according to Definition~\ref{def:homotopyclasssimonsmith}. 

For consistency in both cases, we introduce the following notation, which mirror the setup in Case $2$. Define
\[
W^k:=[0,1]\times Y^{k-1}, \quad \partial_0 W^k=\{0\}\times Y^{k-1}, \quad  \partial_1 W^k=\{1\}\times Y^{k-1}\,
\]
and a map \[
F^k:[0,1]\times W^k\to W^k, \quad (s,(t,x)) \mapsto (st,x)
\]
Note that the map $(s,w)\mapsto F^k(1-s,w)$, with $(s,w)\in [0,1]\times W^k$, is a strong deformation retraction of $W^k$ onto $\{0\}\times Y^{k-1}$. We identify $Y^{k-1}$ with the subset $\{0\}\times Y^{k-1}$ in $W^k$, and denote $\{1\}\times Y^{k-1}$ by $\tilde Y^k$.
\begin{rem}
For example, in Case 1, if we have a subset $C \subset Y^{k - 1}$, then $(F^k(0, \cdot))^{-1}(C)$ is the subset $[0, 1] \times C$ of $W^k$.
\end{rem}

\textbf{Case $2$}. As in the first step, we apply Theorem \ref{thm:mapping_cylinder} to $\Psi^{k-1}$ with $r= d_{0}$ and obtain the following.
\begin{itemize}
    \item a cubical complex $\tilde Y^k$,
    \item a surjective, cubical, and homotopy equivalence $f^k:\tilde Y^k\to Y^{k-1}$,
    \item a mapping cylinder $W^k:=M_{f^k}$ (viewed as a simplicial complex) with
    \[
    Y^{k - 1} = \partial_0 W^k, \quad \tilde{Y}^k = \partial_1 W^k
    \]
    and the associated map $F^k:=F_{W^k}:[0,1]\times W^k\to W^k$ (see (\ref{eq: F_W})),
    \item a Simon-Smith family satisfying the topological bound $(0, 1)$
    \[
    \tilde\Psi^k:\tilde Y^k\to\mathcal{S}^*(M),
    \]
    \item a Simon-Smith family satisfying the topological bound $(0, 1)$
    \[
    H^k: W^k \to \mathcal{S}^*(M),
    \]
    with $H^k|_{\partial_0W^k}=\Psi^{k-1}$, $H^k|_{\partial_1W^k}=\tilde\Psi^k$,
    \item a "cap" $C^k$, which is a cubical subcomplex of $\tilde{Y}^k$,
    \item the restriction
    \[
    \Psi^k:=\tilde\Psi^k|_{\overline{\tilde Y^k\setminus C^k}},
    \]
    \item a new domain $Y^k := \operatorname{dmn}(\Psi^k) = \overline{\tilde Y^k\setminus C^k}$,
\end{itemize}
satisfying the following:
\begin{itemize}
    \item The map $(t, w) \mapsto F^k(1 - t, w)$, where $(t, w) \in [0, 1] \times W^k$, is a strong deformation retract of $W^k$ onto $Y^{k - 1}$.
    \item For each $x \in \tilde{Y}^k$, the family $t \mapsto H^k(F^k(t, x))$, $t \in [0, 1]$, is a relative pinch-off process.
    \item $\tilde\Psi^k|_{C^k}$ is a Simon-Smith family of disks.
    \item $ \sup_ {x \in \text{dmn}(\Psi^{k})}\mathcal{H}^2(\Psi^k(x))<\bL(\Lambda(\Psi^{k-1})).$
\end{itemize}

As before, the mass bound
\begin{equation}
 \sup_ {x \in \text{dmn}(\Psi^{k})}\mathcal{H}^2(\Psi^k(x))<\bL(\Lambda(\Psi^{k-1}))
\end{equation}
holds in both cases. Thus, the width $\bL(\Lambda(\Psi^{k}))$ is strictly decreasing by $k$. By combining this with our assumption that there are finitely many free boundary embedded disks and annuli in $(M,g)$ by Proposition \ref{prop:perturbMetric}, the repetitive min-max process must terminate in finitely many steps. Specifically, for some $K \in \mathbb{N}^{+}$, when we apply the min-max process for the $K$-th time to the family $\Psi^{k-1}$, the width $\bL(\Lambda(\Psi^{K-1}))$ reaches zero. 
\subsubsection{Last stage.} Since $\bL(\Lambda(\Psi^{K-1}))=0$, there exists some Simon-Smith family $\Psi^K\in\Lambda(\Psi^{K-1})$ and a homotopy in the sense of Simon-Smith min-max,
    \[
        H^{K}:[0,1]\times Y^{K-1}\to \mathcal{S}^*(M),
    \]
    such that:
    \begin{itemize}
        \item $H^K(0,\cdot)=\Psi^{K-1}$, $H^K(1,\cdot)=\Psi^K$.
        \item For each $(t,x)$,  $H^K(t,x)$ is obtained from $\Psi^{K-1}(x)$ via some diffeomorphism of $M$ according to Definition~\ref{def:homotopyclasssimonsmith}.
        \item  
        $\sup_{Y^{K - 1}} \mathcal{H}^2_{g_{Eucl}}\circ \Psi^K<1$, where $g_{Eucl}$ denotes the Euclidean metric on $M$.
    \end{itemize}
Crucially, note that in the last bullet point, the Hausdorff measure is taken with respect to {\em the unit 3-ball $(\mathbb{B}^3,g_{Eucl})$} in (3).

We denote 
    \[
        W^K:=[0,1]\times Y^{K-1}\,,\quad \partial_0 W^K=\{0\}\times Y^{K-1}\,,\quad \partial_1 W^K=\{1\}\times Y^{K-1}\,,
    \] and define a map 
    \[
        F^K:[0,1]\times W^K\to W^K\,, \quad (s, (t, x)) \mapsto (st,x)\,.
    \]
    Note that, the map $(s,w)\mapsto F^K(1-s,w)$ for $(s,w)\in [0,1]\times W^K$, is a strong deformation retraction of $W^K$ onto $\{0\}\times Y^{K-1}$. As before, we identify $Y^{K-1}$ with the subset $\{0\}\times Y^{K-1}$ in $W^K$.
\subsubsection{New family} We have run the min-max processes $K$ times, generating multiple Simon-Smith families from the original family $\Psi$ each with a different parameter space. In this subsection, we will use these families to reconstruct a new Simon-Smith family $\Xi$, whose parameter space is homotopy equivalent to the original parameter space $Y$ (which is a $\mathbb{RP}^4$-bundle over $\mathbb{RP}^2$), and it satisfies the topological bound $(0,1)$.

Consider the following list of $2K - 1$ Simon-Smith families with the topological bound $(0,1)$. Readers may refer to \cite[Figure 8]{chu2024existence} for visualization purposes. When interpreting the notation below, we should keep in mind that we always identify $\partial_0 W^k$ with $Y^{k - 1} = \operatorname{dmn}(\Psi^{k - 1})$ (and thus also their respective subsets), and we identify $\partial_1 W^k$ with $\operatorname{dmn}(\tilde{\Psi}^k)$. 
\begin{enumerate}
        \item $\tilde \Psi^1|_{C^1}$.
        \item $H^2|_{(F^2(0,\cdot))^{-1}(A_1)}$, where $A_1:= C^1\cap Y^1\;( \subset \partial_0 W^2\subset W^2)$. Note that $A_1$ is simply $\partial C^1$ if $\tilde Y^1$ is a topological manifold and $C^1$ is a $\dim(\tilde Y^1)$-chain in it. Also,  $H^2|_{(F^2(0,\cdot))^{-1}(A_1)}$ is the same  as {$H^2|_{[0,1]\times A_1}$} if Case 1 occurred in the second step of the min-max process.
        \item$\tilde \Psi^2|_{C^2}$.
        \item$H^3|_{(F^3(0,\cdot))^{-1}(A_2)}$, where $A_2\subset \partial_0 W^3$ is  defined as the union of the following two subsets of $\partial_0 W^3\;(\subset W^3)$: 
            \[
                A_2:=\left((f^2 )^{-1}(A_1)\cap Y^2\right)\cup \left(C^2 \cap Y^2\right)\,.
            \] 
        Note that $H^3|_{(F^3(0,\cdot) )^{-1}(A_2)}$ coincides with { $H^3|_{[0,1]\times A_2}$} if Case 1 occurred in the third step of min-max.
        \item$\tilde \Psi^3|_{C^3}$.
        \item$H^4|_{(F^4(0,\cdot))^{-1}(A_3)}$, where $A_{3}\subset \partial_0 W^4$ is defined by 
            \[
                A_3:=\left((f^3 )^{-1}(A_2)\cap  Y^3\right)\cup \left( C^3\cap Y^3\right)\,.
            \]
        \item[{\makebox[4em][r]{...}}]
        \item[{\makebox[4em][r]{$(2K-4)$}}] ...
        \item[{\makebox[4em][r]{($2K-3$)}}] $\tilde \Psi^{K-1}|_{C^{K-1}}$.
        \item[{\makebox[4em][r]{($2K-2$)}}] $H^K|_{(F^K (0,\cdot))^{-1}(A_{K-1})}$, where $A_{K-1}\subset \partial_0 W^K$ is defined by 
            \[
                A_{K-1}:=\left((f^{K-1} )^{-1}(A_{K-2})\cap   Y^{K-1}\right)\cup \left( C^{K-1}\cap   Y^{K-1}\right)\,.
            \]
        \item[{\makebox[4em][r]{($2K-1$)}}] $\Psi^K$.
    \end{enumerate}
Now we glue $2K-1$ families together to build a new family $\Xi$. For each $k=1,\cdots,K-2$, the following pairs of families share a common subfamily:
    \begin{itemize}
        \item ($2k$) and ($2k-1$);
        \item ($2k$) and ($2k+1$);
        \item ($2k$) and ($2k+2$);
        \item ($2K-2$) and ($2K-1$).
    \end{itemize}
We call the new family obtained by the above gluing by $\Xi$. $\Xi$ satisfies a topological bound $(0,1)$.
\begin{prop}\label{prop:XiPsiHomotopic}
        There exist:
        \begin{itemize}
            \item a simplicial complex $\tilde W$, containing $Y$ and $\text{dmn}(\Xi)$ as subcomplexes,
            \item a map $\tilde F: [0,1]\times  \tilde W\to \tilde W$ such that the map $(t,w)\mapsto\tilde F(1-t,w)$, with $(t,w)\in [0,1]\times\tilde W$, is a strong deformation retraction of $\tilde W$ onto $Y$, and $\tilde F(0,\cdot)|_{\text{dmn}(\Xi)}$ is a surjective homotopy equivalence from $\text{dmn}(\Xi)$ onto $Y$,
            \item a Simon-Smith family with a topological bound $(0,1)$,
                \[
                    \tilde \Xi:\tilde W\to \mathcal{S}^*(M)
                \]
        \end{itemize}
        such that:  
        \begin{enumerate}[label=\normalfont(\arabic*)]
            \item $\tilde\Xi|_{Y}=\Psi,\;\;\tilde\Xi|_{\text{dmn}(\Xi)}=\Xi.$
            \item For each $x\in \text{dmn}(\Xi)$, the family $t\mapsto \tilde\Xi(\tilde F(t,x))$, with $t\in [0,1]$, is a relative pinch-off process.
        \end{enumerate}
    \end{prop}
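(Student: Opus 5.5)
We will construct $\tilde W$ by stacking the cobordisms $W^1,\dots,W^K$ produced in the repetitive min-max, and define $\tilde F$ and $\tilde\Xi$ stage by stage. The argument follows the proof of the corresponding statement in Chu-Li \cite[Proposition 3.7]{chu2024existence}. The only free boundary modifications are that the surgeries appearing in the $H^k$ are relative pinch-off processes in the sense of Definition~\ref{defn:boundary_pinch_off}, and that all families take values in $\mathcal{S}^*(M)$ with the topological bound $(0,1)$.

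First we build $\tilde W$ inductively. We set $\tilde W^0 := Y$. Given $\tilde W^{k-1}$, which contains $\partial_1 W^{k-1}=\tilde Y^{k-1}\supset Y^{k-1}$, we glue $W^k$ along $\partial_0 W^k = Y^{k-1}$ to obtain $\tilde W^k := \tilde W^{k-1}\cup_{Y^{k-1}} W^k$. We then put $\tilde W := \tilde W^K$. In Case 1, $W^k=[0,1]\times Y^{k-1}$ is a product. In Case 2, $W^k = M_{f^k}$ is a mapping cylinder, which is a simplicial complex by \cite[\S4]{buchstaber2002torus}. In either case the glued space is a simplicial complex after subdivision. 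We define $\tilde\Xi$ on each piece: $\tilde\Xi|_Y=\Psi$, $\tilde\Xi|_{W^k}=H^k$, and $\tilde\Xi|_{C^k}=\tilde\Psi^k|_{C^k}$ on the caps, which sit in $\partial_1 W^k$. These definitions agree on overlaps, since $H^k|_{\partial_0W^k}=\Psi^{k-1}=\tilde\Psi^{k-1}|_{Y^{k-1}}$ and $H^k|_{\partial_1W^k}=\tilde\Psi^k$. The Simon-Smith conditions (i)–(v) and the topological bound $(0,1)$ are local in the parameter, so they hold on each closed piece and therefore on the union. We need $\operatorname{dmn}(\Xi)$ to be a subcomplex of $\tilde W$. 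For this we identify the $2K-1$ listed families with subsets of $\tilde W$: the caps $C^k\subset\tilde Y^k$, the preimages $(F^{k+1}(0,\cdot))^{-1}(A_k)\subset W^{k+1}$, and $Y^{K-1}\times\{1\}\subset W^K$. By construction their union is exactly $\operatorname{dmn}(\Xi)$, and $\tilde\Xi$ restricts to $\Xi$ there. This proves (1).

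Next we define $\tilde F$ by concatenating the deformation retractions $F^K,F^{K-1},\dots,F^1$ in this order. We reparametrize time so that on $[\frac{k-1}{K},\frac{k}{K}]$ the map $\tilde F(1-\cdot,\cdot)$ retracts $\tilde W^{K-k+1}$ onto $\tilde W^{K-k}$ via $F^{K-k+1}(1-s,\cdot)$, and is the identity on $\tilde W^{K-k}$. Each $F^k(1-\cdot,\cdot)$ is a strong deformation retraction of $W^k$ onto $\partial_0 W^k$, and it is the identity on the gluing locus. Hence the concatenation is continuous and is a strong deformation retraction of $\tilde W$ onto $Y$. To see that $\tilde F(0,\cdot)|_{\operatorname{dmn}(\Xi)}$ is a surjective homotopy equivalence onto $Y$, we use induction on $k$ with the sets $A_k$. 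Surjectivity holds because each $f^k$ is surjective and each $F^k(0,\cdot)$ maps $(F^k(0,\cdot))^{-1}(A_{k-1})\cup C^{k}$ onto what was removed, so the pieces together cover $Y$. For the homotopy equivalence we argue that $\operatorname{dmn}(\Xi)$ is obtained from $Y$ by replacing each removed cap with a cobordism over its boundary. Equivalently, $\operatorname{dmn}(\Xi)\hookrightarrow\tilde W$ is a homotopy equivalence, since $\tilde W$ deformation retracts onto $\operatorname{dmn}(\Xi)$ by pushing the complementary parts of each $W^k$ outward. Composing with $\tilde W\simeq Y$ gives the claim.

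Finally we verify (2). Fix $x\in\operatorname{dmn}(\Xi)$. The path $t\mapsto\tilde F(t,x)$ runs through at most $K$ stages, and in stage $k$ it follows $t\mapsto F^k(t,\cdot)$ inside $W^k$. In Case 1 this segment is an isotopy, which is a trivial relative pinch-off process. In Case 2 it is a relative pinch-off process by Theorem~\ref{thm:mapping_cylinder}(2)(b). A concatenation of finitely many relative pinch-off processes is again one, since Definition~\ref{defn:boundary_pinch_off} only involves finitely many spacetime singular points and is local in time. The main obstacle is the homotopy-equivalence claim together with the requirement that every path stay inside pieces where $\tilde\Xi$ is a pinch-off process. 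At junctions, a point of $C^k$ or of $A_k$ flows through different cobordisms, and we must check that the bookkeeping of the sets $A_k$ makes the concatenated path well defined. We will handle this exactly as in \cite[Section 3]{chu2024existence}, checking the inductive inclusions $F^{k}(0,\cdot)\bigl((F^{k}(0,\cdot))^{-1}(A_{k-1})\bigr)\subset A_{k-1}$.
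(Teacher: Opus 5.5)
Your construction of $\tilde W$ by stacking $W^1,\dots,W^K$ along $Y^k=\partial_0W^{k+1}\subset\partial_1W^k$, of $\tilde\Xi$ as $H^k$ on each $W^k$, and of $\tilde F$ by concatenating the retractions $F^K,\dots,F^1$ is what the paper's one-line appeal to Chu--Li amounts to. Your verification of (1), of the strong deformation retraction onto $Y$, of surjectivity, and of (2) is fine.

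The gap is the claim that $\tilde F(0,\cdot)|_{\operatorname{dmn}(\Xi)}$ is a homotopy equivalence. The inclusions $F^{k}(0,\cdot)\bigl((F^{k}(0,\cdot))^{-1}(A_{k-1})\bigr)\subset A_{k-1}$ that you propose to check are tautological. The phrase ``pushing the complementary parts outward'' is not justified by the properties of $f^k$ you invoke.

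To see what is really needed, set $B'_j:=(F^{j+1}(0,\cdot))^{-1}(A_j)$ and $E_k:=C^1\cup B'_1\cup C^2\cup\cdots\cup C^{k-1}\cup B'_{k-1}\cup\partial_1W^k$. Then $E_K=\operatorname{dmn}(\Xi)$. Also, $E_{k+1}$ is obtained from $E_k$ by cutting out $Y^k$ and gluing in $B'_k\cup\partial_1W^{k+1}$ along $A_k$.
\begin{itemize}
\item By the gluing lemma, the inductive step from $E_k\hookrightarrow\tilde W^k$ to $E_{k+1}\hookrightarrow\tilde W^{k+1}$ reduces to showing that $B'_k\cup\partial_1W^{k+1}\hookrightarrow W^{k+1}$ is a homotopy equivalence.
\item The space $B'_k\cup\partial_1W^{k+1}$ is the homotopy pushout of $\tilde Y^{k+1}\hookleftarrow(f^{k+1})^{-1}(A_k)\to A_k$, while $W^{k+1}\simeq Y^k$.
\item So you need the restriction $f^{k+1}\colon(f^{k+1})^{-1}(A_k)\to A_k$ to be a homotopy equivalence, not merely $f^{k+1}$ itself.
\end{itemize}

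With only the properties recorded in Theorem~\ref{thm:mapping_cylinder} (a surjective cubical homotopy equivalence $f$), this can fail, and with it the proposition. Here is a one-dimensional toy model:
\begin{itemize}
\item Take $Y=S^1=C^1\cup Y^1$ with $Y^1$ an arc, so $A_1$ is its pair of endpoints.
\item Let $f^2$ map a tripod onto $Y^1$ so that one endpoint has two preimages.
\item Let the last stage be a product.
\end{itemize}
Then $\operatorname{dmn}(\Xi)$ is a graph with first Betti number $2$, whereas $Y$ has first Betti number $1$.

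The missing input is a property of the specific maps $f^k$ built in the interpolation construction: their point-preimages are contractible. This holds because there $f$ collapses a product collar inserted over the high-area subcomplex, so each point-preimage is a point or an interval. Consequently $f^k$ restricted over any subcomplex, in particular over $A_{k-1}$, is a homotopy equivalence, and the induction above closes. You should extract this from the construction and use it explicitly.

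Once $\operatorname{dmn}(\Xi)\hookrightarrow\tilde W$ is known to be a homotopy equivalence, the deformation retraction of $\tilde W$ onto $\operatorname{dmn}(\Xi)$ you describe also follows from the homotopy extension property for CW pairs. The statement does not need it, however.
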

    \begin{rem}
        Heuristically, for $(t,w)\in[0,1]\times \text{dmn}(\Xi)$, the map $(t,w)\mapsto \tilde\Xi(\tilde F(1-t,w))$ can be viewed as a \emph{generalized homotopy} which is a deformation from $\Xi$ to $\Psi$.
    \end{rem}
    \begin{proof}
        The proof is identical to the proof of Proposition 3.11 in \cite{chu2024existence}. Here we note that we homotope $\Xi$ back to $\Psi$ with ``$\bF$-homotopies" introduced in \cite{chu2024existence}. The $\bF$-metric in $\mathcal{Z}_2(M;\mathbb{Z}_2)$ does not induce a metric on the space $\mathcal{S}^*(M)$. However, the $\bF$-metric would induce a pseudometric, and therefore a topology, on $\mathcal{S}^*(M)$.
    \end{proof}
\subsection{Trace of caps and the decomposition of $\Xi$} For each $k=1,..., K-1$, during the $k$-th min-max process, there is a dichotomy on the limit min-max free boundary minimal surfaces:
\begin{enumerate} [label= \textbf{Case} \normalfont\arabic* :]
    \item Some multiplicity one annuli $A^k_1, \cdots,A^k_{n_k}$ can be detected;
    \item A multiplicity one disk $D^{k}$ can be detected.
\end{enumerate}

If Case $1$ occurs. Then for each $k$, $C^k\subset\text{dmn}(\tilde\Psi^k)$ can be decomposed into a union of $C^k_1, \cdots,C^k_{n_k}$, and let us call each $C^k_j$ as an {\em annulus cap}. Note that $[\tilde \Psi^k|_{C^k_j}]$ is $\bF$-close to $[A^k_j]$.

If Case $2$ occurs, we call $C^k$ a {\em disk cap}.  $\tilde \Psi^k|_{C^k}$ is a Simon-Smith family of genus 0 with boundary complexity $0$.

Let us now define the trace of a cap $C$ obtained at the $k$-th step of the min-max process. We define the {\em trace of $C$}, $T(C) \subseteq \text{dmn}(\Xi)$ as follows. We first define the following sets which consist the trace:
    \begin{itemize}
        \item $B_k\;:=C$, a subcomplex of $\text{dmn}(\tilde\Psi^k)=\tilde Y^k$;
        \item $B_{k+1}:=(F^{k+1}(0,\cdot))^{-1}(B_k\cap Y^k)\subset W^{k+1}$;
        \item $B_{k+2}\;:=(F^{k+2}(0,\cdot) )^{-1}(B_{k+1}\cap Y^{k+1})\subset W^{k+2}$;
        \item ...
        \item $B_{K}\;:=(F^{K}(0,\cdot) )^{-1}(B_{K-1}\cap Y^{K-1})\subset W^{K}$.
    \end{itemize}
Then we define the trace of $C$ by the union of sets we defined above
    \[
        T(C):=B_k\cup B_{k+1}\cup\cdots\cup B_K\subset\text{dmn}(\Xi)\,.
    \]
Now let us denote by $N$ the number of all the annulus caps we obtained throughout the first $K-1$ steps from the min-max process.  We then decompose $\operatorname{dmn}(\Xi)$ as the union of the following $(N + 1)$ subcomplexes $D_0, \cdots, D_N$ defined by:
    \begin{itemize}
        \item  $D_0$ is the union of $\text{dmn}(\Psi^K)$ and $\bigcup_C T(C)$, where $C$ ranges over the disk caps. 
        \item for each annulus cap $C$, we consider its trace $T(C)$. There are in total $N$ such traces, denoted by $D_1, \cdots, D_N$.
    \end{itemize}
It follows directly from the definition of $\Xi$ that:
\[
\operatorname{dmn}(\Xi) = D_0 \cup D_1 \cup \cdots \cup D_N.
\]
\begin{prop}\label{prop:XiProperty}
        The decompositions above satisfy the following two properties:
        \begin{enumerate}[label=\normalfont(\arabic*)]
            \item\label{item:XiD0}   $\Xi|_{\bigcup_C T(C)}$,  where $C$ ranges over the disk caps,  is a Simon-Smith family of genus $0$ and boundary complexity $0$. 
            \item\label{item:traceDeformRetract} For each disk cap or annulus cap $C$, there exists a strong deformation retraction of the trace $T(C)$  back to $C$ in $\text{dmn}(\Xi)$. 
        \end{enumerate}    
    \end{prop}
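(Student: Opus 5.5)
We prove the two properties separately, following the proof of the corresponding statement in Chu--Li \cite[Proposition 3.12]{chu2024existence}. Both rely on how the traces are built: $B_{j+1}=(F^{j+1}(0,\cdot))^{-1}(B_j\cap Y^j)$, and on each $W^{j+1}$ the map $F^{j+1}$ is a strong deformation retraction onto $\partial_0W^{j+1}=Y^j$.

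For \ref{item:traceDeformRetract}, fix a cap $C$ produced at step $k$. We build the retraction by induction on the steps $j=K,K-1,\dots,k+1$, collapsing one layer $B_j$ at a time. On $B_j\subset W^j$, the homotopy $(s,w)\mapsto F^j(1-s,w)$ restricts to a strong deformation retraction of $B_j$ onto
\[
F^j(0,B_j)=B_{j-1}\cap Y^{j-1}\subset B_{j-1}.
\]
The retraction stays inside $B_j$ because $B_j$ is the full preimage under $F^j(0,\cdot)$ and $F^j(t,\cdot)$ commutes with $F^j(0,\cdot)$. It fixes $\partial_0 W^j\cap B_j$ pointwise, which is exactly where $B_j$ meets $B_{j-1}$, so it glues continuously with the identity on $B_{j-1}$. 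To finish, we concatenate these retractions in time, first collapsing $B_K$, then $B_{K-1}$, and so on down to $B_{k+1}$. This gives a strong deformation retraction of $T(C)$ onto $B_k=C$. The one thing to check is that in Case 2 the cylinder $W^j=M_{f^j}$ is glued along $\partial_0 W^j$, so that the layers meet only along these sets; this holds by construction of $\operatorname{dmn}(\Xi)$.

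For \ref{item:XiD0}, take a disk cap $C$ from step $k$ and a point $x\in T(C)$. Then $x\in B_j$ for some $j\ge k$.
\begin{itemize}
\item If $j=k$, then $\Xi(x)=\tilde\Psi^k(x)$. This is a surface of genus $0$ and boundary complexity $0$, because $\tilde\Psi^k|_{C^k}$ is a Simon-Smith family of disks, which is property (1) of Theorem \ref{thm:mapping_cylinder} after refining the cap.
\item If $j>k$, then $\Xi(x)=H^j(x)$. By the inductive description, $H^j(x)$ is obtained from $H^j(F^j(0,x))$, with $F^j(0,x)\in B_{j-1}\cap Y^{j-1}$, in one of two ways:
\begin{itemize}
\item In Case 1, it is obtained by an ambient isotopy of $(M,\partial M)$, which preserves genus and the number of boundary components.
\item In Case 2, it is obtained by a relative pinch-off process (Definition \ref{defn:boundary_pinch_off}). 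Such a process does not increase genus or boundary complexity.
\end{itemize}
\end{itemize}
In either case, induction on $j$ shows that $\Xi(x)$ has $\fg=0$ and $\fb=0$.

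The main obstacle is the claim that a relative pinch-off process is monotone. We must show that interior and boundary neck-pinches, together with shrinking processes, never raise $\fg$ or $\fb$, including at time $t_i$ when a half double cone forms. The boundary neck-pinch can split a boundary circle into two or merge two. We would handle it by a local Euler characteristic count. Cutting along a half-disk (an essential arc) changes $\chi$ by $+1$. Combined with the requirement that all surfaces in $H^j$ stay properly embedded and genus zero (the topological bound $(0,1)$ holds for all the families), this forces each resulting component to have at most as many boundary circles as the component it came from. We will also use that components shrunk to points contribute nothing to $\fb$.
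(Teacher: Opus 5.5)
Your proposal is correct and follows the same route as the paper. The paper derives (1) from the definition of the relative pinch-off process together with property (1) of Theorem \ref{thm:mapping_cylinder}, and (2) from the construction of $T(C)$ via the retractions $F^j$; you are filling in the details it leaves implicit, and restricting the monotonicity claim to genus zero is genuinely needed (for positive genus, a boundary neck-pinch along a non-separating arc with both endpoints on one boundary circle raises $\fb$), but under the topological bound $(0,1)$ your Euler characteristic count suffices.
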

    \begin{proof}
        \ref{item:XiD0} follows from the definition of the relative pinch-off process and Theorem \ref{thm:mapping_cylinder}. \ref{item:traceDeformRetract} follows from the definition of $T(C)$.
    \end{proof}
\subsection{Topological arguments} We prove $N \ge 3$ based on the previous decomposition and topological arguments below. Similar to \cite{chu2024existence}, we define the following quantities: 
\begin{itemize}
    \item $\lambda : = [\Psi]^*(\bar{\lambda})$ where $\bar{\lambda}$ is the generator of $H^*(\mathcal{Z}_2(\mathbb{B}^3, \partial \mathbb{B}^3; \mathbb{Z}_2) ; \mathbb{Z}_2)$. 
    \item $A \coloneqq \mathbb{RP}^4 \times \mathbb{RP}^1 \subset Y$. Define $\alpha$ to be the Poincar\'e dual $PD(A)$ of $A$.
\end{itemize}
Now we state the topological fact that we need.
\begin{thm} \label{thm: nontrivial topology}
In the cohomology ring $H^*(Y ; \mathbb{Z}_2)$,
\begin{equation*}
\lambda^4 \cup \alpha^2 \neq 0.
\end{equation*}
\end{thm}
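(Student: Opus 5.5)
The plan is to use the Künneth formula. With $\mathbb{Z}_2$ coefficients,
\[
H^*(Y;\mathbb{Z}_2)\cong H^*(\mathbb{RP}^4;\mathbb{Z}_2)\otimes H^*(\mathbb{RP}^2;\mathbb{Z}_2)\cong \mathbb{Z}_2[a]/(a^5)\otimes\mathbb{Z}_2[b]/(b^3).
\]
Here $a=\mathrm{pr}_1^*(\text{gen})$ and $b=\mathrm{pr}_2^*(\text{gen})$ have degree one. The top class of the closed $6$-manifold $Y$ is $a^4b^2\neq 0$. I will identify $\lambda$ and $\alpha$ in this basis and then multiply.

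First, $\alpha$. The submanifold $A=\mathbb{RP}^4\times\mathbb{RP}^1$ is $\mathbb{RP}^4$ times the Poincaré dual of a line in $\mathbb{RP}^2$. Hence $PD(A)=\mathrm{pr}_2^*PD_{\mathbb{RP}^2}(\mathbb{RP}^1)=b$, so $\alpha=b$ and $\alpha^2=b^2$.

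Second, $\lambda=[\Psi]^*\bar\lambda$. Write $\lambda=\epsilon_1 a+\epsilon_2 b$ with $\epsilon_i\in\mathbb{Z}_2$, and determine the coefficients by evaluating on the generators of $H_1(Y;\mathbb{Z}_2)$.
\begin{enumerate}
\item On the loop $\gamma\times\{[a_0]\}$, with $\gamma(t)=[(v_0,t)]$, $v_0\in\partial\mathbb{B}^3$, the family is the standard sweepout by orthogonal spherical caps. By Proposition \ref{prop: 4sweepout} (equivalently, restricting to the slice $\mathbb{RP}^4\times\{[a_0]\}$, where $\Psi=\Phi_4^{C_{[a_0]}}$), this gives $\langle\lambda,[\gamma]\rangle=1$, so $\epsilon_1=1$.
\item On a loop $\{z_0\}\times\sigma$, with $\sigma$ the nontrivial loop in $\mathbb{RP}^2$, I will fix $z_0=[(0,t_0)]$ with $t_0=0$. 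Then $\Psi(z_0,[a])=C_{[a]}$ is the Clifford annulus rotating its axis from $a$ to $-a$. Remark \ref{rem: Continuous orientation} says this $\mathbb{RP}^2$ family is not a sweepout. Concretely, the enclosed regions $A_a=A_{-a}$ vary continuously and close up, so the family of relative cycles lifts to a loop of $3$-chains with $\partial$ equal to the cycle. By the Almgren isomorphism, this means the loop is trivial in $\pi_1(\mathcal{Z}_2)$, so $\epsilon_2=0$.
\end{enumerate}
If needed I may simply allow $\epsilon_2$ to be unknown: the final product turns out to be independent of $\epsilon_2$.

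Finally, I compute the product:
\[
\lambda^4\alpha^2=(a+\epsilon_2 b)^4 b^2=(a^4+\epsilon_2 b^4)b^2=a^4b^2\neq 0 .
\]
This uses that squaring is additive mod 2, and that $b^4=0$ and $b^6=0$. So the result is independent of $\epsilon_2$.

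The main obstacle is justifying $\epsilon_1=1$ rigorously, i.e. that the restriction of $\Psi$ to the $\mathbb{RP}^4$-factor detects $\bar\lambda$ for each fixed axis. Proposition \ref{prop: 4sweepout} already does this, using continuity from Lemma \ref{lem:Ca-continuous}. The remaining points to check are the Künneth ring structure and that $PD(A)=b$; both are standard.
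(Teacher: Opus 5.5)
Your proof is correct and follows the paper's route. It uses Künneth with the pulled-back generators, identifies $\alpha$ as the pullback from $\mathbb{RP}^2$, detects the $\mathbb{RP}^4$-component of $\lambda$ through Proposition~\ref{prop: 4sweepout} on a slice, and concludes that $\gamma^4\alpha^2$ is the point class. The one real difference is your observation that $(a+\epsilon_2 b)^4 b^2 = a^4 b^2$ in characteristic $2$, which makes the paper's step $c_2=0$ unnecessary; the paper justifies that step only informally, via Remark~\ref{rem: Continuous orientation}, so dropping it is a small but genuine gain in robustness.
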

We will prove Theorem \ref{thm: nontrivial topology} in Section \ref{section:topological arguments}. Based on Theorem \ref{thm: nontrivial topology}, we now prove $N \ge 3$ by contradiction, and assume $N \le 2$. By Proposition \ref{prop:XiPsiHomotopic}, $\text{dmn}(\Xi)$ and $Y$ are homotopy equivalent, and we abuse the notation, by viewing $\lambda,\alpha$ as elements of $H^1(\text{dmn}(\Xi);\mathbb{Z}_2)$ as well. For each $j=0,1,\cdots,N$, We define $i_j$ to be the inclusion map $D_j\hookrightarrow \text{dmn}(\Xi)$. Also, given a cohomology class $\gamma$ of $\text{dmn}(\Xi)$, we denote its pullback under  
    \[
        (i_j)^*: H^*(\text{dmn}(\Xi);\mathbb{Z}_2)\to H^*(D_j;\mathbb{Z}_2)
    \]
    by $\gamma|_{D_j}$. We obtain the following lemma by applying Theorem \ref{thm: nontrivial topology}:
    \begin{lem}\label{lem:LS}The following holds:
        \begin{itemize} 
            \item If $N=2$, then one of $\lambda^4|_{D_0}, \alpha|_{D_1}, \alpha|_{D_2}$ is non-zero.
            \item If $N=1$, then one of $\lambda^4|_{D_0}, \alpha^2|_{D_1}$ is non-zero.
            \item If $N=0$, i.e. $\text{dmn}(\Xi)=D_0$, then $\lambda^4|_{D_0}$ is non-zero.
        \end{itemize}
    \end{lem}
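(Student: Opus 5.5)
The plan is to deduce the lemma from Theorem \ref{thm: nontrivial topology} via the Lyusternik--Schnirelmann vanishing lemma \cite[Lemma 5.3]{Haslhofer2019}. That lemma says: if a space is covered by subcomplexes $D_0,\dots,D_N$ and cohomology classes $\gamma_0,\dots,\gamma_N$ satisfy $\gamma_j|_{D_j}=0$ for every $j$, then $\gamma_0\cup\cdots\cup\gamma_N=0$. Since $\text{dmn}(\Xi)\simeq Y$ by Proposition \ref{prop:XiPsiHomotopic}, the classes $\lambda,\alpha$ can be viewed on $\text{dmn}(\Xi)$. The first check is that this identification is compatible with cup products, so that Theorem \ref{thm: nontrivial topology} gives $\lambda^4\cup\alpha^2\neq 0$ in $H^6(\text{dmn}(\Xi);\mathbb{Z}_2)$. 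This holds because the homotopy equivalence $\tilde F(0,\cdot)|_{\text{dmn}(\Xi)}$ induces a ring isomorphism.

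One might worry that $\lambda$ on $\text{dmn}(\Xi)$ should be $[\Xi]^*\bar\lambda$ rather than the pullback of $[\Psi]^*\bar\lambda$. For this lemma, however, $\lambda$ is only used as the abstract pulled-back class, so no such identification is needed here. The identification $[\Xi]^*\bar\lambda=\lambda$ matters only later, when showing $\lambda^4|_{D_0}=0$. It comes from the continuous map $[\tilde\Xi]$ on $\tilde W$ restricting to $[\Psi]$ on $Y$, together with the deformation retraction $\tilde F$.

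The three cases then follow by choosing factorizations of $\lambda^4\cup\alpha^2$ with $N+1$ factors and arguing by contradiction.
\begin{itemize}
\item For $N=2$, take $\gamma_0=\lambda^4$, $\gamma_1=\alpha$, $\gamma_2=\alpha$. If all three restrictions vanished, the vanishing lemma would give $\lambda^4\cup\alpha\cup\alpha=0$, a contradiction.
\item For $N=1$, take $\gamma_0=\lambda^4$ and $\gamma_1=\alpha^2$.
\item For $N=0$, $\text{dmn}(\Xi)=D_0$. If $\lambda^4|_{D_0}=\lambda^4$ were zero, then $\lambda^4\cup\alpha^2$ would be zero as well.
\end{itemize}
In each case the product is commutative over $\mathbb{Z}_2$, so the order of factors is irrelevant.

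The main obstacle is verifying the hypotheses of \cite[Lemma 5.3]{Haslhofer2019}, namely that the $D_j$ are subcomplexes, or at least closed sets with the needed excision/neighborhood properties, covering $\text{dmn}(\Xi)$. The covering was noted right after the definition of the traces. The vanishing lemma uses relative cup products $H^*(X,D_i)\otimes H^*(X,D_j)\to H^*(X,D_i\cup D_j)$, which need the $D_j$ to be subcomplexes, hence excisive. The traces $T(C)$ and $D_0$ are defined as unions of subcomplexes of the refined cubical/simplicial structures on the $W^k$. I would state explicitly that, after a common refinement, all $D_j$ are subcomplexes of $\text{dmn}(\Xi)$. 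Then each $\gamma_j$ with $\gamma_j|_{D_j}=0$ lifts to $H^*(\text{dmn}(\Xi),D_j)$, and the product of the lifts lies in $H^*(\text{dmn}(\Xi),\text{dmn}(\Xi))=0$.
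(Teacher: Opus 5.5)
Your proposal is correct and follows the paper's proof. You transport $\lambda^4\cup\alpha^2\neq 0$ to $\text{dmn}(\Xi)$ via the homotopy equivalence of Proposition~\ref{prop:XiPsiHomotopic}, then apply the Lyusternik--Schnirelmann vanishing lemma \cite[Lemma 5.3]{Haslhofer2019} to the subcomplex cover $D_0,\dots,D_N$ with exactly the factorizations you list; your remarks on the ring isomorphism and on the $D_j$ being subcomplexes only make explicit what the paper's one-line proof leaves implicit.
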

    \begin{proof}
        This follows directly from Theorem \ref{thm: nontrivial topology} and the Lyusternik–Schnirelmann vanishing lemma \cite[Lemma 5.3]{Haslhofer2019}.
    \end{proof}
By Proposition \ref{prop:XiPsiHomotopic}, the following equality holds on the induced maps $[\Psi]$ and $[\Xi]$ into $\mathcal{Z}_2(M, \partial M;\mathbb{Z}_2)$:
    \[
        [\Xi]^*(\bar\lambda)=[\Psi]^*(\bar\lambda)=\lambda.
    \]
Hence, we have
\[
        \lambda^4|_{D_0}=i_0^*([\Xi]^*(\bar \lambda)^4)=([\Xi\circ i_0]^*(\bar \lambda))^4=([\Xi|_{D_0}]^*(\bar \lambda ))^4,
    \]
    which is non-zero if and only if $[\Xi|_{D_0}]$ is a $4$-sweepout. First, we prove the following.
    \begin{prop}\label{prop:not4sweepout}
        $\Xi|_{D_0}$ is not a $4$-sweepout.
    \end{prop}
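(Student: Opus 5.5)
Argue by contradiction, following the corresponding step in Chu-Li \cite{chu2024existence}. Assume $\Xi|_{D_0}$ is a $4$-sweepout. Then $\Xi|_{D_0}$ is a free boundary Simon-Smith family with the topological bound $(0,1)$. However, by Proposition \ref{prop:XiProperty}\ref{item:XiD0}, every surface parametrized by $\bigcup_C T(C)$ ($C$ ranging over disk caps) has genus $0$ and boundary complexity $0$. So these are punctate unions of disks. The remaining piece of $D_0$ is $\operatorname{dmn}(\Psi^K)$, on which $\sup \mathcal{H}^2_{g_{Eucl}}\circ\Psi^K<1$.

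\textbf{Step 1: make the whole restriction a family of disks with small Euclidean area.} I would split $D_0 = D_0' \cup D_0''$, where $D_0'' := \operatorname{dmn}(\Psi^K)$ and $D_0' := \bigcup_C T(C)$. Apply the Lyusternik--Schnirelmann vanishing lemma \cite[Lemma 5.3]{Haslhofer2019} to $\lambda^4|_{D_0}\neq 0$ with the factorization $\lambda^4 = \lambda^{4}\cup 1$, or better $\lambda^4=\lambda^3\cup\lambda$. This gives that either $\lambda^4|_{D_0'}\neq 0$, or $\lambda|_{D_0''}\neq0$ (after a slight thickening so that $D_0', D_0''$ are open covers up to homotopy).

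The second alternative is impossible. A $1$-sweepout of $(\mathbb{B}^3, g_{Eucl})$ must contain a slice of Euclidean area at least $\omega_1(\mathbb{B}^3)=\pi>1$. Here I use Proposition \ref{prop:sslargerap}, which says Simon-Smith families induce $\bF$-continuous Almgren-Pitts sweepouts, and the fact that the homotopy class of $[\Psi^K]$ in cycle space does not depend on the metric. Hence $\Xi|_{D_0'}$ is a $4$-sweepout consisting of genus-zero surfaces with $\fb=0$.

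\textbf{Step 2: a family of disks cannot be a $4$-sweepout of $\mathbb{B}^3$.} This is the main obstacle. I would first try to reuse the min-max argument of Part I. Run the Simon-Smith min-max theorem (Theorem \ref{thm:simon-smith min-max}) in $(\mathbb{B}^3,g_{Eucl})$ on $\Xi|_{D_0'}$ with topological bound $(0,0)$. Its width is at least $\omega_4(\mathbb{B}^3)>\pi$ by Proposition \ref{prop:lowerfourthbound} and Proposition \ref{prop:sslargerap}. By \cite[Theorem 1.8]{franz2023topological} and the Multiplicity One Theorem \cite[Theorem 1.3]{sarnataro2026existence}, the min-max limit is a multiplicity-one free boundary minimal surface with genus $0$ and one boundary component, i.e. a disk. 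Nitsche's theorem \cite{nitsche1985stationary} says such a disk is flat with area $\pi$, which contradicts width $>\pi$.

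The delicate points in Step 2 are the following.
\begin{enumerate}
\item The topological control applies to the Simon-Smith homotopy class $\Lambda(\Xi|_{D_0'})$ with the $(0,0)$ bound, which is preserved under isotopies.
\item Its width dominates the Almgren--Pitts width of the induced $4$-sweepout, so the inequality $\mathbf{L}_{SS}\ge\omega_4(\mathbb{B}^3)$ holds.
\item $D_0'$ is a cubical complex on which the family is genuinely Simon-Smith. The gluing in Proposition \ref{prop:XiPsiHomotopic} guarantees this.
\end{enumerate}

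If the min-max route has technical gaps, I would fall back on a direct topological argument. The space of disks, together with degenerate punctate limits, should deformation retract in cycle space onto the equatorial family $\mathcal{T}_D\simeq\mathbb{RP}^2$. Then $\lambda^3=0$ there, exactly as in the last paragraph of the proof of Proposition \ref{prop:lowerfourthbound}.

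Concluding, $\lambda^4|_{D_0}=0$, i.e. $\Xi|_{D_0}$ is not a $4$-sweepout.
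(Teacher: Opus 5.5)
Your Step 1 misapplies the Lyusternik--Schnirelmann vanishing lemma. With $\lambda^4=\lambda^3\cup\lambda$, the lemma only gives that $\lambda^3|_{D_0'}\neq 0$ or $\lambda|_{D_0''}\neq 0$. It never produces $\lambda^4|_{D_0'}\neq 0$. The factorization $\lambda^4\cup 1$ gives nothing, since $1|_{D_0''}\neq 0$.

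The dichotomy you state is in fact false. Take $X=\mathbb{RP}^4$, let $D_0''$ be a small closed ball, and let $D_0'$ be the closure of its complement, which retracts onto $\mathbb{RP}^3$. Then both $\lambda^4|_{D_0'}$ and $\lambda|_{D_0''}$ vanish, although $\lambda^4\neq 0$ on $X$.

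You correctly show $\lambda|_{D_0''}=0$. So all you can conclude is that $\Xi|_{D_0'}$ is a $3$-sweepout by disks, and that is no contradiction. The flat slices $\{x\in\mathbb{B}^3: a_0+\langle a,x\rangle=0\}$, $[a_0:a]\in\mathbb{RP}^3$, form a $3$-sweepout by disks of area at most $\pi$; this is why $\omega_3(\mathbb{B}^3)=\pi$.

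The same example kills your fallback. Families of disks can have $\lambda^3\neq 0$, so the space of disks cannot retract onto $\mathcal{T}_D\simeq\mathbb{RP}^2$ in cycle space. The projection argument in Proposition \ref{prop:lowerfourthbound} works only for families that are $\mathcal{F}$-close to $\mathcal{T}_D$.

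The missing idea is that you must not throw away $Z:=\operatorname{dmn}(\Psi^K)$; you must keep it fixed. The fourth power of $\lambda$ is carried by $D_0$ relative to the small-area part, not by the disk part alone.

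The paper applies the relative min-max theorem (Theorem \ref{thm:relative simon-smith min-max}) to $\Xi|_{D_0}$ relative to $Z$. It is applicable because $\mathbf{L}(\Lambda_Z(\Xi|_{D_0}))\ge \mathbf{L}(\Lambda(\Xi|_{D_0}))\ge \omega_4(\mathbb{B}^3)>\pi>1>\sup_Z \mathcal{H}^2_{g_{Eucl}}\circ\Xi$.

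In the relative class the isotopies are the identity on $Z$. So a min-max sequence, whose areas tend to a value above $\pi$, must come from parameters in $\bigcup_C T(C)$. There the surfaces are isotopic images of disks. Your Step 2 (topological control with bound $(0,0)$, multiplicity one, Nitsche) then runs in this relative setting. It yields a flat disk of area $\pi$, which contradicts the width bound.
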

    \begin{proof}
In this proof, we will treat $\Xi|_{D_0}$ as a free boundary Simon-Smith family of surfaces with the topological bound $(0, 1)$ in the Euclidean ball $(\mathbb{B}^3, g_{Eucl})$  instead of $(\mathbb{B}^3, g')$. For the rest of the proof, if there is no specification from the context, we will always work with $(\mathbb{B}^3, g_{Eucl})$. Recall that $D_0$ is the union of $E:= \bigcup_CT(C)$, where $C$ runs over the disk caps, and $\text{dmn}(\Psi^K)$.

By Proposition \ref{prop:XiProperty}, $\Xi|_{D_0}$ is a union of Simon-Smith family of disks and surfaces with small areas, as we have
         \[
            \sup  \mathcal{H}^2_{g_{Eucl}}\circ\Xi|_{\text{dmn}(\Psi^K)}= \sup \mathcal{H}^2_{g_{Eucl}}\circ\Psi^K<1<\pi.
        \]
Now suppose that $\Xi|_{D_0}$ is a $4$-sweepout. By definition of the $4$-width $\omega_4(\mathbb{B}^3)$ and Proposition \ref{prop:lowerfourthbound}, we have the Simon-Smith width
\begin{equation} \label{eq: relative min-max width}
\bL(\Lambda(\Xi|_{D_0})) \geq \omega_4(\mathbb{B}^3) > \pi > \sup  \mathcal{H}^2_{g_{Eucl}}\circ\Xi|_{\text{dmn}(\Psi^K)}.  
\end{equation}

Thus if we apply the relative Simon-Smith theorem (Theorem \ref{thm:relative simon-smith min-max}) to $\Xi|_{D_0}$, relative to $\text{dmn}(\Psi^K)\subset D_0$, we will detect the multiplicity one equator disk. Thus, $\bL(\Lambda(\Xi|_{D_0})) \le \pi$, contradiction to (\ref{eq: relative min-max width}). 
    \end{proof}
    Moreover, we have the following to complete the proof of $N \ge 3$. 
    \begin{thm}\label{thm:trivialInFirsthomo} Suppose $N=1,2$.
        Then for each $j=1,\cdots,N$, the map 
        \[
            (i_j)_*:H_1(D_j;\mathbb{Z}_2)\to H_1(\text{dmn}(\Xi);\mathbb{Z}_2)
        \]
        is the zero map. 
    \end{thm}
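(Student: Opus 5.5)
We follow the scheme of Chu--Li \cite[Theorem 3.12]{chu2024existence}, adapted from tori to annuli. By Proposition \ref{prop:XiProperty} \ref{item:traceDeformRetract}, each $D_j=T(C)$ strongly deformation retracts onto its annulus cap $C=C^k_j$. It therefore suffices to show that every loop $\gamma$ in $C^k_j$ becomes null-homologous in $\text{dmn}(\Xi)$, with $\mathbb{Z}_2$-coefficients. Since $\text{dmn}(\Xi)\simeq Y=\mathbb{RP}^4\times\mathbb{RP}^2$, the group $H^1(\text{dmn}(\Xi);\mathbb{Z}_2)\cong\mathbb{Z}_2^2$ is generated by $\lambda$ and $\alpha$. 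Hence it is enough to prove the two vanishing statements
\[
\langle\lambda,(i_j)_*[\gamma]\rangle=0,\qquad \langle\alpha,(i_j)_*[\gamma]\rangle=0 .
\]

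\textbf{Pairing with $\lambda$.} On $C^k_j$ the cycles $[\tilde\Psi^k(x)]$ lie in $\bB^{\bF}_{d_0}([A^k_j])$. Choosing $d_0$ smaller than the injectivity scale of the cycle space near the single relative cycle $[A^k_j]$, the ball $\bB^{\bF}_{d_0}([A^k_j])$ is contractible in $\mathcal{Z}_2(M,\partial M;\mathbb{Z}_2)$ (Almgren isomorphism plus a local homotopy argument as in \cite{marques2017existence}). So $[\Xi]\circ\gamma$ is null-homotopic in the cycle space. Since $[\Xi]^*\bar\lambda=\lambda$ by Proposition \ref{prop:XiPsiHomotopic}, this gives $\langle\lambda,[\gamma]\rangle=0$.

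\textbf{Pairing with $\alpha$.} This is the main obstacle, because $\alpha$ is not pulled back from the cycle space. It records the $\mathbb{RP}^2$-factor, i.e.\ which unoriented Clifford annulus $C_{[a]}$ the family is built on. Following Chu--Li, I would extract this class from the surfaces themselves. An embedded annulus $\Sigma$ that is $\bF$-close to a fixed annulus $A^k_j$ determines continuously, up to isotopy, an ``axis'' or the unoriented core class of $\Sigma$. On the original family $\Psi$, the annuli in $\Psi(z,[a])$ have a core circle linking the axis $\mathbb{R}a$. I would aim to show that $\alpha$ is represented by a map $\text{dmn}(\Xi)\to\mathbb{RP}^\infty$ recording the $\mathbb{Z}_2$-monodromy of a line bundle over the annulus locus. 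The natural candidate is the double cover formed by the two boundary components, or equivalently by the two complementary regions $A,A^*$ modulo the orientation ambiguity.

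Near $C^k_j$ all surfaces are isotopic to the fixed embedded annulus $A^k_j$, with graphical convergence away from finitely many points. Hence the relevant bundle is trivialized over $C^k_j$: the two boundary components and the two sides of the annulus can be labeled continuously, so they undergo no swaps. This gives the vanishing of $\langle\alpha,[\gamma]\rangle$.

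The hardest part is showing that this bundle class actually equals $\alpha$ on all of $\text{dmn}(\Xi)$, even though the relative pinch-off processes in $D_0$ destroy annuli. I would try to show that $\alpha$ is nonzero on loops in the $\mathbb{RP}^1\subset\mathbb{RP}^2$ factor at a fixed annulus $\Psi(z,[a])$, where rotating the axis by $\pi$ swaps the two boundary circles. I would then use the strong deformation retraction $\tilde F$ of Proposition \ref{prop:XiPsiHomotopic}, along which annulus-type surfaces are deformed by isotopies near $C^k_j$, to transport the trivialization to $Y$.

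The restriction to $N=1,2$ enters only through Lemma \ref{lem:LS}. If the direct identification of $\alpha$ fails, I would fall back on the cruder argument of \cite{chu2024existence}. Any loop in $C^k_j$ maps, after the retraction, into a loop in $Y$ whose image annuli stay near one minimal annulus. I would then show that such loops lift to the universal cover $\mathbb{S}^4\times\mathbb{S}^2$ of $Y$, using that surfaces near a fixed annulus cannot realize the antipodal swap on either factor, which forces the image class in $H_1(Y;\mathbb{Z}_2)$ to vanish.
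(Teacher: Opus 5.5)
Your reduction to loops in the annulus cap $C=C^k_j$ is sound, and so is the splitting into the pairings with $\lambda$ and $\alpha$. The $\lambda$-pairing argument also works: you only need that a loop staying in a small $\mathcal F$-ball around $[A^k_j]$ lifts to a closed loop of $3$-currents, so it is not a $1$-sweepout; contractibility of the ball is more than required. In the paper this part comes for free, because the loop is eventually moved into the Clifford locus $Z_0\cong\mathbb{RP}^2$, where $\lambda$ restricts to zero.

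The gap is in the $\alpha$-pairing, which carries the whole content of the theorem. First, part of the invariant you propose is wrong. The two complementary regions never swap, since one is a ball and the other a solid torus; this is exactly Remark \ref{rem: Continuous orientation}. So the ``two sides'' double cover is trivial over $\mathcal C\cong\mathbb{RP}^2$ and cannot detect $\alpha$. Labeling the two boundary circles does flip along the nontrivial loop of Clifford annuli, but it is not robust. It is not well defined for punctate surfaces that are merely $\mathbf F$-close to $A^k_j$. It is also not stable under boundary neck-pinches that split off small components. The invariant the paper uses is a choice of generator of $H_1(\operatorname{out}(S);\mathbb Z)\cong\mathbb Z$ on the solid-torus side, i.e.\ an orientation of its core. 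Near $A$, Proposition \ref{prop: rigidity of surfaces close to an annulus} fixes this generator uniformly, via a loop lying in $\operatorname{out}(S)\cap\operatorname{out}(A)$ that represents a fixed $a_0$.

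Second, and more seriously, your mechanism for transporting the invariant back to $Y$ rests on a false claim. The retraction $\tilde F$ is not an isotopy near $C^k_j$. The path from $C^k_j$ back to $Y$ passes through all earlier stages, including the relative pinch-off processes from disk caps, and these perform surgeries.

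The missing ingredient is Proposition \ref{prop: mcf}(3): along a relative pinch-off process, the ranks of $H_1(\operatorname{in})$ and $H_1(\operatorname{out})$ are non-increasing. These ranks equal $(0,1)$ on the cap, and every member of $\Psi$ has ranks in $\{0,1\}$, not both $1$. Hence they are identically $(0,1)$ along the whole backward homotopy. Several things follow:
\begin{enumerate}
\item The endpoint loop in $Y$ lies in the annulus locus and retracts to a loop $c_0\subset Z_0\cong\mathbb{RP}^2$.
\item A generator of $H_1(\operatorname{out})$ can be propagated consistently across the homotopy by the subdivision argument of Proposition \ref{prop: Rectanles}.
\item Because the generator is fixed by $a_0$ along $c$, it returns to itself along $c_0$.
\item Lemma \ref{lem: topological lemma} shows the generator must flip along the nontrivial loop of $\mathbb{RP}^2$, which forces $[c_0]=0$.
\end{enumerate}
Your fallback of lifting to $\mathbb S^4\times\mathbb S^2$ only restates what is to be proved and does not supply this ingredient.
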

We will prove this in Section \ref{sec: topology of boundary two cap}. The main idea is the same as the proof of \cite[Theorem 3.16]{chu2024existence}. Roughly speaking, For each $1 \le j \le N$, $D_{j}$ is a trace $T(C_{j})$ of an annulus cap $C_{j}$ and can be strongly deformed to $C_{j}$ by Proposition \ref{prop:XiProperty}. Hence, it suffices to show that the induced map $i_*$ between the first homology groups of $C_{j}$ and $\text{dmn}(\Xi)$ is zero for the inclusion map $i:C_{j}\hookrightarrow\text{dmn}(\Xi)$. The entire image of $[\Xi|_{C_{j}}]$ is $d_0$-close to a single free boundary minimal annulus  in the $\bF$-metric for currents. This heuristic gives that $i_*$ is a zero map, and we will prove this in Section \ref{sec: topology of boundary two cap}. By combining Proposition \ref{prop:not4sweepout} and Theorem \ref{thm:trivialInFirsthomo}, we can prove Theorem \ref{thm:threeminimalannuli} as follows.
\begin{proof}[Proof of Theorem \ref{thm:threeminimalannuli}]
    By Proposition \ref{prop:not4sweepout} and Theorem \ref{thm:trivialInFirsthomo}, we have that $\lambda^{4}|_{D_{0}}$ is zero and  $\alpha|_{D_1}$, and $\alpha|_{D_2}$ are zero in each case. This contradicts Lemma \ref{lem:LS} and we prove that $N \ge 3$. Also since $N$ annulus caps correspond to $N$ geometrically distinct annuli, we finish the proof.
\end{proof}
\section{Min-max construction and Multiplicity One theorem in free boundary Simon-Smith min-max} \label{Sec:proofmin-max}
In this section, we discuss the Simon-Smith min-max setup and prove Theorem \ref{thm:simon-smith min-max} and Theorem \ref{thm:relative simon-smith min-max}. In particular, we apply the recent Multiplicity One Theorem in free boundary Simon-Smith min-max construction by Sarnataro-Stryker-Wang-Zhou \cite{sarnataro2026existence}.
\subsection{Existence and regularity for free boundary Simon-Smith min-max construction} We develop free boundary Simon-Smith min-max setup to obtain existence and regularity to the free boundary setting for the punctate surfaces. We will adapt Chu-Li \cite[Section 4]{chu2024existence} mostly with necessary modifications to our setting. 

We first collect notions for Pitts' combinatorial arguments with almost minimizing varifolds in our free boundary setting. For a (relative) open subset $U \subset M$, we denote the set of isotopies $\{\varphi(t)\}_{t \in [0, 1]}$ in $U$ by $\mathfrak{Is}(U)$.
\begin{defn}[$(\varepsilon, \delta)$-deformation]\label{def:epsilon-delta-def}
        Given $\varepsilon, \delta > 0$, a (relative) open set $U \subset M$, and a punctate surface $\Sigma \in \mathcal{S}(M)$, we call an isotopy $\psi \in \mathfrak{Is}(U)$ {\it an $(\varepsilon, \delta)$-deformation of $\Sigma$ in $U$} provided that: 
        \begin{enumerate}[label=\normalfont(\arabic*)]
            \item $\mathcal{H}^2(\psi(t, \Sigma)) \leq \mathcal{H}^2(\Sigma) + \delta$ for all $t \in [0, 1]$.
            \item $\mathcal{H}^2(\psi(1, \Sigma)) \leq \mathcal{H}^2(\Sigma) - \varepsilon$.
        \end{enumerate}
        We define $\mathfrak{a}(U; \varepsilon, \delta)$ to be the set of all punctate surfaces that do not admit $(\varepsilon, \delta)$-deformations in $U$.
\end{defn}
 \begin{defn}[Admissible annuli]
        Given $K \in \mathbb{N}$ and $p \in M$, a collection of annuli centered in $p$
        \[
            A(p; s_1, r_1), \cdots, A(p; s_K, r_K),
        \]
        is called {\it $K$-admissible} if $2r_{i + 1} < s_i$ for all $i = 1, \cdots, K - 1$.

        For $R \in (0, \infty]$, if $\sup_i r_i \leq R$, we will say that these annuli are {\em of outer radius at most $R$}.
    \end{defn}
    We take a minimizing sequence $\{ \Phi_{i} \}$ in $\Lambda(\Phi)$ and we can apply the standard pull-tight argument by isotopies.
    \begin{prop}[Pull-tight procedure: {\cite[Proposition~3.1]{colding2003min}, \cite[Theorem~4.3]{pitts2014existence}}]\label{prop:Pulltight}
        Given any minimizing sequence $\{\Phi_i\}$ in a homotopy class $\Lambda$, there exists a pulled-tight minimizing sequence $\{\Phi^*_i\}$ in $\Lambda$ such that 
        \[
            \bC(\{\Phi^*_i\}) \subset \bC(\{\Phi_i\})\,.
        \]
    \end{prop}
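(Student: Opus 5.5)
The plan is to run the classical Colding--De Lellis / Pitts pull-tight argument, modified so that every deformation is an isotopy of $M$ preserving $\partial M$, so that families stay in $\Lambda$. The basic object is the compact metric space
\[
\mathcal{V}^L := \{V \in \mathcal{V}_2(M): \|V\|(M) \le 2\sup_i \sup_x \mathcal{H}^2(\Phi_i(x))\}.
\]
Inside it sits the closed subset $\mathcal{V}_\infty$ of varifolds that are stationary for the free boundary problem, i.e.\ stationary with respect to ambient vector fields tangent to $\partial M$ along $\partial M$. Write $\gamma(V) := \mathbf{F}(V, \mathcal{V}_\infty)$.

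First I would construct a continuous map $V \mapsto X_V$ from $\mathcal{V}^L \setminus \mathcal{V}_\infty$ into smooth vector fields on $M$ that are tangent to $\partial M$. The construction goes as follows.
\begin{itemize}
\item On each annular region $\{2^{-k-1} \le \gamma \le 2^{-k}\}$, compactness gives finitely many tangential fields $X$ with $\delta V(X) \le -c_k < 0$ nearby.
\item These are patched together with a partition of unity, and the resulting map is rescaled by a continuous factor so that $\delta V(X_V) \le -c(\gamma(V)) < 0$.
\item Set $X_V = 0$ on $\mathcal{V}_\infty$. The factor is chosen so that $X_V \to 0$ as $\gamma \to 0$, which makes the map continuous there.
\end{itemize}
Tangency to $\partial M$ is the only change from the closed case. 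It guarantees that the flows $\varphi^V_s$ lie in $\mathrm{Diff}^\infty(M,\partial M)$, hence preserve properness and the topological bound, and keep us in $\mathcal{S}^*(M)$.

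Next I would define the time function $T(V)$, continuous and vanishing on $\mathcal{V}_\infty$, so that flowing for time $T(V)$ decreases mass by at least $L(\gamma(V)) > 0$. The deformed families are
\[
\Phi_i^*(x) := \varphi^{|\Phi_i(x)|}_{T(|\Phi_i(x)|)}(\Phi_i(x)).
\]
These are homotopic to $\Phi_i$ through
\[
\varphi(t,x) := \varphi^{|\Phi_i(x)|}_{tT(|\Phi_i(x)|)},
\]
which is continuous in $x$ because $x\mapsto|\Phi_i(x)|$ is $\mathbf{F}$-continuous (Proposition~\ref{prop:sslargerap}) and $V\mapsto X_V$ is continuous. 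Since the flows never increase mass, $\{\Phi_i^*\}$ is still minimizing.

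For $\mathbf{C}(\{\Phi_i^*\})\subset\mathbf{C}(\{\Phi_i\})$ and stationarity, take a min-max sequence $\Phi_{i_j}^*(x_j)$ converging to $V$. Its mass tends to $\mathbf{L}$, so $\mathcal{H}^2(\Phi_{i_j}(x_j))$ also tends to $\mathbf{L}$. Passing to a subsequence, $|\Phi_{i_j}(x_j)| \to W \in \mathbf{C}(\{\Phi_i\})$, and the mass drop forces $\gamma(W)=0$. Then $X_W=0$, so the deformation moves $W$ trivially in the limit and $V=W$, which gives both claims.

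The main obstacle is the continuity step. The flows act on the varifold $|\Phi_i(x)|$, which may come from a punctate surface whose punctures lie on $\partial M$, and we need both continuity of the resulting map in $\mathbf{F}$ and that $\Phi_i^*$ still satisfies Definition~\ref{def:fbssfamily}(v), i.e.\ smooth convergence away from punctures. I would handle this as in \cite[Section~4]{chu2024existence}. The key point is that $(x,t)\mapsto\varphi(t,x)$ is continuous into $\mathrm{Diff}^\infty(M,\partial M)$ with the $C^\infty$ topology, and a diffeomorphism preserving $\partial M$ maps punctate sets to punctate sets with the same cardinality.
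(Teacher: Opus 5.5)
Your proposal is correct and follows the same route as the paper, which simply invokes the Colding--De Lellis/Pitts pull-tight by isotopies. Your use of vector fields tangent to $\partial M$, so that the flows lie in $\mathrm{Diff}^\infty(M,\partial M)$ and the deformed families stay in $\Lambda$ by Definition~\ref{def:homotopyclasssimonsmith}, is exactly the free boundary adaptation that citation relies on. One phrase to tighten: ``the flows never increase mass'' is not what you need; what you use, and what your choice of $T(V)$ guarantees, is the endpoint inequality $\mathcal{H}^2(\Phi_i^*(x))\le \mathcal{H}^2(\Phi_i(x))$, which holds because the vector field is evaluated at $V=|\Phi_i(x)|$ itself.
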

    For simplicity, let us assume that $\{\Phi_i\}$ is a pulled-tight sequence. We collect the definitions on almost minimizing varifolds first.
    \begin{defn}[Almost minimizing varifold]
        For an open subset $U \subset M$ and a sequence of punctate surfaces $\{\Sigma_j\} \subset \mathcal{S}^*(M)$, a varifold $V \in \mathcal{V}_2(M)$ is called \textit{almost minimizing with respect to $\{\Sigma_j\}$} if there exist two sequences of positive real numbers $\varepsilon_j \to 0$, $\delta_j \to 0$ such that:
        \begin{enumerate}[label=\normalfont(\arabic*)]
            \item $\bF(|\Sigma_j|, V) < \varepsilon_j$.
            \item $\Sigma_j \in \mathfrak{a}(U; \varepsilon_j, \delta_j)$.
        \end{enumerate}
    \end{defn}
    
    \begin{defn}[Almost-minimizing varifold in admissible annuli]\label{def:am_aa}
        Given $R \in (0, \infty]$, a sequence of punctate surfaces $\{\Sigma_j\} \subset \mathcal{S}^*(M)$ and a varifold $V \in \mathcal{V}_2(M)$, we say that a varifold $V$ is \textit{almost minimizing in every $K$-admissible collection of annuli of outer radius at most $R$ with respect to $\{\Sigma_j\}$}, if there exists $\varepsilon_j \to 0$, $\delta_j \to 0$ such that:
        \begin{enumerate}[label=\normalfont(\arabic*)]
            \item $\bF(|\Sigma_j|, V) < \varepsilon_j$.
            \item for any $K$-admissible collection of annuli $\{A_i\}^K_{i = 1}$ each of outer radius at most $R$, and any $\Sigma_j$, 
            \[
                \Sigma_j \in \bigcup^K_{i = 1}\mathfrak{a}(A_i; \varepsilon_j, \delta_j)\,.
            \]
        \end{enumerate}
    \end{defn}
 Now we have the following statements on the regularity of almost minimizing varifolds and the existence of almost minimizing varifolds.
    \begin{prop}[Regularity of varifolds almost minimizing in admissible annuli]\label{prop:reg_am_aa}
        Given an orientable compact Riemannian $3$-manifold $(M, g)$ with a mean convex boundary $\partial M$, $K \in \mathbb{N}^+$, $R \in (0, +\infty]$, a sequence $\{\Sigma_j\} \subset \mathcal{S}^{*}(M)$, and a stationary varifold $0 \neq V \in \mathcal{V}_2(M)$, suppose that:
        \begin{enumerate}[label=\normalfont(\roman*)]
            \item For each $j \in \mathbb{N}^+$, we can choose a finite set of points $P_j$ such that $\Sigma_j \setminus P_j$ is a properly embedded smooth surface and 
            \[
                P = \lim_{j \to \infty} P_j
            \]
            in the Hausdorff sense where $P$ is a finite set in $M$.
            \item $V$ is almost minimizing in every $K$-admissible collection of annuli of radius at most $R$ with respect to $\{\Sigma_j\}$.
        \end{enumerate}
        Then $V \in \mathcal{W}_L$, where $L = \|V\|(M)$.
    \end{prop}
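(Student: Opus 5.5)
The plan is to reduce to the standard closed-case regularity theory (Simon–Smith, Colding–De Lellis) in the interior and to the free boundary regularity theory of Li and Li–Zhou at $\partial M$. The argument is local, so I will split $M$ into interior points and boundary points. Throughout, I use that $V$ is stationary with free boundary, so the monotonicity formula holds up to $\partial M$ by reflection across the mean convex boundary. In particular, $V$ has tangent cones everywhere and its support is a closed set of locally finite $\mathcal{H}^2$-measure.

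\textbf{Step 1: good annuli.} Fix $p\in M$. Any $K$-admissible collection of annuli about $p$ of outer radius at most $R$ contains, for each $\Sigma_j$, one annulus in which $\Sigma_j$ admits no $(\varepsilon_j,\delta_j)$-deformation. By pigeonholing over finitely many nested $K$-admissible families, as in Colding–De Lellis, I will find for every $p$ a radius $r(p)>0$ with two properties. First, each annulus $A(p;s,r)$ with $r<r(p)$ contains a sub-annulus in which $V$ is almost minimizing with respect to a subsequence of $\{\Sigma_j\}$. Second, away from $P$, this can be upgraded to small balls. For $p\in\partial M$, "annulus" and "ball" mean relative annuli and half-balls $B_r(p)\cap M$, and the admissible isotopies are those in $\operatorname{Diff}(M,\partial M)$, which preserve $\partial M$.

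\textbf{Step 2: replacements.} In each almost-minimizing relative open set $U$, I will solve the constrained minimization problem: minimize area over the isotopy class of $\Sigma_j$ in $U$, with area increase at most $\delta_j$. Meeks–Simon–Yau in the interior and its free boundary version (Jost, Li) give smooth, properly embedded, stable replacements meeting $\partial M$ orthogonally. The finite punctate sets $P_j$ converge to the finite set $P$ by hypothesis (i). Hence, after shrinking radii to avoid the finitely many points of $P$, every replacement construction takes place in a region where $\Sigma_j$ is a genuine smooth surface, and the punctate points do not obstruct the isotopy arguments. The replacements converge, by Schoen's curvature estimates and their free boundary analogues (Guang–Li–Zhou), to a stable free boundary minimal surface $V^*$ which coincides with $V$ outside $\overline U$ and has the same mass.

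\textbf{Step 3: gluing, and the main obstacle.} The gluing argument of Colding–De Lellis (unique continuation of replacements on overlapping annuli) then shows that $V$ is smooth on $M\setminus(P\cup\{\text{finitely many points}\})$. It also shows that $V$ meets $\partial M$ orthogonally there. The removable singularity theorem for stationary varifolds with smooth stable pieces, together with its free boundary version at boundary points, removes the isolated points. Consequently $\operatorname{spt}\|V\|$ is a finite disjoint union of properly embedded free boundary minimal surfaces with integer multiplicities, so $V\in\mathcal{W}_L$ with $L=\|V\|(M)$.

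I expect the main obstacle to be the boundary points of $P$, where $P\cap\partial M\neq\emptyset$ is allowed. There the replacement must be built in a half-ball around a possible singular point of $\Sigma_j$ lying on $\partial M$. My first attempt will be to apply the regularity argument only in small relative annuli around such a point, which contain no points of $P$ for $j$ large. Afterwards I will invoke the free boundary removable singularity result. That result should follow from monotonicity at $\partial M$ and the classification of stationary free boundary cones in a half-space as multiples of half-planes orthogonal to the boundary.
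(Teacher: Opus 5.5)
Your outline is essentially the route the paper relies on: the paper gives no separate proof of this proposition and instead adapts Chu--Li's Section~4 to the free boundary setting, i.e.\ the Colding--De~Lellis replacement-and-gluing argument run in annuli (which for large $j$ miss $P_j$), followed by removal of isolated singular points. Two things to tighten: the claimed upgrade from annuli to small balls away from $P$ is unjustified in this framework and is not needed; and at points of $\partial M$, monotonicity alone does not force half-plane tangent cones, since unions of distinct half-planes through the vertex orthogonal to the boundary, and the boundary plane itself, are also free boundary stationary. So you must first use the stability of the replacements to get smoothness away from the vertex, and then the maximum principle, which requires strict mean convexity, to exclude the boundary plane.
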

    \begin{prop}[Existence of varifolds almost minimizing in admissible annuli]\label{prop:exist_am_aa}
        Suppose that $X$ is a finite cubical subcomplex of some cube $I(m, k)$, $\Phi: X \to \mathcal{S}^*(M)$ is a Simon-Smith family and $\{\Phi_i\}$ is a pulled-tight sequence in $\Lambda(\Phi)$. Then for $K = K(m) := 3^{m3^m}$ and any $R \in (0, \infty]$, there exists a varifold $W \in \bC(\{\Phi_i\})$, almost minimizing in every $K$-admissible collection of annuli of outer radius at most $R$ with respect to $\{\Sigma_j\}$, such that there exists a min-max subsequence $\Sigma_j := \Phi_{i_j}(x_j)$ such that $\Sigma_j$ converges to $W$ in the varifold sense. 
    \end{prop}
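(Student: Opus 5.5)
The plan is to transplant the closed-case argument of Chu-Li \cite[Section 4]{chu2024existence}, which follows Colding-De Lellis \cite{colding2003min} and Pitts \cite[Theorem 4.10]{pitts2014existence}, to the free boundary setting. I will argue by contradiction. Let $K=3^{m3^m}$ and suppose that no $W\in\bC(\{\Phi_i\})$ is almost minimizing in every $K$-admissible collection of annuli of outer radius at most $R$ with respect to some min-max subsequence. I will then build a competitor family in $\Lambda(\Phi)$ whose maximal area is strictly below $L=\bL(\Lambda(\Phi))$, which contradicts the definition of $L$. Here annuli centered at points of $\partial M$ are understood as relative annuli $A(p;s,r)\cap M$, and all isotopies are taken in $\operatorname{Diff}^\infty(M,\partial M)$, so that the deformed family stays proper.

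\textbf{Step 1 (compactness of the critical set).} Since $\{\Phi_i\}$ is pulled tight, $\bC(\{\Phi_i\})$ is a compact set of stationary varifolds. By the contradiction hypothesis, for each $W\in\bC$ there are a radius $\epsilon_W>0$ and, for every $x$ with $|\Phi_i(x)|$ $\bF$-close to $W$ and $i$ large, a $K$-admissible collection of annuli $A_1,\dots,A_K$ of outer radius at most $R$. Each $A_l$ carries an $(\varepsilon,\delta)$-deformation of $\Phi_i(x)$, with $\varepsilon,\delta$ uniform after a covering argument. Here I use the standard fact that $(\varepsilon,\delta)$-deformations persist under small perturbations: by Definition \ref{def:fbssfamily}(v), surfaces are smoothly close away from finitely many punctate points, and the areas vary continuously by Definition \ref{def:fbssfamily}(i). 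Hence the same deformation works on a neighborhood of $x$ in $X$, possibly with halved $\varepsilon$.

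\textbf{Step 2 (combinatorial choice of disjoint annuli).} Refine the cubical structure of $X$. Using the admissibility condition $2r_{l+1}<s_l$, I run Pitts' combinatorial lemma to assign to each cell in the high-area region an annulus from its collection. The assignment must be such that annuli attached to cells sharing a common face are pairwise disjoint. The choice $K=3^{m3^m}$ is exactly what makes this pigeonhole argument work over all $3^m$ neighbours in $I(m,k)$, iterated $m$ times.

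\textbf{Step 3 (gluing the deformations).} Next I glue the chosen isotopies with cutoff functions in the parameter variable, as in \cite[Proposition 4.?]{chu2024existence}. Because the supports are disjoint, the deformations compose. Disjointness also keeps the area increase bounded by $m\delta$ while giving a decrease of at least $\varepsilon$ on cells where area is near $L$. Since the deformations are ambient isotopies, the resulting family lies in $\Lambda(\Phi)$ per Definition \ref{def:homotopyclasssimonsmith}, and it still satisfies the topological bounds. Its supremum is then at most $L-\varepsilon/2$ for large $i$, which is the desired contradiction.

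\textbf{Main obstacle.} I expect the main obstacle to be the boundary. First, the relative annuli near $\partial M$ must behave like Euclidean half-annuli so that the admissibility and disjointness bookkeeping survive; this is handled with Fermi coordinates and small $R$ relative to the boundary's normal injectivity radius. Second, the local isotopies must be made tangential on $\partial M$. I would handle this by working in $\mathfrak{Is}(U)$ for relatively open $U$, with generating vector fields tangent to $\partial M$. The mean convexity of $\partial M$ is what prevents the pull-tight and deformations from pushing mass onto $\partial M$. Finally, to extract the min-max subsequence $\Sigma_j=\Phi_{i_j}(x_j)$ converging to $W$, I apply the contradiction argument with $\varepsilon_j,\delta_j\to0$ and take a diagonal subsequence.
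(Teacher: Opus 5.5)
Your plan follows the route the paper intends. The paper gives no separate proof of this proposition; it relies on the Pitts/Colding--De Lellis combinatorial deformation argument as adapted to Simon--Smith families by Chu--Li, run with relative annuli $A(p;s,r)\cap M$ and isotopies in $\operatorname{Diff}^\infty(M,\partial M)$, and that is exactly your contradiction scheme.

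There are two small corrections. First, do not restrict to small $R$ via Fermi coordinates: the statement is needed for every $R\in(0,\infty]$ (the paper later takes $R=+\infty$), and the selection of pairwise disjoint annuli uses only the triangle inequality for the distance on $M$, so neither a half-annulus model nor mean convexity of $\partial M$ plays a role here. Second, state explicitly that the negation of almost minimality lets you take $\delta=\varepsilon/C$, with $C$ larger than the number of overlapping cells (up to $3^m$, not $m$); this is what makes the glued family drop below $L$.
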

    Now we proceed to the existence and regularity part of the proof of Theorem \ref{thm:simon-smith min-max}.
    \begin{proof}[Proof of the existence and regularity part of Theorem \ref{thm:simon-smith min-max}] By the pull-tight procedure in Proposition \ref{prop:Pulltight}, there exists a pulled-tight sequence $\{ \Phi_{i} \}$ in a homotopy class $\Lambda(\Phi)$. By Proposition \ref{prop:exist_am_aa}, there exists an integer $K \in \mathbb{N}^+$ and a varifold $W \in \bC(\{\Phi_i\})$ which is almost minimizing in every $K$-admissible collection of annuli with respect to some $\{\Sigma_j = \Phi_{i_j}(x_j)\}$. We can choose $P_{j}$ such that  $\Sigma_{j} \setminus P_{j}$ is a properly embedded smooth surface for each $j$ to satisfy that $N_{P_{j}}:= \# P_j$ can be uniformly bounded:
    \[
            \sup_j \# P_j < \infty,
        \]
    and by compactness $P_{j}$ converges to $P$ in the Hausdorff topology sense. By the regularity statement Proposition \ref{prop:reg_am_aa}, we have that
     \[
            W \in \bC(\{\Phi_i\}) \cap \mathcal{W}_L.
        \]
    \end{proof}
\subsection{Topological bounds} In the previous section, we proved the existence of a varifold $W \in \bC(\{\Phi_i\}) \cap \mathcal{W}_L$. We now prove the existence of a min-max varifold $W$ whose support satisfies the topological bound $(\fg_{0},\fb_{0})$. $W$ is almost minimizing in every $K$-admissible collection of annuli of outer radius at most $R$ with respect to $\{\Sigma_j\}$, and let us take $R = +\infty$ in this section.
\subsubsection{Genus bound} We prove the genus bound of min-max varifolds first.
\begin{prop}[Genus bound of varifolds almost minimizing in admissible annuli]\label{prop:am_genus_bound}
        Given a Compact Riemannian $3$-manifold $(M, g)$ with strictly mean convex boundary, $K \in \mathbb{N}^+$, $\mathfrak{g}_0 \in \mathbb{N}$, $R \in (0, \infty]$, a sequence $\{\Sigma_j\} \subset \mathcal{S}(M)$ and a stationary varifold $V \in \mathcal{V}_2(M)$, suppose that:
        \begin{enumerate}[label=\normalfont(\roman*)]
            \item We can choose for each $j \in \mathbb{N}^+$ a finite set $P_j$ such that $\Sigma_j \setminus P_j$ is a smooth surface and 
            \[
                P = \lim_{j \to \infty} P_j
            \]
            in the Hausdorff sense where $P$ is a finite set in $M$;
            \item $\sup_j \mathfrak{g}(\Sigma_j) \leq \mathfrak{g}_0$;
            \item $V$ is almost minimizing in every $K$-admissible collection of annuli of outer radius at most $R$ with respect to $\{\Sigma_j\}$.
        \end{enumerate}
        Then $V \in \mathcal{W}_{L}$ and the genus of the support of $V$ is bounded by $\mathfrak{g}_{0}$ for $L = \|V\|(M)$.
    \end{prop}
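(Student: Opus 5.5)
The plan is to follow the genus-bound argument of Ketover and Franz--Schulz (as adapted to punctate surfaces by Chu--Li \cite[Section 4]{chu2024existence}), localized separately near interior points and near $\partial M$. First, Proposition \ref{prop:reg_am_aa} already gives $V\in\mathcal{W}_L$, so $\operatorname{supp}\|V\| = \bigcup_j \Gamma_j$ is a disjoint union of smooth, properly embedded, connected free boundary minimal surfaces. What remains is to show $\sum_j \mathfrak{g}(\Gamma_j)\le \mathfrak{g}_0$. Because genus is detected by finitely many disjoint essential closed curves, it suffices to prove the following. For any $\mathfrak{g}'$ with $\mathfrak{g}'\le\sum_j\mathfrak{g}(\Gamma_j)$ there is a compact subsurface $S\subset \operatorname{supp}\|V\|$, avoiding $P$ and a neighbourhood of $\partial M$, of genus $\mathfrak{g}'$. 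We then show that $\Sigma_j$ contains, for $j$ large, a subsurface of genus at least $\mathfrak{g}'$. Combined with (ii), this yields the bound.

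\textbf{Step 1: lifting.} Choose $\rho>0$ small so that the balls $B_\rho(p)$, $p\in P$, and a collar $\{d(\cdot,\partial M)<\rho\}$ miss $S$. Using almost minimality in $K$-admissible annuli together with the local replacement construction, upgrade the varifold convergence $|\Sigma_j|\to V$ to smooth graphical convergence with multiplicity on compact sets away from finitely many points. The standard argument applies here. One fills annuli by replacements whose limits are stable free boundary minimal surfaces. Then Schoen's curvature estimates, and their free boundary versions near $\partial M$ using mean convexity, give graphical convergence of the replacements, and one glues them back as in \cite[Section 4]{chu2024existence}. Since $P_j\to P$ in the Hausdorff sense, the punctures of $\Sigma_j$ stay inside $\bigcup_p B_\rho(p)$ for $j$ large. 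The regular part of $\Sigma_j$ near $S$ is therefore a union of normal graphs over $S$. Any one sheet contributes a copy of $S$, so $\Sigma_j$ contains a subsurface of genus $\ge \mathfrak{g}'$.

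\textbf{Step 2: the main obstacle.} The difficulty is that convergence is not literally smooth near $\operatorname{supp}\|V\|$. Before the replacement, $\Sigma_j$ might carry necks that disappear in the limit, and, conversely, genus of $V$ could in principle be created in the limit from regions where $\Sigma_j$ is only close as a varifold. To rule out created genus, I would use the Simon--Smith/Ketover lifting lemma. Almost minimality lets one replace $\Sigma_j$ inside small balls, and near $\partial M$ inside small relative half-balls, by isotopic surfaces plus controlled neck-pinch surgeries. A neck-pinch can only decrease genus. The resulting surfaces $\tilde\Sigma_j$ then converge smoothly, away from finitely many points, to $\operatorname{supp}\|V\|$ with integer multiplicities, and they satisfy $\mathfrak{g}(\tilde\Sigma_j)\le\mathfrak{g}(\Sigma_j)$. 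The free boundary case needs one extra ingredient: half-ball replacements must preserve the free boundary structure. For this I would invoke strict mean convexity of $\partial M$, via the maximum principle, to keep the replacements proper and to obtain the stable free boundary regularity. Once this is in place, Step 1 applied to $\tilde\Sigma_j$ gives $\mathfrak{g}'\le\mathfrak{g}(\tilde\Sigma_j)\le\mathfrak{g}_0$. Taking $\mathfrak{g}'=\sum_j\mathfrak{g}(\Gamma_j)$ finishes the proof.
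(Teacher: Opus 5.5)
There is a genuine gap, in Step 2 and in the counting in Step 1 that relies on it.

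Your count (``any one sheet over $S$ is a copy of $S$'') requires the surgered surfaces $\tilde\Sigma_j$ to be a union of normal graphs over a compact $2$-dimensional subsurface $S$ carrying all the genus. In other words, you need $\tilde\Sigma_j$ to converge smoothly to $\operatorname{supp}\|V\|$ away from finitely many points. Almost minimality does not yield this, and you give no argument for it.

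\begin{itemize}
\item Almost minimality in $K$-admissible annuli controls $\Sigma_j$ only through replacements. These are built one annulus at a time, and only in one annulus out of each admissible $K$-tuple.
\item The replacements (the objects to which Schoen's estimates apply) come from different isotopy-minimizing sequences in different annuli. They do not glue into a single surgered copy of $\Sigma_j$ that is graphical over a $2$-dimensional region.
\item $\Sigma_j$ itself may wrinkle or carry thin fingers of area $<\varepsilon_j$, and neck-pinch surgeries do not straighten these.
\end{itemize}

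Your Step~1 claim that the convergence of $\Sigma_j$ can be upgraded to smooth graphical convergence is false as stated. Step~2 merely transfers the same unproved claim to $\tilde\Sigma_j$.

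This is exactly why Proposition \ref{prop:improv_lifting} is stated only for curves. Take simple closed curves $\gamma_k\subset\Gamma_i\setminus P$ at a definite distance from $P\cup\partial M$, meeting pairwise only at a base point $q_i$. The proposition produces surgered $\tilde\Sigma_j$ in which $\pi^{-1}(\tilde\gamma_k)\cap T_\varepsilon(\Gamma_i)\cap\tilde\Sigma_j$ consists of $m_i$ curves projecting with degree one, or $m_i/2$ curves of degree two if $\Gamma_i$ is one-sided. It gives no $2$-dimensional graphical structure.

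The missing idea is the topological step that turns these $1$-dimensional lifts into a lower bound for $\mathfrak g(\tilde\Sigma_j)$. The paper runs the argument of \cite[Proposition 4.10]{chu2024existence}, following Ketover, as follows.
\begin{itemize}
\item On each $\Gamma_i$ choose $2\mathfrak g(\Gamma_i)$ curves through $q_i$ with the standard intersection pattern.
\item Since $\pi^{-1}(\gamma_a)\cap\pi^{-1}(\gamma_b)=\pi^{-1}(q_i)$, lifts of different curves can only meet over $q_i$. The common base point is what lets one control the intersection numbers of the lifted curves inside $\tilde\Sigma_j$.
\item Since neck-pinch and half neck-pinch surgeries do not raise genus, this gives $\sum_i m_i\mathfrak g(\Gamma_i)\le\mathfrak g(\tilde\Sigma_j)\le\mathfrak g(\Sigma_j)\le\mathfrak g_0$.
\end{itemize}
This is stronger than the support bound you aimed for.

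Two smaller points:
\begin{itemize}
\item Your sheet-counting would also fail for one-sided components, where a sheet over $S$ is a double cover rather than a copy of $S$.
\item The half-ball replacements and the appeal to strict mean convexity in Step~2 are unnecessary. Genus is detected by interior curves, so the paper keeps everything away from $P\cup\partial M$ and uses only interior curvature estimates.
\end{itemize}
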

    Our genus bound above is an adaptation of the effective genus bound for free boundary minimal surfaces \cite[Theorem 3.2]{ketover2016free} to our setting. In particular, we allow that the sequence of surfaces have finite number of singular points.
    
    The proof of Proposition \ref{prop:am_genus_bound} is based on the improved lifting lemma with multiplicity proved by Ketover in \cite{ketover2019genus} (See also the proof of Proposition 4.10 and 4.11 in \cite{chu2024existence}). We now state the improved lifting lemma with multiplicity tailored to our setting.
     \begin{prop}[Improved lifting lemma with multiplicity]\label{prop:improv_lifting}
        Given a compact Riemannian $3$-manifold $(M,g)$ with strictly mean convex boundary, $K \in \mathbb{N}^+$, a sequence $\{\Sigma_j\} \subset \mathcal{S}(M)$ and a stationary varifold $V \in \mathcal{V}_2(M)$, suppose that We can choose for each $j \in \mathbb{N}^+$ a finite set $P_j$ such that $\Sigma_j \setminus P_j$ is a smooth surface and 
            \[
                P = \lim_{j \to \infty} P_j
            \]
            in the Hausdorff sense where $P$ is a finite set in $M$.
        
        Suppose $V$, an almost minimizing varifold in every $K$-admissible collection of annuli with respect to $\{\Sigma_j\}$, is obtained by Proposition~\ref{prop:reg_am_aa} whose support is a union of disjoint set of smooth, connected, embedded free boundary minimal surfaces $\{\Gamma_i\}^l_{i = 1}$ with multiplicities such that
        \[
            V = m_1|\Gamma_1| + \cdots m_l |\Gamma_l|\,.
        \]
        We denote $\Gamma := \bigcup^l_{i = 1} \Gamma_i$.

        Assume that there exists a collection of points $\{q_i \in \Gamma_i\}^l_{i = 1}$ and a collection of simple closed curves $\{\gamma_k\}^t_{k = 1}$ contained in $\Gamma \setminus P$ so that for all $k_1 \neq k_2 \in \{1, 2, \cdots, t\}$, if $\gamma_{k_1}, \gamma_{k_2} \subset \Gamma_i$ then $\gamma_{k_1} \cap \gamma_{k_2} = \{q_i\}$. Then there exists $\varepsilon_0 > 0$, so that for any $\varepsilon < \varepsilon_0$, there exists curves $\{\tilde\gamma_k\}^t_{k = 1}$, after taking a subsequence and relabeling as $\{\Sigma_j\}$, and punctate surfaces $\{\tilde \Sigma_j\}$ each of which is obtained from $\Sigma_j$ by finitely many {\em half neck-pinch surgeries} and {\em neck-pinch surgeries} outside $P_j$ with the following properties. Let $\pi_{i}: T_\varepsilon(\Gamma_i) \to \Gamma_i$ be a closest-point projection onto $\Gamma_i$ and $\pi: \cup \,T_\varepsilon(\Gamma_i) \to \cup \, \Gamma_i$ defined by $\pi(x_{i}) = \pi_{i}(x_{i})$ for $x_{i} \in T_\varepsilon(\Gamma_i)$.
        \begin{enumerate}
            \item Each $\tilde \gamma_k$ is homotopic to $\gamma_k$ in $\Gamma$ and $\tilde \gamma_k \subset T_\varepsilon(\gamma_k)$;
            \item $\tilde{\Sigma}_j$ converges to $V$ in the sense of varifold;
            \item For each $k \in \{1, 2, \cdots, t\}$, if $\tilde \gamma_k \subset \Gamma_i$, then exactly one of the following holds:
            \begin{enumerate}[label=\normalfont(\arabic*)]
                \item $\pi^{-1}(\tilde \gamma_k) \cap T_\varepsilon(\Gamma_i) \cap \tilde \Sigma_j$ is a union of $m_i$ closed curves, each of which projects via the closest-point projection onto $\tilde \gamma_k$ with degree one.
                \item $\pi^{-1}(\tilde \gamma_k) \cap T_\varepsilon(\Gamma_i) \cap \tilde \Sigma_j$ is a union of $m_i / 2$ closed curves, each of which projects via the closest-point projection onto $\tilde \gamma_k$ with degree two. In this case, $\Gamma_i$ is non-orientable and $\pi^{-1}(\tilde \gamma_k)$ is a Möbius band.
            \end{enumerate}
        \end{enumerate}
    \end{prop}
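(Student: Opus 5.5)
The plan is to follow Ketover's improved lifting lemma \cite{ketover2019genus} and its punctate adaptation in Chu--Li (Propositions 4.10--4.11 of \cite{chu2024existence}). The only genuinely new ingredient is the free boundary: curves and neck-pinches may meet $\partial M$. We assume the $\gamma_k$ lie in $\Gamma\setminus P$. Each $\gamma_k$ is either a closed curve in the interior of $\Gamma_i$, or passes through $\partial\Gamma_i$ (as happens for a loop built from essential arcs). In the latter case the lifting has to be done in half-balls centered on $\partial M$.

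\textbf{Step 1: graphical decomposition.} Since $P$ is finite and the $\gamma_k$ avoid $P$, choose $\varepsilon_0$ so that $T_{\varepsilon_0}(\bigcup_k\gamma_k)$ misses a neighborhood of $P$. For $j$ large, $P_j$ also misses it. Using that $V$ is almost minimizing in $K$-admissible annuli, cover $T_\varepsilon(\gamma_k)$ by small balls and relative half-balls $B_r(x)\cap M$ in which $V$ is almost minimizing with respect to $\{\Sigma_j\}$. Pitts' replacement argument, in the free boundary version of Li \cite{li2015general} and Li--Zhou \cite{li2021min}, then shows the following. After finitely many isotopies that keep areas within $\delta_j$, together with neck-pinch surgeries (in interior balls) and half neck-pinch surgeries (in half-balls at $\partial M$), the surfaces $\tilde\Sigma_j$ restricted to each such ball are, outside a small set, a union of $m_i$ normal graphs over $\Gamma_i$ (or $m_i/2$ two-sheeted graphs when $\Gamma_i$ is one-sided). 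The surgeries only remove small-area necks, so $\tilde\Sigma_j\to V$ as varifolds; this gives (2). Here strict mean convexity of $\partial M$ and the maximum principle keep the replacements properly embedded and transverse to $\partial M$, so that the surgeries stay inside $\mathcal{S}(M)$.

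\textbf{Step 2: choosing $\tilde\gamma_k$ and gluing the local lifts.} By a Sard/transversality argument, perturb $\gamma_k$ within $T_\varepsilon(\gamma_k)$ to $\tilde\gamma_k$, homotopic to $\gamma_k$. Choose the perturbation so that $\pi^{-1}(\tilde\gamma_k)\cap\tilde\Sigma_j$ is transverse and avoids the small bad sets of Step 1; this gives (1). Crossing points at $q_i$ are handled as in Ketover, by making the small balls around $q_i$ common to all curves through $q_i$. The local graph decompositions then glue along $\tilde\gamma_k$ into a covering of $\tilde\gamma_k$ by $\pi^{-1}(\tilde\gamma_k)\cap\tilde\Sigma_j$ with total degree $m_i$. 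Each component is a closed curve covering $\tilde\gamma_k$ with some degree $d$.

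\textbf{Step 3: degree control (main obstacle).} We must show that $d=1$ for all components, or $d=2$ for all components with $\pi^{-1}(\tilde\gamma_k)$ a Möbius band. Ketover's argument rules out higher monodromy: if a lift had degree $d\ge 2$ (or degree $2$ over an orientable band), one builds an isotopy in an admissible annulus that decreases area by a definite amount, contradicting almost minimality. That argument lives in an interior tube, so the difficulty is a $\tilde\gamma_k$ crossing $\partial\Gamma_i$. There the tube $\pi^{-1}(\tilde\gamma_k)$ meets $\partial M$ along arcs, and the monodromy has to be computed on strips whose ends lie on $\partial M$. The plan is to double along $\partial M$ locally via Fermi coordinates, reflecting across the boundary in half-ball charts, so that the sheet permutation is computed on a closed loop in the doubled interior chart. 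Ketover's area-decreasing isotopy is then performed in the symmetric double and restricted back to $M$; the restricted isotopy lies in $\mathfrak{Is}(U)$ for relative open $U$. Checking that the restricted isotopy still gives an $(\varepsilon,\delta)$-deformation is the delicate point. The first thing to try is to make the local replacements reflection-symmetric to leading order, which should be possible because free boundary minimal pieces reflect to $C^{1,1}$-close minimal pieces in the doubled Fermi metric.
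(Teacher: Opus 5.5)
\textbf{Gap: Step 3 is not a proof.} You allow some $\gamma_k$ to cross $\partial\Gamma_i$ and call this the main obstacle. You then offer only a reflection across $\partial M$ as ``the first thing to try'', and you leave open whether the restricted isotopy is an $(\varepsilon,\delta)$-deformation. That is a real obstruction, not a detail:
\begin{itemize}
\item Reflection in Fermi coordinates across a strictly convex $\partial M$ gives a doubled metric that is only Lipschitz across $\partial M$. So the interior curvature estimates (Schoen, Choi--Schoen) that Ketover's argument uses are not available there.
\item Neither $\Sigma_j$ nor its replacements are reflection-symmetric.
\item Almost minimality only concerns isotopies of $M$. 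An isotopy that lowers area in the double need not lower the area of $\Sigma_j$ in $M$ by a definite amount.
\end{itemize}
You have also misplaced the difficulty. The degree-one/degree-two dichotomy does not come from an area-decreasing isotopy. Once $\tilde\Sigma_j$ is a union of embedded normal graphs over a neighborhood of $\tilde\gamma_k$, the sheets in the band $\pi^{-1}(\tilde\gamma_k)$ are ordered. Embeddedness then forces trivial monodromy over an annulus and the order-reversing one over a M\"obius band. The real work in Ketover's lemma is producing that graphical structure, via replacements and neck-pinches.

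\textbf{The missing idea: never touch the boundary.} The paper keeps the $\gamma_k$ in the interior of $\Gamma$. Closed curves carrying the genus can always be chosen there. Boundary components, and hence essential arcs, are handled separately by the tree lifting lemma, Proposition~\ref{prop:tree_lifting}, not by this proposition. The paper then argues as follows.
\begin{itemize}
\item Choose $\varepsilon_0$ as in Ketover's Set up 4.2, with in addition $\varepsilon_0<\operatorname{dist}(P\cup\partial M,\bigcup_k\gamma_k)$.
\item Remove $B_s(P)$ to get $\tilde M=M\setminus B_s(P)$. After passing to a subsequence, every $\Sigma_j\cap\tilde M$ is smooth.
\item Ketover's Section 4 argument is localized near $\bigcup_k\gamma_k$, so it applies verbatim.
\item The only input needing smoothness of $\Sigma_j$ is the genus collapse lemma, Lemma~\ref{lem:genus_collapse}. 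It is purely topological and does not need minimality.
\item The curvature estimates used are interior ones, valid at a definite distance from $\partial M$.
\end{itemize}
With the curves kept off $\partial M$, your Steps 1--2, without the half-balls and half neck-pinches, reduce to this argument. The boundary part of Step 3 disappears.
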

    As pointed out in \cite[Section 4]{chu2024existence}, in the proof of the improved lifting lemma with multiplicity \cite{ketover2019genus}, the only part relied on the smoothness of the sequence of surfaces $\{ \Sigma_{i} \}$ is in the application of the genus collapse lemma in Colding-Minicozzi \cite{colding2004space}. We also note that the original statement in Colding-Minicozzi~\cite{colding2004space} requires each $\Sigma_j$ is minimal. However, the minimality condition is not necessary as mentioned in Ketover \cite[Lemma 3.4]{ketover2019genus}, since all the arguments therein are topological arguments. The statement also holds for properly embedded surfaces on the manifold with boundary. We rewrite the statement of the genus collapse lemma as follows for our setting.
    \begin{lem}[Genus Collapse, {\cite[Lemma I.0.14]{colding2004space}}]\label{lem:genus_collapse}
        Given a $3$-manifold with boundary $(M, g)$, a sequence of properly embedded smooth surfaces $\{\Sigma_j\}$ of genus at most $\mathfrak{g}_0$ in $(M, g)$, there exists finitely many points in the manifold $\{x_i\}^{m}_{i = 1}$ with $m \leq \mathfrak{g}_0$ and a subsequence of the surfaces, still denoted $\{\Sigma_j\}$, such that for all $x \notin \{x_i\}^{m}_{i=1}$, there is a radius $r_x > 0$ such that $\Sigma_j \cap B_{r_x}(x)$ is a union of properly embedded surfaces, i.e.,
        \[
            \mathfrak{g}(\Sigma_j \cap B_{r_x}(x)) = 0\,.
        \]
    \end{lem}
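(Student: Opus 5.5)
We follow the Colding--Minicozzi argument, which is purely topological once minimality is discarded (as in Ketover \cite[Lemma 3.4]{ketover2019genus}), and adapt it to properly embedded surfaces with boundary on $\partial M$.

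The key notion is that a point $x$ is a \emph{genus concentration point} if, for every $r>0$, infinitely many $\Sigma_j$ have $\mathfrak{g}(\Sigma_j \cap B_r(x)) \ge 1$. For $x \in \partial M$, $B_r(x)$ denotes the relative (half-)ball $B_r(x)\cap M$. We will show that after passing to a subsequence there are at most $\mathfrak{g}_0$ such points, and that every other point $x$ has a radius $r_x$ with $\mathfrak{g}(\Sigma_j\cap B_{r_x}(x))=0$ for all $j$ in the subsequence.

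The central topological input is additivity of genus over disjoint subdomains. If $U_1,\dots,U_k \subset M$ are pairwise disjoint relatively open sets, chosen after a small perturbation of radii so that $\partial U_i$ meets $\Sigma_j$ and $\partial M$ transversally, then
\[
\sum_{i=1}^k \mathfrak{g}(\Sigma_j\cap U_i)\le \mathfrak{g}(\Sigma_j)\le \mathfrak{g}_0 .
\]
The reason is that each handle in $\Sigma_j\cap U_i$ contributes a pair of curves with intersection number one, supported in $U_i$. These pairs are disjoint across different $i$, so they span an isotropic-free symplectic subspace of $H_1(\Sigma_j;\mathbb{Z}_2)$ modulo the boundary classes. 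That subspace has dimension at most $2\mathfrak{g}(\Sigma_j)$.

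Free boundary components of $\Sigma_j\cap U_i$ lying on $\partial M$ do not inflate the count, because genus here is computed after capping off boundary circles. The boundary complexity $\mathfrak{b}$ is not being bounded, so it plays no role. This additivity step is the one place where the boundary setting needs care, and I expect it to be the main obstacle. The task is to define $\mathfrak{g}$ of a piece $\Sigma_j\cap B_r(x)$ cut along both $\partial B_r$ and $\partial M$, and then to check that the intersection-form argument survives when arcs of $\partial B_r\cap\Sigma_j$ end on $\partial M$. I would handle this by doubling each piece along its free boundary arcs, or equivalently by capping every boundary component with disks, and then applying the closed-surface argument.

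Given additivity, the counting argument runs as follows. Suppose there were $\mathfrak{g}_0+1$ distinct concentration points $x_1,\dots,x_{\mathfrak{g}_0+1}$. Choose $r$ smaller than half of their minimal mutual distance. By a diagonal subsequence, for large $j$ each ball $B_r(x_i)$ carries genus at least one. Additivity then gives $\mathfrak{g}(\Sigma_j)\ge \mathfrak{g}_0+1$, a contradiction.

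To make the dichotomy "concentration point or not" hold uniformly along one subsequence, we use a countable dense set together with compactness of $M$. Fix a countable dense set $\{y_\ell\}$ and rational radii. For each pair $(\ell,\text{radius})$, pass to a diagonal subsequence along which the event $\mathfrak{g}(\Sigma_j\cap B)\ge1$ holds either for all sufficiently large $j$ or for none. Call the resulting limiting set of concentration points $\{x_i\}_{i=1}^m$; by the counting argument above, $m\le\mathfrak{g}_0$.

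It remains to treat points $x\notin\{x_i\}$. Take a ball $B$ from the countable family that contains $x$, avoids the $x_i$, and has genus zero for all large $j$. Such a ball exists because $x$ is not a limit of concentration points, and because a ball carrying genus for infinitely many $j$ contains a concentration point. This last fact follows from compactness by bisection: cover $B$ by finitely many smaller balls, note that one of them must carry genus for infinitely many $j$ by additivity, and iterate. Choosing $r_x$ with $B_{r_x}(x)\subset B$, monotonicity of genus under restriction to subdomains gives $\mathfrak{g}(\Sigma_j\cap B_{r_x}(x))=0$. Finitely many initial indices can be discarded, or the radius shrunk, as needed.
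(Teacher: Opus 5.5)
Your overall route is the one the paper relies on. The paper gives no proof of its own: it cites Colding--Minicozzi together with Ketover's remark that the argument is purely topological and carries over to properly embedded surfaces. Two of your steps, however, are wrong as written.

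\textbf{The last paragraph.} You claim that a ball carrying genus for infinitely many $j$ contains a concentration point. This is false, and the bisection justification misuses additivity. Additivity bounds the sum of the genera of \emph{disjoint} pieces by the total genus. It gives nothing in the other direction, since a handle can straddle every member of a finite cover. Take the constant sequence $\Sigma_j=\Sigma$, with $\Sigma$ a fixed properly embedded genus-one surface. A large ball carries genus for every $j$, yet there is no concentration point at all: every point has a small ball meeting $\Sigma$ in a disk, a half-disk, or nothing.

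The step is also unnecessary. If $x$ is not a concentration point of the final subsequence, your definition directly gives $r>0$ with $\mathfrak{g}(\Sigma_j\cap B_r(x))=0$ for all but finitely many $j$. You then shrink $r$ so that this also holds for the finitely many exceptional $j$. This works because each smooth $\Sigma_j$ is locally a disk or half-disk, and genus is monotone under passing to open subsets. Discarding initial indices is not legitimate here, because the subsequence must not depend on $x$.

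\textbf{The counting step.} This argument is valid only for concentration points of the already-diagonalized subsequence, and the phrase ``by a diagonal subsequence'' does not rescue it otherwise. Concentration in the ``infinitely many $j$'' sense at different points need not occur for the same $j$. For example, with $\mathfrak{g}_0=1$, put a handle of size $1/j$ near $x_1$ for even $j$ and near $x_2$ for odd $j$. Both points concentrate, no $\Sigma_j$ has genus $2$, and any subsequence adapted to $x_1$ kills concentration at $x_2$.

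The fix is to first fix the dichotomy subsequence and then upgrade ``infinitely often'' to ``eventually always''. Given a concentration point $x$ and $r>0$, pick a ball $B$ of your countable family with $B_{r'}(x)\subset B\subset B_r(x)$. Then $B$ carries genus infinitely often, hence for all large $j$ by the dichotomy, hence so does $B_r(x)$ by monotonicity. With $\mathfrak{g}_0+1$ such points and disjoint balls, all of them carry genus simultaneously beyond finitely many thresholds, and additivity gives the contradiction.

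\textbf{The boundary issue.} The boundary additivity you flag as the main obstacle is not one. For a compact surface with boundary, the radical of the $\mathbb{Z}_2$ intersection form on $H_1$ is the image of the boundary classes, and the quotient has dimension $2\mathfrak{g}$. Your symplectic-pair argument therefore applies verbatim to pieces cut by $\partial B_r$ and $\partial M$, and no doubling is needed.
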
 \begin{proof}[Proof of Proposition \ref{prop:improv_lifting}] 
    We choose $\varepsilon_0$ as in~\cite[Set up 4.2]{ketover2019genus} and, after shrinking if necessary, assume that
\[
\varepsilon_0 < \text{dist}\!\Big(P \cup \partial M,\bigcup_k \gamma_k\Big), 
\]
and $\varepsilon<\varepsilon_0$. Choose $s \in (0, \text{dist}(P \cup \partial M, \bigcup_k \gamma_k))$ so that for every $p\in P$, $B_s(p) \cap \Gamma$ is either a disk, a half-disk, or empty, and set
\begin{equation} \label{eqn:removingnbdP}
\tilde M:=M\setminus B_s(P).    
\end{equation}
        By possibly choosing a subsequence, without loss of generality, we may assume that every $\Sigma_j \cap \tilde M$ is a smooth surface. Since all the arguments in~\cite[Section 4]{ketover2019genus} are localized near $\bigcup_k\gamma_k$. We can directly apply arguments therein with Lemma~\ref{lem:genus_collapse}.
    \end{proof}
    \begin{rem}
        The curvature estimates such as of stable minimal surfaces by Schoen \cite{schoen1984estimates} and graphical minimal surfaces by Choi-Schoen \cite{choi1985space} are interior curvature estimates, and these are used in Ketover \cite[Section 4.3]{ketover2019genus}. By our assumption, since $\gamma_{k}$'s lie on definite distances from the boundary, we can apply interior curvature estimates without modification.
    \end{rem}
    \begin{proof}[Proof of Proposition \ref{prop:am_genus_bound}]
    By applying Proposition \ref{prop:improv_lifting} and replacing neck-pinching surgeries by half neck-pinching surgeries and neck-pinching surgeries in the proof of \cite[Proposition 4.10]{chu2024existence}, we can follow the proof of \cite[Proposition 4.10]{chu2024existence} and obtain
    \begin{equation}
            \sum^l_{i = 1} m_i \mathfrak{g}(\Gamma_i) \leq \mathfrak{g}_0\,.
        \end{equation}
    \end{proof}
\subsection{Bound of the sum of the genus and boundary complexities} In this section, we prove that the sum of the genus and boundary complexity of min-max surfaces is bounded above by those of the surfaces in the min-max sequence. We here adapt the arguments by Franz-Schulz \cite{franz2023topological} to our setting. We focus on the estimate without multiplicity since it is sufficient to our applications as in \cite{franz2023topological}. The following is the statement on the lower semicontinuity of the sum of the genus and boundary complexities, where finite non-smooth points exist on surfaces on the sequence.
\begin{prop}[The upper bound of the sum of the genus and boundary complexities]\label{prop:am_boundary_bound}
        Given a Riemannian manifold $(M, g)$ with a strictly mean convex boundary, $K \in \mathbb{N}^+$, $\mathfrak{g}_0, \mathfrak{b}_{0} \in \mathbb{N}$, $R \in (0, \infty]$, a sequence $\{\Sigma_j\} \subset \mathcal{S}(M)$ and a stationary varifold $V \in \mathcal{V}_2(M)$, suppose that:
        \begin{enumerate}[label=\normalfont(\roman*)]
            \item We can choose for each $j \in \mathbb{N}^+$ a finite set $P_j$ such that $\Sigma_j \setminus P_j$ is a smooth surface and 
            \[
                P = \lim_{j \to \infty} P_j
            \]
            in the Hausdorff sense where $P$ is a finite set in $M$;
            \item $\sup_j \mathfrak{g}(\Sigma_j) \leq \mathfrak{g}_0$;
            \item $\sup_j \mathfrak{g}(\Sigma_j) + \mathfrak{b}(\Sigma_{j}) \leq \mathfrak{g}_0+\mathfrak{b}_0$;
            \item $V$ is almost minimizing in every $K$-admissible collection of annuli of outer radius at most $R$ with respect to $\{\Sigma_j\}$.
        \end{enumerate}
        Then $V \in \mathcal{W}_{L, \leq \mathfrak{g}_0, \leq \mathfrak{g}_0+\mathfrak{b}_{0}}$ for $L = \|V\|(M)$.
    \end{prop}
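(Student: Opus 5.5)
We plan to follow Franz--Schulz \cite{franz2023topological}, adapted to punctate surfaces exactly as the genus bound was adapted in Proposition \ref{prop:am_genus_bound}. By Proposition \ref{prop:reg_am_aa} we already know $V\in\mathcal{W}_L$. By Proposition \ref{prop:am_genus_bound} the genus bound \eqref{genusbounddef} holds for $\operatorname{supp}\|V\|$. It therefore remains to prove
\[
\fg(\Gamma)+\fb(\Gamma)\le \fg_0+\fb_0,\qquad \Gamma=\operatorname{supp}\|V\|.
\]
As in Franz--Schulz, we only need this without multiplicity weights.

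First we localize away from the bad set. We fix $s>0$ small, as in \eqref{eqn:removingnbdP}, so that for every $p\in P$ the set $B_s(p)\cap\Gamma$ is a disk, a half-disk, or empty. We set $\tilde M=M\setminus B_s(P)$ and pass to a subsequence such that $\Sigma_j\cap\tilde M$ is smooth. Removing disks and half-disks does not change genus. A half-disk around a boundary point can be absorbed back without changing the count of boundary components, so it suffices to bound the topology of $\Gamma\cap\tilde M$ against that of $\Sigma_j$.

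Second, we construct curve systems on $\Gamma$. On each component $\Gamma_i$ we choose a basis of loops $\gamma_k$ realizing the genus, all passing through one base point $q_i$. In addition, for each pair of boundary components we choose properly embedded arcs in $\Gamma_i\cap\tilde M$, so that loops together with arcs detect $\fg(\Gamma_i)+\fb(\Gamma_i)$. This is the free boundary analogue of the Franz--Schulz choice of a system of arcs and loops.

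Third, we lift these curves to $\tilde\Sigma_j$. For loops we use Proposition \ref{prop:improv_lifting}. For arcs we prove the analogous lifting statement near $\partial M$, replacing neck-pinch surgeries by half neck-pinch surgeries where needed. The tool here is the relative version of genus collapse, Lemma \ref{lem:genus_collapse}, together with the almost-minimizing property in admissible annuli (hypothesis (iv)), which gives smooth graphical convergence away from finitely many points. We then check two facts. First, surgeries do not increase $\fg+\fb$: an interior neck-pinch either splits a component or reduces genus, and a half neck-pinch either splits a component or reduces the number of boundary components on it, which keeps $\fb$ additive. Second, the lifted loops and arcs form a system of the same combinatorial type inside $\tilde\Sigma_j$. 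Franz--Schulz's counting lemma then gives
\[
\fg(\Gamma)+\fb(\Gamma)\le \fg(\tilde\Sigma_j)+\fb(\tilde\Sigma_j)\le \fg(\Sigma_j)+\fb(\Sigma_j)\le\fg_0+\fb_0,
\]
using hypothesis (iii) for the last inequality.

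The main obstacle is lifting the boundary arcs. The interior curvature estimates used by Ketover are unavailable near $\partial M$. We will first try free boundary stability and curvature estimates, using the strict mean convexity of $\partial M$ to get graphical convergence in half-annuli. Punctate points lying on $\partial M$ are handled by the choice of $s$. The remaining risk is that a lifted arc degenerates into a loop, which would confuse the boundary count. We will rule this out by counting boundary components of $\tilde\Sigma_j$ on $\partial M$ through their Hausdorff convergence to $\partial\Gamma$.
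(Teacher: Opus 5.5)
Your outline has the same architecture as the paper's proof, but the boundary term is never actually supplied. The shared steps are: get the genus bound from Proposition~\ref{prop:am_genus_bound}; lift genus loops through a base point with Proposition~\ref{prop:improv_lifting}; lift a boundary-connecting object; note that neck-pinches and half neck-pinches do not increase $\fg+\fb$ \cite[Lemma 3.6]{franz2023topological}; and conclude with the counting of \cite[Theorem 4.11]{franz2023topological}.

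The object to lift is not a family of arcs ``for each pair of boundary components''. In the paper it is one point on each boundary component of $\Gamma_i$, joined by a single properly embedded rooted tree $T_i\subset\Gamma_i$ whose leaves are exactly these points. This tree is pushed off $P\cup Q$, where $Q$ is the finite set where the stable replacements fail to converge graphically. It is then lifted \emph{as a whole} by the tree lifting lemma (Proposition~\ref{prop:tree_lifting}, i.e.\ \cite[Lemma 4.8]{franz2023topological}). This produces a single connected tree $T_j\subset\tilde\Sigma_j$ projecting homeomorphically onto a tree properly homotopic to $T_i$, which is what the counting uses.

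Arcs lifted one at a time carry no such information. When $m_i>1$ (the proposition does not assume multiplicity one), the lifts may start on different sheets near the same boundary component of $\Gamma_i$. Nothing then forces them into one component of $\tilde\Sigma_j$, so your claim that the lifted loops and arcs ``form a system of the same combinatorial type'' is unjustified.

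The obstacle you single out, lifting near $\partial M$ without interior curvature estimates, is already handled by \cite[Lemma 4.8]{franz2023topological}, which is a free boundary lifting lemma. No new free boundary stability estimates are needed. The only new point is that $T_i$ avoids $B_s(P)\cup Q$, so all lifting and surgery happen away from the punctate set. Genus collapse (Lemma~\ref{lem:genus_collapse}) enters only through the closed-curve lifting, not the tree. Your remedy for leaves landing on the same boundary curve, via Hausdorff convergence of $\partial\tilde\Sigma_j$ to $\partial\Gamma$, is not available. Varifold convergence gives no Hausdorff control of boundaries, and $\tilde\Sigma_j$ may have extra boundary curves anywhere on $\partial M$.

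The treatment of the punctate set is also wrong in two places. First, the reduction ``it suffices to bound the topology of $\Gamma\cap\tilde M$'' is false. For $p\in P$ in the interior of $\Gamma$, deleting the disk $\Gamma\cap B_s(p)$ creates a new boundary circle. So $\fg+\fb$ of $\Gamma\cap\tilde M$ can exceed $\fg_0+\fb_0$, for instance when $\Gamma$ is a disk through a point of $P$ and the $\Sigma_j$ are disks. The paper never modifies $\Gamma$; it only requires the test loops and tree to lie in $\Gamma\setminus(P\cup Q)$.

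Second, you apply the Franz--Schulz counting directly to the punctate surfaces $\tilde\Sigma_j$, but that argument is for smooth properly embedded surfaces. You are missing the paper's desingularization step. Small neighborhoods of the punctate points of $\tilde\Sigma_j$ are replaced by disks and half-disks, via surgeries and discarding components. This does not touch the lifted loops and tree and does not increase $\fg+\fb$. It yields smooth $\tilde\Sigma_j'\to V$ with $\fg(\tilde\Sigma_j')+\fb(\tilde\Sigma_j')\le\fg(\Sigma_j)+\fb(\Sigma_j)\le\fg_0+\fb_0$. Only then is \cite[Theorem 4.11]{franz2023topological} applied verbatim.
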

    The proof of the topological bound hinges on the tree lifting lemma which applies to a loop-free network of curves, a free boundary version of Simon's lifting lemma in De Lellis-Pellandini \cite{de2018min}, proven by Franz-Schulz in \cite[Lemma 4.8]{franz2023topological}. 
    
    We briefly define the notion of trees before stating the lifting lemma (See Definition 4.7 in \cite{franz2023topological} for the full definitions). A properly embedded tree in a connected properly embedded surface $\Gamma_i$ is a compact connected embedded graph $T\subset \Gamma_i$ with no loops. A vertex contained in exactly one edge is called
a leaf. We require that the leaves are exactly the boundary points of $T$, i.e. $T\cap \partial \Gamma_i$ is precisely the set of leaves.

We call two properly embedded trees $T_0,T_1\subset U$ as properly homotopic in $U$ if there exist a finite tree $G$ and embeddings $f_0,f_1:G\to U$ with $f_0(G)=T_0$ and $f_1(G)=T_1$, together with a continuous map $F:[0,1]\times G\to U$ such that for every $t\in[0,1]$, the map $F_t:=F(t,\cdot)$ is an embedding whose image $F_t(G)$ is a properly embedded tree in $U$, and the leaves of $G$ are mapped into $\partial M$ for all $t$. Now let us state the tree lifting lemma with non-smooth points on surfaces in the sequence.
\begin{prop}[Tree lifting lemma]\label{prop:tree_lifting}
Assume the setting of Proposition~\ref{prop:improv_lifting}. Let $T\subset \Gamma_i\setminus P$ be a properly embedded tree for some $i\in\{1,\dots,l\}$. Then, after shrinking
$\varepsilon_0$ if necessary, for every $0<\varepsilon<\varepsilon_0$ and all sufficiently large $j$,
there exists a properly embedded tree
\[
T_j\subset \widetilde{\Sigma}_j\cap T_\varepsilon(\Gamma_i)
\]
which is properly homotopic to $T$ in $T_\varepsilon(\Gamma_i)$.
Moreover, $T_j$ can be chosen so that the closest-point projection $\pi_i:T_\varepsilon(\Gamma_i)\to \Gamma_i$ restricts to a homeomorphism from $T_j$ onto a properly embedded tree in $\Gamma_i$ properly homotopic to $T$.
\end{prop}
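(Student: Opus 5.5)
The plan is to reduce Proposition~\ref{prop:tree_lifting} to the smooth free boundary tree lifting lemma of Franz--Schulz \cite[Lemma 4.8]{franz2023topological}, localized away from the punctate set $P$. The work is in making that localization compatible with the surgeries. First, since $T\subset\Gamma_i\setminus P$ is compact, I fix $s>0$ with $s<\tfrac12\,\mathrm{dist}(T,P)$ and shrink $\varepsilon_0$ so that $T_{\varepsilon_0}(T)\cap B_{s}(P)=\emptyset$. As in the proof of Proposition~\ref{prop:improv_lifting}, I then set $\tilde M=M\setminus B_s(P)$. Since $P_j\to P$ in the Hausdorff sense, for $j$ large the surfaces $\Sigma_j\cap\tilde M$, and hence $\widetilde\Sigma_j\cap\tilde M$, are smooth and properly embedded. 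Here $\widetilde\Sigma_j$ is obtained by (half) neck-pinch surgeries performed outside $P_j$. All subsequent constructions will take place in $T_\varepsilon(T)\subset\tilde M$. Note that the leaves of $T$ lie on $\partial\Gamma_i\subset\partial M$, so this neighborhood meets $\partial M$ in half-balls.

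Second, I use the structure of $\widetilde\Sigma_j$ near $\Gamma_i$ from Proposition~\ref{prop:improv_lifting} together with Lemma~\ref{lem:genus_collapse}. Away from finitely many genus-collapse points, $\widetilde\Sigma_j$ is locally a union of genus zero pieces. Since $T$ is a finite graph, I may perturb $T$ by a proper homotopy within $\Gamma_i\setminus P$ so that it avoids those points and the surgery regions. I then cover $T$ by finitely many small balls and half-balls $U_1,\dots,U_m$ centered on $T$, with $U_\ell\cap\Gamma_i$ a disk or half-disk. Almost minimality in admissible annuli gives, on each $U_\ell$, the usual replacement argument of \cite{de2018min}: $\widetilde\Sigma_j\cap U_\ell$ converges to $V$, and it contains a component that is a graph over $\Gamma_i\cap U_\ell$ away from a small set. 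On each piece I use the free boundary version of Simon's lifting lemma. Near leaves this uses that $\widetilde\Sigma_j$ meets $\partial M$ transversely. This yields arcs in $\widetilde\Sigma_j$ that project homeomorphically onto the edges of $T$ inside $U_\ell$.

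Third, I glue the local lifts along the edges and at the vertices, following the combinatorial argument of \cite[Lemma 4.8]{franz2023topological}. Because $T$ has no loops, consistency around cycles never has to be checked, which is exactly why trees rather than curves are used. Lifts on overlapping $U_\ell$ can be matched by choosing them to lie on the same local sheet; since the region is simply connected along the tree, this is a sheet choice made inductively from a root. The resulting $T_j$ is a properly embedded tree, and $\pi_i|_{T_j}$ is a homeomorphism onto a tree in $\Gamma_i$ that is $C^0$-close to $T$. A straight-line homotopy in the normal bundle then gives the proper homotopy to $T$ inside $T_\varepsilon(\Gamma_i)$, with leaves kept on $\partial M$ because $\partial M$ is preserved by the nearest-point projection in Fermi coordinates.

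I expect the main obstacle to be the second step, specifically getting the local sheet structure near the boundary leaves when multiplicity is present. The risk is that in half-balls the lifted arc might be forced to end on a boundary component of $\widetilde\Sigma_j$ that differs from the one over the corresponding leaf. My first attempt there is to apply the half-ball replacement and boundary regularity for free boundary almost minimizing surfaces. If that is insufficient, I will restrict to the multiplicity one situation, which is all that is needed for Proposition~\ref{prop:am_boundary_bound}.
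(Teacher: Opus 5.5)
Your proposal follows essentially the same route as the paper. You excise $B_s(P)$, move $T$ off a finite bad set, lift the edges one at a time from a root as in \cite[Lemma~4.8]{franz2023topological}, and use that $T_j$ stays near $T$, hence away from $B_s(P)$, so that it lies on $\widetilde\Sigma_j$. Two minor corrections. First, the relevant bad set is the finite set $Q$ where the stable replacements fail to converge graphically: the sheet structure you invoke belongs to the replacements, not to $\widetilde\Sigma_j$, and $j$-dependent surgery regions cannot be avoided by a single perturbation of $T$. Second, your multiplicity worry at the leaves is unnecessary, since the statement only asks that the leaves of $T_j$ lie on $\partial M$, which the cited free boundary lemma already gives without any reduction to multiplicity one.
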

\begin{proof}
We take the manifold outside the neighborhood of $P$ as in (\ref{eqn:removingnbdP}). Also, as in the proof of \cite[Lemma~4.8]{franz2023topological}, there exists a finite set $Q \subset \Gamma_{i} \setminus P$ such that the stable replacement surfaces converge smoothly and graphically to $\Gamma_{i}$ on compact subsets of $\Gamma_{i} \setminus (P \cup Q)$. After perturbing $T$ slightly by a homotopy in $\Gamma_i$ we may assume that $T\subset \Gamma_i\cap \tilde M$, $T\cap Q=\emptyset$, and we still denote the resulting tree by $T$.

We can therefore apply the argument of \cite[Lemma~4.8]{franz2023topological} to lift the edges of $T$ successively, starting from the root and continuing along adjacent edges. Moreover, the closest-point projection $\pi_i$ restricts to a homeomorphism from $T_j$ onto a properly embedded tree in $\Gamma_i$ properly homotopic to $T$. Since $T$ has a positive distance from $B_{s}(P)$ and $T_{j}$ is in a small neighborhood of $T$ and surgeries are performed in $B_{s}(P)$, the lifted tree $T_j$ is disjoint from $B_{s}(P)$ for all sufficiently large $j$. Hence, $T_{j}$ lies on $\tilde{\Sigma}_{j}$ for large $j$.
\end{proof}
Now let us prove the topological bound in Proposition \ref{prop:am_boundary_bound} as in \cite{franz2023topological}.
  \begin{proof}[Proof of Proposition \ref{prop:am_boundary_bound}]
     In Proposition \ref{prop:am_genus_bound}, we proved that $V \in \mathcal{W}_L$ and the genus of the support of $V$ is bounded by $\mathfrak{g}_{0}$ with $V = m_1|\Gamma_1| + \cdots + m_l |\Gamma_l|$ for a disjoint set of smooth, connected, embedded free boundary minimal surfaces $\{\Gamma_i\}^l_{i = 1}$ and a set of positive integers $\{m_i\}^l_{i = 1}$ whose genus is bounded by $\mathfrak{g}_{0}$.
        
On each $\Gamma_{i}$, choose the finite collection of simple closed curves and properly embedded rooted
trees used there. On each $\Gamma_i$, choose simple closed curves
\[
\gamma_{i, 1}, \cdots, \gamma_{i, 2 \mathfrak{g}(\Gamma_i)} \subset \Gamma_i
\]
whose homology classes are linearly independent in 
\[
H_1(\Gamma_i; \mathbb{Z}) / l_* H_1(\partial \Gamma_i ; \mathbb{Z}).
\]
If $\partial \Gamma_i \neq \emptyset$, choose one point on each boundary component of $\Gamma_i$ and a properly embedded rooted tree whose $T_i \subset \Gamma_i$ whose leaves are precisely those points. Since $P$ and $Q$ are finite, after a small perturbation we may assume that all these curves and trees are contained in $\Gamma_{i}\setminus (P \cup Q)$. We may therefore
apply Proposition~\ref{prop:improv_lifting} to the closed curves and
Proposition ~\ref{prop:tree_lifting} to the tree part, obtaining punctate surfaces
$\{\widetilde\Sigma_j\}$ with the corresponding lifted objects. In this procedure, $\fg + \fb$ does not increase over the neck-pinching and half neck-pinching surgeries by \cite[Lemma 3.6]{franz2023topological}.

Since the surfaces $\widetilde\Sigma_j$ are not necessarily smooth, we replace the small neighborhood of punctate set by finitely many disks and half-disks through surgeries and the removal of connected components and still have smooth surfaces $\widetilde\Sigma_j' \rightarrow V$. Then we obtain a sequence of properly embedded smooth surfaces $\{\tilde\Sigma'_j\}$ such that, for all sufficiently large $j$, $|\widetilde\Sigma'_j|\to V\text{ as varifolds}$ and
\[
\mathfrak g(\widetilde\Sigma'_j)+\mathfrak b(\widetilde\Sigma'_j) \le \mathfrak g(\widetilde\Sigma_j)+\mathfrak b(\widetilde\Sigma_j) \le \mathfrak g(\Sigma_j)+\mathfrak b(\Sigma_j) \le \mathfrak g_0+\mathfrak b_0.
\]
With this sequence of smooth surfaces $\{\tilde\Sigma'_j\}$, we apply the proof of~\cite[Theorem~4.11]{franz2023topological} verbatim to conclude that
\[
\mathfrak g(\Gamma)+\mathfrak b(\Gamma)\le \mathfrak g_0+\mathfrak b_0.
\]
Together with Proposition~\ref{prop:am_genus_bound}, this implies
\[
V\in \mathcal W_{L,\le \mathfrak g_0,\le \mathfrak g_0+\mathfrak b_0}.
\qedhere
\]
  \end{proof}
\subsection{Multiplicity One Theorem of unstable min-max free boundary minimal surfaces in Simon-Smith setting} We apply the Multiplicity One Theorem by Sarnataro-Stryker-Wang-Zhou \cite[Theorem 1.3]{sarnataro2026existence} to our free boundary setting. The proof follows from the free boundary PMC min-max theorem therein (See Wang-Zhou \cite{wang2023existence} for the closed Simon-Smith setting, and Sun-Wang-Zhou \cite{sun2024multiplicity}). We now prove Theorem \ref{thm:simon-smith min-max} (2) based on \cite[Theorem 1.3]{sarnataro2026existence}.

\begin{rem}
    Sarnataro-Stryker-Wang-Zhou \cite{sarnataro2026existence} proved the Multiplicity One theorem where Simon-Smith families consist of properly embedded separating surfaces. In the closed setting, Chu-Li \cite[Section 4.4]{chu2024existence} adapted the Multiplicity One Theorem for min-max surfaces by Wang-Zhou \cite[Theorem 7.3]{wang2023existence} to the punctate surfaces. The same adaptation works by reproving the $C^{1,1}$-regularity of free boundary PMC surfaces for punctate surfaces and deriving the multiplicity one theorem for min-max free boundary surfaces in Simon-Smith setting. We therefore use this extension without further proof.
\end{rem}

\begin{proof}[Proof of Theorem \ref{thm:simon-smith min-max} \normalfont{(2)}]
By the multiplicity-one theorem for the free boundary Simon--Smith min--max theory \cite[Theorem 3.5]{sarnataro2026existence}, the min--max varifold can be written as
\[
    V=\sum_j m_j|\Gamma_j|,
\]
where the $\Gamma_j$'s are pairwise disjoint connected smooth properly embedded minimal surfaces with possibly empty free boundary. Moreover, if $\Gamma_j$ is two-sided and unstable, then $m_j=1$, while if $\Gamma_j$ is one-sided, then its connected double cover is stable. The topological bounds follow from Proposition \ref{prop:am_boundary_bound}.
\end{proof}
\subsection{Surfaces near the critical set} We proved $\bC(\{\Phi_{i}\})\cap \mathcal{W}_{L, \leq \mathfrak{g}_0,\le \fg_{0}+\fb_{0}} \neq \emptyset$ in the previous sections. However, even together with the pulled-tight sequence, this does not give the following conclusion in Theorem \ref{thm:simon-smith min-max} (3) directly: There exists $\eta > 0$ such that for all sufficiently large $i$,
		\[
			\mathcal{H}^2(\Phi_{i}(x)) \geq L - \eta \implies |\Phi_i(x)| \in \bB^{\bF}_{r}(\mathcal{W}_{L, \leq \mathfrak{g}_0,\le \fg_{0} + \fb_{0}}\cap \bC(\{\Phi_{i}\}))\,.
		\]
We show this by applying arguments in Section 4.5 in \cite{chu2024existence}, which is a refinement from Marques-Neves \cite[Theorem 4.7]{marques2021morse}. Note that these arguments follow from Pitts' combinatorial argument \cite[Theorem 4.10]{pitts2014existence}. Combining this with our proof of Theorem \ref{thm:simon-smith min-max} (1), (2) in the previous sections, we complete the proof of Theorem \ref{thm:simon-smith min-max}.
\begin{proof}[Proof of Theorem \ref{thm:simon-smith min-max}] Theorem \ref{thm:simon-smith min-max} (3) follows from the argument of \cite[Theorem~2.16]{chu2024existence} verbatim once we replace notions in the closed setting by those in the free boundary setting. Combining this with the proof of the existence and regularity part of Theorem \ref{thm:simon-smith min-max}, we finish the proof of Theorem \ref{thm:simon-smith min-max}.
\end{proof}
\subsection{Proof of Theorem \ref{thm:relative simon-smith min-max}} We complete the proof of Theorem \ref{thm:relative simon-smith min-max} as in \cite[Theorem 2.21]{chu2024existence}.
\begin{proof}[Proof of Theorem \ref{thm:relative simon-smith min-max}]
With the same argument as \cite[Section 4.6]{chu2024existence}, for each $\Phi' \in \Lambda_Z(\Phi)$, one can restrict all the deformations in the previous subsections to occur away from the compact set
    \[
        X' := \left\{x \in X : \mathcal{H}^2(\Phi'(x)) \leq \frac{\bL(\Lambda_Z(\Phi)) + \sup_{z \in Z} \mathcal{H}^2(\Phi'(z))}{2}\right\}\,,
        \]
since $\bL(\Lambda_Z(\Phi)) > \sup_{z \in Z} \mathcal{H}^2(\Phi(z))$. By smoothing deformations, we obtain the desired conclusion.
\end{proof}

\section{Perturbing the metric} \label{sec:perturbmetric}
In this section, we are going to show Proposition \ref{prop:perturbMetric}. Let $(M, g_0)$ be a compact $3$-manifold with nonnegative Ricci and strictly convex boundary $\partial M$. By the Compactness theorem \cite[Theorem 1.2]{FraserLi2014}, there exists $L >0$ such that the area of any free boundary minimal surfaces in $M$ satisfying the topological bound $(0, 1)$ is bounded above by $\frac{L}{2}$. Thus, Proposition \ref{prop:perturbMetric} follows from the following result.

\begin{prop} \label{prop:arealinearindep}
        Let $(M, g_0)$ be a compact $3$-dimensional Riemannian manifold with nonnegative Ricci curvature and strictly convex boundary. Suppose there are only finitely many properly embedded free boundary minimal annuli in $M$. For any $\epsilon > 0$, there exist a smooth metric $g'$ on M that is $\epsilon$-close to $g_{0}$ in $C^{\infty}$ such that among all properly embedded $g'$-free boundary minimal surfaces with area less than $L$, the following properties hold:
        \begin{enumerate}[label=\normalfont(\arabic*)]
            \item\label{item:sameNumber}  $(M, \partial M, g_0)$ and $(M, \partial M, g')$ have the same number of embedded minimal annuli.
            \item $(M, \partial M, g')$ has finitely many embedded minimal disks $\{ D_1, \cdots, D_q \}$, each of which is non-degenerate.
            \item \label{item:area_disk_neq_annuli} 
            The $g'$-areas
            \[
            \operatorname{Area}_{g'}(D_1), \cdots, \operatorname{Area}_{g'}(D_q)
            \]
            are $\mathbb{Z}$-linearly independent. Moreover, their $\mathbb{Z}$-linear combination can never achieve the area of any $g'$-free boundary minimal annulus with $g'$-area less than $L$.
            \item Any properly embedded $g'$-free boundary minimal surface is unstable.
            \item The Frankel property holds for disks and annuli: For any pair of connected properly embedded $g'$-free boundary minimal surfaces $\Sigma_1,\Sigma_2$ satisfying the topological bound $(0,1)$, we have
            \[
            \Sigma_1\cap\Sigma_2\neq\emptyset.
            \]
            \item There is no closed embedded $g'$-minimal sphere in $M$.
        \end{enumerate}
    \end{prop}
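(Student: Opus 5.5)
Properties (4)--(6) are open conditions in the metric, and the original metric $g_0$ satisfies them. So the plan is to perturb $g_0$ by an arbitrarily small amount to obtain (1)--(3), while staying inside a $C^\infty$-neighborhood on which (4)--(6) persist. Concretely, I first fix $\epsilon_0>0$ such that every metric $g$ that is $\epsilon_0$-close to $g_0$ has strictly convex boundary and satisfies (4)--(6) for all $g$-free boundary minimal surfaces of area $<L$ with topological bound $(0,1)$, plus closed minimal spheres.

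The openness is argued by contradiction and compactness. Suppose $g_i\to g_0$ and each $g_i$ violates one of the conditions. Convex boundary and area $<L$ give uniform control, and for disks and annuli the curvature estimates of Fraser--Li \cite{FraserLi2014} apply. A subsequence of the offending surfaces then converges to a $g_0$-free boundary minimal surface $\Sigma$, possibly with multiplicity. Each condition then fails for the limit:
\begin{itemize}
\item If the $g_i$-surfaces are stable, $\Sigma$ is stable, or multiplicity produces a positive Jacobi field and hence stability. This is impossible for $g_0$: testing the second variation with $\varphi\equiv 1$ gives $\int_\Sigma(\mathrm{Ric}(\nu,\nu)+|A|^2)+\int_{\partial\Sigma}h(\nu,\nu)>0$ by $\mathrm{Ric}\ge0$ and strict convexity.
\item If disjoint pairs exist for every $g_i$, the limits are disjoint or touching $g_0$-minimal surfaces. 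Touching is excluded by the maximum principle, and disjointness contradicts the Frankel property for $\mathrm{Ric}\ge0$ with convex boundary (Fraser--Li).
\item A closed minimal sphere in $g_0$ is impossible: it would be stable as a limit, or else contradict Frankel against any free boundary disk.
\end{itemize}

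Next I handle (2) and (1). By White's bumpy metric theorem, adapted to free boundary by Ambrozio--Carlotto--Sharp, a generic metric $\epsilon_0$-close to $g_0$ has all free boundary minimal surfaces nondegenerate. Compactness with area $<L$ then gives finitely many disks. For (1), the hypothesis says $g_0$ has finitely many annuli $A_1,\dots,A_m$, but they may be degenerate. To keep the count unchanged, I choose the perturbation to be $g'=g_0+h$ with $h$ supported away from $\bigcup A_j$, and with $h$ vanishing to second order near them. Then each $A_j$ stays $g'$-minimal and free boundary. Showing no new annuli appear is, I expect, the main obstacle.

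For that obstacle, the first thing I would try is the following. Any $g'$-annulus for $g'$ near $g_0$ converges to some $A_j$, possibly with multiplicity. Multiplicity is excluded because it would force a positive Jacobi field, contradicting instability as in step one. If $A_j$ is nondegenerate, implicit function uniqueness gives no new annulus. If $A_j$ is degenerate, I would instead perturb through a one-parameter family of metrics and use the fact (as in Chu--Li \cite[Proposition 6.1]{chu2024existence}) that each isolated $A_j$ persists under perturbation. Uniqueness near $A_j$ would then come from local Morse-theoretic degree arguments, or else from restricting to perturbations that make $A_j$ nondegenerate while keeping it minimal, by adding a conformal bump near $A_j$ that kills the kernel. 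Since (1) only requires the same \emph{number} of annuli, I would aim to show each $A_j$ persists and, after the kernel-killing bump, is locally unique.

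Finally, for (3), with finitely many nondegenerate disks $D_1,\dots,D_q$ (and annuli), I perform conformal perturbations $g_s=e^{2\sum s_k f_k}g'$. Here $f_k$ is supported near a point of $D_k$ lying off all other disks and annuli; such points exist by finiteness and unique continuation. By nondegeneracy, the disks and annuli deform smoothly in $s$. Moreover $\partial_{s_k}\mathrm{Area}(D_l)=\delta_{kl}\int_{D_k}2f_k>0$ to first order, and the annulus areas are unaffected to first order. Hence the area map is a local submersion in $s$, and choosing $s$ outside a countable union of hypersurfaces makes the areas $\mathbb{Z}$-linearly independent and avoiding the annulus areas. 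All perturbations are kept within $\epsilon_0$ and $\epsilon$, so (4)--(6) survive.
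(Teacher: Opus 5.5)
Your overall architecture matches the paper: openness of (4)--(6), genericity for disks, and a conformal perturbation that makes the area map a submersion for (3). However, the step you flag as the main obstacle, (1), is not resolved, and the tools you propose for it would fail.

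First, your step (2) invokes the full bumpy metric theorem to make \emph{every} free boundary minimal surface nondegenerate. This is incompatible with your choice $g'=g_0$ near $\bigcup_j A_j$. The hypothesis only gives finitely many annuli, some possibly degenerate, and the Jacobi operator of $A_j$ only sees the metric near $A_j$.

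Second, making a degenerate $A_j$ nondegenerate is exactly what you should avoid. An isolated degenerate annulus of saddle-node type (local degree zero) disappears or splits into two under a generic perturbation. A ``kernel-killing bump'' that keeps $A_j$ minimal does not give local uniqueness either. The implicit-function-theorem uniqueness neighbourhood shrinks with the perturbation, and bifurcating annuli can sit just outside it. The model is $x^3+tx^2$: $0$ stays critical and becomes nondegenerate, but a new critical point appears at $-2t/3$. Degree arguments only control signed counts, so they cannot give (1).

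The missing idea is that no local analysis at the $A_j$ is needed. Fix a relatively open $U\supset\bigcup_j A_j$ and use only metrics with $g'|_U=g_0|_U$, which your choice of $h$ already arranges. Suppose $g_i\to g_0$ with $g_i|_U=g_0|_U$, and $\Sigma_i$ are $g_i$-free boundary minimal annuli of area $<L$. By compactness and instability, $\Sigma_i$ converges smoothly with multiplicity one to an annulus, i.e.\ to some $A_j$. So $\Sigma_i\subset U$ for large $i$, where the metric, and hence the free boundary condition, is that of $g_0$. Finiteness then forces $\Sigma_i=A_j$. This is Lemma~\ref{lem:eo}(2), and degeneracy of $A_j$ is irrelevant.

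The price is that bumpiness must be obtained for disks only, inside the restricted class $\{e^{2f}g_0: f|_U=0\}$. So the general bumpy metric theorem cannot just be cited; you need the Sard--Smale argument of Lemma~\ref{lem:fredholmfb}. Its transversality step uses that no minimal disk of area $<L$ lies entirely in $U$ (Lemma~\ref{lem:eo}(1)), together with unique continuation for Jacobi fields. Likewise, in (3) you should take the $f_k$ supported outside $U$, so the annuli are literally unchanged; the nondegeneracy of annuli you invoke there is not available.

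Two smaller points on openness. In (5) you omit the case where the two disjoint sequences converge to the same limit; it is excluded because the normalized separation gives a positive Jacobi field, contradicting instability. In (6), meeting a free boundary disk is not a contradiction for a closed minimal sphere. The correct reason $g_0$ has none is a Frankel-type second-variation argument along a shortest geodesic to the strictly convex boundary.
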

As in \cite[Lemma 6.2]{chu2024existence}, we have the following compactness result for free boundary minimal surfaces.

\begin{lem}\label{lem: compactness for free boundary minimal surfaces}
Let $(M, g)$ be a compact $3$-dimensional Riemannian manifold with nonnegative Ricci and  strictly convex boundary. Given a sequence of Riemannian metrics $\{g_i \}_{i = 1}^{\infty}$ and a sequence of connected, properly embedded free boundary minimal surfaces $\{ \Sigma_i \}_{i = 1}^{\infty}$, suppose that $g_i \to g$ in $C^3$ and satisfying
\[
\sup_{i} \operatorname{area}(\Sigma_i) < + \infty, \quad \sup_i \fg(\Sigma_i) \leq \fg_0, \quad \sup_i \fb(\Sigma_i) \leq \fb_0.
\]
Then there exists a subsequence of $\{ \Sigma_i \}_{i = 1}^{\infty}$, still denoted by $\{ \Sigma_i \}_{i = 1}^{\infty}$, and a connected, properly embedded $g$-minimal free boundary surface $\Sigma$ such that $\Sigma_i \to \Sigma$ graphically in the $C^3$ topology of multiplicity $1$. Thus, for all sufficiently large $i$, $\fg(\Sigma_i) = \fg(\Sigma)$, $\fb(\Sigma_i) = \fb(\Sigma)$.
\end{lem}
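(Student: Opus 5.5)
The plan is the standard compactness argument for minimal surfaces with bounded area and bounded topology, adapted to the free boundary setting as in Fraser--Li \cite{FraserLi2014} and \cite[Lemma 6.2]{chu2024existence}. Since $g_i \to g$ in $C^3$ and $g$ has nonnegative Ricci curvature and strictly convex boundary, I would first fix the following: for $i$ large, $\partial M$ is uniformly strictly convex for $g_i$, and $\operatorname{Ric}_{g_i} \ge -\delta_i$ with $\delta_i \to 0$. The area bound and the topological bound $\fg(\Sigma_i)\le \fg_0$, $\fb(\Sigma_i)\le\fb_0$ give a uniform bound on the Euler characteristic. Gauss--Bonnet with geodesic curvature boundary terms (using convexity of $\partial M$ and the free boundary condition) then gives a uniform $L^2$ bound on the second fundamental form $\int_{\Sigma_i}|A|^2$, as in the proof of \cite[Theorem 1.2]{FraserLi2014}.

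Next I would run the $\varepsilon$-regularity and concentration argument. The total curvature bound and the monotonicity formula (interior and reflected boundary version) give uniform curvature estimates away from a finite set $S$ of points where $|A|^2$ concentrates. A subsequence therefore converges smoothly and graphically, with some integer multiplicity $m$, on compact subsets of $M\setminus S$ to a properly embedded $g$-free boundary minimal surface $\Sigma$. Removable singularity results, interior and free boundary, show that $\Sigma$ extends smoothly across $S$. Since each $\Sigma_i$ is connected, the limit support is connected.

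The main obstacle is excluding multiplicity $m\ge 2$ and concentration points. This is where the curvature condition enters. If $m\ge2$, or if $S\neq\emptyset$, the standard argument (normalized height differences between sheets, or the blow-up near a point of $S$) produces a positive Jacobi field on $\Sigma$, or a nonzero element of the kernel of its Jacobi operator, with the Robin boundary condition. In either case $\Sigma$ would be stable.

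I would rule this out by showing that with $\operatorname{Ric}_g\ge0$ and $h_{\partial M}>0$ there is no stable two-sided free boundary minimal surface. Plugging the test function $1$ into the second variation gives
\[
-\int_\Sigma (\operatorname{Ric}(N,N)+|A|^2)-\int_{\partial\Sigma} h(N,N)<0 .
\]
The one-sided case is handled by passing to the double cover. Since the $\Sigma_i$ are separating in the ball, the limit is two-sided, so this exclusion applies. I expect the delicate point to be making the multiplicity and no-concentration step rigorous near $\partial M$, where the reflection-type monotonicity must be used.

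Finally, once the multiplicity is $1$ and $S=\emptyset$, Allard-type regularity (its free boundary version) upgrades the convergence to graphical $C^3$ convergence over all of $\Sigma$. Then $\Sigma_i$ is a normal graph over $\Sigma$ for large $i$, hence diffeomorphic to $\Sigma$, which gives $\fg(\Sigma_i)=\fg(\Sigma)$ and $\fb(\Sigma_i)=\fb(\Sigma)$.
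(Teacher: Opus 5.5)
Your proposal is correct and is essentially the paper's argument: the paper cites White's compactness theorem (valid with boundary) for subsequential convergence away from finitely many points. It then uses the standard Jacobi field argument, together with the absence of stable free boundary minimal surfaces in $(M,g)$, to force multiplicity one and smooth convergence everywhere, so your Gauss--Bonnet/$\varepsilon$-regularity derivation is just an explicit version of its first step. One small correction: a nonzero, sign-changing element of the kernel of the Jacobi operator does not imply stability, but you never need it, since once $m=1$ the free boundary Allard theorem already gives smooth convergence across the would-be concentration points, so $S=\emptyset$ is automatic and only the positive Jacobi field coming from $m\ge 2$ has to be excluded.
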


\begin{rem}
Since $M$ is diffeomorphic to the unit ball $\mathbb{B}^3$, any properly embedded surface in such $(M, g)$ is orientable. As a result, $\Sigma_i$ and $\Sigma_{\infty}$ are all orientable.
\end{rem}
\begin{proof}
The proof follows from the same arguments as the proof of \cite[Lemma 6.2]{chu2024existence} as the compactness theorem \cite[Theorem 3]{White1987} also holds for manifolds with boundary (explained therein). As there are no stable minimal surfaces in $(M, g)$, the convergence must be of multiplicity one by the standard Jacobi field argument and hence smooth in the $C^3$-topology everywhere.
\end{proof}

By Lemma \ref{lem: compactness for free boundary minimal surfaces}, we obtain the following lemma.
\begin{lem} \label{lem:eo}
Suppose $(M, g)$ contains only finitely many free boundary minimal annuli, given by $\mathcal{T}:= \{ A_1, \cdots, A_l \}$. Then for any $L > 0$, there exists a (relatively) open subset $U$ containing $\bigcup_{1 \leq j \leq l} A_j$ in $M$ and an open neighborhood $\mathcal{O}$ of $g$ in $C^{\infty}$ such that for every $g' \in \mathcal{O}$,
\begin{enumerate}
    \item Every properly embedded free boundary disk with $g'$-area less than $L$ in $M$ is not entirely contained in $U$.
    \item Let us assume $g'|_U = g|_U$. Then every properly embedded free boundary minimal annulus $A$ with $g'$-area less than $L$ in $(M, g')$ belongs to $\mathcal{T}$.
\end{enumerate}
\end{lem}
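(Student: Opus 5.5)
Both parts will be proved by contradiction, using Lemma \ref{lem: compactness for free boundary minimal surfaces} as the compactness engine. The key input is that, for $g$, all free boundary minimal annuli $A_1,\dots,A_l$ are isolated among surfaces of topological bound $(0,1)$ and bounded area.

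For the open set, take $U$ to be a thin tubular (relative) neighborhood $T_\rho(\bigcup_j A_j)$, with $\rho$ chosen below.

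\textbf{Part (1).} Suppose no neighborhood $\mathcal{O}$ and no $\rho$ work. Then there are sequences $\rho_i\to 0$, metrics $g_i\to g$ in $C^\infty$, and properly embedded $g_i$-free boundary minimal disks $D_i$ with $\operatorname{Area}_{g_i}(D_i)<L$ and $D_i\subset T_{\rho_i}(\bigcup_j A_j)$. By Lemma \ref{lem: compactness for free boundary minimal surfaces} with $(\fg_0,\fb_0)=(0,0)$, a subsequence converges graphically with multiplicity one to a connected $g$-free boundary minimal surface $D$. Since the convergence is multiplicity one and graphical, $D$ is a disk for large $i$ (genus and number of boundary components are preserved). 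On the other hand, $D\subset\bigcup_j A_j$. Since $D$ is connected and the $A_j$ are smooth, the constancy theorem gives $D=A_j$ for some $j$. This is a contradiction, because an annulus is not a disk. Hence some $\rho_0$ and some $\mathcal{O}_1$ work, and the same conclusion then holds for every smaller $\rho$.

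\textbf{Part (2).} Here $g'|_U=g|_U$. Suppose there are $g_i\to g$ with $g_i|_U=g|_U$ and $g_i$-free boundary minimal annuli $B_i$ of area $<L$ with $B_i\notin\mathcal{T}$. By Lemma \ref{lem: compactness for free boundary minimal surfaces}, $B_i\to B$ with multiplicity one and graphically in $C^3$, where $B$ is a $g$-free boundary minimal annulus. Hence $B=A_j$ for some $j$. For large $i$, $B_i\subset U$, since it is a small normal graph over $A_j$ and $U\supset T_\rho(A_j)$. Because $g_i=g$ on $U$, $B_i$ is then a $g$-free boundary minimal annulus, so $B_i\in\mathcal{T}$. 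This contradicts the choice of $B_i$. We then take $\mathcal{O}=\mathcal{O}_1\cap\mathcal{O}_2$.

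\textbf{Main obstacle.} The delicate point is justifying multiplicity one convergence uniformly along the perturbed metrics $g_i$. Lemma \ref{lem: compactness for free boundary minimal surfaces} asserts this, using instability of the limit (all surfaces being unstable under $\mathrm{Ric}\ge 0$ with strictly convex boundary). I would invoke it directly, noting that the limit $D$ or $B$ is $g$-minimal, where instability holds. A secondary point is that the graphical convergence must reach up to $\partial M$, so that $B_i\subset U$ holds near the free boundary. This is handled because the convergence in the lemma is smooth everywhere, including at the boundary, and $U$ is taken as a relative neighborhood.
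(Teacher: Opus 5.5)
Your proposal is correct and is essentially the paper's argument. The paper simply refers to \cite[Lemma 6.4]{chu2024existence}, whose proof is this same contradiction-plus-compactness scheme: shrinking tubular neighborhoods together with Lemma~\ref{lem: compactness for free boundary minimal surfaces} for part (1), and graphical convergence forcing $B_i\subset U$, hence $B_i$ is $g$-minimal, for part (2).

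One small point needs care. By the Frankel property the annuli $A_j$ intersect one another, so $D\subset\bigcup_j A_j$ together with connectedness does not by itself give $D=A_j$. Argue instead that some $D\cap A_k$ has nonempty interior in $D$ (by Baire), and then conclude $D=A_k$ by unique continuation.
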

\begin{proof}
    The proof is the same as \cite[Lemma 6.4]{chu2024existence}.
\end{proof}
Now we follow Ambrozio-Carlotto-Sharp \cite{ambrozio2018compactness} to discuss the bumpy metric in our free boundary setting. Let $3\leq q\leq \infty$ be an integer or infinity, and fix $L>\sup_j \operatorname{area}_{g_0}(\Sigma_j)$. We take $U$ and $\mathcal{O}$ from Lemma \ref{lem:eo}. We set
\begin{align*}
        \Gamma^q & := \{C^q\text{-metric }g'\in \mathcal{O}\text{ conformal to }g: g'|_U = g|_U\} \\ 
        \mathcal{M}^q & := \{(g, D): g\in \Gamma^q\,;\; D \text{ is a free boundary minimal disk in }(M^3, g), \text{ area}_g(D)<L\}\,;
    \end{align*}
Let us use $C^q$-topology on Riemannian metrics and $C^{2,\alpha}$-topology on embedding of submanifolds where $\alpha\in (0,1)$.
\begin{lem} \label{lem:fredholmfb}
$\mathcal M^q$ is a separable $C^{q-2}$ Banach manifold, and the projection
\[
    \Pi:\mathcal M^q\to \Gamma^q,\qquad (g,\Sigma)\mapsto g,
\]
is a $C^{q-2}$ Fredholm map of index $0$.

Moreover, $g\in\Gamma^q$ is a regular value of $\Pi$ if and only if every $(g,\Sigma)\in \mathcal M^q$ is non-degenerate.
\end{lem}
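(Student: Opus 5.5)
We follow White's bumpy metrics framework \cite{White1987} (as in \cite{white1991space}), adapted to the free boundary setting by Ambrozio--Carlotto--Sharp \cite{ambrozio2018compactness}, and restricted to conformal perturbations supported away from $U$.

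\textbf{Local description of $\mathcal M^q$.} Fix $(g_0,D_0)\in\mathcal M^q$. By Proposition \ref{prop:perturbMetric} (5), $D_0$ is two-sided with a unit normal $\nu$, so nearby properly embedded disks are normal graphs $\exp(u\nu)$ with $u\in C^{2,\alpha}(D_0)$. For nearby surfaces to meet $\partial M$, we use a modified exponential map that is tangent to $\partial M$ along $\partial D_0$. We then define
\[
\mathcal H(g,u)=\bigl(H_g(D_u),\ \langle \nu_{D_u},\bar\eta_g\rangle|_{\partial D_u}\bigr)\in C^{0,\alpha}(D_0)\times C^{1,\alpha}(\partial D_0),
\]
where the first entry is the mean curvature and the second is the free boundary angle condition. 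This map is $C^{q-2}$ in $(g,u)$. Its partial derivative in $u$ at $(g_0,0)$ is the Jacobi operator with Robin boundary condition, $L u=(\Delta+|A|^2+\operatorname{Ric}(\nu,\nu))u$ together with $\partial_\eta u-h^{\partial M}(\nu,\nu)u$. This is a self-adjoint elliptic boundary problem, hence Fredholm of index $0$.

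\textbf{Key surjectivity step.} We show that $D\mathcal H(g_0,0)$ is onto. By self-adjointness, it suffices to show that for every Jacobi field $\phi\ne 0$ (a nonzero element of $\ker L$ with the Robin condition) there is an admissible variation $h=f g_0$ with
\[
\int_{D_0}\partial_g H[h]\,\phi\neq 0.
\]
Two constraints apply:
\begin{itemize}
\item $f$ must be supported in $M\setminus \overline U$, so that $g'|_U=g|_U$.
\item The boundary term from the angle condition must be handled.
\end{itemize}
For a conformal change $g=e^{2f}g_0$, the first variation gives $\partial_g H[fg_0]=-2\partial_\nu f+(\text{terms in }f|_{D_0})$, and the boundary angle derivative vanishes if $\nabla f$ is tangent to $\partial M$ along $\partial D_0$. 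We therefore choose $f$ vanishing on $D_0$, with $\partial_\nu f$ prescribed and concentrated near an interior point of $D_0$ where $\phi\neq0$, lying outside $\overline U$. Such a point exists for two reasons:
\begin{itemize}
\item By Lemma \ref{lem:eo} (1), $D_0\not\subset U$, so $D_0\setminus\overline U$ has an open piece; if needed, shrink $U$ slightly so that this piece has nonempty interior.
\item $\phi$ cannot vanish on an open set, by unique continuation.
\end{itemize}
This makes the integral nonzero. The implicit function theorem then gives $\mathcal M^q$ locally as the $C^{q-2}$ graph $\{(g,u):\mathcal H(g,u)=0\}$, a Banach submanifold. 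Separability follows because $\Gamma^q$ is separable and, by the compactness Lemma \ref{lem: compactness for free boundary minimal surfaces} with the area bound $L$, only countably many charts are needed.

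\textbf{Fredholm property and regular values.} In the local chart, $\Pi$ is the restriction of $(g,u)\mapsto g$ to $\ker D\mathcal H$. Standard linear algebra gives $\ker D\Pi\cong\ker L$ and $\operatorname{coker} D\Pi\cong \operatorname{coker} L$, so $\Pi$ is Fredholm with index equal to $\operatorname{ind} L=0$. It follows that $D\Pi$ is onto at $(g,D)$ if and only if $\ker L=0$, that is, if and only if $D$ is non-degenerate. Hence $g$ is a regular value of $\Pi$ exactly when every $(g,D)\in\mathcal M^q$ is non-degenerate.

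\textbf{Main obstacle.} The hardest step is the surjectivity argument under the support constraint $f|_U=0$ together with the boundary compatibility. The first thing to try is a localized interior conformal bump as above. If the Jacobi field happened to vanish identically on $D_0\setminus \overline U$, unique continuation would force $\phi\equiv0$, which rules this case out.
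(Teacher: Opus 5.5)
Your proposal is correct and is essentially the paper's argument. The paper cites Ambrozio--Carlotto--Sharp (their Propositions 41, 45, 46) for the Banach-manifold and Fredholm machinery that you rederive by hand. It verifies the same transversality condition using the conformal curve $(1+sf)\hat g$ with $f$ supported off $\overline U$, which gives $\int_{\hat D}\hat\nu(f)\kappa$ with no boundary term (conformal changes preserve the contact angle), and it concludes by unique continuation on $\hat D\setminus\overline U$.

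One small correction: $D_0$ is two-sided simply because a disk in an orientable (ball-diffeomorphic) $M$ is two-sided. Proposition~\ref{prop:perturbMetric}(5) is the wrong citation, since it concerns instability and is itself proved using this lemma.
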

\begin{proof}
    For every $(\hat{g}, \hat{\Sigma})\in \mathcal{M}^q$ with unit normal field $\hat{\nu}$, we apply~\cite[Proposition~41, 45, 46]{ambrozio2018compactness} with:
    \begin{align*}
            & \Gamma := \Gamma^q\,, \;\;\;\;\;  X := C^{2,\alpha}(\hat{D})\,,  \;\;\;\;\;  Y:= C^{0,\alpha}(\hat{D}) \times C^{1,\alpha}(\partial \hat{D})\,, \;\;\;\;\;  \mathcal{H}:= L^2(\hat{D}) \times L^{2}(\partial \hat{D})\,, \\
            & F: \Gamma\times X \to \mathbb{R} \,, \;\; (g, w) \mapsto \text{area}_g(\text{graph}_{\hat{D}, \hat{g}}(w))\,, \\
            & (H,\Theta): \Gamma\times X\to Y\,, \; \text{ given by } 
\left\langle (H,\Theta)(g,w),(v,v|_{\partial\hat D})\right\rangle_{\mathcal{H}} = \left.\frac{d}{dt}\right|_{t=0}F(g,w+tv) \,,
        \end{align*}
        where $(H, \Theta)$ is characterized by the first variation formula and $H$ is the mean curvature and $\Theta$ is the intersection angle function. 
        
        The space $\Gamma^q$ can be identified, after possibly shrinking the neighborhood $\mathcal O$ of $g_0$, with an open subset of the Banach space
\[
    C^q_U(M) := \{ f\in C^q(M): f|_U=0\}
\]
by the map $f\mapsto e^{2f}g_0$ where we endow $C^q_U(M)$ with the $C^q$-topology. Note that  $\Gamma^q$ is a separable $C^\infty$ Banach manifold and $C^q_U(M)$ is a separable Banach space. Near $(\hat g,\hat D)$, we have
\begin{equation} \label{eq: level set of M_q}
\mathcal M^q = \{ (g, w) \in \Gamma \times X : (H, \Theta)(g, w) = 0 \}.    
\end{equation}
By ~\cite[Proposition~45]{ambrozio2018compactness}, the operator $\mathcal{F}_{\hat D}(\hat g,0): X \to Y$ given by
\[
    \mathcal{F}_{\hat D} (\hat g,0)(w) := \bigl(-L_{\hat D,\hat g}w,\, B_{\hat D,\hat g}w|_{\partial \hat D}\bigr),
\]
where
\[
    L_{\hat D,\hat g} = \Delta_{\hat D,\hat g} + |A_{\hat D,\hat g}|^2+ \operatorname{Ric}_{\hat g}(\hat\nu,\hat\nu),
\]
and
\[
    B_{\hat D,\hat g}w = \partial_{\hat v}w + h^{\partial M}_{\hat g}(\hat\nu,\hat\nu)w \quad\text{on }\partial\hat D,
\]
is a Fredholm map of index $0$.

By ~\cite[Proposition~46]{ambrozio2018compactness}, to show $\Pi$ is a Fredholm map of index $0$, it suffices to show that for every nonzero $\kappa\in K: = \operatorname{ker}(\mathcal{F}_{\hat D}(\hat g, 0))$, there exists a differentiable curve $\gamma:(-1,1)\to\Gamma$ with $\gamma(0)=\hat{g}$ and such that
  	\begin{equation} \label{eqn:nonzero1stvar}
  	\left.\frac{\partial}{\partial s}\right|_{s=0}((H,\Theta)(\gamma(s),0),(\kappa,\kappa|_{\partial \hat{D}}))_{\mathcal{H}} \neq 0.
  	\end{equation}
Take $f\in C^q_U(M)$, and set $\gamma(s)=(1+sf)\hat g$. For $|s|$ sufficiently small, $\gamma(s)\in\Gamma^q$ and $\gamma(0)=\hat g$.  Similar to \cite[lemma 6.5]{chu2024existence}(See the proof of \cite[Theorem~35]{ambrozio2018compactness}), we have
\[
\left. \frac{\partial}{\partial s} \right|_{s=0} \left\langle (H,\Theta)(\gamma(s),0), (\kappa,\kappa|_{\partial\hat D}) \right\rangle_{\mathcal H} =\left. \frac{\partial^2}{\partial s\partial t} \right|_{s=t=0} F(\gamma(s),t\kappa) =\int_{\hat D}\hat\nu(f)\kappa\,d\mu_{\hat g}
\] 
(There is no boundary term involved as $\gamma(s)$ does not change the intersection angle function $\Theta$). Let $0\neq \kappa \in  \ker \mathcal{F}_{\hat D}(\hat g,0)$. By Lemma \ref{lem:eo}, $\hat D \setminus\overline U\neq\emptyset$. The unique continuation argument implies that $\kappa$ is not identically zero on $\hat D\setminus U$. Therefore, (\ref{eqn:nonzero1stvar}) is true by taking $f \in C_c^q(M \setminus \overline{U})$ such that $\hat\nu(f)$ is an $L^2$-approximation of $\kappa \cdot \chi_{M \setminus U}$.

To study the regular values of $\Pi$, note that for every $(g', D') \in \mathcal{M}_q$,  by (\ref{eq: level set of M_q}), the tangent space $T_{(g',D')}\mathcal M^q$ is given by
\[
T_{(g',D')}\mathcal M^q = \{(f,w): f\in C^q_U(M),\ L_{D',g'}w=\nu'(f),B_{D',g'}w=0\},
\]
and 
\[
d\Pi_{(g',D')}(f,w)=f.
\]
If $g'$ is a regular value and $\kappa\in\ker(\mathcal{F}_{\hat D}(\hat g,0))$, then for any $\varphi\in C^q_c(D'\setminus\overline U)$ choose $f\in C^q_U(M)$ with $\nu'(f)=\varphi$. By the surjectivity, there exists  $w_\varphi$ with $L_{D',g'}w_\varphi=\varphi$, $B_{D',g'}w_\varphi=0$.
Hence, by self-adjointness,
\[
0=\int_{D'}w_\varphi L_{D',g'}\kappa =\int_{D'}\kappa L_{D',g'}w_\varphi =\int_{D'}\kappa\varphi .
\]
Thus $\kappa\perp C^q_c(D'\setminus\overline U)$, so $\kappa=0$ on $D'\setminus\overline U$. By the unique continuation argument, $\kappa\equiv0$. Thus, $D'$ is non-degenerate. The converse part follows from non-degeneracy and surjectivity.
\end{proof}
\begin{proof}[Proof of Proposition \ref{prop:arealinearindep}]
 We take $U\subset M$ and the neighborhood $\mathcal{O} \ni g_0$ from Lemma~\ref{lem:eo}. By the Sard-Smale Theorem, the regular values of $\Pi: \mathcal{M}^q\to \Gamma^q$ forms a dense subset $\Gamma^q_{bumpy}\subset \Gamma^q$. By Lemma~\ref{lem: compactness for free boundary minimal surfaces},~\ref{lem:eo},~\ref{lem:fredholmfb} and the compactness theorem in Ambrozio-Carlotto-Sharp \cite[Theorem 2]{ambrozio2018compactness}, the assertions (1) and (2) of Proposition \ref{prop:arealinearindep} hold for every $g \in \Gamma^q_{bumpy}$. Then by \cite[Theorem 9]{ambrozio2018compactness}, (1) and (2) of Proposition \ref{prop:arealinearindep} also hold for every $g \in \Gamma^q_{bumpy} \cap \Gamma_{\infty}$, which is dense in $\Gamma_{\infty}$.

 Moreover, for each $g \in \Gamma^q_{bumpy} \cap \Gamma_{\infty}$, there are only finitely many free boundary minimal disks with area at most $L$. By Lemma \ref{lem: compactness for free boundary minimal surfaces} and the compactness theorem in Ambrozio-Carlotto-Sharp \cite[Theorem 2]{ambrozio2018compactness} again, for every $g_{bumpy} \in \Gamma^q_{bumpy} \cap \Gamma_{\infty}$ there exists a neighborhood $\mathcal{U}$ of $g_{bumpy}$ such that $\mathcal{U} \subset \Gamma^q_{bumpy} \cap \Gamma_{\infty}$. Therefore, $\Gamma^q_{bumpy} \cap \Gamma_{\infty}$ is open and dense in $\Gamma_{\infty}$. 
 
 Given $\epsilon > 0$, we can take $g_{0}' \in \Gamma^q_{bumpy} \cap \Gamma_{\infty}$ that is $\frac{\epsilon}{2}$-close to $g_0$. By the same arguments as the proof of Proposition 6.1 in \cite{chu2024existence} and the fact that $\Gamma^q_{bumpy} \cap \Gamma_{\infty}$ is open and dense in $\Gamma_{\infty}$, we can find $g'$ that is $\frac{\epsilon}{2}$-close to $g_0'$ that satisfies \normalfont{(3)}. As a result, metrics $g'$ satisfying assertions \normalfont{(1)}-\normalfont{(3)} of Proposition \ref{prop:arealinearindep} can be taken arbitrarily close to $g_0$. Now it suffices to show that there exists a neighborhood $\mathcal{V}$ of $g_0$ such that assertions \normalfont{(4)}-\normalfont{(6)} hold for any metric $g \in \mathcal{V}$, i.e., assertions \normalfont{(4)}-\normalfont{(6)} are open conditions.

By the curvature estimates for stable minimal surfaces \cite[Theorem 1.3]{DeLellis2013}, there exists a neighborhood of $g_0$ that guarantees nonexistence of stable properly embedded free boundary minimal surfaces. Similarly, nonexistence of closed embedded minimal spheres is an open condition by the compactness theorem \cite[Theorem 3]{White1987}. Hence, \normalfont{(4)} and \normalfont{(6)} are open conditions.

It remains to verify that assertion \normalfont{(5)} is also open. Suppose not, then there exists a sequence of metrics $g_i\to g_0$ and two disjoint connected properly embedded $g_i$-free boundary minimal surfaces $\Sigma_i^1$ and $\Sigma_i^2$ satisfying the topological bound $(0, 1)$. By Lemma \ref{lem: compactness for free boundary minimal surfaces}, after passing to a subsequence, $\{\Sigma_i^j \}_i$ converges smoothly and with multiplicity one to a connected properly embedded $g_0$-free boundary minimal surface $\Sigma_\infty^j$, for $j=1,2$.

If $\Sigma_\infty^1\neq \Sigma_\infty^2$, then by the Frankel property for $(M,g_0)$ the two limits intersect. By the maximum principle, the two distinct embedded free boundary minimal surfaces must intersect transversely, contradicting the smooth convergence of $\Sigma_i^1 \cup \Sigma_i^2$ to $\Sigma_{\infty}^1 \cup \Sigma_{\infty}^2$.

Now assume $\Sigma_\infty^1=\Sigma_\infty^2$, by the graphical convergence, the normalized separation between two disjoint sheets gives a positive Jacobi field on the limit surface, hence the limit is (degenerate) stable, contradicting the nonexistence of stable free boundary minimal surfaces in the metric $g_0$. Hence \normalfont{(5)} is an open condition. This gives that there exists $g'$ arbitrarily close to $g_0$ which satisfies conditions (1)-(6).
\end{proof}
\section{Boundary version of the interpolation} \label{boundaryinterpolation}
We are going to prove the following boundary version of the interpolation result \cite[Proposition 8.1]{chu2024existence}.

\begin{prop}[Boundary interpolation near a disk]\label{prop:boundary_interpolation}
Let $X$ be a cubical subcomplex of $I(m,k)$, $\mathcal D$ be a compact set of properly embedded disks in $M$, $q\in\mathbb N$, and $\epsilon \in (0,1)$. Then there exists $\delta=\delta(\epsilon,q,m,\mathcal D)>0$ with the following property.

Suppose that $\Phi:X\to \mathcal S^*(M)$ is a Simon-Smith family satisfying the topological bound $(0,1)$ and $N_P(\Phi)<q$, and suppose that there is a map $D:X\to \mathcal D$ such that
\[
    \mathbf F([\Phi(x)],[D(x)])<\delta
\]
for every $x\in X$. Then there exists a Simon-Smith family $H:[0,1]\times X\to \mathcal S^*(M)$ satisfying the same topological bound $(0,1)$ such that
\begin{enumerate}[label=\normalfont(\arabic*)]
    \item $H(0,x)=\Phi(x)$ and $\fb(H(1,x))=0$ for every $x\in X$;
    \item for every $x\in X$ and $0\leq t\leq t'\leq 1$,
    \[
        \mathcal H^2(H(t,x))\leq \mathcal H^2(\Phi(x))+\epsilon,
        \qquad
        \fb(H(t',x))\leq \fb(H(t,x));
    \]
    \item for each $x\in X$, the path $t\mapsto H(t,x)$ is a relative pinch-off process.
\end{enumerate}
\end{prop}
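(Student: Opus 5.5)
The plan is to follow the proof of \cite[Proposition 8.1]{chu2024existence} and modify only the local surgery steps so that they respect $\partial M$. First I would fix the quantitative picture. By compactness of $\mathcal D$, every $D\in\mathcal D$ has uniformly bounded curvature, and up to $\partial M$ it has a uniform tubular neighborhood $T_{r_0}(D)$ with a closest point projection. Fix a small scale $r\ll r_0$ depending on $\epsilon,q,m,\mathcal D$. Cover $M$ by finitely many interior balls and relative half-balls $B_r(p)\cap M$. The $\mathbf F$-closeness $\mathbf F([\Phi(x)],[D(x)])<\delta$ then gives two things. First, $\Phi(x)$ has small area outside $T_{r}(D(x))$. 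Second, since the current and varifold $\mathbf F$-metrics control both the mass and the mod $2$ class, in most balls of the cover $\Phi(x)$ looks like a single sheet over $D(x)$. To compare topology I would use the relative systolic statement announced in Remark~\ref{rem: relative pinch-off process} (Theorem~\ref{thm:relativesystolic}). It says that if an annulus component, or more generally a component with $\fb>0$, is $\delta$-close to a disk, then it contains either an essential arc joining two boundary components inside a small half-ball $B\cap M$ centered on $\partial M$, or a homologically nontrivial closed curve inside a small interior ball $B$. Choosing $\delta$ small in terms of $r$ makes $B$ of radius less than $r$.

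Second, I would build the deformation pointwise and then make it continuous in $x$. For a fixed $x$, take the ball $B$ produced above. Using a cutoff isotopy that does not increase area by more than $\epsilon/(2q')$, where $q'$ bounds the number of surgeries in terms of $q$ and $m$, I would perform one of two operations. In the interior case it is a neck-pinch along the curve. In the boundary case it is a boundary neck-pinch, a half-cylinder pinched to a half double cone as in Figure~\ref{fig:Boundary neck-pinch}, along the arc. After that, the small components trapped in $B$ are shrunk to a point by a shrinking process. Each operation lowers $\fb$ by one and keeps $\fg=0$, by \cite[Lemma 3.6]{franz2023topological}. It also adds at most a bounded number of punctate points, which keeps $N_P$ finite, and it changes the area by at most a small multiple of $r^2$. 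Since $\fg+\fb\le 1$ initially, at most one boundary-complexity-reducing step per component of positive $\fb$ is needed. The number of such components is bounded because of the area bound and the fact that there are at most $N_P<q$ points. This makes $\fb$ monotone in $t$, gives the relative pinch-off structure in Definition~\ref{defn:boundary_pinch_off}, and ends with $\fb(H(1,x))=0$.

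Third, and this is the main obstacle, the choice of $B$ and of the surgery must vary continuously in $x$. I would handle it by Pitts' combinatorial scheme as in \cite[Section 8]{chu2024existence}. Refine $X$ so that over each cell the relevant ball can be chosen uniformly, which is possible because the Simon-Smith continuity (Definition~\ref{def:fbssfamily}(v)) gives smooth convergence away from the punctate sets. Then proceed by induction on the skeleta of $X$. On each cell, the surgeries chosen on its faces are carried out in disjoint balls; disjointness is arranged by using $3^m$ admissible scales, in the spirit of $K(m)$ in Proposition~\ref{prop:exist_am_aa}. Each surgery is extended over the cell using a time-parametrized family of cutoff isotopies, and the surgeries are staggered in time so that a path $t\mapsto H(t,x)$ meets only finitely many spacetime singular points. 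The delicate parts are two. One is guaranteeing that surgeries in different balls commute and do not reintroduce boundary components. The other is that, in the boundary case, the half-ball isotopies preserve $\partial M$ and transversality with $\partial M$. For the latter I would first try Fermi coordinates near $\partial M$ together with reflection across $\partial M$, so that the half-ball surgeries reduce to symmetric interior ones. The area bound in (2) then follows by summing the at most $C(m,q)$ small contributions, with $r$ chosen so that the total is less than $\epsilon$.
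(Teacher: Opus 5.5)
There is a genuine gap in your local deformation. You propose to neck-pinch directly along the short essential curve or arc $\gamma\subset\Phi(x)\cap B$, and afterwards shrink ``trapped components''. But $\gamma$ is only known to be short and localized, while $\Phi(x)\cap B$ may contain many other sheets. So there is no neck around $\gamma$ to pinch, and nothing controls the area of an isotopy that would create one. Nor is there any reason that (half-)disk caps spanning $\gamma$ avoid the other sheets, as embeddedness requires. Also, a pinch along $\gamma$ traps nothing in $B$; trapped pieces only arise when one cuts along a sphere.

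The paper, following \cite[Proposition 8.1]{chu2024existence}, uses $\gamma$ only to locate a center $p_x$. That localization comes from Lemma~\ref{lem:small_essential_piece_punctate}, not from Theorem~\ref{thm:relativesystolic} alone. The quantitative input from $\mathbf F$-closeness to $\mathcal D$ is not a single-sheet picture. It is the annular bound $\mathcal H^2(\Phi(x)\cap\mathcal A(p;r,2r))\le 100r^2$ for all $p$ and all $r\in(\epsilon_2,\epsilon_1)$. By coarea this gives a (half-)sphere $\partial B(p,s)\cap M$ meeting $\Phi(y)$ transversally in curves and arcs of total length at most $4\Lambda r$. One then pinches along all of them (Lemma~\ref{lem:relative-local-pinching}):
the necks are the products $\bigcup_{s'}\gamma_i(s',y)$, and the caps are (half-)disks on concentric (half-)spheres of area at most $C\Lambda^2r^2$, coming from the (relative) isoperimetric inequality. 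Finally one shrinks the entire content of the ball to a point (Lemma~\ref{lem:relative-shrinking}).

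The second missing idea is the choice of centers, and this is what makes your continuity step work. At a parameter $x_0$ where $\Phi(x_0)$ is a singular disk, nearby $\Phi(y)$ can be annuli whose essential curves or arcs collapse into a point of $P_{\Phi(x_0)}$. This happens in $\Psi$ near $t=t_2$ in Proposition~\ref{lem:behaviourSigmavt}. A surgery along such curves cannot be switched off continuously as $y\to x_0$, so ``refine $X$ so that the ball can be chosen uniformly'' fails exactly there.

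The paper sets $P(x)=P_{\Phi(x)}\cup\{p_x\}$ and performs the cut-and-shrink around every point of $P(x)$, for all parameters near $x$. It uses Lemmas 8.9--8.11 of \cite{chu2024existence} to select pairwise disjoint or properly nested admissible (half-)annuli. This works for three reasons:
\begin{enumerate}
\item The sphere intersections vary smoothly in $y$, because the annuli avoid $P_{\Phi(x)}$.
\item Any topology of $\Phi(y)$ not seen in $\Phi(x)$ sits near $P_{\Phi(x)}$ and is shrunk away.
\item The part of $\Phi(x)$ outside the ball around $p_x$ misses the essential curve or arc, so after capping it has $\fb=0$.
\end{enumerate}
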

\subsection{Nontrivial essential arc or short closed loop in a small ball} In this section, we prove that if an annulus (possibly with singularities) is sufficiently close to a disk, then we can find an essential arc or a short simple closed loop contained within a small ball by adapting Section 8.1 in \cite{chu2024existence}.
\begin{thm}[The Loewner inequality for annuli]\label{thm:relativesystolic}
Let \(\Sigma \subset M\) be a smooth properly embedded annulus. We define
\[
\begin{split}
\operatorname{sys}(\Sigma) & : = \inf \{ \mathcal{H}^1(\gamma) : [\gamma] \neq 0 \in H_1(\Sigma ; \mathbb{Z}) \}, \\
\operatorname{sys}_{\operatorname{rel}}(\Sigma) & : = \inf \{ \mathcal{H}^1(\gamma) : [\gamma] \neq 0 \in H_1(\Sigma, \partial \Sigma ; \mathbb{Z}) \}.
\end{split}
\]

Then
\begin{equation}
    \min \{ \operatorname{sys}(\Sigma), \operatorname{sys}_{\operatorname{rel}}(\Sigma) \} \leq \sqrt{\mathcal{H}^2(\Sigma)}.
\end{equation}

In particular, one of the following holds:
\begin{enumerate}[label=\normalfont(\arabic*)]
    \item There exists a simple closed curve
    $\gamma\subset \Sigma$ with
    \[
        [\gamma]\neq 0\in H_1(\Sigma;\mathbb Z_2), \qquad \mathcal H^1(\gamma)\le \sqrt{\mathcal H^2(\Sigma)}.
    \]
    \item There exists a properly embedded simple arc  \(\gamma\subset \Sigma\) $(\text{an essential arc})$ with
    \[
        [\gamma]\neq 0\in H_1(\Sigma,\partial\Sigma;\mathbb Z_2), \qquad \mathcal H^1(\gamma)\le \sqrt{\mathcal H^2(\Sigma)}.
    \]
\end{enumerate}
\end{thm}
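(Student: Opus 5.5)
The plan is a conformal-modulus (extremal length) argument on the annulus, in the spirit of Loewner's systolic inequality. By uniformization, the smooth annulus $\Sigma$ with the metric induced from $g$ is conformally equivalent to a flat cylinder $Z_\ell=(\mathbb R/\mathbb Z)\times[0,\ell]$, with boundary circles going to the two ends. Write the induced metric as $\rho^2(dx^2+dy^2)$ for a smooth positive $\rho$ on $Z_\ell$, so that $\mathcal H^2(\Sigma)=\int_{Z_\ell}\rho^2$.

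First I handle the closed-curve systole. Each horizontal circle $c_y=\{y=\text{const}\}$ generates $H_1(\Sigma;\mathbb Z)$, so $\operatorname{sys}(\Sigma)\le \int_0^1\rho(x,y)\,dx$ for every $y$. By Cauchy--Schwarz, $\operatorname{sys}(\Sigma)^2\le\int_0^1\rho^2\,dx$. Integrating over $y\in[0,\ell]$ gives
\[
\ell\,\operatorname{sys}(\Sigma)^2\le \mathcal H^2(\Sigma).
\]

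Next, each vertical segment $a_x=\{x\}\times[0,\ell]$ is a properly embedded arc joining the two boundary components, and it represents a nonzero class in $H_1(\Sigma,\partial\Sigma;\mathbb Z)$. The same argument gives $\operatorname{sys}_{\operatorname{rel}}(\Sigma)^2\le \ell\int_0^\ell\rho^2\,dy$, and integrating in $x\in[0,1]$ gives
\[
\operatorname{sys}_{\operatorname{rel}}(\Sigma)^2\le \ell\,\mathcal H^2(\Sigma).
\]

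Multiplying the two inequalities gives $\operatorname{sys}(\Sigma)^2\operatorname{sys}_{\operatorname{rel}}(\Sigma)^2\le \mathcal H^2(\Sigma)^2$. Hence the minimum of the two systoles is at most $\sqrt{\mathcal H^2(\Sigma)}$, which is the main inequality.

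For the ``in particular'' part, I reduce from $\mathbb Z$ to $\mathbb Z_2$ classes and upgrade competitors to simple curves. The inequalities above were obtained by exhibiting explicit curves $c_y$ and $a_x$, which are already embedded. In the first case, averaging shows that some $y$ satisfies $\mathcal H^1(c_y)\le \ell^{-1/2}\sqrt{\mathcal H^2(\Sigma)}$. In the second case, some $x$ satisfies $\mathcal H^1(a_x)\le \ell^{1/2}\sqrt{\mathcal H^2(\Sigma)}$. Now split into the cases $\ell\ge1$ and $\ell\le 1$. If $\ell\ge 1$, a core circle $c_y$ gives alternative (1). If $\ell\le1$, a vertical arc $a_x$ gives alternative (2). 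No minimization is needed: $c_y$ is a simple closed curve generating $H_1(\Sigma;\mathbb Z_2)\cong\mathbb Z_2$, and $a_x$ is a simple arc with one endpoint on each boundary circle, hence nonzero in $H_1(\Sigma,\partial\Sigma;\mathbb Z_2)$. This direct choice also gives the systole inequality with the constant $1$.

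The main obstacle is justifying the conformal model. The fact needed is that every smooth compact Riemannian annulus is conformally a flat cylinder $Z_\ell$ of some modulus $\ell$, with $\rho$ smooth up to the boundary. I would cite the uniformization theorem for doubly connected domains, or equivalently isothermal coordinates on the doubled surface. Measurability and smoothness of $\rho$ then justify the Fubini and Cauchy--Schwarz steps.
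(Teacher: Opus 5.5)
Your proof is correct, and it takes a genuinely different route from the paper's.

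\textbf{The paper's argument.} The paper argues metrically. It sets $d=\operatorname{d}_\Sigma(\Gamma_0,\Gamma_1)$ and notes that $d\ge \operatorname{sys}_{\operatorname{rel}}(\Sigma)$, because a shortest arc between the two boundary circles is a competitor. It then applies the coarea formula to the distance function $h=\operatorname{d}_\Sigma(\cdot,\Gamma_0)$, which has $|\nabla h|=1$ a.e. Almost every level set of $h$ separates $\Gamma_0$ from $\Gamma_1$, so it has length at least $\operatorname{sys}(\Sigma)$. This yields $\mathcal H^2(\Sigma)\ge d\cdot\operatorname{sys}(\Sigma)\ge \operatorname{sys}_{\operatorname{rel}}(\Sigma)\,\operatorname{sys}(\Sigma)$.

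\textbf{Your argument.} You prove the same product inequality $\operatorname{sys}\cdot\operatorname{sys}_{\operatorname{rel}}\le\mathcal H^2(\Sigma)$ by an extremal-length argument. The shortest arc and the level sets of $h$ are replaced by the two orthogonal foliations $\{a_x\}$ and $\{c_y\}$ of the conformal model $Z_\ell$.

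\textbf{What each route buys.} The paper's route needs no uniformization, only the distance function. It does, however, invoke Sard's theorem for the merely Lipschitz function $h$, which needs a smoothing step. It also leaves implicit how one passes from infima over $\mathbb Z$-classes to simple curves that are nontrivial mod $2$.

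Your route costs the uniformization of a doubly connected bordered surface, with the conformal factor controlled up to the boundary. This is standard, for example via the harmonic measure of one boundary circle and its conjugate, or via the Schottky double.

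In exchange, you get the sharper, modulus-dependent bounds
\[
\operatorname{sys}(\Sigma)^2\le \mathcal H^2(\Sigma)/\ell
\quad\text{and}\quad
\operatorname{sys}_{\operatorname{rel}}(\Sigma)^2\le \ell\,\mathcal H^2(\Sigma).
\]
Also, your competitors are explicit embedded curves. So alternatives (1) and (2) follow directly from your case split into $\ell\ge 1$ and $\ell\le 1$, and their $\mathbb Z_2$-nontriviality is evident.
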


\begin{proof}
Let $\Gamma_0, \Gamma_1$ be the two boundary components of $\Sigma$. Define
\begin{equation*}
    d = \operatorname{d}_{\Sigma}(\Gamma_0, \Gamma_1)
\end{equation*}
to be the intrinsic distance between $\Gamma_0, \Gamma_1$. Thus, we have
\begin{equation}
    d \geq \operatorname{sys}_{\operatorname{rel}}(\Sigma)
\end{equation}
as $d$ is achieved by a geodesic joining $\Gamma_0, \Gamma_1$.

On the other hand, consider the intrinsic distance function
\begin{equation*}
    h(x) := \operatorname{d}_\Sigma(x, \Gamma_0) \quad \text{for }x \in \Sigma.
\end{equation*}
By \textit{Sard's} Theorem, for almost every $t \in (0, d)$, the level set $h^{-1} \{ t\}$ is a finite union of simple closed curves that separates $\Gamma_0$ from $\Gamma_1$, and hence at least one connected component is nontrivial in $H_1(\Sigma ;\mathbb{Z})$. Thus, by definition, we have
\begin{equation*}
    \mathcal{H}^1(h^{-1} \{ t\}) \geq \operatorname{sys}(\Sigma)
\end{equation*}
for almost every $t \in (0, d)$. By the coarea formula, we have
\begin{equation} \label{eq: systolic inequlaity}
\begin{split}
\mathcal{H}^2(\Sigma) & \ge \int_{0}^d \int_{h^{-1} \{ t\}} \frac{1}{|\nabla_{\Sigma} h(x)|}  \, d\mathcal{H}^1(x) \, dt \\
& \geq \int_0^d \mathcal{H}^1(h^{-1} \{ t\}) \, dt \\
& \geq d \cdot \operatorname{sys}(\Sigma) \\
& \geq \operatorname{sys}_{\operatorname{rel}}(\Sigma) \cdot \operatorname{sys}(\Sigma) 
\end{split} 
\end{equation}
Therefore, the claim follows from (\ref{eq: systolic inequlaity}).
\end{proof}

Now we show the existence of an essential arc or a simple closed curve when an annulus (possibly with singularities) is sufficiently close to a disk.
\begin{lem}[Existence of a small essential arc or a simple closed curve on smooth annuli] \label{lem:boundarysmallcurve}
Let $D\subset (M,\partial M)$ be a properly embedded disk. For any $\epsilon>0$, there exists $\delta>0$ such that if $\Sigma$ is a properly embedded annulus with
\[
    \mathbf F([\Sigma],[D])<\delta,
\]
then there exists either
\begin{enumerate}[label=\normalfont(\arabic*)]
    \item $p \in \operatorname{Int}(M)$ and a simple closed curve $\gamma\subset \Sigma\cap B_{\epsilon}(p)$ with $[\gamma]\neq0\in H_1(\Sigma;\mathbb Z_2)$, or
    \item $p \in \partial M$ and a properly embedded simple arc $\gamma\subset \Sigma\cap B_{\epsilon}(p)$ with
    $[\gamma]\neq 0\in H_1(\Sigma,\partial\Sigma;\mathbb Z_2)$.
\end{enumerate}
In either case, a union of disk components is obtained by the corresponding interior or boundary neck-pinch.
\end{lem}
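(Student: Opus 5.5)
The plan is to argue by contradiction and use a localized version of Theorem \ref{thm:relativesystolic}, as in \cite[Section 8.1]{chu2024existence}. Fix $\epsilon>0$ and cover the compact manifold $M$ by finitely many balls $B_{\epsilon/2}(p_k)$, with centers $p_k\in \operatorname{Int}(M)$ or $p_k\in\partial M$ (relative half-balls). Since $D$ is a smooth properly embedded disk, after shrinking $\epsilon$ each $D\cap B_{\epsilon}(p_k)$ is a graphical disk or half-disk, and $D$ has area density close to $1$ (interior) or $1/2$ (boundary) at scale $\epsilon$. Suppose, for contradiction, that there are annuli $\Sigma_j$ with $\mathbf F([\Sigma_j],[D])\to 0$ such that no ball $B_\epsilon(p)$ contains a homologically nontrivial closed curve or a nontrivial properly embedded arc of $\Sigma_j$. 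Then $|\Sigma_j|\to|D|$ as varifolds and $[\Sigma_j]\to[D]$ in flat norm, so areas converge: $\mathcal H^2(\Sigma_j)\to \mathcal H^2(D)$.

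\textbf{Step 1 (small essential curves somewhere).} Localize the coarea argument of Theorem \ref{thm:relativesystolic}. The contradiction hypothesis says every essential closed curve and every essential arc of $\Sigma_j$ has diameter at least roughly $\epsilon$. Fill the two boundary circles of $\Sigma_j$ by spanning disks (or cap them) to obtain a surface homologous to $D$. A cut-and-paste / isoperimetric argument at scale $\epsilon$ then shows that a definite amount of extra area, of order $c(\epsilon)>0$, is forced near some point. Concretely, since $\Sigma_j$ is close to a disk, $\partial\Sigma_j$ converges to the single curve $\partial D$, so its two components $\Gamma_0^j,\Gamma_1^j$ must both be Hausdorff-close to $\partial D$, or one of them collapses. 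In the first case the two boundary curves are $\bF$-close and "parallel" along $\partial D$. Following an intrinsic shortest path in $\Sigma_j$ from $\Gamma_0^j$ to $\Gamma_1^j$ near a point $p\in\partial D$ where the varifold density is $1/2$, monotonicity shows the sheet structure is a single half-disk. The two boundary arcs therefore meet inside $B_\epsilon(p)$ through a short connecting arc, which is essential in $H_1(\Sigma_j,\partial\Sigma_j)$. This gives alternative (2).

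\textbf{Step 2 (collapsing boundary component).} If instead one boundary component $\Gamma_1^j$ shrinks to a point $p\in\partial M$, its length tends to $0$ and it lies in $B_\epsilon(p)$. Either it bounds a short essential arc (again (2)) or, pushing slightly into the interior, a parallel copy gives a short closed curve homotopic to $\Gamma_1^j$. That curve is nontrivial in $H_1(\Sigma_j;\mathbb Z_2)$ and lies in a small ball, which gives (1) (with $p$ moved slightly into $\operatorname{Int}(M)$). Interior necks, where the essential loop collapses inside $\operatorname{Int}(M)$, are treated the same way using density $1$ and the closed-setting argument of \cite[Lemma 8.2]{chu2024existence}. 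The final sentence of the lemma is immediate: cutting an annulus along an essential loop, or along an essential arc, yields disks, so the corresponding neck-pinch or half neck-pinch produces disk components.

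\textbf{Main obstacle.} The hard part is Step 1: turning $\bF$-closeness into the localization of an essential curve inside a single $\epsilon$-ball, because $\bF$-closeness alone permits long thin tubes. I would first try the monotonicity/density argument. Density close to $1/2$ at boundary points (respectively $1$ at interior points) forces single-sheetedness in the varifold sense, and a coarea slicing by $d(\cdot,p)$ over radii in $(\epsilon/2,\epsilon)$, as in the proof of Theorem \ref{thm:relativesystolic}, then yields a slice that is either an essential short arc or a loop separating the boundary components. If area control of tubes fails, I would fall back on a compactness argument over $\mathcal D$, using Lemma \ref{lem:genus_collapse} to locate the finitely many points where the topology concentrates.
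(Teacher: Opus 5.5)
There is a genuine gap. Step 1 carries the whole content of the lemma, and it does not work as written.

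First, $\mathbf F([\Sigma_j],[D])\to 0$ does not force $\partial\Sigma_j$ to converge to $\partial D$ in the Hausdorff sense. The relative flat norm works modulo $\partial M$, varifold convergence ignores one-dimensional sets, and both are blind to pieces of small area. For example, join two points of $\partial D$ by a thin half-tube running far away along $\partial M$, i.e.\ a boundary band sum of $D$ with itself. The result is an annulus that is $\mathbf F$-close to $D$. Its boundary circles leave every neighborhood of $\partial D$, and neither collapses, so your case split is not exhaustive.

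Second, no monotonicity is available. The $\Sigma_j$ (and $D$) are arbitrary smooth properly embedded surfaces, not stationary ones. Density $1/2$ or $1$ of the limit $|D|$ therefore says nothing about the topology of $\Sigma_j\cap B_\epsilon(p)$: a thin tube or band of negligible mass can sit in any ball next to the sheet of $D$. This is exactly the obstruction you flag yourself. It also defeats the heuristic that nontrivial topology forces a definite amount of extra area, since topology can be added at arbitrarily small area cost.

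Slicing a single ball $B_\epsilon(p)$ by the coarea formula does give a short slice. However, nothing tells you that slice is essential unless you already know the topology sits in that ball. Lemma \ref{lem:genus_collapse} does not help either: annuli have genus zero, and what must be localized here is boundary complexity.

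The missing idea, which the paper uses (adapting \cite[Lemma 8.3]{chu2024existence}), is to cut \emph{globally} and convert smallness of area into smallness of length. Triangulate $D$ finely and cover a neighborhood of $D$ by small prisms and half-prisms. Using $\mathbf F$-closeness and the coarea formula, choose all the prism walls, together with a level set separating this neighborhood from the rest of $M$, so that $\Sigma$ meets them in closed curves and proper arcs of small total length. Cut $\Sigma$ along these and cap off by small disks and half-disks; every resulting piece then has small area.

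If some piece is not a disk, the systolic inequality of Theorem \ref{thm:relativesystolic} gives an essential closed curve or proper arc of length at most the square root of that area, hence contained in a small ball. If all pieces are disks (or spheres), one of the short cuts must be essential in $\Sigma$. This is because surgery along curves and arcs that are inessential in an annulus always leaves a component homeomorphic to the annulus. Thin tubes and bands are then handled automatically: some wall cuts across them, and that short meridian cut is the essential curve.

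Two smaller points. Your Step 2 can be kept and needs no length bound, since each boundary circle of an annulus already generates $H_1(\Sigma;\mathbb Z_2)$. In your last sentence, cutting along an essential loop gives two annuli; it is the capping in the neck-pinch that produces disks.
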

\begin{proof}
We can adapt the proof of \cite[Lemma 8.3]{chu2024existence}. We triangulate the limiting disk $D$, using prisms and half-prisms. The existence of a good triangulation is adapted directly. In Step 2 therein, after cutting along the short loops and short proper arcs, by Theorem~\ref{thm:relativesystolic}, any
piece which is not a disk contains a short essential closed curve or a short essential proper arc. If all pieces are disks, then one of the cutting loops or arcs is essential in the original annulus. The arguments in Step 3 there remain unchanged, except that the capping disks are replaced by disks and half-disks, and the use of the closed systolic inequality is replaced by Theorem~\ref{thm:relativesystolic}.

For the last statement, we give more details on the topological arguments. If $\Sigma$ is an annulus and $\gamma\subset \Sigma$ is a properly embedded arc representing a nonzero class in $H_1(\Sigma,\partial\Sigma;\mathbb Z_2)$, then $\gamma$ connects the two boundary components of \(\Sigma\). A boundary neck-pinch along a small half-cylinder over $\gamma$ cuts the annulus across its core and replaces the half-cylinder by two half-disks. Hence, every resulting connected component is a disk. Similarly, if $\gamma$ is an essential simple closed curve in $\Sigma$, then the interior neck-pinch along a small cylinder over $\gamma$ separates the annulus into disk components. In either case, the operation decreases the boundary complexity from $1$ to $0$ and does not increase the genus.
\end{proof}

We obtain a small essential arc or a simple closed curve on punctate surface as well by an approximation argument, where the proof remains the same as in Lemma 8.4 and 8.5 in \cite{chu2024existence}.
\begin{lem}[Existence of a small essential arc or a simple closed curve on punctate annuli] \label{lem:small_essential_piece_punctate}
Let $\mathcal D$ be a compact subset of smooth properly embedded disks in $M$, endowed with the smooth topology. Let $q\in\mathbb N$ and $\epsilon\in(0,1)$. Then there exists $\delta=\delta(\mathcal D,q,\epsilon)>0$ with the following property.

Assume that $\Sigma\in\mathcal S^*(M)$ is a punctate surface with punctate set $P$, $\#P\le q$, and
\[
    \fg(\Sigma)=0,\qquad \fb(\Sigma)=1,
\]
and there exists some $D\in\mathcal D$ such that
\[
\mathbf F([\Sigma],[D])\le \delta
\]
Then there exists an admissible ball or half-ball $B_\epsilon(p)$ and an embedded essential curve $\gamma\subset \Sigma\cap \mathcal B_\epsilon(p)$ of one of the following two types:
\begin{enumerate}[label=\normalfont(\arabic*)]
    \item $\gamma$ is a simple closed curve with
    \[
        [\gamma]\neq 0\in H_1(\Sigma;\mathbb Z_2);
    \]

    \item $\gamma$ is a properly embedded simple arc with
    \[
        [\gamma]\neq 0\in H_1(\Sigma,\partial\Sigma;\mathbb Z_2).
    \]
\end{enumerate}
In either case, a union of disk components is obtained by the corresponding interior or boundary neck-pinch.
\end{lem}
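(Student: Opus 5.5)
The plan is to argue by contradiction with an approximation argument, reducing to the smooth case in Lemma \ref{lem:boundarysmallcurve}, following the scheme of Lemmas 8.4 and 8.5 in \cite{chu2024existence}. Suppose there are no such $\delta$. Then there are punctate annuli $\Sigma_i$ with $\#P_i\le q$, $\fg(\Sigma_i)=0$, $\fb(\Sigma_i)=1$, disks $D_i\in\mathcal D$ with $\mathbf F([\Sigma_i],[D_i])\to 0$, and no admissible (half-)ball of radius $\epsilon$ containing an essential closed curve or essential proper arc of $\Sigma_i$. By compactness of $\mathcal D$ we may pass to a subsequence with $D_i\to D$ smoothly. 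After a further subsequence the punctate sets $P_i$ converge in the Hausdorff sense to a set $P$ with at most $q$ points, some of which may lie on $\partial M$.

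Next, I will desingularize each $\Sigma_i$ near $P_i$ while keeping the $\mathbf F$-distance small. Around each $p\in P_i$, take a tiny (half-)ball $B_{r_i}(p)$ with $r_i\to0$, chosen by coarea/Sard so that $\partial B_{r_i}(p)$ meets $\Sigma_i\setminus P_i$ transversally in finitely many circles and arcs. Replace $\Sigma_i\cap B_{r_i}(p)$ by disks or half-disks spanning these curves, pushed slightly to keep the surface embedded. Then discard any small closed or sphere components created. This produces smooth properly embedded surfaces $\Sigma_i'$ with $\mathbf F([\Sigma_i'],[\Sigma_i])\to0$, since the modified area is $O(q r_i^2)$ by choosing $r_i$ so that $\mathcal H^2(\Sigma_i\cap B_{r_i})$ is small.

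The key bookkeeping step is the topology after capping. By \cite[Lemma 3.6]{franz2023topological}, $\fg+\fb$ does not increase under these surgeries. So either $\Sigma_i'$ still has a component that is an annulus, or all components are disks. In the first case, a capping curve on $\partial B_{r_i}(p)$ was essential in $\Sigma_i$, and it already lies in a ball or half-ball of radius $\ll\epsilon$. That contradicts the assumption once $r_i<\epsilon$. In the second case, the annulus component of $\Sigma_i'$ is $\mathbf F$-close to $D$. This holds after disregarding small-area disk components, which can be handled by running Lemma \ref{lem:boundarysmallcurve} with $D$ and $\epsilon/2$ on the union, or by applying its triangulation argument directly to the union. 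That yields an essential curve or arc $\gamma\subset\Sigma_i'\cap B_{\epsilon/2}(p)$.

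It remains to transfer $\gamma$ back to $\Sigma_i$. Isotope $\gamma$ within $\Sigma_i'$ to avoid the capping (half-)disks, which are tiny and away from a generic curve. Its image then lies in $\Sigma_i\setminus P_i$. It is still essential there, because the inclusion $\Sigma_i'\setminus\bigcup B_{r_i}\hookrightarrow\Sigma_i$ induces a map on $H_1$ and on relative $H_1$ that detects the core class or the class of an arc between the two boundary components. This gives the contradiction. The final sentence of the statement then follows as in Lemma \ref{lem:boundarysmallcurve}: pinching along $\gamma$ cuts the annulus into disks.

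The main obstacle is the uniformity of $\delta$ in $\mathcal D$ and $q$, together with the case where punctate points lie on $\partial M$. Capping at a boundary puncture must produce half-disks meeting $\partial M$ properly. To handle this I would work in Fermi coordinates near $\partial M$ and use half-balls, reducing to the interior argument by reflection.
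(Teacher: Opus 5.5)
Your route is the paper's route: cap $\Sigma$ off in small (half-)balls around $P$, apply the smooth Lemma~\ref{lem:boundarysmallcurve}, and pull the curve back. The paper's proof simply defers to \cite[Lemmata 8.4, 8.5]{chu2024existence}, with Lemma~\ref{lem:boundarysmallcurve} in place of their Lemma 8.3.

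\textbf{The gap.} It is in your case where capping lowers $\mathfrak b$ to $0$; note that your two cases are labelled the wrong way round. There you conclude that some capping curve was essential in $\Sigma_i$. For punctate surfaces this does not follow. The quantities $\beta$ and $\mathfrak b$ are computed on the components of $\Sigma\setminus\Sigma_{\mathrm{iso}}$, and these components are glued together at points of $P$. So unlike $\mathfrak g$, which is defined through $\Sigma\setminus B_r(P)$, the boundary complexity is sensitive to pinch points.

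\textbf{A concrete failure.} Take $\Sigma=D'\cup D''$, two properly embedded disks meeting at a single interior point $z$, with $P=\{z\}$. This is an annulus at the singular time of an interior neck-pinch.
\begin{itemize}
\item $\mathfrak g=0$, $\beta=2$, $\mathfrak b=1$, and capping at $z$ produces two disks.
\item The capping circles bound the cone pieces through $z$. Indeed $\Sigma$ is contractible, so $H_1(\Sigma;\mathbb Z_2)=0$.
\item The nonzero class of $H_1(\Sigma,\partial\Sigma;\mathbb Z_2)\cong\mathbb Z_2$ is carried only by arcs running from $\partial D'$ through $z$ to $\partial D''$.
\end{itemize}
Now let $D''$ be a tiny cap near a point of $\partial M$ far from $\partial D$, and let $D'$ be $D$ with a thin finger whose tip touches $D''$ at $z$. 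Then $\mathbf F([\Sigma],[D])$ is as small as you like, yet every such arc has diameter bounded below independently of $\delta$. So this case cannot be closed by any argument: the statement as written fails for such $\Sigma$.

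Reading the homology on $\Sigma\setminus P$ instead does not rescue it. A small loop around $z$ then counts as ``essential'', but pinching a single such loop need not produce disks. For example, let $D'$ and $D''$ touch at two distant points.

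\textbf{What your argument does prove.} It proves a dichotomy. Either surgery in (half-)balls of radius $<\epsilon$ around the at most $q$ points of $P$ already yields $\mathfrak b=0$, or the capped surface still has $\mathfrak b=1$. In the second case your argument gives a short essential curve or arc away from $P$, which transfers back to $\Sigma$.

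This weaker statement is all that Proposition~\ref{prop:boundary_interpolation} needs. There $p_x$ is added to $P(x)$, and one pinches and shrinks around every point of $P(x)$ anyway. Alternatively, restrict the hypothesis to surfaces $\Sigma$ whose capped-off surface $\Sigma\setminus B_r(P)$ still has $\mathfrak b=1$ for small $r$. This mirrors how $\mathfrak g$ is defined, and it removes your first case altogether.

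This is exactly where the reduction to \cite{chu2024existence} does not transfer verbatim. There only the genus is controlled, and genus is insensitive to pinch points.

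\textbf{A further point in the surviving case.} Lemma~\ref{lem:boundarysmallcurve} is stated for a single annulus. The disk components of the capped surface may carry a definite portion of the mass of $D$, so the annulus component alone need not be $\mathbf F$-close to any element of $\mathcal D$. As you indicate, the triangulation argument has to be rerun for the whole union rather than quoted.
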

\begin{proof}
    The proof is identical to \cite[Lemmata 8.4 and 8.5]{chu2024existence}, replacing \cite[Lemma 8.3]{chu2024existence} by Lemma \ref{lem:boundarysmallcurve} and the arguments on balls by balls and half-balls.
\end{proof}
\subsection{Local deformations in the boundary setting} In the previous subsection, we detected a small essential arc or a simple closed curve for singular annuli within small balls, when they are sufficiently close to a properly embedded disk. We now construct pinching and shrinking deformations in a continuous manner along a Simon-Smith family as \cite[Section 8.2]{chu2024existence} with necessary modifications.
\begin{lem}[Relative local deformation A: pinching]\label{lem:relative-local-pinching}
There exists a constant $C=C(M,g)>0$ with the following property. 

Let $\Lambda>0$, $0<r<\operatorname{injrad}(M,g)/4$, and $p\in M$. Suppose that $\Phi:X\to \mathcal S^*(M)$ is a Simon--Smith family and $x_0\in X$ is such that $\Phi(x_0)\in \mathcal S^{*}(M)$ and
\[
    \mathcal H^2\big(\Phi(x_0)\cap A_M(p;r,2r)\big)
    \leq \Lambda r^2 .
\]
Then there exist $s\in(r,2r)$, $\zeta\in (0,1/5\min\{\Lambda r,s-r,2r-s\})$, an integer $K\geq 0$, a neighborhood $O_{x_0}\subset X$ of $x_0$, and a Simon--Smith family
\[
    H:[0,1]\times O_{x_0}\to \mathcal S^{*}(M)
\]
with the following properties.

\begin{enumerate}[label=\normalfont(\arabic*)]
    \item For every $y\in O_{x_0}$, the surface
    \[
        \Sigma_y:=\Phi(y)\cap A_M(p;s-5\zeta,s+5\zeta)
    \]
    is smooth and varies smoothly in $y$.
    
    \item  For every $s'\in[s-4\zeta,s+4\zeta]$, $\Phi(y)$ intersects $\partial B(p,s')\cap M$ transversely, and
    \[
        \Phi(y)\cap \big(\partial B(p,s')\cap M\big) = \bigsqcup_{i=1}^K \gamma_i(s',y),
    \]
    where each $\gamma_i(s',y)$ is either a simple closed curve or a properly embedded arc with endpoints on $\partial M$. Furthermore,
    \[
        \sum_{i=1}^K \mathcal H^1(\gamma_i(s',y)) \leq 4\Lambda r .
    \]
    \item[If ]$K=0$, then $H(t,y)=\Phi(y)$ for all $(t,y)\in[0,1]\times O_{x_0}$. If $K\geq 1$, then
    \item For every $y \in O_{x_0}$, $H(0, y) = \Phi(y)$.
    \item there exists $0=t_0<t_1<\cdots<t_K<t_{K+1}=1$ such that for every $0\leq i\leq K$, every $t\in(t_i,t_{i+1})$, and every $y\in O_{x_0}$,
    \[
        \Sigma_{t,y}:=H(t,y)\cap A_M(p;s-5\zeta,s+5\zeta)
    \]
    is obtained from $\Sigma_y$ by performing $i$ times of neck-pinches along $\gamma_1(s,y),\ldots,\gamma_i(s,y)$. More precisely,for each neck-pinch, either a closed curve component is replaced by two disks or a properly embedded arc component is replaced by two half-disks with boundary on $\partial M$. Moreover, we have
    \[
        H(1,y)\cap A_M(p;s-\zeta,s+\zeta)=\emptyset .
    \]
    \item For each $i=1,\ldots,K$, near $t_i$, the family $H(t,y)$ is a surgery process via pinching the cylinder or half-cylinder over $\bigcup_{s'\in(s-2\zeta,s+2\zeta)}\gamma_i(s',y)$.

    \item The deformation is supported in the slightly smaller annulus and has an area control:
    \begin{align*}
        H(t,y)\setminus A_M(p;s-3\zeta,s+3\zeta) &= \Phi(y)\setminus A_M(p;s-3\zeta,s+3\zeta) \\
        \mathcal H^2(H(t,y)) &\leq \mathcal H^2(\Phi(y))+C\Lambda^2 r^2
    \end{align*}
    for every \((t,y)\in[0,1]\times O_{x_0}\).
\end{enumerate}
\end{lem}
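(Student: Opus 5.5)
We adapt the proof of the local pinching deformation in Chu-Li \cite[Section 8.2]{chu2024existence} to the relative setting; the only new feature is that spheres $\partial B(p,s')$ become (possibly) relative spheres $\partial B(p,s')\cap M$ meeting $\partial M$, so slices may be arcs. We first choose the radius $s$ by a coarea/Sard argument. The distance function $\rho=d(p,\cdot)$ is smooth with $|\nabla\rho|=1$ on $A_M(p;r,2r)$ since $2r<\operatorname{injrad}/2$. By the coarea formula applied on the regular part of $\Phi(x_0)$,
\[
\int_r^{2r}\mathcal H^1\big(\Phi(x_0)\cap\partial B(p,s')\big)\,ds'\le \mathcal H^2\big(\Phi(x_0)\cap A_M(p;r,2r)\big)\le\Lambda r^2,
\]
so the set of $s'\in(r,2r)$ with slice length $\le 2\Lambda r$ has measure at least $r/2$. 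Removing the finitely many radii $\rho(P_{\Phi(x_0)})$ and the null set of critical values of $\rho|_{\Phi(x_0)}$ and of $\rho|_{\partial\Phi(x_0)}$ (Sard, applied on the surface and on its boundary curves in $\partial M$), we pick such an $s$ where $\Phi(x_0)$ is smooth near $\partial B(p,s)$ and meets both $\partial B(p,s)\cap\operatorname{int}M$ and the circle $\partial B(p,s)\cap\partial M$ transversely. Transversality is open, so there is $\zeta$ (also below $\tfrac15\min\{\Lambda r,s-r,2r-s\}$) with transversality and length $\le 3\Lambda r$ on $[s-5\zeta,s+5\zeta]$ for $x_0$; by item (v) of Definition~\ref{def:fbssfamily} (smooth convergence away from the punctate set), the same holds with length $\le 4\Lambda r$ for all $y$ in a neighborhood $O_{x_0}$, and $\Sigma_y$ varies smoothly. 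This gives (1) and (2); the slice components are closed curves or proper arcs with endpoints on $\partial M$ because the surface meets $\partial M$ transversely, and their number $K$ is locally constant.

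Next we construct $H$. For closed-curve components we use verbatim the neck-pinch construction of \cite[Lemma 8.6]{chu2024existence}: inside the collar $\bigcup_{|s'-s|<2\zeta}\gamma_i(s',y)$, which is a thin cylinder of area $\lesssim \zeta\cdot\Lambda r$, we deform it isotopically to squeeze toward $\partial B(p,s)$, pinch to a double cone at the time $t_i$, and replace it by two disks each of area $\le C\mathcal H^1(\gamma_i)^2$ (isoperimetric cones on $\partial B(p,s\pm 2\zeta)$). For arc components we do the relative analogue: the collar is a half-cylinder with its two boundary edges on $\partial M$; using Fermi coordinates adapted to $\partial M$ we pinch it to a half double cone at a point of $\partial M$ and replace it by two half-disks with boundaries on $\partial M$, of area $\le C\mathcal H^1(\gamma_i)^2$ by a relative isoperimetric inequality in the half-ball. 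The pinches are performed one component at a time on the time intervals $(t_{i-1},t_{i+1})$, and afterwards the disks are pushed out of $A_M(p;s-\zeta,s+\zeta)$ by a radial isotopy. Since all constructions depend smoothly on the smoothly varying curves $\gamma_i(s',y)$, $H$ is a (free boundary) Simon-Smith family, is supported in $A_M(p;s-3\zeta,s+3\zeta)$, and the area increase is at most $C\sum_i\mathcal H^1(\gamma_i)^2+C\zeta\Lambda r\le C\Lambda^2r^2$, using $\zeta\le\Lambda r$ and the length bound in (2). This yields (3)--(6).

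The main obstacle is the boundary pinch: one must check that the half-neck deformation keeps the surface properly embedded and transverse to $\partial M$ at all times except the single pinch time, and that the resulting family satisfies the continuity conditions of Definition~\ref{def:fbssfamily} at the boundary singular point. We would handle this by flattening $\partial M$ near $\partial B(p,s)\cap\partial M$ via a diffeomorphism onto a half-space, and doubling across the flat boundary: the model half-neck pinch is then the restriction of a reflection-symmetric interior neck pinch, so smoothness, transversality and the area estimate reduce to the closed case of \cite{chu2024existence}, with constants depending only on $(M,g)$.
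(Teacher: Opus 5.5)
This is essentially the paper's proof: both adapt \cite[Lemma 8.6]{chu2024existence} by replacing level spheres with relative spheres $\partial B(p,s')\cap M$, capping closed slices by disks and proper arcs by half-disks via a relative isoperimetric inequality, with constants depending only on $(M,g)$. Your coarea/Sard choice of $s$, $\zeta$ and $O_{x_0}$ just spells out what the paper leaves to the citation; like the paper, it tacitly takes $\Phi(x_0)\in\mathcal S(M)$, so that item (v) of Definition~\ref{def:fbssfamily} applies. The reflection trick is optional, since you can sweep the half-disk bounded by $\gamma_i(s,y)$ and an arc of $\partial B(p,s)\cap\partial M$ directly. If you do use it, first isotope the half-cylinder to meet the flattened $\partial M$ orthogonally, since otherwise the doubled surface has a corner along the boundary.
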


\begin{rem}
For each $y \in O_{x_0}$, by (5), $\{H(t, y) \}_{t \in [0, 1]}$ is a relative pinch-off process.
\end{rem}
\begin{proof}
The proof is the same as \cite[Lemma 8.6]{chu2024existence}, with minor modifications near $\partial M$. The level spheres are replaced by half-spheres, and an isoperimetric inequality is replaced by a relative isoperimetric inequality near boundary; See, for instance, Ritorè \cite{ritore2023isoperimetric}. Closed curves are filled by disks as in the closed case, while properly embedded arcs are filled by half-disks in the hemispheres. The area estimates are unchanged up to a constant depending only on $(M,g)$.
\end{proof}
We state the relative version of local shrinking deformation here. The proof remains the same as that of \cite[Lemma 8.8]{chu2024existence}.
\begin{lem}[Relative local deformation B:  shrinking]\label{lem:relative-shrinking}
Let $(M^3,g)$ be a compact smooth Riemannian manifold with boundary. Then, for any $\epsilon>0$, there exists $r_0=r_0(M,g)>0$ with the following property. 

Let $p\in M$ and $0<r^-<r^+<r_{0}$. Also assume that $B_{r^+}(p)\cap\partial M=\emptyset$ when $p \in \operatorname{Int}(M)$. Set $B_M(p,r):=B_r(p)\cap M$. Then there exists a smooth one-parameter family of maps $R_t:M\to M$, $t\in[0,1]$, such that:
\begin{enumerate} [label=\normalfont(\arabic*)]
    \item $R_t=\operatorname{id}$ for $t\in[0,1/4]$, and $R_t(B_M(p,r^-))=\{p\}$ for $t\in[3/4,1]$.
    \item $R_t$ is a diffeomorphism of $M$ for $t\in[0,3/4)$.
    \item$R_t|_{M\setminus B_M(p,r^+)}=\operatorname{id}$ and $R_t(B_M(p,r^\pm))\subset B_M(p,r^\pm)$ for every $t\in[0,1]$.
    \item The deformation preserves the boundary i.e. $R_t(\partial M)\subset \partial M$ for every $t\in[0,1]$.
    \item $R_t|_{B_M(p,r^-)}$ is $(1+\epsilon)$-Lipschitz for every $t\in[0,1]$.
\end{enumerate}
\end{lem}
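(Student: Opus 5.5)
We will construct $R_t$ explicitly in local coordinates adapted to the point $p$, separating the interior case $p\in\operatorname{Int}(M)$ with $B_{r^+}(p)\cap\partial M=\emptyset$ from the boundary case $p\in\partial M$. In the interior case, we work in geodesic normal coordinates $\exp_p$ on $B_{r^+}(p)$. We define $R_t(\exp_p(v))=\exp_p(\rho_t(|v|)\,v/|v|)$, where $\rho_t:[0,r^+]\to[0,r^+]$ is a smooth family of radial profiles with the following properties. First, $\rho_t(\sigma)=\sigma$ for $t\le 1/4$. Second, $\rho_t$ is a diffeomorphism of $[0,r^+]$ fixing a neighborhood of $r^+$ for $t<3/4$. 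Third, $\rho_t\equiv 0$ on $[0,r^-]$ for $t\ge 3/4$. Fourth, $\rho_t(\sigma)\le\sigma$. Concretely, we take $\rho_t(\sigma)=\sigma\,\lambda(t,\sigma)$ with $\lambda\in[0,1]$ nondecreasing in $\sigma$, equal to $1$ near $r^+$, and with $\lambda(t,\cdot)$ on $[0,r^-]$ decreasing uniformly to $0$ as $t\uparrow 3/4$. For $t<3/4$ we keep $\partial_\sigma\rho_t>0$. This gives (1)--(3), and (4) is vacuous.

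For the Lipschitz bound (5), on $B_M(p,r^-)$ and for $t$ in the shrinking regime, the map is a pure homothety $v\mapsto \lambda(t)v$ in normal coordinates with $\lambda(t)\le 1$. The Euclidean Lipschitz constant there is $\le 1$, and comparing the metric $g$ with the Euclidean metric in normal coordinates gives a distortion factor $1+O(r_0^2)$. We therefore choose $r_0=r_0(M,g,\epsilon)$ so small that this distortion is at most $1+\epsilon$. The transition region $[r^-,r^+]$ has no Lipschitz requirement, so there we only need smoothness and monotonicity.

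In the boundary case, normal coordinates do not preserve $\partial M$, so we replace them with Fermi-type coordinates. We write points near $p$ as $\Theta(y,s)=\exp_{y}(s\,\nu(y))$, where $y\in\partial M$ is given in boundary normal coordinates centered at $p$, $s\ge 0$, and $\nu$ is the inward unit normal. In these coordinates $M$ corresponds to the half space $\{s\ge0\}$, $\partial M=\{s=0\}$, and the metric is Euclidean to first order at $p$. We define $R_t$ as the same radial homothety-type map $w\mapsto \rho_t(|w|)\,w/|w|$ in the coordinate $w=(y,s)\in\mathbb{R}^2\times\mathbb{R}_{\ge0}$. Radial scaling preserves $\{s\ge0\}$ and $\{s=0\}$, which gives (4). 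The coordinate balls $\{|w|<r\}$ differ from the geodesic half-balls $B_M(p,r)$, so we handle this by choosing $r_0$ small. One option is to note that the coordinate ball of radius $(1\pm Cr^2)r$ is sandwiched between geodesic half-balls. The other is to replace $|w|$ by the true distance $d_g(p,\cdot)$, which is smooth on $B_M(p,r_0)\setminus\{p\}$ with level sets transverse to $\partial M$ for small $r_0$, and flow along the normalized gradient of $d_g(p,\cdot)$, which is tangent to $\partial M$ along $\partial M$.

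The main obstacle is making (3) hold exactly for the geodesic half-balls $B_M(p,r^\pm)$ while keeping the boundary-preserving property (4) and the $(1+\epsilon)$-Lipschitz bound (5). We plan to use the gradient-flow-of-distance construction. Monotonicity of $d_g(p,\cdot)$ along the flow then gives $R_t(B_M(p,r))\subset B_M(p,r)$ for all $r$. Tangency to $\partial M$ gives (4). On $B_M(p,r^-)$, the composition with the pure homothety in Fermi coordinates (where $d_g$ equals $|w|$ up to $O(r_0^2)$) gives Lipschitz constant $1+O(r_0)$. That bound is at most $1+\epsilon$ once $r_0$ is small.
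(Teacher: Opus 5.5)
\textbf{There is a genuine gap in the boundary case.}

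Your interior case is fine, and it is the construction the paper imports from Chu--Li's Lemma 8.8. For $p\in\partial M$, the paper simply uses the radial shrinking map in Fermi coordinates (your first description). It gets (4) from the scaling invariance of $\{s=0\}$ and $\{s\ge 0\}$, and (5) from the bi-Lipschitz comparison with the Euclidean metric; it does not discuss (3). You are right that (3) concerns geodesic half-balls rather than coordinate balls.

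However, the remedy you commit to breaks (4). The gradient of $d(p,\cdot)$ is \emph{not} tangent to $\partial M$ along $\partial M$: geodesic spheres about $p\in\partial M$ meet $\partial M$ transversally, but in general not orthogonally. In $\overline{\mathbb{B}^3}$ with $p=e_3$ and $x=(\sin\theta,0,\cos\theta)\in\partial\mathbb{B}^3$, the unit gradient $(x-p)/|x-p|$ has normal component $\langle x-p,x\rangle/|x-p|=\sin(\theta/2)\neq 0$. Flowing along $-\nabla d(p,\cdot)$, i.e.\ sliding points along the radial geodesics from $p$ (here chords), pushes boundary points into the interior of the ball. For the exterior of a ball it pushes them out of $M$. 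So $R_t(\partial M)\subset\partial M$ fails, and $R_t$ need not even map $M$ into $M$.

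The sandwich option does not give (3) either. You would need a coordinate radius $a^-$ with $B_M(p,r^-)\subset\{|w|<a^-\}$ (where the map is a homothety) and some $a^+>a^-$ with $\{|w|<a^+\}\subset B_M(p,r^+)$. This is impossible once $r^+-r^-\lesssim r_0 r^+$, while the lemma allows any $r^-<r^+<r_0$. Even then, $R_t(B_M(p,r^-))\subset B_M(p,r^-)$ would require star-shapedness of $B_M(p,r^-)$ in Fermi coordinates, which you do not establish. (A minor inaccuracy as well: in Fermi coordinates $g_{ij}=\delta_{ij}-2s\,\mathrm{II}_{ij}+O(|w|^2)$, so $g$ is Euclidean only to zeroth order at $p$. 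Your $1+O(r_0)$ Lipschitz bound is unaffected.)

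\textbf{The fix.} Keep the Fermi rays as trajectories, since they preserve $\{s=0\}$ and $\{s\ge0\}$, and use $d$ only as the cutoff variable.

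Read $B_r(p)$ as a geodesic ball in a closed extension of $M$, as the notation $B_r(p)\cap M$ suggests. Then $\phi:=\exp_p^{-1}\circ\Theta$ is smooth with $\phi(w)=w+O(|w|^2)$. Hence $f(w):=d(p,\Theta(w))=|\phi(w)|$ satisfies $\partial_\sigma f(\sigma\hat w)=1+O(\sigma)>0$ for small $r_0$: the distance to $p$ strictly increases along Fermi rays. Now set
\[
R_t\bigl(\Theta(w)\bigr)=\Theta\bigl(\mu_t(w)\,w\bigr),\qquad \mu_t(w):=\lambda(t)+\bigl(1-\lambda(t)\bigr)\,\eta\bigl(f(w)\bigr),
\]
and $R_t=\mathrm{id}$ off the chart. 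Here $\lambda$ is smooth, with $\lambda=1$ on $[0,1/4]$, $\lambda>0$ on $[0,3/4)$ and $\lambda=0$ on $[3/4,1]$. The function $\eta$ is smooth and nondecreasing, with $\eta=0$ on $[0,r^-]$ and $\eta=1$ on $[r^+-\delta,\infty)$ for some $0<\delta<r^+-r^-$. This construction gives each property as follows:
\begin{itemize}
\item Since $\mu_t\le 1$, every point moves inward along its ray, so $f$ never increases. Hence $R_t(B_M(p,r))\subset B_M(p,r)$ for \emph{every} $r$, and $R_t=\mathrm{id}$ where $f\ge r^+-\delta$.
\item The map is smooth, because $\eta(f)\equiv 0$ near $p$.
\item The radial derivative is $\mu_t+\sigma(1-\lambda)\eta'(f)\,\partial_\sigma f\ge\lambda(t)$, so $R_t$ is a diffeomorphism for $t<3/4$.
\item On $B_M(p,r^-)$ the map is the pure homothety $w\mapsto\lambda(t)w$. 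It collapses to $p$ for $t\ge 3/4$ and is $(1+O(r_0))$-Lipschitz otherwise.
\end{itemize}
This yields (1)--(5) exactly. The same formula with normal coordinates in place of $\Theta$ is your interior construction.
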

\begin{proof}
The construction is the same as in \cite[Lemma 8.8]{chu2024existence}. If $p\in\partial M$, we construct the same radial shrinking map with Fermi coordinates near boundary. Since the radial shrinking map preserves the boundary, we obtain (4). Choosing $r_-$ and $r_+$ sufficiently small, the metric in these coordinates is uniformly bi-Lipschitz to the Euclidean metric, so the Lipschitz bound follows.
\end{proof}
\subsection{Proof of Proposition \ref{prop:boundary_interpolation} and Theorem \ref{thm:mapping_cylinder}}
\begin{proof}[Proof of Proposition \ref{prop:boundary_interpolation}]
It follows the proof of \cite[Proposition 8.1]{chu2024existence}. We here explain the modifications in the free boundary setting. Throughout the proof, balls and annuli are replaced by relative balls and annuli. Note that the combinatorial arguments \cite[Lemmata 8.9--8.11]{chu2024existence} can be applied for these admissible balls, since the arguments only rely on the combinatorial selection arguments.

Choose constants $0<\epsilon_2\ll \epsilon_1\ll \epsilon$ as in \cite[Proposition 8.1]{chu2024existence}, with \(\epsilon_1\) also sufficiently small within the neighborhood where Fermi coordinate is well-defined. Also, the annular area estimate holds for sufficiently small $\delta>0$ for all admissible annuli:
\[
    \sup_{p\in M}\mathcal H^2\bigl(\Phi(x)\cap \mathcal A(p;r,2r)\bigr)\le 100r^2,
    \qquad r\in(\epsilon_2,\epsilon_1).
\]
Moreover, by Lemma~\ref{lem:small_essential_piece_punctate}, whenever $\fb(\Phi(x))=1$, there exists a point $p_x\in M$ such that $\Phi(x)\cap\mathcal B_{\epsilon_2}(p_x)$ contains either a simple closed curve nontrivial in $H_1(\Phi(x);\mathbb Z_2)$, or a properly embedded simple arc nontrivial in $H_1(\Phi(x),\partial\Phi(x);\mathbb Z_2)$. We add this point to the punctate set, exactly as in the closed case:
\[
    P(x)=P_{\Phi(x)}\cup\{p_x\}
\]
when \(\fb(\Phi(x))=1\), and \(P(x)=P_{\Phi(x)}\) otherwise.

We then apply \cite[Lemmata 8.10 and 8.11]{chu2024existence} to choose,
after subdividing \(X\), pairwise disjoint or properly nested admissible
annuli around the points in \(P(x)\). On each such annulus, we apply
Lemma~\ref{lem:relative-local-pinching}. Along the relative neck-pinch, the local area increase is bounded by $C r^2$, and the constants are chosen so that the total increase over all cells is at most \(\epsilon\). We apply Lemma~\ref{lem:relative-shrinking} in place of \cite[Lemma 8.8]{chu2024existence} for the shrinking process.

By adapting the proof of \cite[Proposition 8.1]{chu2024existence} and constructing the homotopy $H$ by combining the local deformations as in Step 3 of \cite[Proposition 8.1]{chu2024existence}, the area is controlled along the homotopy i.e. $\mathcal H^2(H(t,x))\le \mathcal H^2(\Phi(x))+\epsilon$ for all $t\in[0,1]$ and we have $\fb(H(1,x))=0$, and the boundary complexity is nonincreasing along the homotopy. Therefore \(H\) satisfies all the claimed properties.
\end{proof}
\begin{proof}[Proof of Theorem~\ref{thm:mapping_cylinder}]
We explain the modification from \cite[Theorem 3.6]{chu2024existence}, replacing the interpolation theorem near a sphere by Proposition~\ref{prop:boundary_interpolation}.

Let $\Gamma$ be the unique free boundary minimal disk such that $|\Gamma|\in\mathcal W_{L,\leq 0,\leq 1}$. By Theorem~\ref{thm:currentsCloseInBoldF}, after replacing $\Phi$ by a family in the same homotopy class and refining $X$, we may assume that there exist $r>0$ and $\eta_0>0$ such that
\begin{equation} \label{eqn:nearmaximal}
    \mathcal H^2(\Phi(x))\ge L-\eta_0 \quad\Longrightarrow\quad \mathbf F([\Phi(x)],[\Gamma])<r .
\end{equation}
We choose $r>0$ sufficiently small so that Proposition~\ref{prop:boundary_interpolation} applies to the compact family $\mathcal D=\{\Gamma\}$. Applying relative pinch-off deformation in Proposition \ref{prop:boundary_interpolation}, and since we can apply the local min-max theorem \cite[Theorem 8.12]{chu2024existence} (See White \cite[Theorem 5]{white1994strong} and Marques-Neves \cite[Theorem 6.1]{marques2021morse}), we obtain the construction of a cubical complex $X'$, a surjective cubical homotopy equivalence 
\[
f:X'\to X,
\]
and a family 
\[
\Phi':X'\to\mathcal S^*(M)
\]
as in Step 2 and 3 of \cite[Section 8.5]{chu2024existence}, where $\Phi'(x)$ satisifes the topological bound $(0, 1)$ for any $x$ satisfying (\ref{eqn:nearmaximal}).

Finally, let $W=M_f$ be the mapping cylinder of $f$. The homotopy $H:W\to\mathcal S^*(M)$ is obtained by concatenating the homotopies in the construction. Then by the construction,
\[
    H|_{\partial_0 W}=\Phi,\qquad H|_{\partial_1 W}=\Phi',
\]
and for each $x\in X'$, $t\mapsto H(t,x)$ is a relative pinch-off process. The topological bound $(0, 1)$ is preserved throughout $H$. This proves the theorem.
\end{proof}
\section{Relative pinch-off process} \label{sec:relativepinchoff}
In this section, we adapt Section $9$ of \cite{chu2024existence} to list certain topological properties of the relative pinch-off process that are similar to those of (free boundary) Mean Curvature Flow (MCF). These properties will be used in Section \ref{sec: topology of boundary two cap}.

Consider a relative pinch-off process $\Phi: [0, T] \to \mathcal{S}(M)$, similar to the level-set flow in MCF, we define:
\[
W(t) := \{t \} \times (M \setminus \Phi(t)) \subset [0, T] \times M,
\]
and 
\[
W[t_1, t_2] := \bigcup_{t \in [t_1, t_2]} W(t) \subset [0, T] \times M.
\]
We have the following proposition.
\begin{prop} \label{prop: mcf}
If $\Phi : [0, T] \to \mathcal{S}(M)$ is a relative pinch-off process, we have:
\begin{enumerate}[label=\normalfont(\arabic*)]
    \item Any loop in $W(T)$ is homotopic in $W[0, T]$ to a loop in $W(0)$.
    \item If a loop is homologically non-trivial in $W(T)$, then it is homotopic in $W[0, T]$ to a homologically non-trivial loop in $W(0)$.
    \item By Definition \ref{def: Punctate surfaces}, for each $t \in [0, T]$, $W(t)$ can be written as the disjoint union of two open sets $\operatorname{in}(\Phi(t))$ and $\operatorname{out}(\Phi(t))$, such that both of their reduced boundaries are $\Phi(t) \setminus \Phi(t)_{iso}$, and they both vary continuously in $t$ (as Caccioppoli sets). Then
    \[
    \operatorname{rank}\left(H_1(\operatorname{in}(\Phi(t)); \mathbb{Z})\right) \quad \text{and} \quad \operatorname{rank}\left(H_1(\operatorname{out}(\Phi(t)); \mathbb{Z}) \right)
    \]
    are both non-increasing in $t$.
    \end{enumerate}
\end{prop}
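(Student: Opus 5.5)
We reduce to the local models. A relative pinch-off process is, away from the finitely many singular spacetime points $(t_i,p_i)$, locally an ambient isotopy. Near each $(t_i,p_i)$ it is one of three things: a shrinking process (a component inside a small ball or half-ball contracts to a point), an interior neck-pinch, or a boundary neck-pinch. We partition $[0,T]$ by times $0=s_0<s_1<\dots<s_m=T$ so that each open interval $(s_{k-1},s_k)$ contains at most one singular time $t_i$, with the $s_k$ chosen regular. Then it suffices to prove (1)--(3) on each subinterval and concatenate. For (1) and (2) concatenation is immediate: homotopies compose, and nontriviality is transported step by step. For (3) monotonicity is transitive.

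\textbf{Regular intervals.} On an interval with no singular time, compactness of $\Phi(t)$ and condition (1) of Definition \ref{defn:boundary_pinch_off} give a cover by finitely many local isotopies. We patch these with a partition of unity into an ambient isotopy $\varphi_t\in\operatorname{Diff}(M,\partial M)$ with $\varphi_t(\Phi(s_{k-1}))=\Phi(t)$. Hence $(t,x)\mapsto(t,\varphi_t(x))$ trivializes $W[s_{k-1},s_k]$ as a product, and all three statements hold with equality of ranks.

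\textbf{Singular intervals.} Fix a singular time $t_i$ with ball (or half-ball) $U\ni p_i$. Outside $U$ the family is again an isotopy, so we first isotope away everything outside $U$. We then compare $M\setminus\Phi(t_i-\tau)$ with $M\setminus\Phi(t_i+\tau)$.
\begin{itemize}
\item In the interior neck-pinch, the complement gains a ball-like region: the cylinder's solid neck is cut. After the pinch, $M\setminus\Phi$ equals the earlier complement union a $3$-cell (a 1-handle's cocore region) attached along a disk. Hence the inclusion $W(t_i+\tau)\hookrightarrow W[t_i-\tau,t_i+\tau]$ is homotopic, via the spacetime track, to an inclusion into $W(t_i-\tau)$ union a contractible piece. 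Any loop at the later time can be pushed off the newly created region, which is a ball in $U$, into a loop at the earlier time. This gives (1).
\item The boundary neck-pinch is identical, with half-balls and relative cells.
\item In the shrinking process, the later complement is the earlier one union the ball $U$ minus a point. Loops there retract into $U\setminus\{p_i\}$ near $\partial U$, which at the earlier time lies in $W$.
\end{itemize}
For (2), we use Mayer--Vietoris on $\operatorname{in}$ and $\operatorname{out}$ separately. The side receiving the surgery changes by attaching a $2$-handle (interior pinch: the neck's solid side loses a 1-handle, i.e.\ a meridian disk is glued), which can only kill $H_1$ classes, never create them. The other side gains a 1-handle's complement, which is a ball union. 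Concretely:
\begin{itemize}
\item For $\operatorname{out}$ in an interior neck-pinch, the region inside the cylinder is cut by a disk. This is removal of a $2$-handle neighborhood, which does not create rank in $H_1$ because the disk is separating locally within the ball $U$.
\item For the region outside the neck, a $2$-handle (the pinched neck's spanning disk region) is added, so by Mayer--Vietoris $H_1$ surjects from before to after.
\end{itemize}
Applying this to each side yields (3). A homologically nontrivial loop later maps to a class that is the image of a class earlier under the surjection induced by the spacetime track, which gives (2). The boundary case uses relative Mayer--Vietoris over half-balls, where $U\cap\partial M$ is a disk and therefore contributes no $H_1$.

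The main obstacle is the careful handle bookkeeping in the boundary neck-pinch and the shrinking at $p_i\in\partial M$. There one must check that the half-disk attached across the half-cylinder, meeting $\partial M$ in an arc, acts on $\operatorname{in}$ and $\operatorname{out}$ as a $2$-handle attachment and a $1$-handle deletion respectively. It must not act as something that could create a new $H_1$ generator through $\partial M$. I would verify this in the explicit half-catenoid and half-double-cone model of Figure \ref{fig:Boundary neck-pinch} by doubling across $\partial M$. Doubling reduces the boundary neck-pinch to an interior neck-pinch that is symmetric under reflection, and the rank statements descend to the quotient.
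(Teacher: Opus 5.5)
Your local Mayer--Vietoris bookkeeping is a genuinely different route from the paper's, which follows Chu--Li. The paper proves (2) by contraposition: it extends a $2$-chain $C_t$ bounding the pushed-back loops $\gamma_t$ forward through each singular time. At a boundary neck-pinch, $C_t$ is pushed off $\partial M$ so that it meets the spanning half-disk only in closed curves, which are removed by cut-and-paste. The paper then deduces (3) from (2) by a linear-independence argument. Your route could work, but as written the argument for (2) has a gap.

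The ``forward surjection'' exists only on the side that absorbs the region around the pinch point. On the other side, call it $\Omega$, the pinch cuts $\Omega$ along a meridian (half-)disk $D$. There the spacetime track gives no map from before to after, since a loop crossing $D$ before the pinch cannot be continued. A rank inequality also says nothing about whether a particular loop that is nontrivial after the pinch stays nontrivial once pushed back. What you need is that $H_1(\Omega\setminus N(D);\mathbb{Z})\to H_1(\Omega;\mathbb{Z})$ is injective. This follows from Mayer--Vietoris because $H_1(D)=0$, and it is exactly the homological content of the paper's surgery on $C_t$. Your stated reason (``the disk is separating locally within $U$'') is not the relevant one: every disk separates a ball. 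The typical case is a globally non-separating meridian disk of a solid torus, where the rank does drop.

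Your handle identifications are also off, which matters for the step you single out as the main obstacle. In the half-catenoid model, the side outside the half-tube is, inside $U$, a half solid torus, i.e.\ a ball. The region opened up by the pinch is glued to it along a half-annulus, i.e.\ a disk. So a boundary neck-pinch does not change the homotopy type of that side at all; it is not a $2$-handle attachment. Similarly, when an interior double cone is left intact, the loop around the vertex survives, so no $2$-handle is attached. In the split interior case the new $3$-cell is glued along an annulus, not along a disk as your first bullet says.

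Monotonicity of ranks still holds, since the only change is cutting along a half-disk. Your doubling argument, however, does not prove it as stated. $H_1$ of the doubled region also contains reflection-anti-invariant classes, for instance those coming from $\tilde H_0(\Omega\cap\partial M)$ and the loop around the doubled neck. Rank statements for doubles therefore only descend through a transfer argument: with $\mathbb{Q}$ coefficients, $H_1(\Omega;\mathbb{Q})$ is the invariant part, and taking invariants is exact. It is simpler to run Mayer--Vietoris directly in the half-ball.

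Finally, your treatment of (2) and (3) omits the shrinking process. There one side loses whole components contained in $U$, so the backward map on $H_1$ is injective. The other side absorbs $U\setminus\{p_i\}$ along the connected region adjacent to the relative boundary of $U$, which gives a surjection on $H_1$ because $H_1(U\setminus\{p_i\})=0$.
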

\begin{proof}
The proof of $(1)$ is the same as that of
\cite[Proposition~9.1 (1)]{chu2024existence}.

By the proof of \cite[Proposition 9.1 (2)]{chu2024existence}, to show $(2)$, it suffices to consider how to extend the $2$-chain $\{C_t\}$ bounded by the loop $\{\gamma_t\}$ in \cite[Proposition 9.1 (2)]{chu2024existence} over a singular point $(t_0, p_0)$ where $p_0 \in \partial M$. 

If $(t_0, p_0)$ is a boundary neck-pinch point, the surgery region $U$ is a half-ball. For $t< t_0$ close to $t_0$, the solid cylinder and the disk $\Phi(t) \cap U$ in the interior neck-pinch case are replaced by a solid
half-cylinder and a half-disk $\Phi(t) \cap U$, respectively. By slightly perturbing away from $\partial M$, we can assume the 
bounding $2$-chain $C_t$ is disjoint from both $\partial M$ and $\Phi(t)$.
Thus, its intersection with the  half-disk $\Phi(t) \cap U$ consists
only of simple closed curves. These curves can be removed by the same
surgery arguments as in the interior neck-pinch case \cite[Proposition 9.1 (2) Case (a)]{chu2024existence}. The argument for $(t_0, p_0)$ being a shrinking process in a half-ball is identical to that in an interior ball, with $\partial U$ replaced by the relative boundary $\partial U \cap \operatorname{int}(M)$. Thus we can follow the proof of  \cite[Proposition 9.1 (2)]{chu2024existence} to show \normalfont{$(2)$}.

Finally, \textnormal{$(3)$} follows from
\textnormal{$(2)$} by the same linear-independence argument as in
\cite[Proposition~9.1 (3)]{chu2024existence}.
\end{proof}

\section{Nonvanishing cohomology of $6$-parameter family} \label{section:topological arguments}
In this section, we proceed as \cite[Section 5.5]{chu2024existence} to show Theorem \ref{thm: nontrivial topology}. By Remark \ref{rem: Continuous orientation}, since our parameter space $Y := \mathbb{RP}^4 \times \mathbb{RP}^2$ is  the trivial $\mathbb{RP}^4$ bundle of $\mathbb{RP}^2$, our proof is easier than \cite[Section 5.5]{chu2024existence}. 
 
We recall the $6$-parameter family (see Theorem \ref{thm:genuszerotwoboundary}).
\begin{equation} \label{eq: 6-parameter}
\Phi_6([(v, t)], [a])= \Phi_4^{[a]}[(v,t)],
\qquad ([(v,t)],[a])\in Y.
\end{equation}
and the following definitions:
\begin{itemize}
    \item $\lambda : = [\Phi_6]^*(\bar{\lambda})$ where $\bar{\lambda}$ is the generator of $H^*(\mathcal{Z}_2(\mathbb{B}^3, \partial \mathbb{B}^{3};\mathbb{Z}_2) ; \mathbb{Z}_2)$. 
    \item $A \coloneqq \mathbb{RP}^4 \times \mathbb{RP}^1 \subset Y$. Define $\alpha$ to be the Poincar\'e dual $PD(A)$ of $A$. 
\end{itemize}
\begin{thm}
In the cohomology ring $H^*(Y ; \mathbb{Z}_2)$,
\begin{equation*}
\lambda^4 \cup \alpha^2 \neq 0.
\end{equation*}
\end{thm}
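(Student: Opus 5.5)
Since $Y=\mathbb{RP}^4\times\mathbb{RP}^2$, the Künneth formula with $\mathbb{Z}_2$ coefficients gives $H^*(Y;\mathbb{Z}_2)\cong\mathbb{Z}_2[x]/(x^5)\otimes\mathbb{Z}_2[y]/(y^3)$. Here $x=\mathrm{pr}_1^*(\text{generator of }H^1(\mathbb{RP}^4))$ and $y=\mathrm{pr}_2^*(\text{generator of }H^1(\mathbb{RP}^2))$. The top class $x^4y^2$ is nonzero, and every degree-six class is a multiple of it. So the plan is to identify $\lambda$ and $\alpha$ explicitly in terms of $x$ and $y$, and then expand the product.

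\textbf{Computing $\alpha$.} The submanifold $A=\mathbb{RP}^4\times\mathbb{RP}^1$ is the preimage under $\mathrm{pr}_2$ of a projective line in $\mathbb{RP}^2$. The Poincaré dual of $\mathbb{RP}^1\subset\mathbb{RP}^2$ is the generator of $H^1(\mathbb{RP}^2;\mathbb{Z}_2)$. Poincaré duality is natural under pullback by the projection, since $A$ is a transverse preimage. Hence $\alpha=y$.

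\textbf{Computing $\lambda$.} Write $\lambda=ax+by$ with $a,b\in\mathbb{Z}_2$. We evaluate $\lambda$ on the two generators of $H_1(Y;\mathbb{Z}_2)$.
\begin{itemize}
\item On the loop $\gamma(t)=([(v_0,t)],[a_0])$ in the $\mathbb{RP}^4$ factor: this is the loop used in Proposition \ref{prop: 4sweepout}, where $\Phi_6\circ\gamma$ is a standard sweepout of $\mathbb{B}^3$ by orthogonal spherical caps. Therefore $\langle\lambda,\gamma\rangle=1$, so $a=1$.
\item On the loop $\sigma(s)=(z_0,[a(s)])$ in the $\mathbb{RP}^2$ factor, where $[a(s)]$ traverses a generator of $\pi_1(\mathbb{RP}^2)$: we compute $b=\langle\lambda,\sigma\rangle$.
\end{itemize}

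For the second loop, the cleanest choice is $z_0=[(0,0)]$, so that $\Phi_6(z_0,[a])$ is the Clifford annulus $C_{[a]}$ itself. The loop $[a]\mapsto C_{[a]}$ is then the $\mathbb{RP}^1$-family of rotated Clifford annuli. Remark \ref{rem: Continuous orientation} states that the $\mathbb{RP}^2$ family of Clifford annuli is not a sweepout. The reason is that the bounded regions $A_a=A_{-a}$ vary continuously and close up around the loop, so $\{C_{[a]}\}$ lifts to a loop of Caccioppoli sets, i.e.\ of $3$-chains. By Almgren's isomorphism, a loop of relative cycles that lifts to a loop of bounding $3$-chains is trivial in $\pi_1(\mathcal{Z}_2(\mathbb{B}^3,\partial\mathbb{B}^3;\mathbb{Z}_2))$. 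Hence $b=0$ and $\lambda=x$.

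Even if $b$ were $1$, the conclusion would still hold: $(x+y)^4=x^4+y^4=x^4$ over $\mathbb{Z}_2$, because $y^3=0$.

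\textbf{Conclusion.} We get $\lambda^4\cup\alpha^2=x^4y^2\neq0$, as required.

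\textbf{Main obstacle.} The only step needing real care is rigorously justifying $\langle\lambda,\gamma\rangle=1$ via Proposition \ref{prop: 4sweepout}, together with the naturality of Poincaré duality for $\alpha$. The binomial observation above makes the value of $b$ irrelevant, so we would not spend effort computing it beyond a remark.
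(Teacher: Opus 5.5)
Your proof is correct and follows the paper's argument. You use Künneth for $Y=\mathbb{RP}^4\times\mathbb{RP}^2$, take $\alpha$ as the pullback of the generator of $H^1(\mathbb{RP}^2;\mathbb{Z}_2)$, and get the $\mathbb{RP}^4$-coefficient of $\lambda$ equal to $1$ because each fibre family $\Phi_4^{C_{[a]}}$ is a $4$-sweepout; the paper's $\gamma^4\cup\alpha^2=PD(\text{point})$ is your $x^4y^2\neq 0$. Your remark that $(x+by)^4=x^4+by^4=x^4$ over $\mathbb{Z}_2$ is a genuine simplification: it makes the paper's step $c_2=0$ unnecessary, and that step relies on Remark \ref{rem: Continuous orientation} (the rotation family of Clifford annuli is not a $1$-sweepout), which the paper only states rather than proves carefully.
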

\begin{proof}
Define the submanifold
\begin{equation*}
    E :=\{([(v, t)],[a]) \in \mathbb{RP}^4 \times \mathbb{RP}^2: t = 0 \}.
\end{equation*}                                        
Then $E \cong \mathbb{RP}^3 \times \mathbb{RP}^2$, and $E$ can be viewed as a subset of $Y$. Geometrically, $\Phi_6 |_E$ consists of images of Clifford annuli under conformal maps, and topological disks. Let $\gamma := PD(E) \in H^1(Y ; \mathbb{Z}_2)$ be the Poincar\'e dual of $E$.
\begin{clm} \label{clm H^1(Y)}
The cohomology ring $H^*(Y ; \mathbb{Z}_2)$ is generated by $\gamma, \, \alpha$.
\end{clm}
\begin{proof}
By definition, $\alpha$ is the pullback of the generator of $H^*(\mathbb{RP}^2 ; \mathbb{Z}_2)$ under the projection map $ \pi_1: Y \to \mathbb{RP}^2$; $\gamma$ is the pullback of the generator of $H^*(\mathbb{RP}^4 ; \mathbb{Z}_2)$. The claim then follows from the Kunneth Theorem (see \cite[Corollary 3B. 7]{Hatcher2002}).
\end{proof}

Let $k_1, k_2 \subset \mathbb{RP}^2$ be two transverse isotopic loops in $\mathbb{RP}^2$ with a single intersection point $p = [a] \in \mathbb{RP}^2$. Let $Y_p \coloneqq \mathbb{RP}^4\times \{ p\} \subset Y$ be the fiber over $p$.

\begin{clm}
    $\alpha^2 = PD(Y_p).$
\end{clm}
\begin{proof}
By definition, $\alpha$ is the Poincar\'e dual of $A_i := \pi_1 ^{-1}(k_i)$ for $i \in \{1, 2\}$. Therefore, we obtain
\[
\alpha^2 = PD(A_{1} \cap A_{2}) = PD(Y_p).
\]
\end{proof}

\begin{clm} 
    $\lambda = \gamma$ in $H^1(Y ; \mathbb{Z}_2).$
\end{clm}
\begin{proof}
By Claim \ref{clm H^1(Y)}, we can assume
\[
\lambda = c_1 \gamma + c_2 \alpha \in H^1(Y ; \mathbb{Z}_2). 
\]
By Remark \ref{rem: Continuous orientation}, for any cycle $\sigma$ detected by $\alpha$, $\Phi_6 \circ \sigma$ is never a $1$-sweepout as it corresponds to a rotation of some Clifford annulus that preserves the orientation. Thus, $c_2 = 0$. Since $\Phi_6 |_{Y_p}$ is a $4$-sweepout, $l_p^*(\lambda^4) \neq 0 \in H^4(Y_p ; \mathbb{Z}_2)$. Thus, $\lambda $ is nonzero and hence $\lambda = \gamma$.
\end{proof}

Now we can finally give the proof of Theorem \ref{thm: nontrivial topology}.

Since $E \to \mathbb{RP}^2$ is a trivial $\mathbb{RP}^3$-bundle, we can take $E_i \subset \mathbb{RP}^4$ for each $i = 1, \cdots, 4$, such that $E_i \cong \mathbb{RP}^3$ and $\bigcap_{i = 1}^4 E_i = \{ q\}$ is a single point $q \in \mathbb{RP}^4$. By definition, $\gamma = PD(\pi_2^{-1}(E_i))$ for each $i = 1, \cdots, 4$.

Thus, we obtain
\[
\lambda^4 \cup \alpha^2 = \gamma^4 \cup \alpha^2 = PD\left(Y_p \cap \bigcap_{i = 1}^4 \pi_2^{-1}(E_i) \right) = PD(\{(q, o) \}) \neq 0.
\]
\end{proof}
\section{Topology of annulus cap} \label{sec: topology of boundary two cap}

The goal of this section is to prove Theorem \ref{thm:trivialInFirsthomo}. We will proceed the same as in \cite[Section 10]{chu2024existence} with necessary modifications as we are working on annuli in a $3$-ball instead of tori in a $3$-sphere.

As explained in Section \ref{sec: Existence of 3 free-boundary minimal annuli}, it suffices to show that for each annulus cap $C \subset \text{dmn}(\Xi)$, the map
\[
i_* : H_1(C; \mathbb{Z}_2) \to H_1(\text{dmn}(\Xi); \mathbb{Z}_2)
\]
induced by the inclusion $i : C \xhookrightarrow{} \text{dmn}(\Xi)$ is trivial. Recall Proposition \ref{prop:XiProperty}, $T(C)$ strongly deformation retracts onto $C$, and hence, it suffices to prove the claim for loops contained in the original annulus cap $C$. In particular, for any fixed loop $c \subset C$, we aim to show that
\begin{equation} \label{eq: i(c) is trivial in first homology}
[i(c)] = 0 \in H_1(\text{dmn}(\Xi); \mathbb{Z}_2).    
\end{equation}
Note that, by the definition of an annulus cap, the image
\begin{equation}
    [\Xi](c) \subset \mathbf{B}_{d_0}^{\mathbf{F}}([A])
\end{equation}
for some properly embedded free boundary minimal annulus $A \subset M$.

The proof of (\ref{eq: i(c) is trivial in first homology}) will consist of three steps. First, we need to understand the topology of the elements of $\Xi|_c$. In the second step, using the family $\tilde{\Xi}$ obtained in Proposition \ref{prop:XiPsiHomotopic}, we, in a certain sense, homotope the family $\Xi|_c$ back to some subfamily $\Phi|_{c_0}$ of $\Phi$, for some loop $c_0 \subset Y$, while keeping track of the topology of the members of this homotopy. We will show that one can assume $\Phi|_{c_0}$ is contained in the $\mathbb{RP}^2$-family $\mathcal{C}$ of unoriented Clifford annuli (see Section \ref{sec: 6-parameter family}). To prove (\ref{eq: i(c) is trivial in first homology}), it suffices to show that $c_0$ is homologically trivial in $Y$. In the third step, we prove that $c_0$ is homologically trivial by studying the family $\Psi$. We can directly adapt the argument for the first two steps in \cite[Section 10]{chu2024existence} with trivial modifications. As our parameter space $Y$ is different from \cite{chu2024existence}, we have to make some modifications to the third step in \cite[Section 10]{chu2024existence}. Now we give the details below.

\subsection{The family $\Xi|_c$} In Proposition \ref{prop:XiPsiHomotopic}, we obtained a mapping cone $\tilde{W}$ containing $Y$ and dmn$(\Xi)$, a map $\tilde{F}: [0, 1] \times \tilde{W} \to \tilde{W}$, and a Simon-Smith family $\tilde{\Xi}: \tilde{W} \to \mathcal{S}^*(M)$ that each surface is of genus $0$ with at most $2$ boundary components. Then we can define a map
\[
G_1 : [0, 1] \times \mathbb{S}^1 \to\tilde{W}
\]
such that $G_1 (1, \cdot )$ parametrizes the loop $c \subset \tilde{W}$ given above. We then define, for every $t \in [0, 1]$ and $\theta \in \mathbb{S}^1$,
\[
G_1(t, \theta) := \tilde{F}(t, G_1(1, \theta)).
\]
By Proposition \ref{prop:XiPsiHomotopic}, for each $\theta \in \mathbb{S}^1$, the family $\{\tilde{\Xi}\circ G_1(t, \theta) \}_{t \in [0, 1]}$ is a relative pinch-off process.

To prove Theorem \ref{thm:trivialInFirsthomo}, we need more information about $\Xi(x)$ for $x \in C$ beyond the fact that $C$ is an annulus cap. More precisely, we need that the inside region enclosed by the surface resemble a solid ball, and the outside region resemble a solid torus.

\begin{prop} \label{prop: rigidity of surfaces close to an annulus}
In a Riemannian $3$-ball $(M, g)$, let $\Sigma$ be a smooth properly embedded unknotted annulus. There exists a constant $\tau (\Sigma) > 0$ with the following property.

Let $S \in \mathcal{S}(M)$. By Definition \ref{def: Punctate surfaces}, let $\Omega, \Omega' \subset M$ be the two open subsets of $M \setminus S$, whose reduced boundaries are both $S \setminus S_{iso}$. Assume that:
\begin{enumerate}[label=\normalfont(\roman*)]
    \item $H_1(\Omega; \mathbb{Z})$ and $H_1(\Omega'; \mathbb{Z})$ are both either $0$ or $\mathbb{Z}$, and cannot be both $\mathbb{Z}$ at the same time.
    \item $\mathbf{F}([S], [\Sigma]) < \tau(\Sigma)$.
\end{enumerate}
Then,
\begin{enumerate} [label=\normalfont(\arabic*)]
    \item Fix an inside direction for $\Sigma$, such that  the exterior region bounded by $\Sigma$, $\operatorname{out}(\Sigma)$ is a solid torus, and the interior region $\operatorname{in}(\Sigma)$ is a solid ball. Then there is a unique choice of an inside direction for $S$ such that
    \[
    \operatorname{vol}(\operatorname{in}(S) \Delta \operatorname{in}(\Sigma)), \operatorname{vol}(\operatorname{out}(S) \Delta \operatorname{out}(\Sigma)) < \tau(\Sigma).
    \]
    where $\{ \operatorname{in}(S), \operatorname{out}(S) \} = \{ \Omega, \Omega' \}$. After this choice, we relabel the two components so that $\Omega=out(S)$, $\Omega'=in(S)$.
    \item $H_1(\Omega, \mathbb{Z}) \cong \mathbb{Z}$ and $H_1(\Omega', \mathbb{Z}) \cong 0$.
    \item Fixing a generator $a_0$ for $H_1(\operatorname{out}(\Sigma); \mathbb{Z})$, there is a unique generator $a_1$ for $H_1(\operatorname{out}(S); \mathbb{Z})$ such that there exists a loop $\gamma \subset \operatorname{out}(S) \cap \operatorname{out}(\Sigma)$ that induces both $a_0$ and $a_1$.
\end{enumerate}
\end{prop}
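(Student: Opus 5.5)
The plan is to adapt the closed-case rigidity statement in Chu--Li (their proposition on surfaces close to a torus in $S^3$) to the relative setting: a $3$-ball $M$ and an unknotted properly embedded annulus $\Sigma$ that splits $M$ into a solid ball $\operatorname{in}(\Sigma)$ and a solid torus $\operatorname{out}(\Sigma)$. Everything rests on the fact that $\mathbf F$-closeness of the relative cycles $[S]$ and $[\Sigma]$ forces closeness of the bounding Caccioppoli sets, up to complement. Since $M$ is a ball, every relative $2$-cycle bounds, and by the constancy theorem the region it bounds is unique up to taking the complement. The flat distance is $\inf\{\mathbf M(Q):T=\partial Q\}$, and $\mathbf F$-closeness implies flat closeness. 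Hence, once $\tau$ is small, one of $\Omega,\Omega'$ is volume-close to $\operatorname{in}(\Sigma)$ and the other to $\operatorname{out}(\Sigma)$. Both $\operatorname{in}(\Sigma)$ and $\operatorname{out}(\Sigma)$ have volume bounded away from $0$, and their symmetric difference is all of $M$, which is large. So for $\tau$ small at most one labelling can satisfy the volume estimate, and this gives (1) with uniqueness.

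For (2) and (3), I will fix a core circle $\gamma_0$ of the solid torus $\operatorname{out}(\Sigma)$ that stays a definite distance from $\Sigma$ and from $\partial M$; it generates $H_1(\operatorname{out}(\Sigma);\mathbb Z)$. I also fix a tubular neighborhood $N_\rho(\gamma_0)\subset\operatorname{out}(\Sigma)$. The claim is that, for $\tau$ small, $\gamma_0$ (or a slight perturbation of it) lies in $\operatorname{out}(S)$. By varifold closeness, the mass of $S$ inside $N_\rho(\gamma_0)$ is tiny. By the volume estimate of (1), $\operatorname{out}(S)$ fills almost all of $N_\rho(\gamma_0)$. A coarea/Fubini argument over the family of parallel circles $\{\gamma_0+w\}_{|w|<\rho}$ then produces one such circle that misses $S$ entirely; this uses that $S$ is smooth away from finitely many points, so a generic circle avoids those points. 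That circle lies in $\operatorname{out}(S)\cap\operatorname{out}(\Sigma)$ and represents $a_0$.

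Next I need this loop to be nontrivial in $H_1(\operatorname{out}(S);\mathbb Z)$. For that I use a dual linking disk: a meridian disk $\Delta$ of the complementary ball side, i.e. a properly embedded disk in $\operatorname{in}(\Sigma)$ whose boundary arc on $\partial M$ together with $\Sigma$ links $\gamma_0$ once. The same Fubini argument lets me choose $\Delta$ so that its interior lies in $\operatorname{in}(S)$ away from $S$. I then argue via intersection numbers in $(M,\partial M)$. If the circle bounded a $2$-chain in $\operatorname{out}(S)$, that chain would have to meet the relative cycle built from $\Delta$ an odd number of times, which is impossible because the chain stays in $\operatorname{out}(S)$ while $\Delta$ sits in $\operatorname{in}(S)$. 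Relative Lefschetz duality is the tool here, and I expect this linking step near $\partial M$ to be the main obstacle.

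Once the circle is known to be nontrivial in $H_1(\operatorname{out}(S);\mathbb Z)$, hypothesis (i) forces $H_1(\operatorname{out}(S))\cong\mathbb Z$ and $H_1(\operatorname{in}(S))=0$, which is (2). The class of the circle is then a nonzero element of $\mathbb Z$. To see it is a generator, I will use the linking number with $\Delta$, which is $\pm1$; this pins down the sign and gives existence of $a_1$. For uniqueness, suppose another loop $\gamma'\subset\operatorname{out}(S)\cap\operatorname{out}(\Sigma)$ induces $a_0$. Then $\gamma'$ has the same linking number with the dual cycle, so it determines the same class in $H_1(\operatorname{out}(S))$, which proves (3).
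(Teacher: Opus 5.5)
Your treatment of (1), via flat closeness and the constancy theorem, is fine. So is the Fubini argument that produces a circle $\gamma\subset\operatorname{out}(S)\cap\operatorname{out}(\Sigma)$ representing $a_0$. Both match the paper.

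The gap is in the step you flagged yourself: the dual object $\Delta$. A $2$-chain $c\subset\operatorname{out}(S)$ with $\partial c=\gamma$ and a disk $\Delta$ in a $3$-manifold do not meet in isolated points, so ``$c$ meets $\Delta$ an odd number of times'' has no meaning. Pairing against $2$-chains requires a $1$-dimensional relative cycle. A disk also cannot be placed in $\operatorname{in}(S)$ disjoint from $S$ by Fubini: a one-parameter family of parallel disks only lets coarea make $\mathcal{H}^1(S\cap\Delta_s)$ small, not zero. Your description of $\Delta$ is also incoherent. The meridian disk of $\overline{\operatorname{in}(\Sigma)}\cong D^2\times I$ has its boundary on $\Sigma$, not an arc on $\partial M$, and its boundary is parallel to the core of $\operatorname{out}(\Sigma)$ rather than dual to it.

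The correct dual object, which the paper uses, is a properly embedded arc $\alpha\subset\operatorname{in}(\Sigma)$ joining the two caps $B_\Sigma=\partial\operatorname{in}(\Sigma)\cap\partial M$. This arc represents a generator $b_0$ of $H_1(\overline{\operatorname{in}(\Sigma)},B_\Sigma;\mathbb{Z})\cong\mathbb{Z}$ and has relative linking number $\pm1$ with $a_0$.

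You can put such an arc inside $\operatorname{in}(S)$ and off $S$ in either of two ways. One is your own Fubini argument run over a two-parameter family of parallel arcs: the projection of $S$ onto a $2$-dimensional parameter disk has area controlled by the small mass of $S$ away from $\Sigma$. The other is the paper's route: choose a parallel surface $\Sigma_{t_0}$ on which $S\cap\Sigma_{t_0}$ lies in a few tiny balls, and take the arc in $\Sigma_{t_0}$ avoiding them.

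With $\alpha$ in hand, $\operatorname{out}(S)\subset M\setminus\alpha$, and $H_1(M\setminus\alpha;\mathbb{Z})\cong\mathbb{Z}$ is generated by a meridian of $\alpha$. Your circle maps to $\pm1$ there. This shows at once that $[\gamma]\neq0$ in $H_1(\operatorname{out}(S);\mathbb{Z})$, which gives (2) via hypothesis (i). It also shows that $[\gamma]$ is a generator and that $H_1(\operatorname{out}(S))\to H_1(M\setminus\alpha)$ is an isomorphism. That isomorphism makes your uniqueness argument for (3) valid: since $\operatorname{out}(\Sigma)\subset M\setminus\alpha$, any other loop in $\operatorname{out}(S)\cap\operatorname{out}(\Sigma)$ representing $a_0$ has the same image in $H_1(M\setminus\alpha)$. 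With $\Delta$ replaced by $\alpha$, your proof is the paper's linking-number argument.
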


\begin{proof}
First, choose a constant $d_1 > 0$ small enough such that $\Sigma$ has a tubular neighborhood of width $2d_1$. Fix an inside direction of $\Sigma$ such that the exterior region bounded by $\Sigma$, $\operatorname{out}(\Sigma)$ is a solid torus and take the unit normal $N$ on $\Sigma$ to be the outward pointing normal. For each $t \in (0, d_1)$, the boundary of the $t$-neighborhood of $\Sigma$ consists of two smooth surfaces, $\Sigma_{t}$ and $\Sigma_{-t}$ Thus, $\Sigma_t$ lies outside $\Sigma$, while $\Sigma_{-t}$ lies inside. For sufficiently small $\tau(\Sigma) >0$, we have:
\[
\mathcal{F}([S], [\Sigma]) \leq \mathbf{F}([S], [\Sigma]) < \tau(\Sigma),
\]
then Property \normalfont{($1$)} of the proposition follows. 

Fix a small constant $\delta = \delta(\Sigma) > 0$ to be determined by $\Sigma$ only. By $\mathbf{F}(|S|, |\Sigma|) < \tau (\Sigma)$ and by assuming $\tau(\Sigma)$ sufficiently small (depending on $\delta$ and $\Sigma$), we can assert that: by the coarea formula and \textit{Sard's Theorem}, there exists $t_0 \in (-d_1, -\frac{d_1}{2})$ such that:
\begin{itemize}
       \item The intersection $S \cap \Sigma_{t_0}$ is transverse and is a finite union of smooth curves (not necessarily closed).
    \item These curves are contained in finitely many small balls $\{ B_{r_i}(q_i) \}_{i = 1}^N$ ($p_i$ may happen to be on $\partial M$) with $\Sigma_{i = 1}^N r_i < \delta$.
\end{itemize}
Let $B_{\Sigma} \coloneqq \partial \operatorname{in}(\Sigma) \cap \partial M$ be the union of the two disk components of $\partial \operatorname{in}(\Sigma)$ on $\partial M$. Since $\Sigma$ is a properly embedded annulus, we have
\[
H_1(\operatorname{out}(\Sigma); \mathbb{Z}) \cong \mathbb{Z}, \quad H_1(\overline{\operatorname{in}(\Sigma)}, B_{\Sigma}; \mathbb{Z}) \cong \mathbb{Z}.
\]
We fix generating elements 
\[
a_0 \in H_1(\operatorname{out}(\Sigma); \mathbb{Z}), \quad b_0 \in H_1(\overline{\operatorname{in}(\Sigma)}, B_{\Sigma}; \mathbb{Z}).
\]
such that they have relative linking number $1$ in $M$.

Then by the bullet points above and $\mathcal{F}([S], [\Sigma]) < \tau(\Sigma)$, we can choose $\delta$ small enough such that there exists a properly embedded arc 
\[
\gamma_{in} \subset \Sigma_{t_0} \cap \operatorname{in}(S), \quad \partial \gamma_{in} \subset \partial M
\]
which avoids all curves in $\Sigma_{t_0} \cap S$ and satisfies $[\gamma_{in}] = b_0 \in H_1(\overline{\operatorname{in}(\Sigma)}, B_{\Sigma}; \mathbb{Z})$. Arguing similarly, we can choose a loop $\gamma_{out} \subset \operatorname{out}(\Sigma) \cap \operatorname{out}(S)$ such that $[\gamma_{out}] = a_0 \in H_1(\operatorname{out}(\Sigma); \mathbb{Z})$. Noting that the relative linking number $\operatorname{link}(\gamma_{in}, \gamma_{out}) = 1$. Thus, we have shown Property \normalfont{$(2)$}. Now, set
\[
a_1 \coloneqq [\gamma_{out}] \in H_1(\operatorname{out}(S); \mathbb{Z}).
\]
From $\operatorname{link}(\gamma_{in}, \gamma_{out}) = 1$, Property \normalfont{$(3)$} follows.
\end{proof}
 Recall that by assumption, $(M, g')$ has only finitely many properly embedded free boundary minimal annuli. For each such minimal annulus, the above proposition gives a constant $\tau(\Sigma) > 0$.

Without loss of generality, we now assume that the constant $d_0 > 0$ in Section \ref{sec:boundary component cap} is smaller than $\tau(\Sigma)$ for every free boundary minimal annulus $\Sigma$. Then for each $x \in c \subset \operatorname{dmn}(\Xi)$, we have the following:
\begin{enumerate}[label=\normalfont(\roman*)]
    \item $\mathbf{F}(|\Xi(x)|, |A|) < \tau(A)$. In particular, $\Xi(x) \in \mathcal{S}(M)$.
    \item Since the family $\tilde{\Xi} \circ \tilde{F}(\cdot, x)$ is a relative pinch-off process by Proposition \ref{prop:XiPsiHomotopic}, by considering the shape of the elements in the original family $\Psi$ (Recall Section \ref{sec: 6-parameter family}), we know by Proposition \ref{prop: mcf}, that $\Xi(x)$ satisfies the assumptions on $S$ in Proposition \ref{prop: rigidity of surfaces close to an annulus}.
\end{enumerate}
As a result, we can apply Proposition \ref{prop: rigidity of surfaces close to an annulus} to $\tilde{\Xi} \circ G_1(1, \theta)$ for each $\theta \in \mathbb{S}^1$ (as $G_1(1, \cdot)$ is a parametrization for the loop $c$). Thus, fixing an inside direction for the annulus $A$ such that $\operatorname{in}(A)$ is a solid ball, we have:
\begin{enumerate}[label=\normalfont(\roman*)]
    \item For each $\theta$, there is a unique choice of an inside direction for $\tilde{\Xi} \circ G_1(1, \theta)$ such that
    \[
    \operatorname{vol}(\operatorname{in}(A) \Delta \text{in}(\tilde{\Xi} \circ G_1(1, \theta))), \operatorname{vol}(\operatorname{out}(A) \Delta \operatorname{out}(\tilde{\Xi} \circ G_1(1, \theta))) < \tau(A).
    \]
    \item For each $\theta \in \mathbb{S}^1$,
    \begin{equation} \label{eq: homology of G_1}
        H_1(\operatorname{in}(\tilde{\Xi} \circ G_1(1, \theta)); \mathbb{Z}) = 0,\quad H_1(\operatorname{out}(\tilde{\Xi}  \circ G_1(1, \theta)); \mathbb{Z}) = \mathbb{Z}
    \end{equation}
    \item Let us fix a generator $a_0$ for $H_1(\operatorname{out}(A); \mathbb{Z})$. For each $\theta \in \mathbb{S}^1$, there is a unique generator $a(\theta)$ for $H_1(\operatorname{out}(\tilde{\Xi} \circ G_1(1, \theta)); \mathbb{Z})$ such that there exists a loop $\gamma_{\theta} \subset \operatorname{out}(\tilde{\Xi} \circ G_1(1, \theta))) \cap \operatorname{out}(A)$ that 
    \begin{equation}\label{eq: a_0 and a_theta}
    \begin{split} 
        & [\gamma_{\theta}] = a_0 \in H_1(\operatorname{out}(A); \mathbb{Z}),\\
        & [\gamma_{\theta}] = a(\theta) \in H_1(\operatorname{out}(\tilde{\Xi} \circ G_1(1, \theta)); \mathbb{Z})
    \end{split}
    \end{equation}
\end{enumerate}

\subsection{Relating $\Xi|_c$ to $\Psi$.} Using the Property \normalfont{(i)} above, we can continuously and uniquely choose an inside direction for $\tilde{\Xi} \circ G_1(t, \theta)$ for each $(t, \theta) \in [0, 1] \times \mathbb{S}^1$. Moreover, from the fact that $\tilde{\Xi} \circ G_1(\cdot, \theta)$ is a relative pinch-off process for each $\theta$, we know by Proposition \ref{prop: mcf} that:
\[
\operatorname{rank}(H_1(\operatorname{in}(\tilde{\Xi} \circ G_1(t, \theta); \mathbb{Z})), \quad \operatorname{rank}(H_1(\operatorname{out}(\tilde{\Xi} \circ G_1(t, \theta); \mathbb{Z}))
\]
are both non-increasing in $t$. Thus, together with (\ref{eq: homology of G_1}) and the description of the family $\Psi$ given in Section \ref{sec: 6-parameter family}, we obtain:
\begin{enumerate}[label=\normalfont(\roman*)]
    \item For each $(t, \theta) \in [0, 1] \times \mathbb{S}^1$,
    \[
\operatorname{rank}(H_1(\operatorname{in}(\tilde{\Xi} \circ G_1(t, \theta); \mathbb{Z})) = 0, \quad 
\operatorname{rank}(H_1(\operatorname{out}(\tilde{\Xi} \circ G_1(t, \theta); \mathbb{Z})) =1
    \]
    \item The family $\tilde{\Xi} \circ G_1(0, \cdot)$, which can be viewed as the same as $\Psi|_{\tilde{F}(0, c)}$, consists entirely of properly embedded smooth annuli, where $\tilde{F}(0, c)$ is a loop in $Y$.
\end{enumerate}
Now, we prove a lemma stating that all smooth properly embedded annuli in $\Psi$ can be deformation retracted to Clifford annuli.

\begin{lem}
Consider $\Psi$ as a Simon-Smith family in the unit ball $M$. Let $Z_0, Z_1 \subset Y$ denote the sets of parameters corresponding to Clifford annuli, and properly embedded annuli. Then $Z_1$ can be deformation retracted to $Z_0$.
\end{lem}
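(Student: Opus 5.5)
We will identify $Z_1$ and $Z_0$ explicitly using Proposition \ref{lem:behaviourSigmavt} and then write down a retraction that simply shrinks the time parameter. Work in a single fiber $\mathbb{RP}^4\times\{[a]\}$; since $Y$ is the trivial bundle $\mathbb{RP}^4\times\mathbb{RP}^2$ (Remark \ref{rem: Continuous orientation}), and every formula we use depends continuously (indeed smoothly) on $[a]$, it suffices to construct the retraction fiberwise and check continuity in $[a]$ at the end. In the fiber, a point is $[(v,t)]$ with $v\in\overline{\mathbb{B}^3}$ and the surface is $C(v,2\pi t+\gamma(|v|)(\tfrac{\pi}{2}-\bar r(v)))$. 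First I would check that $Z_0$ is the $t$-slice with $T(v)$ corresponding to $v=0$ up to orthogonal symmetry; more precisely, by conformal invariance, Clifford annuli arise exactly from time-shift zero and $F_{T(v)}(C)$ congruent to $C$. The cleanest choice: take $Z_0=\{v\in\mathbb{B}^3\setminus\overline\Omega_{\epsilon}, T(v)=0, \text{effective time }0\}\times\mathbb{RP}^2$, which (after the reparametrization) is one point per fiber, i.e.\ $Z_0\cong\mathbb{RP}^2$.

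Next, describe $Z_1$. From Proposition \ref{lem:behaviourSigmavt}, for each $v\in\mathbb{B}^3$ (interior, away from the blow-up region) the set of effective times $s$ for which $\Sigma_{(T(v),s)}$ is a smooth properly embedded annulus is an open interval $I_v=(t_-(v),t_+(v))$ containing $0$, with $t_\pm$ the thresholds $t_1,t_2,t_3$ appearing there (e.g.\ $(t_1,t_2)$ when $v_3=0$, $(t_2,t_3)$ otherwise). Points with $v\in\partial\mathbb{B}^3\cup\overline\Omega_\epsilon$ give spherical caps or disks (Remark \ref{rem: Area bound}), never annuli, so they are excluded. Hence $Z_1$ is the set $\{(v,s,[a]) : v\in\mathbb{B}^3\setminus\overline\Omega_\epsilon,\ s\in I_{T(v)}\}$ in the effective-time coordinates. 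I would verify that $t_\pm(v)$ depend continuously on $v$: they are defined by the vanishing of explicit quantities built from $R_v(\alpha)$, $R_v(\beta)$ (Lemmas \ref{lem:alpharadius}, \ref{lem:betaradius}), namely $t_+=\min_\alpha R_v(\alpha)$-type and $t_-=-\min_\beta R_v(\beta)$-type thresholds, and these minima of continuous functions are continuous in $v$.

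The retraction is then done in two stages. Stage one: $H_1(\sigma,(v,s,[a]))=(v,(1-\sigma)s,[a])$; since $I_v$ is an interval containing $0$, this stays in $Z_1$ and ends at $s=0$, i.e.\ at conformal images $F_{T(v)}(C_{[a]})$. Stage two: contract $v$ radially, $(v,0,[a])\mapsto(\rho v,0,[a])$, $\rho$ from $1$ to $0$; this stays in $\mathbb{B}^3\setminus\overline\Omega_\epsilon$ provided we note that $\overline\Omega_\epsilon$ is a collar near $\partial\Sigma\subset\partial\mathbb{B}^3$, so after replacing the straight contraction by a contraction of $\mathbb{B}^3\setminus\overline\Omega_\epsilon$ onto $\{T(v)=0\}$ (which is star-shaped-like after composing with $T$, since $T$ is the identity away from the collar) the surfaces stay $F_w(C_{[a]})$, all annuli, ending at $C_{[a]}$. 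Points of $Z_0$ are fixed throughout if we use $T(v)=0$ in Stage two, giving a deformation retraction.

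The main obstacle is the bookkeeping of the reparametrization $2\pi t+\gamma(|v|)(\tfrac\pi2-\bar r(v))$ and the quotient identifications defining $\mathbb{RP}^4$: one must confirm that the region where annuli appear avoids $|v|\ge 1/2$ issues and the boundary identification $(v,t)\sim(-v,-t)$, so that $H_1,H_2$ descend. I would handle this by noting the identification only acts on $\partial\mathbb{B}^3\times[-1,1]\cup\overline{\mathbb{B}^3}\times\{\pm1\}$, which lies outside $Z_1$, and that on $Z_1$ the shift $\gamma(|v|)(\tfrac\pi2-\bar r(v))$ is a continuous function of $v$ alone, so straight-line homotopy in the effective time is equivalent to one in $t$. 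Continuity in $[a]$ follows since everything is conjugated by $R_a$.
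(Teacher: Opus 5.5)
Your argument is correct and is essentially the paper's own proof, which defers to Chu--Li, Lemma 10.3. By Proposition \ref{lem:behaviourSigmavt}, the annulus parameters are exactly the interior conformal parameters together with an open interval of effective times containing $0$. One therefore slides the effective time to $0$ and then contracts the conformal parameter to the origin. This uses only that each interval is convex; continuity of $t_\pm$ is not needed.

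The one point you assert rather than prove is that $Z_0$ is exactly the slice $\{v=0,\ s=0\}\times\mathbb{RP}^2$. This deserves a line of justification, for example that a conformal image or parallel surface of a half Clifford torus is a Clifford annulus only in the trivial case. The paper takes this for granted as well.
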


\begin{proof}
The proof is the same as \cite[Lemma 10.3]{chu2024existence}.
\end{proof}

Thus, by the above lemma, there exists a homotopy
\[
G_2: [0, 1] \times \mathbb{S}^1 \to Y \subset \tilde{W}
\]
such that $G_2(0, \cdot)$ lies in $Z_0$ and $G_2(1, \cdot) = G_1(0, \cdot)$. Let $c_0$ denote the loop in $Z_0$ parametrized by $G_2(0, \cdot)$. By concatenating the two homotopies $G_1$ and $G_2$, we obtain a homotopy
\[
G_3 : [0, 1] \times \mathbb{S}^1 \to \tilde{W}
\]
such that:
\begin{enumerate}[label=\normalfont(\roman*)]
    \item $G_3(0, \cdot)$ parametrizes $c_0 \subset Z_0 \subset \tilde{W}$.
    \item $G_3(1, \cdot)$ parametrizes $c \subset \text{dmn}(\Xi) \subset \tilde{W}$.
    \item For each $\theta \in \mathbb{S}^1$, the family $\tilde{\Xi} \circ G_3(\cdot, \theta)$ is a relative pinch-off process.
    \item For each $(t, \theta) \in [0, 1] \times \mathbb{S}^1$,
    \[
    H_1(\operatorname{in}(\tilde{\Xi} \circ G_3(t, \theta); \mathbb{Z}) \cong 0, \quad H_1(\operatorname{out}(\tilde{\Xi} \circ G_3(t, \theta); \mathbb{Z}) \cong \mathbb{Z}.
    \]
\end{enumerate}
In particular, by \normalfont{(i)}, \normalfont{(ii)} above and the fact that $\tilde{F}$ induces a strong deformation retraction of $\tilde{W}$ onto $Y$, to prove that $[c] = 0$ in $H_1(\operatorname{dmn}(\Xi); \mathbb{Z}_2)$, it suffices to show that $[c_0] = 0$ in $H_1(Z_0, \mathbb{Z}_2)$.

Recall that $Z_0 \cong \mathbb{RP}^2$. Thus, $H_1(Z_0; \mathbb{Z}_2) = \mathbb{Z}_2$ has $1$ generator $\lambda \in \mathbb{Z}_2$. Thus, it suffices to rule out the possibility that $[c_0] = \lambda$. Let us extend the inside directions we chose for the family $\tilde{\Xi} \circ G_1$ to the family $\tilde{\Xi} \circ G_3$. Note that this extension is continuous and unique. As a result, we can naturally identify the groups
\[
H_1(\operatorname{out}(\tilde{\Xi} \circ G_3(t, \theta)); \mathbb{Z}), \quad (t, \theta) \in [0, 1] \times \mathbb{S}^1
\]
in the following sense. Let $k \in \mathbb{N}$, for each $i, j = 1, \cdots, k$, define the closed rectangle
\[
R_{i, j} = \left[\frac{i - 1}{k}, \frac{i}{k}\right] \times \left[2 \pi \frac{j - 1}{k}, 2 \pi \frac{j}{k}\right] \subset [0, 1] \times \mathbb{S}^1.
\]

\begin{prop} \label{prop: Rectanles}
For sufficiently large $k \in \mathbb{N}$, we have: for each $(i, j)$, there exists a loop $\gamma_{i, j} $ such that the following hold: If $R_{m, n}$ is one of the rectangles that intersect $R_{i, j}$, then for every $p \in R_{m, n}$, we have:
\begin{enumerate}[label=\normalfont(\roman*)]
    \item $\gamma_{i, j} \subset \operatorname{out}(\tilde{\Xi} \circ G_3(p))$ and generates $H_1(\operatorname{out}(\tilde{\Xi}\circ G_3(p)); \mathbb{Z})$
    \item  $\gamma_{i, j} $ and $\gamma_{m, n} $ are homologous in $\operatorname{out}(\tilde{\Xi} \circ G_3(p))$
\end{enumerate}
\end{prop}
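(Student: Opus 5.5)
The plan is to use the continuity of $p\mapsto \tilde\Xi\circ G_3(p)$ on the compact square $[0,1]\times\mathbb{S}^1$, together with the Simon-Smith structure, to produce, for every $p$, a loop that stays in the outside region for all parameters near $p$. This mirrors the analogous step in \cite[Section 10]{chu2024existence}. By Definition \ref{def:fbssfamily}(ii) (upper semicontinuity of supports) and (v) (smooth convergence away from the punctate set), each $p_0\in[0,1]\times\mathbb{S}^1$ has the following property. Pick a loop $\gamma_{p_0}\subset \operatorname{out}(\tilde\Xi\circ G_3(p_0))$ generating $H_1(\operatorname{out}(\tilde\Xi\circ G_3(p_0));\mathbb{Z})\cong\mathbb{Z}$; this is possible by item (iv) of the list for $G_3$. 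We may take $\gamma_{p_0}$ compact and at positive distance $\rho(p_0)$ from $\tilde\Xi\circ G_3(p_0)$. Then there is a neighborhood $O_{p_0}$ on which $\tilde\Xi\circ G_3(p)$ avoids the $\rho/2$-neighborhood of $\gamma_{p_0}$.

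Since the inside/outside direction has been chosen continuously, and the outside regions vary continuously as Caccioppoli sets (Proposition \ref{prop: mcf}(3)), $\gamma_{p_0}$ lies in $\operatorname{out}(\tilde\Xi\circ G_3(p))$ rather than in $\operatorname{in}$ for all $p\in O_{p_0}$. A small volume of symmetric difference cannot move a whole tubular neighborhood of $\gamma_{p_0}$ from one side to the other. By Lebesgue's number lemma, for $k$ large each union of $R_{i,j}$ with its neighboring rectangles (the $3\times3$ block) lies in a single $O_{p_0}$. We set $\gamma_{i,j}:=\gamma_{p_0}$ for that $p_0$. This gives the containment part of (i).

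The main obstacle is the generating part of (i) and all of (ii), because homology of $\operatorname{out}$ may a priori change along $O_{p_0}$, for instance through neck-pinches in the relative pinch-off direction $t$. Here I would use item (iv) of the list for $G_3$: $H_1(\operatorname{out}(\tilde\Xi\circ G_3(p));\mathbb{Z})\cong\mathbb{Z}$ and $H_1(\operatorname{in})=0$ for all $p$. I would then argue via linking, as in Proposition \ref{prop: rigidity of surfaces close to an annulus}. Near $p_0$, choose also a properly embedded arc $\beta_{p_0}\subset \operatorname{in}(\tilde\Xi\circ G_3(p_0))$ with endpoints on $\partial M$ and relative linking number $1$ with $\gamma_{p_0}$. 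It exists because $\gamma_{p_0}$ generates $H_1$ of the outside while the inside is acyclic; this is Alexander/Lefschetz duality in the ball relative to $\partial M$. Shrinking $O_{p_0}$, the arc $\beta_{p_0}$ also stays in $\operatorname{in}(\tilde\Xi\circ G_3(p))$.

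Now $\operatorname{link}(\gamma_{p_0},\beta_{p_0})=1$, and $\beta_{p_0}$ lies in the complement of $\operatorname{out}(\tilde\Xi\circ G_3(p))$. Linking with $\beta_{p_0}$ defines a homomorphism $H_1(\operatorname{out}(\tilde\Xi\circ G_3(p)))\cong\mathbb{Z}\to\mathbb{Z}$ sending $[\gamma_{p_0}]$ to $1$, so $\gamma_{p_0}$ generates. This proves (i).

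For (ii), fix $p\in R_{m,n}$ with $R_{m,n}\cap R_{i,j}\neq\emptyset$. Both $\gamma_{i,j}$ and $\gamma_{m,n}$ lie in $\operatorname{out}(\tilde\Xi\circ G_3(p))$ and generate it, so $[\gamma_{i,j}]=\pm[\gamma_{m,n}]$. To fix the sign, I would orient all loops consistently by requiring linking $+1$ with a globally continuous choice of inner arc. To get that choice, pick $k$ even larger so that the arc from the block around $R_{i,j}$ and the arc from the block around $R_{m,n}$ both lie in $\operatorname{in}(\tilde\Xi\circ G_3(p))$, which has trivial $H_1$ and connected intersection with $\partial M$. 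The relative homology class of such arcs is then forced up to sign by duality, and the sign is fixed by continuity from the base identification \eqref{eq: a_0 and a_theta}. With orientations normalized this way, (ii) follows.
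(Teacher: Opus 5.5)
Your skeleton is the same as the paper's: a generating loop $\gamma_{p_0}$ at positive distance from $\tilde\Xi\circ G_3(p_0)$, a neighborhood on which it stays in $\operatorname{out}$, and a Lebesgue-number choice of $k$. The gap is in (ii). You fix signs by linking $+1$ with \emph{a globally continuous choice of inner arc}, built by matching neighboring arcs up to sign and fixing the sign \emph{by continuity}. But whether such local sign choices close up over $[0,1]\times\mathbb{S}^1$ is exactly the nontrivial content of (ii). Transport once around the $\mathbb{S}^1$ factor can come back with the opposite orientation; Lemma \ref{lem: topological lemma} shows this is forced along the $t=0$ circle when $[c_0]\neq 0$. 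So local matching plus continuity does not by itself produce a global choice. The paper gets consistency by anchoring the whole top row. Each $\gamma_{k,j}$ lies in $\operatorname{out}(A)$ and represents the single fixed class $a_0\in H_1(\operatorname{out}(A);\mathbb{Z})$, which via Proposition \ref{prop: rigidity of surfaces close to an annulus}(3) singles out the generator $a(\theta)$ for all $\theta$ at once, with no monodromy. The remaining rows are then filled by backward induction, choosing $\gamma_{i-1,j}=\pm\gamma_l$ to agree with $\gamma_{i,j}$ directly above it, so orientations are only transported in the contractible $t$-direction. You need this anchoring-plus-downward-propagation, not an a priori continuous inner arc. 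Equivalently: a $\mathbb{Z}_2$-local system of orientations on $[0,1]\times\mathbb{S}^1$ that is trivial on $\{1\}\times\mathbb{S}^1$ is trivial.

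Your linking step also contains errors. First, the claim that $\operatorname{in}(\tilde\Xi\circ G_3(p))$ meets $\partial M$ in a connected set is false, and it would defeat your own argument. $\operatorname{in}(A)\cap\partial M$ consists of two disks. If the intersection were connected, then since $H_1(\operatorname{in})=0$ the exact sequence of the pair would give $H_1(\operatorname{in},\operatorname{in}\cap\partial M)=0$, and no arc in $\operatorname{in}$ could link $\gamma_{p_0}$.

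Second, for punctate surfaces, duality pairs $H_1(\operatorname{out}(S))$ with the relative homology of the closed set $M\setminus\operatorname{out}(S)$, not of the open set $\operatorname{in}(S)$. For example, pinch the core neck of an annulus to a double-cone point. Then still $H_1(\operatorname{in})=0$ and $H_1(\operatorname{out})\cong\mathbb{Z}$, but $\operatorname{in}$ splits into two pieces, each meeting $\partial M$ in one disk. Every arc in $\operatorname{in}$ then has linking number $0$ with the generator. So \emph{it exists because the inside is acyclic} does not hold as stated. You need either an argument that such singular points do not occur along $G_3$, or a modification: let $\beta_{p_0}$ pass through them and check that it avoids $\operatorname{out}$ for nearby parameters.

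The paper itself simply asserts the generating property from the Simon--Smith condition. Your linking idea is a sensible way to justify it, but it is not correct as written.
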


\begin{proof}
This proof is the same as \cite[Proposition 10.4]{chu2024existence}, but since this argument indicates how we use $c$ to  study $c_0$, we include the whole proof for completeness.

For each $(t, \theta) \in [0, 1) \times \mathbb{S}^1$, since
\[
H_1(\operatorname{out}(\tilde{\Xi} \circ G_3(t, \theta)); \mathbb{Z}) \cong \mathbb{Z},
\]
there exists $\gamma_{(t, \theta)} \subset \operatorname{out}(\tilde{\Xi} \circ G_3(t, \theta))$ generating this homology group. From the fact that $\tilde{\Xi} \circ G_3$ is a Simon-Smith family (see Definition \ref{def:fbssfamily}), there exists a neighborhood $O_{t, \theta}$ of $(t,\theta)$ in $[0, 1) \times \mathbb{S}^1$, such that for every $p \in O_{t, \theta}$, we have
\[
\gamma_{(t, \theta)} \subset \operatorname{out}(\tilde{\Xi} \circ G_3(p)) \text{ and is a generator of }H_1(\operatorname{out}(\tilde{\Xi} \circ G_3(p); \mathbb{Z}).
\]
For $t = 1$, by (\ref{eq: a_0 and a_theta}), for each $\theta \in \mathbb{S}^1$ we can choose $\gamma_{1, \theta} = \gamma_{\theta} \subset \operatorname{out}(\tilde{\Xi} \circ G_3(1, \theta)) \cap \operatorname{out}(A)$ such that
\begin{equation} \label{eq: a_0 and a_theta state again}
\begin{split} 
        & [\gamma_{1, \theta}] = a_0 \in H_1(\operatorname{out}(A); \mathbb{Z}),\\
        & [\gamma_{1, \theta}] = a(\theta) \in H_1(\operatorname{out}(\tilde{\Xi} \circ G_1(1, \theta)); \mathbb{Z})
    \end{split}
\end{equation}
We then choose a neighborhood $O_{1, \theta}$ of $(1, \theta)$ in $[0, 1] \times \mathbb{S}^1$ satisfying the same properties as the case for $t < 1$. Clearly, $\{O_{t, \theta} \}_{(t, \theta) \in [0, 1] \times \mathbb{S}^1}$ is an open cover of $[0, 1] \times \mathbb{S}^1$. Since $[0, 1] \times \mathbb{S}^1$ is compact, we can find a finite cover, denoted by
\[
\{ O_l \}_{l = 1}^N
\]
together with the corresponding loop $\{ \gamma_l \}$. Moreover, we can take $k \in \mathbb{N}$ sufficiently large such that for each $i,j = 1, \cdots, k$, there exists some $l$ such that
\[
U(R_{i, j}) \subset O_l
\]
where $U(R_{i, j})$ is the union of $R_{i, j}$ and all the rectangles adjacent to $R_{i, j}$. 

Now we use backward induction to show the lemma. Let us first work on the rectangles $R_{k, 1}, \cdots, R_{k, k}$.

For each $R_{k, j}$, it contains some point $(1, \theta)$. Pick any $l \in \{1, \cdots, N \}$ such that $U(R_{k, j}) \subset O_l$, then $\gamma_l \subset \operatorname{out}(\tilde{\Xi} \circ G_3(1, \theta)) \cap \operatorname{out}(A)$ by our construction. Then we choose 
$\gamma_{k, j}= \gamma_l$ such that (\ref{eq: a_0 and a_theta state again}) gives
\begin{equation}  \label{eq: a_theta}
    [\gamma_{k, j}] = a_{\theta} \in H_1(\operatorname{out}(\tilde{\Xi} \circ G_1(1, \theta)); \mathbb{Z}),
\end{equation}
and
\begin{equation} \label{eq: a_0}
    [\gamma_{k, j}] = a_0 \in H_1(\operatorname{out}(A); \mathbb{Z}).
\end{equation}
Thus, for the rectangles $R_{k, 1}, \cdots, R_{k, k}$, Property \normalfont{(i)} of the proposition follows from (\ref{eq: a_theta}), and Property \normalfont{(ii)} follows from (\ref{eq: a_0}).

Now using the backward induction, suppose that for some $i$, $\gamma_{i, j}$ have been chosen for all $j = 1, 2, \cdots, k$. For each $(i -1, j)$, as before, we assume that $U(R_{i - 1, j}) \subset O_l$ for some $l$. We choose $\gamma_{i - 1, j}$ to be either $\gamma_l$ or $-\gamma_l$ such that
\[
[\gamma_{i - 1, j}] = [\gamma_{i, j}] \in H_1\left(\operatorname{out}\left(\tilde{\Xi} \circ G_3\left(\frac{i}{k}, 2 \pi \frac{j}{k}\right)\right); \mathbb{Z}\right).
\]
Thus, we have confirmed the proposition for $(i -1, j)$. By induction, we have finished the proof.
\end{proof}

\subsection{The family $\Phi|_{c_0}$.} We now focus on the loop $\{0 \} \times \mathbb{S}^1 \subset [0, 1] \times \mathbb{S}^1$. By Proposition \ref{prop: Rectanles} above, we can piecewise define a map $L_0 : \mathbb{S}^1 \times \mathbb{S}^1 \to M$ by
\[
L_0(\theta, \cdot) = \gamma_{1, j} \subset \operatorname{out}(\tilde{\Xi} \circ G_3(0, \theta)) \quad \text{for }\theta \in \left[2 \pi \frac{j - 1}{k}, 2 \pi \frac{j}{k}\right].
\]
for $j = 1, \cdots, k$. $L_0$ may not be continuous under this definition, but Proposition \ref{prop: Rectanles} \normalfont{(ii)} gives 
\[
[\gamma_{1, j} ] = [\gamma_{1, j + 1}] \in H_1(\operatorname{out}(\tilde{\Xi} \circ G_3(0, 2\pi\frac{j}{k})); \mathbb{Z}).
\]
Thus, we can make $L_0$ continuous by using a homotopy in $\operatorname{out}(\tilde{\Xi} \circ G_3(0, 2\pi\frac{j}{k})$  to continuously deform $\gamma_{1, j}$ to $\gamma_{1, j+1}$ in $(2\pi\frac{j}{k} - \epsilon, 2\pi\frac{j}{k} + \epsilon) \times \mathbb{S}^1$ for each $j$. Hence, we may assume $L_0$ is continuous. Since $\tilde{\Xi} \circ G_3(0, 0) = \tilde{\Xi} \circ G_3(0, 2 \pi)$, 
\begin{equation} \label{eq: Orientation is preserved}
    [L_0(0, \cdot)] = [L_0(2 \pi, \cdot)] \in H_1(\operatorname{out}(\tilde{\Xi} \circ G_3(0, \theta)); \mathbb{Z})
\end{equation}
However, by the topological lemma below, (\ref{eq: Orientation is preserved}) cannot hold if $[\tilde{\Xi} \circ G_3(0, \cdot)] = [c_0] \neq0$ in $H_1(Z_0 ; \mathbb{Z}_{2})$.

\begin{lem} \label{lem: topological lemma}
Let $d: [0, 2 \pi] \to Z_0$ with $d(0) = d(2 \pi)$ parametrize a loop representing $\lambda \in H_1(Z_0 ; \mathbb{Z}_2)$. There is a unique continuous choice of inside direction for $\Psi \circ d(\theta)$ such that $\operatorname{out}(\Psi \circ d(\theta))$ is a solid torus, for each $\theta \in [0, 2 \pi]$. Let
\[
L: [0, 2 \pi] \times \mathbb{S}^1 \to M
\]
be a continuous map such that each loop $L(\theta, \cdot)$ is in $\operatorname{out}(\Psi \circ d(\theta))$ and generates
\[
H_1(\operatorname{out}(\Psi \circ d (\theta)); \mathbb{Z}) \cong \mathbb{Z}.
\]
Then $[L(0, \cdot)]$ and $[L(2 \pi, \cdot)]$, which both are elements in $H_1(\operatorname{out}(\Psi \circ d (0)); \mathbb{Z})$ differ exactly by a sign.
\end{lem}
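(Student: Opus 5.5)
Since $Z_0\cong\mathbb{RP}^2$ parametrizes the unoriented Clifford annuli $C_{[a]}$, I will lift the loop $d$ to $\tilde{\mathcal C}\cong\mathbb S^2$. A loop representing $\lambda$ lifts to a path $\tilde d:[0,2\pi]\to\mathbb S^2$, $\tilde d(\theta)=a(\theta)$, with $a(2\pi)=-a(0)$. Up to homotopy I may take $a(\theta)=R_\theta e_3$, a half great circle, where $R_\theta\in SO(3)$ is the rotation by angle $\theta/2$ about a fixed axis $e_1\perp e_3$. Then $\Psi\circ d(\theta)=R_\theta(C_{e_3})$ as sets.

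\textbf{Step 1: the inside direction.} By Notation \ref{subsec: orientation of K}, the solid ball region $A_a$ and the solid torus region $A_a^*$ are intrinsic to the surface: $A_a=A_{-a}$. Hence the continuous choice $\operatorname{out}=A^*_{a(\theta)}=R_\theta(A^*_{e_3})$ exists. It is unique because the alternative choice has $\operatorname{out}$ a ball, which cannot have $H_1\cong\mathbb Z$.

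\textbf{Step 2: trivialize the family of exteriors.} The rotations $R_\theta$ give a continuous family of diffeomorphisms of $M=\mathbb B^3$ carrying $\operatorname{out}(\Psi\circ d(0))$ onto $\operatorname{out}(\Psi\circ d(\theta))$. The family of pairs $(M,\operatorname{out}(\Psi\circ d(\theta)))$ is therefore isotopic to the trivial one. For any continuous $L$ as in the statement, $R_\theta^{-1}\circ L(\theta,\cdot)$ is a continuous family of loops in the fixed solid torus $A^*_{e_3}$. Each of these loops generates $H_1$, so they all represent the same class $\pm g$ with a constant sign. Thus $[L(2\pi,\cdot)]=(R_{2\pi})_*[R_{2\pi}^{-1}L(2\pi,\cdot)]=(R_{2\pi})_*[L(0,\cdot)]$ in $H_1(A^*_{e_3})$, and the lemma reduces to computing the action of $R_{2\pi}$ on $H_1(A^*_{e_3};\mathbb Z)$.

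\textbf{Step 3: the monodromy.} $R_{2\pi}$ is the rotation by $\pi$ about the $e_1$-axis, which maps $C_{e_3}$ to itself (sending $e_3\mapsto-e_3$). The generator of $H_1(A^*_{e_3})$ is the core circle $\{(\cos\alpha,\sin\alpha,0)\cdot r\}$ around the $e_3$-axis, where $r$ is chosen in the torus region. This circle lies in the plane $x_3=0$, and $R_{2\pi}$ acts on that plane as the reflection $(x_1,x_2)\mapsto(x_1,-x_2)$. It therefore reverses the orientation of the core circle, so $(R_{2\pi})_*=-1$ on $H_1(A^*_{e_3};\mathbb Z)\cong\mathbb Z$. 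Hence $[L(2\pi,\cdot)]=-[L(0,\cdot)]$.

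The main point needing care is Step 2's reduction: I must make sure that the answer does not depend on the chosen representative loop $d$ or on its lift. A homotopy of $d$ lifts to a homotopy of paths in $\mathbb S^2$ with endpoints antipodal, and the monodromy is then determined by the endpoint class. Concretely, any path in $SO(3)$ from $\mathrm{Id}$ covering the lift ends at a rotation taking $e_3$ to $-e_3$. Every such rotation is a rotation by $\pi$ about an axis orthogonal to $e_3$, which acts by $-1$ on the core circle exactly as computed. The residual stabilizer of $e_3$ consists of rotations about $e_3$, which act trivially on $H_1$. So the sign is independent of all choices, and the conclusion holds for every loop representing $\lambda$.
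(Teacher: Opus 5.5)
Your proof is correct, and it takes a more explicit route than the paper's.

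The paper argues by covering spaces:
\begin{itemize}
\item It considers the space of pairs $(\Omega,a_0)$, where $\Omega$ is the solid-torus side of a Clifford annulus and $a_0$ is a generator of $H_1(\Omega;\mathbb{Z})$.
\item It observes that this space double covers $Z_0\cong\mathbb{RP}^2$, and then asserts that it is therefore $\mathbb{S}^2$.
\item It concludes by path lifting: $\theta\mapsto[L(\theta,\cdot)]$ is a lift of $d$, and a lift of a nontrivial loop in $\mathbb{RP}^2$ to $\mathbb{S}^2$ cannot close up.
\end{itemize}
The step ``double cover, hence $\mathbb{S}^2$'' is not automatic, since a double cover of $\mathbb{RP}^2$ may be trivial. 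Indeed, the double cover of this same family by normal orientations is trivial (Remark \ref{rem: Continuous orientation}). Connectedness of the generator cover is equivalent to the nontrivial monodromy you compute.

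Your argument does this directly:
\begin{itemize}
\item You trivialize the family of exteriors by a path of rotations $R_\theta$ lifting $a(\theta)$ to $SO(3)$.
\item This reduces everything to the action of $(R_{2\pi})_*$ on $H_1(A^*_{e_3};\mathbb{Z})$.
\item You check that any rotation sending $e_3$ to $-e_3$ restricts to a reflection of the plane $x_3=0$, and so reverses the core circle.
\end{itemize}
The paper's version is shorter and more conceptual. Yours verifies the fact its covering argument rests on. Equivalently, $a\mapsto(A^*_a,\ \text{core circle oriented by the right-hand rule about } a)$ is a homeomorphism from $\mathbb{S}^2$ onto the cover.

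One small streamlining: you do not need the preliminary ``up to homotopy'' normalization of $d$, which would also require dragging $L$ along. Instead, lift the given path $a(\theta)$ to a path $R_\theta$ in $SO(3)$ with $R_0=\mathrm{Id}$, using path lifting for the fibration $SO(3)\to\mathbb{S}^2$. Steps 2 and 3 then apply verbatim, which is essentially what your last paragraph already does.
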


\begin{proof}
    Let us view $\Psi$ as a Simon-Smith family in the unit ball $M$, so that each $\Psi \circ d(\theta)$ is an unoriented rotationally symmetric annulus in $\mathcal{A}$. Since $Z_0 \cong \mathbb{RP}^2$ and $\pi_1(Z_0) = \mathbb{Z}_2$, we also have $[d] = \lambda \in \pi_1 (Z_0)$.

    Consider the set
    \begin{equation} \label{eq: the space of solid annulus with orientations}
        \begin{split}
        \bigl\{ (\Omega, a_0): \, & \Omega \subset M \text{ is a solid torus bounded by some annulus in } \mathcal{A},\\
        & a_0 \text{ is one of the two generators of }H_1(\Omega; \mathbb{Z})\bigr\}
    \end{split}
    \end{equation}

    Evidently, there is a natural topology on this set such that it is a double cover of the space
    \begin{equation} \label{eq: the space of soild torus without orientations}
    \bigl\{ \Omega: \,  \Omega \subset M \text{ is a solid torus bounded by some annulus in } \mathcal{A}\bigr\} \cong Z_0
    \end{equation}
    Thus, the space (\ref{eq: the space of solid annulus with orientations}) is homeomorphic to $\mathbb{S}^2$. Note that $L$ induces a map $L': \theta \mapsto [L(\theta, \cdot)]$ from $[0, 2 \pi]$ into the space (\ref{eq: the space of solid annulus with orientations}). To prove the lemma, it suffices to show that, while the solid tori corresponding to $L'(0)$ and $L'(2 \pi)$ are the same, the first homology classes of the solid torus given by $L'(0)$ and $L'(2 \pi)$ differ by a sign.

    Note that $d$ can be viewed as a path $d' : \theta \mapsto \operatorname{out}(\Psi \circ d(\theta))$ from $[0, 2 \pi]$ into the space (\ref{eq: the space of soild torus without orientations}) and $L'$ is a lift of $d'$ into the space (\ref{eq: the space of solid annulus with orientations}). Now we identify the space (\ref{eq: the space of soild torus without orientations}) as $\mathbb{RP}^2$, and its double cover (\ref{eq: the space of solid annulus with orientations}) as $\mathbb{S}^2$.
    Fix $x \in \mathbb{S}^2$ such that $[x]$ in $\mathbb{S}^2 / \sim \,\cong \mathbb{RP}^2$ with $x \sim -x$ corresponds to $d'(0)$. Without loss of generality, we can assume $L'(0) = x$. Since $\mathbb{S}^2$ is simply connected and $[d'] = [d] \neq 0 \in \pi_1 (\mathbb{RP}^2)$, we must have $L'(2 \pi) = -x$. Hence, $L'(0) = - L'(2 \pi)$.
\end{proof}

Plugging in $L_0$ and $c_0$ into Lemma \ref{lem: topological lemma}, we see (\ref{eq: Orientation is preserved}) cannot hold if $[\tilde{\Xi} \circ G_3(0, \cdot)] = [c_0] \neq0$ in $H_1(Z_0 ; \mathbb{Z}_{2})$. Thus, $[c]$ is trivial in $H_1(\operatorname{dmn}(\Xi); \mathbb{Z}_{2})$ and we are done with the proof of Theorem \ref{thm:trivialInFirsthomo}.

\appendix

\section{Conformal images} \label{appendix}
In this appendix we collect some facts about conformal transformations of $\mathbb{R}^3$. Recall that $F_v : \mathbb{B}^3 \to \mathbb{B}^3$ is defined by
\[
F_v(x) = \frac{(1 - |v|^2)x - (1 - 2 \langle v, x \rangle + |x|^2) v}{1 - 2 \langle v, x \rangle + |x|^2 |v|^2}.
\]
Given $p, N \in \mathbb{S}^2$ with $\langle p, N \rangle = 0$ and $0 < r < \frac{\pi}{4}$, we define
\[
\begin{split}
\Delta(p, N, r) & = \mathbb{B}^3 \setminus (B^3_{\tan{r}}(\sec{r} \, (\cos{r} \, p + \sin{r} \, N)) \cup B^3_{\tan{r}}(\sec{r} \, (\cos{r} \, p - \sin{r} \, N)) \\
& = \mathbb{B}^3 \setminus \left( \pi^{-1}\left(B^+_r(\cos{r} \, p + \sin{r} \, N, 0) \right)  \cup \pi^{-1}\left(B^+_r(\cos{r} \, p - \sin{r} \, N, 0) \right)\right).
\end{split}
\]
 This appendix follows \cite[Appendix B]{marques2014}, since our conformal deformations are similar to those in \cite{marques2014}. In our setting, the blow-up limits are orthogonal spherical caps instead of geodesic spheres. So we have to make some necessary modifications  to \cite[Appendix B]{marques2014}.
\begin{prop} \label{prop:A.1}

There is $C_0 > 0$ and, for each $r \in (0, \frac{\pi}{4})$, $C_1 = C_1(r) > 0$ and $\epsilon_0 = \epsilon_0(r) > 0$ such that the following holds: for every
\[
v = (1 - s)(\cos{t}\, p + \sin{t} \, N),
\]
with
\[
\begin{split}
& p, N \in \mathbb{S}^2, \quad \langle p, N \rangle = 0,\quad 0 < s \leq \frac{1}{2}, \quad \text{and} \quad |t| \leq \epsilon_{0},  \\
& H_N : = \{ \langle x, N \rangle < 0 : x \in \mathbb{R}^3 \} \cap \mathbb{B}^3
\end{split}
\]
we have
\[
B^4_{\overline{R} - C_0 \sqrt{| (s, t)|}} (\overline{Q}, 0) \cap \mathbb{S}_+^3 \subset \pi \circ F_v(H_N) \subset B^4_{\overline{R} + C_0 \sqrt{| (s, t)|}} (\overline{Q}, 0) \cap \mathbb{S}_+^3,
\]
and
\[
\pi \circ F_v(\Delta(p, N, r)) \subset \big(B^4_{\overline{R} +  C_1 \sqrt{|(s, t)|}}(\overline{Q}, 0) \setminus B^4_{\overline{R} - C_1 \sqrt{|(s, t)|}} (\overline{Q}, 0) \big) \cap \mathbb{S}^3_+.
\]
where
\[
\begin{split}
\overline{Q} & = - \frac{\frac{t}{s}}{\sqrt{1 + (\frac{t}{s})^2}}p - \frac{1}{\sqrt{1 + (\frac{t}{s})^2}} N, \\
\overline{R} & = \sqrt{2\left(1 - \frac{\frac{t}{s}}{\sqrt{1 + (\frac{t}{s})^2}}\right)}.
\end{split}
\]
\end{prop}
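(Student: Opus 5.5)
The plan is to transfer everything to $\mathbb{S}^3$ and work with explicit Möbius transformations, as in \cite[Appendix B]{marques2014}. The conjugate $G_v := \pi\circ F_v\circ\pi^{-1}$ is a conformal map of $\mathbb{S}^3$ preserving $\partial\mathbb{S}^3_+$. The proof of Lemma \ref{lem:betaradius} already gives the explicit formula for its inverse:
\[
G_{-v}(z)=\frac{(1-|v|^2)z+2(1+\tilde v\cdot z)\tilde v}{1+2\tilde v\cdot z+|v|^2},\qquad \tilde v=(v,0).
\]

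\textbf{Image of $H_N$.} The half ball $H_N$ corresponds to the open hemisphere $\{z\cdot(N,0)<0\}\cap\mathbb{S}^3_+$. Hence $\pi\circ F_v(H_N)=\{y\in\mathbb{S}^3_+: G_{-v}(y)\cdot(N,0)<0\}$. Since the denominator is positive, this is a linear condition in $y$:
\[
y\cdot\Big((1-|v|^2)(N,0)+2(\tilde v\cdot (N,0))\tilde v\Big)<-2\,\tilde v\cdot(N,0).
\]
So it is exactly a spherical cap $\{y\cdot Q_v<\cos\rho_v\}$ with explicit unit center $Q_v\in\partial\mathbb{S}^3_+$ (both vectors have last coordinate $0$) and explicit radius $\rho_v$. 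Equivalently, it is $B^4_{R_v}(Q_v,0)\cap\mathbb{S}^3_+$ for some explicit $R_v$. This is the same computation as in Lemmas \ref{lem:betaradius} and \ref{lem:alpharadius}.

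I would then expand in $(s,t)$. Write $|v|^2=(1-s)^2$, so $1-|v|^2=2s-s^2$, and $\tilde v\cdot(N,0)=(1-s)\sin t$. The normal vector is
\[
(2s+O(s^2))N+2(1-s)^2\sin t(\cos t\,p+\sin t\,N),
\]
which is proportional to $sN+tp+O(|(s,t)|^2)$. Therefore $Q_v$ equals $-(tp+sN)/|(s,t)|$ up to an $O(|(s,t)|)$ error, which is $\overline Q$ up to sign convention. The right-hand side divided by the norm equals $-\,t/\sqrt{s^2+t^2}+O(|(s,t)|)$, which yields $\cos\rho_v$ and hence $R_v=\overline R+O(|(s,t)|)$.

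The difficulty is that $\overline R$ and $\overline Q$ are only Hölder-$1/2$ functions of the ratio near the degenerate corner. This is precisely why the statement has $\sqrt{|(s,t)|}$ rather than $|(s,t)|$. I would absorb this by the elementary estimate
\[
|B^4_a(Q)\,\Delta\, B^4_b(Q')|\ \text{contained in a shell of width}\ |a-b|+|Q-Q'|,
\]
together with $|\sqrt{x}-\sqrt{y}|\le\sqrt{|x-y|}$ applied to $R^2=2(1-\cos\rho)$. Together these give both inclusions with a universal $C_0$, uniformly for $s\le1/2$ and $|t|\le\epsilon_0$.

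\textbf{Image of $\Delta(p,N,r)$.} The set $\Delta(p,N,r)$ is the complement in $\mathbb{B}^3$ of two orthogonal caps tangent to the disk $D=\{\langle x,N\rangle=0\}$ at $p$. I would show that $F_v$ maps it into a thin neighbourhood of $F_v(D)$. Each point of $\Delta$ stays at $\mathbb{S}^3$-distance at least $c(r)>0$ from $(p,0)$. Because $v\to p$, the map $G_v$ contracts everything away from a neighbourhood of $(p,0)$, and linearizing $G_{-v}$ near $(p,0)$ shows that the contraction is of order $\sqrt{|(s,t)|}$ in the normal direction. I would make this quantitative through the linear formula above: for $y=G_v(x)$, the quantity $G_{-v}(y)\cdot(N,0)=x\cdot(N,0)$ is bounded, in terms of $r$, by $C(r)$ times the distance of $x$ from $(p,0)$ times how far $x$ lies off $D$. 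Pushed through $G_v$, this translates into $|y\cdot Q_v-\cos\rho_v|\le C_1(r)\sqrt{|(s,t)|}$. The hyperbolic-translation structure (cross-ratio) gives the square root once again. Combined with the first step, this puts the image in the stated shell.

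\textbf{Main obstacle.} The expected obstacle is obtaining uniformity in the second inclusion when $t/s$ is unbounded, i.e.\ $\overline R\to0$ or $2$. In that regime the two tangent caps sit on opposite sides and one of them is mapped very close to $\partial\mathbb{S}^3_+$. I would handle it by splitting into the two cases $|t|\le s$ and $|t|\ge s$, and choosing $\epsilon_0(r)$ so that $v$ lies inside the union of the two excluded caps' tangency cone. This mirrors the case split in \cite[Appendix B]{marques2014}.
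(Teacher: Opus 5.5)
Your treatment of $H_N$ is correct and follows the paper (Lemma~\ref{lem: A.2}). Pulling back by $G_{-v}$ makes $\langle\cdot,N\rangle<0$ a linear condition, so $\pi\circ F_v(H_N)$ is the explicit cap centered at $Q_v=-(Q,0)/|Q|$, where $Q=2\langle v,N\rangle v+(1-|v|^2)N$, with $\cos\rho_v=2\langle v,N\rangle/|Q|$. Note that it is $R_v^2-\overline R^2$, not $R_v-\overline R$, that is $O(|(s,t)|)$; the square root then comes from $|a-b|\le\sqrt{|a^2-b^2|}$, as you say.

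The argument for $\Delta(p,N,r)$, however, has a genuine gap. Its premise, that every point of $\Delta(p,N,r)$ stays at distance at least $c(r)$ from $(p,0)$, is false. The two removed caps are open and tangent to $D=\{\langle x,N\rangle=0\}$ at $p$, so $\Delta(p,N,r)\supset D$, and in $\mathbb{S}^3$ it contains a horn-shaped region around $\pi(D)$ at $(p,0)$. The points of this region near $(p,0)$ are exactly the ones that $G_v$ spreads over most of the sphere $\pi\circ F_v(D)$; only the part of $\Delta$ far from $p$ is collapsed toward $(-p,0)$. So "contraction away from $(p,0)$" does not handle the essential points, and the bound you then state for $x\cdot(N,0)$ is circular as phrased.

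The target $|y\cdot Q_v-\cos\rho_v|\le C_1\sqrt{|(s,t)|}$ would not suffice anyway. Since $|y-Q_v|^2-R_v^2=2(\cos\rho_v-y\cdot Q_v)$, it only gives $\bigl||y-Q_v|-R_v\bigr|\lesssim|(s,t)|^{1/4}$ when $\overline R$ is small ($t/s\to+\infty$). That is precisely the regime you flag, and the proposed remedy via $\epsilon_0(r)$ and a ``tangency cone'' does not address it.

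What is needed is a bound on the cosine gap that is linear in $|(s,t)|$ on all of $\pi\circ F_v(\Delta(p,N,r))$. The paper gets the shell by writing $\Delta(p,N,r)$ as the complement of two orthogonal caps $B_1,B_2$. It computes $\pi\circ F_v(B_i)=B^4_{R_i}\bigl((Q_i,0)/|Q_i|\bigr)\cap\mathbb{S}^3_+$ explicitly and compares $(Q_i,R_i)$ with $(\overline Q,\overline R)$ as in \cite[Lemma B.7]{marques2014} (Lemma~\ref{lem: A.3}).

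Your direct route can be repaired with two ingredients it currently lacks. The first is the quadratic tangency bound $|x\cdot(N,0)|\le\tfrac12\cot r\,|x-(p,0)|^2$ on $\pi(\Delta(p,N,r))$; this is just $x\cdot(\cos r\,p\pm\sin r\,N,0)\le\cos r$ rewritten using $|x|=1$. The second is the exact identity, for $y=G_v(x)$,
\[
y\cdot(Q,0)+2\langle v,N\rangle=\frac{(1-|v|^2)^2}{|x-\tilde v|^2}\,x\cdot(N,0).
\]
Combine these with $|Q|\gtrsim|(s,t)|$, with $|x-\tilde v|\ge s$ when $|x-(p,0)|\lesssim|(s,t)|$, and with $|x-\tilde v|\ge\tfrac12|x-(p,0)|$ otherwise. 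This gives $|y\cdot Q_v-\cos\rho_v|\le C\cot r\,|(s,t)|$, and the shell inclusion then follows from your first step with no case split on $t/s$.
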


\begin{proof}
We decompose the proof of Proposition \ref{prop:A.1} into the following two lemmas.

\begin{lem} \label{lem: A.2}
There is $C_0 > 0$ so that for every
\[
v = (1 - s)(\cos{t} \,  p + \sin{t} \,  N),
\]
with
\[
p, N \in \mathbb{S}^2, \quad \langle p, N \rangle = 0, \quad 0 < s \leq \frac{1}{2}, \quad \text{and } |t| \leq \frac{1}{2}, 
\]
we have
\[
B^4_{\overline{R} - C_0 \sqrt{| (s, t)|}} (\overline{Q}, 0) \cap \mathbb{S}_+^3 \subset \pi \circ F_v(H_N) \subset B^4_{\overline{R} + C_0 \sqrt{| (s, t)|}} (\overline{Q}, 0) \cap \mathbb{S}_+^3.
\]

\end{lem}
\begin{proof}
Observe that $(F_v)^{-1} = F_{-v}$. For any $y \in F_v(H_N)$, we have
\begin{equation} \label{eq:H_v}
\langle F_{-v}(y), N \rangle < 0.
\end{equation}
By direct computations, (\ref{eq:H_v}) can be expanded as
\[
\left\langle \frac{2y}{|y|^2 + 1}, -2 \langle v, N \rangle v - (1 - |v|^2)N \right\rangle > 2 \langle v, N \rangle,
\]
which implies
\[
\left\langle \pi(y), \left(-\frac{Q}{|Q|}, 0\right) \right\rangle > \frac{2 \langle v, N \rangle}{|Q|},
\]
where 
\[
Q = 2 \langle v, N \rangle v + (1 - |v|^2)N = (2s - s^2 +2(1 - s)^2 \sin^2{t}) N + 2(1 - s)^2\sin{t} \cos{t} \, p
\]
and 
\[
|Q|^2 = (2s - s^2)^2 + 4(1 - s)^2 \sin^2{t}.
\]
Thus, we have
\begin{equation}
    \pi(F_v(H_N)) = B^4_{R} \left(   - \frac{Q}{|Q|}, 0\right) \cap \mathbb{S}_{+}^3,
\end{equation}
where $R = \sqrt{2(1 - \frac{2 \langle v, N \rangle}{|Q|})}$. Now we can directly follow \cite[Lemma B.6]{marques2014} to conclude the proof (Replace $\overline{Q}$ in their proof by $(\overline{Q}, 0)$ given above and all the computations remain the same).
\end{proof}

\begin{lem} \label{lem: A.3}
For every $r \in (0, \pi/4)$ and $0<s \le 1/2$, there is $C_1 = C_1(r) > 0 $ and $\epsilon_0 = \epsilon_0(r) > 0 $ so that for every
\[
v = (1 - s) (\cos{t} \, p + \sin{t} \, N),
\]
with 
\[
p, N \in \mathbb{S}^2, \langle p, N \rangle = 0, \quad \text{and} \quad |t| \leq \epsilon_0,
\]
we have 
\[
\pi \circ F_v(\Delta(p, N, r)) \subset \big(B^4_{\overline{R} +  C_1 \sqrt{|(s, t)|}}(\overline{Q}, 0) \setminus B^4_{\overline{R} - C_1 \sqrt{|(s, t)|}} (\overline{Q}, 0) \big) \cap \mathbb{S}^3_+.
\]
\end{lem}

\begin{proof}
let $\sigma^i = (-1)^{i + 1}$, $i = 1, 2$. Define
\[
B_i = B^3_{\tan{r}}\left(\sec{r}(\cos{r} \, p + \sigma^i \sin{r} \, N)\right) \cap \mathbb{B}^3
\]
and $h_i = \cos{r} (\cos{r}p + \sigma^i \sin{r} N)$. Then, for any $y \in F_v(B_i)$, $F_{-v}(y) \in B_i$. Thus, we have
\[
|F_{-v}(y) - \sec{r}(\cos{r} \, p + \sigma^i \sin{r} \, N)|^2 < \tan^2{r}.
\]
We then expand it to obtain
\begin{equation} \label{eq:B_i}
1 + |F_{-v}(y)|^2 - 2 \left\langle\frac{h_i}{\cos^2{r}}, F_{-v}(y) \right\rangle < 0.
\end{equation}
Using the fact that
\[
|F_{-v}(y)|^2 = \frac{|y + v|^2}{1 + 2 \langle v, y \rangle + |v|^2|y|^2}
\]
and $|h_i| = \cos{r}$, by direct computations, (\ref{eq:B_i}) can be simplified to
\[
\left\langle \frac{2y}{|y|^2 + 1}, Q_i \right\rangle > |h_i|^2 (1 + |v|^2) - 2 \langle h_i, v \rangle
\]
where
\[
Q_i = (1 - |v|^2) h_i - 2(|h_i|^2 - \langle h_i, v \rangle)v.
\]
Thus, we have
\[
\pi(F_v(B_i)) = B^4_{R_i}\left(\frac{(Q_i, 0)}{|(Q_i, 0)|}\right) \cap \mathbb{S}^3_+,
\]
where
\[
R_i = \sqrt{2\left(1 - \frac{|h_i|^2 (1 + |v|^2) - 2 \langle h_i, v \rangle}{|Q_i|}\right)}.
\]
Now we can just follow \cite[Lemma B.7]{marques2014} to conclude the proof (Replace $Q_i$ in their proof by $(Q_i, 0)$ given above and all the computations are the same). 
\end{proof}
Proposition \ref{prop:A.1} then follows from Lemma \ref{lem: A.2} and \ref{lem: A.3}.

\end{proof}

\bibliographystyle{plain}
\bibliography{citation}

@article {marques2014,
    AUTHOR = {Marques, Fernando C. and Neves, Andr\'e},
     TITLE = {Min-max theory and the {W}illmore conjecture},
   JOURNAL = {Ann. of Math. (2)},
  FJOURNAL = {Annals of Mathematics. Second Series},
    VOLUME = {179},
      YEAR = {2014},
    NUMBER = {2},
     PAGES = {683--782},
      ISSN = {0003-486X,1939-8980},
   MRCLASS = {53C42 (49Q20)},
  MRNUMBER = {3152944},
MRREVIEWER = {Andrea\ Mondino},
       DOI = {10.4007/annals.2014.179.2.6},
       URL = {https://doi-org.proxy.libraries.rutgers.edu/10.4007/annals.2014.179.2.6},         
}

@article{chu2024existence,
  title={Existence of 5 minimal tori in 3-spheres of positive Ricci curvature},
  author={Chu, Adrian Chun-Pong and Li, Yangyang},
  journal={arXiv preprint arXiv:2409.09315},
  year={2024}
}

@book{pitts2014existence,
  title={Existence and regularity of minimal surfaces on Riemannian manifolds},
  author={Pitts, Jon T},
  volume={27},
  year={2014},
  publisher={Princeton University Press}
}

@incollection{marques2020applications,
  title={Applications of min--max methods to geometry},
  author={Marques, Fernando C and Neves, Andr{\'e}},
  booktitle={Geometric Analysis: Cetraro, Italy 2018},
  pages={41--77},
  year={2020},
  publisher={Springer}
}

@article{franz2023topological,
  title={Topological control for min-max free boundary minimal surfaces},
  author={Franz, Giada and Schulz, Mario B},
  journal={Journal of European Mathematical Society, To appear},
  year={2026}
}

@article{wang2023existence,
  title={Existence of four minimal spheres in $\mathbb{S}^{3}$ with a bumpy metric},
  author={Wang, Zhichao and Zhou, Xin},
  journal={arXiv preprint arXiv:2305.08755},
  year={2023}
}

@article{ketover2019genus,
  title={Genus bounds for min-max minimal surfaces},
  author={Ketover, Daniel},
  journal={Journal of Differential Geometry},
  volume={112},
  number={3},
  pages={555--590},
  year={2019},
  publisher={Lehigh University}
}

@article{zhou2020multiplicity,
  title={On the multiplicity one conjecture in min-max theory},
  author={Zhou, Xin},
  journal={Annals of Mathematics},
  volume={192},
  number={3},
  pages={767--820},
  year={2020},
  publisher={JSTOR}
}

@article{guang2021min,
  title={Min--max theory for free boundary minimal hypersurfaces II: general Morse index bounds and applications},
  author={Guang, Qiang and Li, Martin Man-chun and Wang, Zhichao and Zhou, Xin},
  journal={Mathematische Annalen},
  volume={379},
  number={3},
  pages={1395--1424},
  year={2021},
  publisher={Springer}
}

@book{buchstaber2002torus,
  title={Torus actions and their applications in topology and combinatorics},
  author={Buchstaber, Victor M and Panov, Taras E},
  number={24},
  year={2002},
  publisher={American Mathematical Soc.}
}

@article{ketover2016free,
  title={Free boundary minimal surfaces of unbounded genus},
  author={Ketover, Daniel},
  journal={arXiv preprint arXiv:1612.08691},
  year={2016}
}

@phdthesis{nurser2016low,
  title={Low min-max widths of the round three-sphere},
  author={Nurser, Charles Arthur George},
  year={2016},
  school={Imperial College London}
}

@article{ros1999willmore,
  title={The Willmore conjecture in the real projective space},
  author={Ros, Antonio},
  journal={Mathematical Research Letters},
  volume={6},
  number={5},
  pages={487--493},
  year={1999},
  publisher={International Press of Boston, Inc. Somerville, MA 02143, USA}
}

@article{franz2023equivariant,
  title={Equivariant index bound for min--max free boundary minimal surfaces},
  author={Franz, Giada},
  journal={Calculus of Variations and Partial Differential Equations},
  volume={62},
  number={7},
  pages={201},
  year={2023},
  publisher={Springer}
}

@article{marques2016morse,
  title={Morse index and multiplicity of min-max minimal hypersurfaces},
  author={Marques, Fernando C and Neves, Andr{\'e}},
  journal={Cambridge journal of mathematics},
  volume={4},
  number={4},
  pages={463--511},
  year={2016},
  publisher={International Press of Boston, Inc. Somerville, MA 02143, USA}
}

@article{tran2016index,
  title={Index characterization for free boundary minimal surfaces},
  author={Tran, Hung},
  journal={Comm. Anal. Geom. 28(1): 189-222},
  year={2020}
}

@article{devyver2019index,
  title={Index of the critical catenoid},
  author={Devyver, Baptiste},
  journal={Geometriae Dedicata},
  volume={199},
  number={1},
  pages={355--371},
  year={2019},
  publisher={Springer}
}

@article{nitsche1985stationary,
  title={Stationary partitioning of convex bodies},
  author={Nitsche, Johannes CC},
  journal={Archive for rational mechanics and analysis},
  volume={89},
  number={1},
  pages={1--19},
  year={1985},
  publisher={Springer}
}

@book {Hatcher2002,
    AUTHOR = {Hatcher, Allen},
     TITLE = {Algebraic topology},
 PUBLISHER = {Cambridge University Press, Cambridge},
      YEAR = {2002},
     PAGES = {xii+544},
      ISBN = {0-521-79160-X; 0-521-79540-0},
   MRCLASS = {55-01 (55-00)},
  MRNUMBER = {1867354},
MRREVIEWER = {Donald\ W.\ Kahn},
}

@article{wang2024existence,
  title={Existence of infinitely many free boundary minimal hypersurfaces},
  author={Wang, Zhichao},
  journal={Journal of Differential Geometry},
  volume={126},
  number={1},
  pages={363--399},
  year={2024},
  publisher={Lehigh University}
}

@article{colding2003min,
  title={The min--max construction of minimal surfaces},
  author={Colding, Tobias H and De Lellis, Camillo},
  journal={Surveys in differential geometry},
  volume={VIII},
  year={2003}
}

@inproceedings{de2018min,
  title={Min-max theory for minimal hypersurfaces with boundary},
  author={De Lellis, Camillo and Ramic, Jusuf},
  booktitle={Annales de l'Institut Fourier},
  volume={68},
  number={5},
  pages={1909--1986},
  year={2018}
}

@article{colding2004space,
  title={The space of embedded minimal surfaces of fixed genus in a 3-manifold III; Planar domains},
  author={Colding, Tobias H and Minicozzi, William P},
  journal={Annals of mathematics},
  volume={160},
  number={2},
  pages={523--572},
  year={2004},
  publisher={JSTOR}
}

@inproceedings{schoen1984estimates,
  title={ESTIMATES FOR STABLE MINIMAL SURFACES IN THREE DIMENSIONAL MANIFOLDS},
  author={Schoen, Richard},
  booktitle={Seminar On Minimal Submanifolds.(AM-103)},
  pages={111--126},
  year={1984},
  organization={Princeton University Press}
}

@article{choi1985space,
  title={The space of minimal embeddings of a surface into a three-dimensional manifold of positive Ricci curvature},
  author={Choi, Hyeong In and Schoen, Richard},
  journal={Inventiones mathematicae},
  volume={81},
  number={3},
  pages={387--394},
  year={1985},
  publisher={Springer-Verlag Berlin/Heidelberg}
}

@article{sun2024multiplicity,
  title={Multiplicity one for min--max theory in compact manifolds with boundary and its applications},
  author={Sun, Ao and Wang, Zhichao and Zhou, Xin},
  year={2024},
  publisher={Springer Berlin Heidelberg}
}

@article{marques2021morse,
  title={Morse index of multiplicity one min-max minimal hypersurfaces},
  author={Marques, Fernando C and Neves, Andr{\'e}},
  journal={Advances in Mathematics},
  volume={378},
  pages={107527},
  year={2021},
  publisher={Elsevier}
}

@book {Simon1983Lectures,
    AUTHOR = {Simon, Leon},
     TITLE = {Lectures on geometric measure theory},
    SERIES = {Proceedings of the Centre for Mathematical Analysis,
              Australian National University},
    VOLUME = {3},
 PUBLISHER = {Australian National University, Centre for Mathematical
              Analysis, Canberra},
      YEAR = {1983},
     PAGES = {vii+272},
      ISBN = {0-86784-429-9},
   MRCLASS = {49-01 (28A75 49F20)},
  MRNUMBER = {756417},
MRREVIEWER = {J.\ S.\ Joel},
}

@article{white1994strong,
  title={A strong minimax property of nondegenerate minimal submanifolds.},
  journal={J. Reine Angew. Math.},
  author={White, Brian},
  volume={457},
  pages={203-218},
  year={1994},
  publisher={Walter de Gruyter, Berlin/New York Berlin, New York}
}

@article{sarnataro2026existence,
  title={Existence of free boundary minimal disks in convex regions},
  author={Sarnataro, Lorenzo and Stryker, Douglas and Wang, Zhichao and Zhou, Xin},
  journal={arXiv preprint arXiv:2606.02396},
  year={2026}
}

@article {FraserLi2014,
    AUTHOR = {Fraser, Ailana and Li, Martin Man-chun},
     TITLE = {Compactness of the space of embedded minimal surfaces with
              free boundary in three-manifolds with nonnegative {R}icci
              curvature and convex boundary},
   JOURNAL = {J. Differential Geom.},
  FJOURNAL = {Journal of Differential Geometry},
    VOLUME = {96},
      YEAR = {2014},
    NUMBER = {2},
     PAGES = {183--200},
      ISSN = {0022-040X,1945-743X},
   MRCLASS = {53C42},
  MRNUMBER = {3178438},
MRREVIEWER = {Claudio\ Gorodski},
       URL = {http://projecteuclid.org/euclid.jdg/1393424916},
}

@article {White1987,
    AUTHOR = {White, B.},
     TITLE = {Curvature estimates and compactness theorems in
              {$3$}-manifolds for surfaces that are stationary for
              parametric elliptic functionals},
   JOURNAL = {Invent. Math.},
  FJOURNAL = {Inventiones Mathematicae},
    VOLUME = {88},
      YEAR = {1987},
    NUMBER = {2},
     PAGES = {243--256},
      ISSN = {0020-9910,1432-1297},
   MRCLASS = {58E12 (49F10 53C20 58D10)},
  MRNUMBER = {880951},
MRREVIEWER = {Helmut\ Kaul},
       DOI = {10.1007/BF01388908},
       URL = {https://doi.org/10.1007/BF01388908},
}

@article{ambrozio2018compactness,
  title={Compactness analysis for free boundary minimal hypersurfaces},
  author={Ambrozio, Lucas and Carlotto, Alessandro and Sharp, Ben},
  journal={Calculus of Variations and Partial Differential Equations},
  volume={57},
  number={1},
  pages={22},
  year={2018},
  publisher={Springer}
}

@article{white1991space,
  title={The space of minimal submanifolds for varying Riemannian metrics},
  author={White, Brian},
  journal={Indiana University Mathematics Journal},
  pages={161--200},
  year={1991},
  publisher={JSTOR}
}

@book {John2019,
    AUTHOR = {Ratcliffe, John G.},
     TITLE = {Foundations of hyperbolic manifolds},
    SERIES = {Graduate Texts in Mathematics},
    VOLUME = {149},
   EDITION = {Third},
 PUBLISHER = {Springer, Cham},
      YEAR = {[2019] \copyright 2019},
     PAGES = {xii+800},
      ISBN = {978-3-030-31597-9; 978-3-030-31596-2},
   MRCLASS = {57M50 (20H10 30F40 57K32)},
  MRNUMBER = {4221225},
       DOI = {10.1007/978-3-030-31597-9},
       URL = {https://doi-org.proxy.libraries.rutgers.edu/10.1007/978-3-030-31597-9},
}

@article{almgren1962homotopy,
  title={The homotopy groups of the integral cycle groups},
  author={Almgren, F},
  journal={Topology},
  volume={1},
  issue={4},
  pages={257-299},
  year={1962}
}

@article {Haslhofer2019,
    AUTHOR = {Haslhofer, Robert and Ketover, Daniel},
     TITLE = {Minimal 2-spheres in 3-spheres},
   JOURNAL = {Duke Math. J.},
  FJOURNAL = {Duke Mathematical Journal},
    VOLUME = {168},
      YEAR = {2019},
    NUMBER = {10},
     PAGES = {1929--1975},
      ISSN = {0012-7094,1547-7398},
   MRCLASS = {49Q05 (49J35 53E10 58E12)},
  MRNUMBER = {3983295},
MRREVIEWER = {Hung\ Thanh\ Tran},
       DOI = {10.1215/00127094-2019-0009},
       URL = {https://doi.org/10.1215/00127094-2019-0009},
}

@article{chodosh2017minimal,
  title={Minimal hypersurfaces with bounded index},
  author={Chodosh, Otis and Ketover, Daniel and Maximo, Davi},
  journal={Inventiones mathematicae},
  volume={209},
  number={3},
  pages={617--664},
  year={2017},
  publisher={Springer}
}

@article{meeks1982embedded,
  title={Embedded minimal surfaces, exotic spheres, and manifolds with positive Ricci curvature},
  author={Meeks, William and Simon, Leon and Yau, Shing-Tung},
  journal={Annals of Mathematics},
  volume={116},
  number={3},
  pages={621--659},
  year={1982},
  publisher={JSTOR}
}

@article {DeLellis2013,
    AUTHOR = {De Lellis, Camillo and Tasnady, Dominik},
     TITLE = {The existence of embedded minimal hypersurfaces},
   JOURNAL = {J. Differential Geom.},
  FJOURNAL = {Journal of Differential Geometry},
    VOLUME = {95},
      YEAR = {2013},
    NUMBER = {3},
     PAGES = {355--388},
      ISSN = {0022-040X,1945-743X},
   MRCLASS = {53C42 (53A10)},
  MRNUMBER = {3128988},
MRREVIEWER = {Fei-Tsen\ Liang},
       URL = {http://projecteuclid.org.proxy.libraries.rutgers.edu/euclid.jdg/1381931732},
}

@article{poincare1905lignes,
  title={Sur les lignes géodésiques des surfaces convexes},
  author={Poincaré, Henri},
  journal={Transactions of the American Mathematical Society},
  pages={237--274},
  year={1905},
  publisher={JSTOR}
}

@article{birkhoff1917dynamical,
  title={Dynamical systems with two degrees of freedom},
  author={Birkhoff, George D},
  journal={Proceedings of the National Academy of Sciences},
  volume={3},
  number={4},
  pages={314--316},
  year={1917}
}

@article{lyusternik1947topological,
  title={Topological methods in variational problems and their application to the differential geometry of surfaces},
  author={Lyusternik, Lazar Aronovich and Shnirel'man, Lev Genrikhovich},
  journal={Uspekhi Matematicheskikh Nauk},
  volume={2},
  number={1},
  pages={166--217},
  year={1947},
  publisher={Russian Academy of Sciences, Steklov Mathematical Institute of Russian~…}
}

@article{grayson1989shortening,
  title={Shortening embedded curves},
  author={Grayson, Matthew A},
  journal={Annals of Mathematics},
  volume={129},
  number={1},
  pages={71--111},
  year={1989},
  publisher={JSTOR}
}

@book{morse1934calculus,
  title={The calculus of variations in the large},
  author={Morse, Marston},
  volume={18},
  year={1934},
  publisher={American Mathematical Soc.}
}

@article{yau1982problem,
  title={Problem section in seminar on differential geometry, edited by st Yau},
  author={Yau, ST},
  journal={Annals of Math. Studies, Princeton, NJ},
  year={1982}
}

@article{smith1983existence,
  title={On the existence of embedded minimal 2-spheres in the 3-sphere, endowed with an arbitrary metric},
  author={Smith, Francis R},
  journal={Bulletin of the Australian Mathematical Society},
  volume={28},
  number={1},
  pages={159--160},
  year={1983},
  publisher={Cambridge University Press}
}

@article{almgren1966some,
  title={Some interior regularity theorems for minimal surfaces and an extension of Bernstein's theorem},
  author={Almgren, Frederick J},
  journal={Annals of Mathematics},
  volume={84},
  number={2},
  pages={277--292},
  year={1966},
  publisher={JSTOR}
}

@article{lawson1970unknottedness,
  title={The unknottedness of minimal embeddings},
  author={Lawson Jr, H Blaine},
  journal={Inventiones mathematicae},
  volume={11},
  number={3},
  pages={183--187},
  year={1970},
  publisher={Springer-Verlag Berlin/Heidelberg}
}

@article{willmore1965note,
  title={Note on embedded surfaces},
  author={Willmore, Thomas J},
  journal={An. Sti. Univ.“Al. I. Cuza” Iasi Sect. I a Mat.(NS) B},
  volume={11},
  number={493-496},
  pages={20},
  year={1965}
}

@article{smith2019morse,
  title={The Morse index of the critical catenoid},
  author={Smith, Graham and Zhou, Detang},
  journal={Geometriae Dedicata},
  volume={201},
  number={1},
  pages={13--19},
  year={2019},
  publisher={Springer}
}

@article{lawson1972equivariant,
  title={The equivariant Plateau problem and interior regularity},
  author={Lawson, H Blaine},
  journal={Transactions of the American Mathematical Society},
  volume={173},
  pages={231--249},
  year={1972}
}

@article{ketover2022flipping,
  title={Flipping Heegaard splittings and minimal surfaces},
  author={Ketover, Daniel},
  journal={arXiv preprint arXiv:2211.03745},
  year={2022}
}

@article{karpukhin2024embedded,
  title={Embedded minimal surfaces in $\mathbb{S}^{3}$ and $\mathbb{B}^{3}$ via equivariant eigenvalue optimization},
  author={Karpukhin, Mikhail and Kusner, Robert and McGrath, Peter and Stern, Daniel},
  journal={arXiv preprint arXiv:2402.13121},
  year={2024}
}

@article{kapouleas2020index,
  title={The index and nullity of the Lawson surfaces $\xi_{g,1}$},
  author={Kapouleas, Nikolaos and Wiygul, David},
  journal={Cambridge Journal of Mathematics},
  volume={8},
  number={2},
  pages={363--405},
  year={2020},
  publisher={International Press of Boston, Inc. Somerville, MA 02143, USA}
}

@article{franz2024genus,
  title={Genus one critical catenoid},
  author={Franz, Giada and Ketover, Daniel and Schulz, Mario B},
  journal={arXiv preprint arXiv:2409.12588},
  year={2024}
}

@article{marques2017existence,
  title={Existence of infinitely many minimal hypersurfaces in positive Ricci curvature},
  author={Marques, Fernando C and Neves, Andr{\'e}},
  journal={Inventiones mathematicae},
  volume={209},
  number={2},
  pages={577--616},
  year={2017},
  publisher={Springer}
}

@article{song2023existence,
  title={Existence of infinitely many minimal hypersurfaces in closed manifolds},
  author={Song, Antoine},
  journal={Annals of Mathematics},
  volume={197},
  number={3},
  pages={859--895},
  year={2023},
  publisher={Department of Mathematiccs, Princeton University Princeton, New Jersey, USA}
}

@article{li2015general,
  title={A general existence theorem for embedded minimal surfaces with free boundary},
  author={Li, Martin Man-chun},
  journal={Communications on Pure and Applied Mathematics},
  volume={68},
  number={2},
  pages={286--331},
  year={2015},
  publisher={Wiley Online Library}
}

@article{li2021min,
  title={Min-max theory for free boundary minimal hypersurfaces, I: Regularity theory},
  author={Li, Martin Man-Chun and Zhou, Xin},
  journal={Journal of Differential Geometry},
  volume={118},
  number={3},
  pages={487--553},
  year={2021},
  publisher={Lehigh University}
}

@inproceedings{gromov2006dimension,
  title={Dimension, non-linear spectra and width},
  author={Gromov, Misha},
  booktitle={Geometric Aspects of Functional Analysis: Israel Seminar (GAFA) 1986--87},
  pages={132--184},
  year={2006},
  organization={Springer}
}

@techreport{gromov2002isoperimetry,
  title={Isoperimetry of waists and concentration of maps},
  author={Gromov, Mikhail},
  year={2002},
  institution={SIS-2003-246}
}

@article{gromov2007singularities,
  title={Singularities, expanders and topology of maps Part I: Homology versus Volume in the Volume in the Spaces of Cycles},
  author={Gromov, Mikhail},
  journal={Geometric and Functional Analysis},
  year={2007}
}

@article{guth2009minimax,
  title={Minimax problems related to cup powers and Steenrod squares},
  author={Guth, Larry},
  journal={Geometric And Functional Analysis},
  volume={18},
  number={6},
  pages={1917--1987},
  year={2009},
  publisher={Springer}
}

@article{chodosh2023p,
  title={The p-widths of a surface},
  author={Chodosh, Otis and Mantoulidis, Christos},
  journal={Publications math{\'e}matiques de l'IH{\'E}S},
  volume={137},
  number={1},
  pages={245--342},
  year={2023},
  publisher={Springer}
}

@article{chu2023strong,
  title={A strong multiplicity one theorem in min-max theory},
  author={Chu, Adrian Chun-Pong and Li, Yangyang},
  journal={arXiv preprint arXiv:2309.07741},
  year={2023}
}

@article{chu2023free,
  title={A free boundary minimal surface via a 6-sweepout},
  author={Chu, Adrian Chun-Pong},
  journal={The Journal of Geometric Analysis},
  volume={33},
  number={7},
  pages={230},
  year={2023},
  publisher={Springer}
}

@article{donato2020first,
  title={The first $ p $-widths of the unit disk},
  author={Donato, Sidney},
  journal={arXiv preprint arXiv:2002.06724},
  year={2020}
}

@article{donato2024first,
  title={The first width of non-negatively curved surfaces with convex boundary},
  author={Donato, Sidney and Montezuma, Rafael},
  journal={The Journal of Geometric Analysis},
  volume={34},
  number={2},
  pages={60},
  year={2024},
  publisher={Springer}
}

@article{marx2025p,
  title={The $p$-widths of $\mathbb{RP}^2$},
  author={Marx-Kuo, Jared},
  journal={Proceedings of the American Mathematical Society},
  volume={153},
  number={12},
  pages={5385--5396},
  year={2025}
}

@article{marx2026p,
  title={The p-widths of the Hemisphere},
  author={Marx-Kuo, Jared},
  journal={arXiv preprint arXiv:2603.17734},
  year={2026}
}

@article{wang2026existence,
  title={Existence of two embedded minimal spheres in $\mathbb{S}^{3}$ with an arbitrary metric},
  author={Wang, Zhichao and Zhou, Xin},
  journal={arXiv preprint arXiv:2607.08631},
  year={2026}
}

@article{sarnataro2023optimal,
  title={Optimal regularity for minimizers of the prescribed mean curvature functional over isotopies},
  author={Sarnataro, Lorenzo and Stryker, Douglas},
  journal={Cambridge Journal of Mathematics},
  volume={13},
  number={3},
  pages={609-706},
  year={2026}
}

@article{white1989every,
  title={Every three-sphere of positive Ricci curvature contains a minimal embedded torus},
  author={White, Brian},
  journal={Bull. Amer. Math. Soc.},
  volume={21},
  number={1},
  year={1989}
}

@article{chu2025min,
  title={Min-max theory and minimal surfaces with prescribed genus},
  author={Chu, Adrian Chun-Pong and Li, Yangyang and Wang, Zhihan},
  journal={arXiv preprint arXiv:2507.23239},
  year={2025}
}

@article{chu2026enumerative,
  title={An enumerative min-max theorem for minimal surfaces},
  author={Chu, Adrian Chun-Pong and Li, Yangyang and Wang, Zhihan},
  journal={arXiv preprint arXiv:2601.01736},
  year={2026}
}

@article{chu2025minimal,
  title={Minimal surfaces with arbitrary genus in 3-spheres of positive Ricci curvature},
  author={Chu, Adrian Chun-Pong},
  journal={arXiv preprint arXiv:2508.06019},
  year={2025}
}

@article{struwe1984free,
  title={On a free boundary problem for minimal surfaces},
  author={Struwe, Michael},
  journal={Inventiones mathematicae},
  volume={75},
  number={3},
  pages={547--560},
  year={1984},
  publisher={Springer}
}

@inproceedings{gruter1986embedded,
  title={On embedded minimal disks in convex bodies},
  author={Gr{\"u}ter, Michael and Jost, J{\"u}rgen},
  booktitle={Annales de l'Institut Henri Poincar{\'e} C, Analyse non lin{\'e}aire},
  volume={3},
  number={5},
  pages={345--390},
  year={1986},
  organization={Elsevier}
}

@article{jost1986existence,
  title={Existence results for embedded minimal surfaces of controlled topological type, II},
  author={Jost, J{\"u}rgen},
  journal={Annali della Scuola Normale Superiore di Pisa-Classe di Scienze},
  volume={13},
  number={3},
  pages={401--426},
  year={1986}
}

@book{fraser1998free,
  title={On the free boundary variational problem for minimal disks},
  author={Fraser, Ailana Margaret},
  year={1998},
  publisher={Stanford University}
}

@article{laurain2019existence,
  title={Existence of min-max free boundary disks realizing the width of a manifold},
  author={Laurain, Paul and Petrides, Romain},
  journal={Advances in Mathematics},
  volume={352},
  pages={326--371},
  year={2019},
  publisher={Elsevier}
}

@article{lin2020min,
  title={Min-max minimal disks with free boundary in Riemannian manifolds},
  author={Lin, Longzhi and Sun, Ao and Zhou, Xin},
  journal={Geometry \& Topology},
  volume={24},
  number={1},
  pages={471--532},
  year={2020},
  publisher={Mathematical Sciences Publishers}
}

@article{haslhofer2025free,
  title={Free boundary minimal disks in convex balls},
  author={Haslhofer, Robert and Ketover, Daniel},
  journal={Journal f{\"u}r die reine und angewandte Mathematik (Crelles Journal)},
  volume={2025},
  number={828},
  pages={307--326},
  year={2025},
  publisher={De Gruyter}
}

@article{maximo2017free,
  title={Free boundary minimal annuli in convex three-manifolds},
  author={Maximo, Davi and Nunes, Ivaldo and Smith, Graham},
  journal={Journal of Differential Geometry},
  volume={106},
  number={1},
  pages={139--186},
  year={2017},
  publisher={Lehigh University}
}

@article {Fraser2011,
    AUTHOR = {Fraser, Ailana and Schoen, Richard},
     TITLE = {The first {S}teklov eigenvalue, conformal geometry, and
              minimal surfaces},
   JOURNAL = {Adv. Math.},
  FJOURNAL = {Advances in Mathematics},
    VOLUME = {226},
      YEAR = {2011},
    NUMBER = {5},
     PAGES = {4011--4030},
      ISSN = {0001-8708,1090-2082},
   MRCLASS = {58J32 (53A10 58J50)},
  MRNUMBER = {2770439},
MRREVIEWER = {Ahmad\ El Soufi},
       DOI = {10.1016/j.aim.2010.11.007},
       URL = {https://doi-org.proxy.libraries.rutgers.edu/10.1016/j.aim.2010.11.007},
}

@article{ache2016metrics,
  title={Metrics with nonnegative Ricci curvature on convex three-manifolds},
  author={Ach{\'e}, Antonio and Maximo, Davi and Wu, Haotian},
  journal={Geometry \& Topology},
  volume={20},
  number={5},
  pages={2905--2922},
  year={2016},
  publisher={Mathematical Sciences Publishers}
}

@article{zhou2016free,
  title={On the free boundary min--max geodesics},
  author={Zhou, Xin},
  journal={International Mathematics Research Notices},
  volume={2016},
  number={5},
  pages={1447--1466},
  year={2016},
  publisher={Oxford University Press}
}

@article{bettiol2023bifurcations,
  title={Bifurcations of Clifford tori in ellipsoids},
  author={Bettiol, Renato G and Piccione, Paolo},
  journal={arXiv preprint arXiv:2309.13758},
  year={2023}
}

@article{bettiol2024nonplanar,
  title={Nonplanar minimal spheres in ellipsoids of revolution},
  author={Bettiol, Renato G and Piccione, Paolo},
  journal={Annali della Scuola Normale Superiore di Pisa- Classe di Scienze},
  year={2024}
}

@article{ko2026existence,
  title={Existence and Morse Index of two free boundary embedded geodesics on Riemannian 2-disks with convex boundary},
  author={Ko, Dongyeong},
  journal={American Journal of Mathematics, To appear},
  year={2026}
}

@article{ko2024morse,
  title={Morse index bound of simple closed geodesics on 2-spheres and strong Morse inequalities},
  author={Ko, Dongyeong},
  journal={Journal f{\"u}r die reine und angewandte Mathematik (Crelles Journal)},
  volume={2024},
  number={817},
  pages={33--66},
  year={2024},
  publisher={De Gruyter}
}

@article{li2024existence,
  title={Existence of embedded minimal tori in three-spheres with positive Ricci curvature},
  author={Li, Xingzhe and Wang, Zhichao},
  journal={arXiv preprint arXiv:2409.10391},
  year={2024}
}

@article{li2025minimal,
  title={Minimal surfaces with low genus in lens spaces},
  author={Li, Xingzhe and Wang, Tongrui and Yao, Xuan},
  journal={Journal f{\"u}r die reine und angewandte Mathematik (Crelles Journal)},
  volume={2025},
  number={828},
  pages={175--218},
  year={2025},
  publisher={De Gruyter}
}

@article{ko2025regularity,
  title={Regularity of Cohomogeneity two equivariant isotopy minimization problems and minimal hypersurfaces with large first Betti number on spheres},
  author={Ko, Dongyeong},
  journal={arXiv preprint arXiv:2512.12322},
  year={2025}
}

@article{wang2026embedded,
  title={Embedded minimal $S^1$-bundles in $\mathbb{S}^{4}$},
  author={Wang, Tongrui},
  journal={arXiv preprint arXiv:2606.31091},
  year={2026}
}

@article {hatcher1983proof,
    AUTHOR = {Hatcher, Allen E.},
     TITLE = {A proof of the {S}male conjecture, {${\rm Diff}(S\sp{3})\simeq
              {\rm O}(4)$}},
   JOURNAL = {Ann. of Math. (2)},
  FJOURNAL = {Annals of Mathematics. Second Series},
    VOLUME = {117},
      YEAR = {1983},
    NUMBER = {3},
     PAGES = {553--607},
      ISSN = {0003-486X,1939-8980},
   MRCLASS = {57M99 (57S05)},
  MRNUMBER = {701256},
MRREVIEWER = {R.\ C.\ Kirby},
       DOI = {10.2307/2007035},
       URL = {https://doi.org/10.2307/2007035},
}

@article{urbano1990minimal,
  title={Minimal surfaces with low index in the three-dimensional sphere},
  author={Urbano, Francisco},
  journal={Proceedings of the American Mathematical Society},
  pages={989--992},
  year={1990},
  publisher={JSTOR}
}

@incollection {Ros2001,
    AUTHOR = {Ros, Antonio},
     TITLE = {The isoperimetric and {W}illmore problems},
 BOOKTITLE = {Global differential geometry: the mathematical legacy of
              {A}lfred {G}ray ({B}ilbao, 2000)},
    SERIES = {Contemp. Math.},
    VOLUME = {288},
     PAGES = {149--161},
 PUBLISHER = {Amer. Math. Soc., Providence, RI},
      YEAR = {2001},
      ISBN = {0-8218-2750-2},
   MRCLASS = {53A10 (53C42)},
  MRNUMBER = {1871006},
MRREVIEWER = {Ildefonso\ Castro},
       DOI = {10.1090/conm/288/04823},
       URL = {https://doi-org.proxy.libraries.rutgers.edu/10.1090/conm/288/04823},
}

@article {Heintze1978,
    AUTHOR = {Heintze, Ernst and Karcher, Hermann},
     TITLE = {A general comparison theorem with applications to volume
              estimates for submanifolds},
   JOURNAL = {Ann. Sci. \'Ecole Norm. Sup. (4)},
  FJOURNAL = {Annales Scientifiques de l'\'Ecole Normale Sup\'erieure.
              Quatri\`eme S\'erie},
    VOLUME = {11},
      YEAR = {1978},
    NUMBER = {4},
     PAGES = {451--470},
      ISSN = {0012-9593},
   MRCLASS = {53C40 (58E10)},
  MRNUMBER = {533065},
MRREVIEWER = {Hubert\ Gollek},
       URL = {http://www.numdam.org/item?id=ASENS_1978_4_11_4_451_0},
}

@incollection {Fraser2013,
    AUTHOR = {Fraser, Ailana and Schoen, Richard},
     TITLE = {Minimal surfaces and eigenvalue problems},
 BOOKTITLE = {Geometric analysis, mathematical relativity, and nonlinear
              partial differential equations},
    SERIES = {Contemp. Math.},
    VOLUME = {599},
     PAGES = {105--121},
 PUBLISHER = {Amer. Math. Soc., Providence, RI},
      YEAR = {2013},
      ISBN = {978-0-8218-9149-0},
   MRCLASS = {35P15 (35R01 53A10 58E12 58J50)},
  MRNUMBER = {3202476},
MRREVIEWER = {Anna\ Maria\ Candela},
       DOI = {10.1090/conm/599/11927},
       URL = {https://doi.org/10.1090/conm/599/11927},
}

@article {Brendle2012,
    AUTHOR = {Brendle, Simon},
     TITLE = {Embedded minimal tori in {$S^3$} and the {L}awson conjecture},
   JOURNAL = {Acta Math.},
  FJOURNAL = {Acta Mathematica},
    VOLUME = {211},
      YEAR = {2013},
    NUMBER = {2},
     PAGES = {177--190},
      ISSN = {0001-5962,1871-2509},
   MRCLASS = {53A10 (49Q05 53C42)},
  MRNUMBER = {3143888},
MRREVIEWER = {Jo\~ao\ Lucas Marques Barbosa},
       DOI = {10.1007/s11511-013-0101-2},
       URL = {https://doi-org.proxy.libraries.rutgers.edu/10.1007/s11511-013-0101-2},
}

@inproceedings {LiMartin2020,
    AUTHOR = {Li, Martin Man-chun},
     TITLE = {Free boundary minimal surfaces in the unit ball: recent
              advances and open questions},
 BOOKTITLE = {Proceedings of the {I}nternational {C}onsortium of {C}hinese
              {M}athematicians 2017},
     PAGES = {401--435},
 PUBLISHER = {Int. Press, Boston, MA},
      YEAR = {[2020] \copyright 2020},
      ISBN = {978-1-57146-392-0},
   MRCLASS = {53A10},
  MRNUMBER = {4251121},
}

@article {Fraser2016,
    AUTHOR = {Fraser, Ailana and Schoen, Richard},
     TITLE = {Sharp eigenvalue bounds and minimal surfaces in the ball},
   JOURNAL = {Invent. Math.},
  FJOURNAL = {Inventiones Mathematicae},
    VOLUME = {203},
      YEAR = {2016},
    NUMBER = {3},
     PAGES = {823--890},
      ISSN = {0020-9910,1432-1297},
   MRCLASS = {58C40 (35P15 53A10)},
  MRNUMBER = {3461367},
MRREVIEWER = {Isabel\ M. C. Salavessa},
       DOI = {10.1007/s00222-015-0604-x},
       URL = {https://doi-org.proxy.libraries.rutgers.edu/10.1007/s00222-015-0604-x},
}

@book{ritore2023isoperimetric,
  title={Isoperimetric inequalities in Riemannian manifolds},
  author={Ritorè, Manuel},
  year={2023},
  publisher={Springer}
}

@article{chu2025unknottedness,
  title={Unknottedness of free boundary minimal surfaces and self-shrinkers},
  author={Chu, Sabine and Franz, Giada},
  journal={Calculus of Variations and Partial Differential Equations},
  volume={64},
  number={8},
  pages={237},
  year={2025},
  publisher={Springer}
}

@article{chodosh2020minimal,
  title={Minimal surfaces and the Allen--Cahn equation on 3-manifolds: index, multiplicity, and curvature estimates},
  author={Chodosh, Otis and Mantoulidis, Christos},
  journal={Annals of Mathematics},
  volume={191},
  number={1},
  pages={213--328},
  year={2020},
  publisher={JSTOR}
}

@article{chu2025minimals,
  title={Minimal surface doublings and electrostatics for Schr$\backslash$" odinger operators},
  author={Chu, Adrian Chun-Pong and Stern, Daniel},
  journal={arXiv preprint arXiv:2509.18630},
  year={2025}
}

@article {Medvedev2023,
    AUTHOR = {Medvedev, Vladimir},
     TITLE = {On the index of the critical {M}\"obius band in {$\Bbb{B}^4$}},
   JOURNAL = {J. Geom. Anal.},
  FJOURNAL = {Journal of Geometric Analysis},
    VOLUME = {33},
      YEAR = {2023},
    NUMBER = {3},
     PAGES = {Paper No. 93, 29},
      ISSN = {1050-6926,1559-002X},
   MRCLASS = {53A10 (49Q05 58E12)},
  MRNUMBER = {4531070},
MRREVIEWER = {Shujie\ Zhai},
       DOI = {10.1007/s12220-022-01069-w},
       URL = {https://doi-org.proxy.libraries.rutgers.edu/10.1007/s12220-022-01069-w},
}

@article{medvedev2023free,
  title={On free boundary minimal submanifolds in geodesic balls in $\mathbb{H}^{n}$ and $\mathbb{S}^{n}_{+}$},
  author={Medvedev, Vladimir},
  journal={arXiv preprint arXiv:2311.02409},
  year={2023}
}

@article{ma2026free,
  title={On free boundary minimal submanifolds with boundary on concentric spheres in Euclidean spac},
  author={Ma, Tianyu and Medvedev, Vladimir},
  journal={arXiv preprint arXiv:2606.23930},
  year={2026}
}
\end{document}